\newtheorem{lemma}{Lemma}[section]
\newtheorem{theorem}[lemma]{Theorem}
\newtheorem{remark}[lemma]{Remark}
\newtheorem{proposition}[lemma]{Proposition}
\newtheorem{corollary}[lemma]{Corollary}
\newtheorem{hypothesis}[lemma]{Hypothesis}
\newenvironment{proof}{\par\noindent{\bf Proof. }}{\hfill{$\Box$}\bigskip}
\newcommand{\al}{\operatorname{al}}
\newcommand{\gr}{\operatorname{nu}}
\newcommand{\mytodo}[2]{\todo[size=\tiny, color=#1!50!white]{#2}\xspace}
\newcommand{\mkcom}[1]{\mytodo{orange}{#1}}
\title{On Growth Functions of Ordered Hypergraphs}
\author{Jaroslav Han\v{c}l Jr. 
\and Martin Klazar 
}
\begin{document}

\maketitle

\begin{abstract}
For $k,l\ge2$ we consider ideals of edge $l$-colored complete $k$-uniform hypergraphs $(n,\chi)$ with vertex sets $[n]=\{1, 2, \dots n\}$ for $n\in\mathbb{N}$. An ideal is a set of such colored hypergraphs that is closed to the relation of induced ordered subhypergraph. We obtain analogues of two results of Klazar \cite{Klazar08} who considered graphs, namely we prove two dichotomies for growth functions of such ideals of colored hypergraphs. The first dichotomy is for any $k,l\ge2$ and says that the growth function is either eventually constant or at least $n-k+2$. The second dichotomy is only for $k=3,l=2$ and says that the growth function of an ideal of edge two-colored complete $3$-uniform hypergraphs grows either at most polynomially, or for $n\ge23$ 
at least as $G_n$ where $G_n$ is the sequence defined by $G_1=G_2=1$, $G_3=2$ and $G_n = G_{n-1} + G_{n-3}$ for $n\ge4$. The lower bounds in both dichotomies are tight.
\\

\noindent
Keywords: Enumeration, combinatorial structures, graph ideals, ordered graphs, hereditary structures.

\end{abstract}
\section{Introduction}

Let $k,l \geq 2$ be positive integers and $\mathcal{C}_k$ be the set of all edge $l$-colored complete $k$-uniform hypergraphs $K=(n,\chi)$. 
Here $n\in\mathbb{N}=\{1,2,\dots\}$ and $\chi:\;\binom{[n]}{k}\to[l]$ gives to each $k$-element subsets of $[n]=\{1,2,\dots,n\}$ one of the $l$ colors. 
We call the elements of $\mathcal{C}_k$ \emph{colorings} and order $\mathcal{C}_k$ by the relation $\preceq$ of an induced ordered subhypergraph: $(m,\phi)\preceq(n,\chi)$ 
if and only if there is an increasing injection $f:\;[m]\to[n]$ 
such that for every set $E\subset[m]$ with $|E|=k$ we have $\phi(E)=\chi(f(E))$. For $X\subset\mathcal{C}_k$ and $n\in\mathbb{N}$ we denote by $X_n$ the subset of
$X$ consisting of the colorings with the vertex set $[n]$. We call the map $n\mapsto|X_n|$, where $|X_n|$ is the cardinality of $X_n$, the {\em growth function of $X$}. 
We call $X\subset\mathcal{C}_k$ an {\em ideal} if it is downward closed to $\preceq$: $K\preceq L\in X$ implies $K\in X$. Sets of structures 
closed to a containment relation are also called hereditary or monotone properties of graphs, permutation classes, or downsets of posets. 

In our paper we study growth functions of ideals in $\mathcal{C}_k$. Scheinerman and Zito \cite{SZ94} were the first to investigate full spectrum of growth of hereditary 
graph properties. Their results were extended and made more precise by Balogh, Bollob\'{a}s and Weinreich \cite{BBW00,BBW06}, Alekseev \cite{A93}, Bollob\'{a}s 
and Thomason \cite{BT95,BT97}, Pr\"{o}mel and Steger \cite{PS96} and others. Growth functions of ideals were studied also for other structures: 
for oriented graphs \cite{AS00}, posets \cite{BGP99,BBM06}, words \cite{QZ04,BB05} and permutations \cite{KK03,MT04}. In this article we obtain analogues for
ordered $k$-uniform hypergraphs of the results in Klazar \cite{Klazar08} who investigated ordered graphs. 

Our first main result concerns ideals $X\subset\mathcal{C}_k$ for any $k,l\ge2$ and asserts that the growth function of every $X$ is either eventually constant 
or at least linear. 

\begin{theorem}\label{T1CD}
If $k,l\ge2$ and $X\subset\mathcal{C}_k$ is an ideal of colorings then either $|X_n|=c$ for every $n>n_0$ or $|X_n|\geq n-k+2 $ for every $n\in\mathbb{N}$.
\end{theorem}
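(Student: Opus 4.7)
The plan is to prove the contrapositive: assuming $|X_n|$ is not eventually constant, I produce at least $n-k+2$ distinct elements of $X_n$ for every $n$. If $X_n=\emptyset$ for all large $n$, then $|X_n|$ is eventually zero, so I may assume $X_n\neq\emptyset$ for every $n$. K\H{o}nig's lemma applied to the locally finite, infinite restriction tree of $X$ (level $n$ = colorings in $X_n$, edges from $K\in X_n$ to its restriction in $X_{n-1}$) yields an infinite coloring $\chi^\infty:\binom{\mathbb{N}}{k}\to[l]$ whose every finite induced sub-coloring lies in $X$. The infinite Ramsey theorem for $k$-uniform hypergraphs in $l$ colors then produces an infinite monochromatic set $M\subseteq\mathbb{N}$ for $\chi^\infty$ of some color $c$, so the ``all-$c$'' coloring $A_n$ lies in $X_n$ for every $n$.

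Let $Y\subset X$ be the sub-ideal of colorings that extend to $X$-colorings of every larger size, and let $\widetilde{Y}$ denote the set of infinite colorings whose finite restrictions all lie in $X$. The restriction map $Y_{n+1}\to Y_n$ is surjective, so $|Y_n|$ is non-decreasing; the transient sub-tree $X\setminus Y$ is locally finite and has no infinite branch, hence is finite by K\H{o}nig's lemma, and $|X_n|-|Y_n|$ is eventually zero. Consequently, $|X_n|$ is eventually constant if and only if $\widetilde{Y}$ is finite. Since $\widetilde{Y}$ contains at most $l$ monochromatic colorings, under our assumption it contains a non-monochromatic $\chi^\infty_*$, which therefore has some $k$-edge $E_0=\{e_1<\cdots<e_k\}$ of color $c^*\neq c$.

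An iterated application of Ramsey to $\chi^\infty_*$ --- once on $k$-subsets of $\mathbb{N}\setminus E_0$, and once on $(k-|V_0|)$-subsets of $\mathbb{N}\setminus E_0$ for each non-empty proper $V_0\subset E_0$ --- extracts an infinite $M^*\subseteq\mathbb{N}\setminus E_0$ on which $\chi^\infty_*$ is monochromatic on $\binom{M^*}{k}$ of some color $c_1$, and for each such $V_0$, the color $\chi^\infty_*(V_0\cup V_1)$ is constant over $V_1\in\binom{M^*}{k-|V_0|}$. Then for any $W\subseteq M^*$ with $|W|=n-k$, the relabelling of $\chi^\infty_*|_{E_0\cup W}$ to $[n]$ is a coloring in $X_n$ whose structure depends only on the interleaving of $W$'s elements with those of $E_0$, and in particular the image of $E_0$ in $[n]$ is determined by this interleaving. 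Selecting $W$ so that the image of $E_0$ occupies $\{j,j+1,\ldots,j+k-1\}$ for each $j=1,\ldots,n-k+1$ yields $n-k+1$ pairwise distinct colorings in $X_n$ (they differ at the edge carrying $c^*$); together with $A_n$, this gives the required $n-k+2$ elements, and the bound is trivial for $n<k$. The main obstacle is the realizability of each target position: since $M^*$ has only finitely many elements below any fixed $e_1$, a single fixed $\chi^\infty_*$ fails for large $n$. This is circumvented by using the infinitude of $\widetilde{Y}$ to select, for each $n$, an anomalous edge in a suitable element of $\widetilde{Y}$ (obtained from $\chi^\infty_*$ by shifts or passing to sub-colorings, operations preserving membership in $\widetilde{Y}$) with $e_1$ and gaps $e_{i+1}-e_i$ large enough to realize every target position.
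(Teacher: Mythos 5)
Your approach---extracting an infinite coloring by K\H{o}nig's lemma and then Ramsey-izing it---is a genuinely different route from the paper's, which works entirely finitarily by introducing $r$-rich and $c$-simple colorings and proving a trichotomy-free dichotomy for them (Proposition~\ref{P0} together with Lemmas~\ref{L-RICH} and \ref{L-SIMPLE}). Unfortunately, the compactness step has a fatal gap. The assertion that $X\setminus Y$ ``is finite by K\H{o}nig's lemma'' is unjustified: $X\setminus Y$ is not a subtree of the restriction tree but a forest, since a node $K\in X_n\setminus Y_n$ can have its parent $K|_{[n-1]}$ lying in $Y_{n-1}$; a locally finite forest with no infinite branch can perfectly well be infinite. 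In consequence the biconditional ``$|X_n|$ eventually constant iff $\widetilde{Y}$ finite'' is false for ideals of ordered structures. Concretely, take $k=l=2$ and let $X$ be the set of colorings $(n,\chi)$ with the property that $\chi(\{i,j\})=1$ forces $j=n$. Restricting along an increasing injection either drops the last vertex (giving the all-$0$ coloring) or keeps it as the new last vertex, so the property is hereditary and $X$ is an ideal with $|X_n|=2^{n-1}$. Yet if $\chi\colon\binom{\mathbb{N}}{2}\to\{0,1\}$ satisfies $\chi(\{i,j\})=1$ for some $i<j$, then $\chi|_{[j+1]}$ has a $1$-edge avoiding its last vertex and so lies outside $X$; hence $\widetilde{Y}$ is the singleton $\{\text{all-}0\}$. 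So $\widetilde{Y}$ is finite while $|X_n|$ grows, your argument never produces a non-monochromatic $\chi^\infty_*$, and the construction of the $n-k+1$ colorings with a shifted anomalous edge cannot even begin.

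The root cause is that initial-segment restriction is the wrong notion of extension here: the order $\preceq$ permits the new vertex to be inserted at any position, not only at the end, and this is precisely what the paper's $r$-rich colorings exploit. In the example above, $X$ contains for every $r\geq 2$ an $r$-rich coloring of type $T_{0,1,1}$ (take $\chi(\{r,2r-1\})=1$ and all other edges $0$), which yields the linear lower bound by Lemma~\ref{L-RICH}, even though no non-constant infinite coloring has all initial segments in $X$. Your final paragraph already concedes a realizability problem and proposes to fix it by ``shifts or passing to sub-colorings'' inside $\widetilde{Y}$; in this example that set has no non-monochromatic element at all, so the fix has nothing to operate on. A compactness argument might conceivably be salvaged by coloring a richer linear order that allows new vertices to appear above, below, and between existing ones, but that would be a substantially different proof and is not what you wrote.
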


\noindent
The lower bound is tight, it is attained for $l=2$ by the ideal of colorings $(n,\chi)\in\mathcal{C}_k$ such that $\chi$ is either constantly $1$ or $\chi(E)=1$ for all 
$E\in\binom{[n]}{k}\setminus\{I\}$ but $\chi(I)=2$ for a $k$-element interval $I$ in $[n]$.

Let the sequence
\begin{equation*}
(G_n)_{n \geq 1} = (1,\,1,\,2,\,3,\,4,\,6,\,9,\,13,\,19,\,28,\,41,\,\dots)
\end{equation*}
be defined by the recurrence $G_1=G_2=1$, $G_3=2$ and $G_n=G_{n-1}+G_{n-3}$ for $n\ge4$. In our second main result we set $k=3$, $l=2$ and prove that the growth function 
of every ideal in $\mathcal{C}_3$ is either at most polynomial or at least as fast as $G_n$.

\begin{restatable}{thmres}{Fibonaccidichotomy} \label{T1FD}
Let $k=3$, $l=2$ and $X\subset\mathcal{C}_3$ be an ideal of colorings. Then there is a constant $c>0$ such that either $|X_n|\le n^c$ for every $n\in\mathbb{N}$,
or $|X_n|\geq G_{n}$ for every $n \geq 23$.
\end{restatable}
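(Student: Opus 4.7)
My plan is to adapt the Fibonacci-type dichotomy argument for ordered graphs from \cite{Klazar08} to the setting of $2$-edge-colored ordered $3$-uniform hypergraphs. The assumed non-polynomial growth of $X$ will be converted into a structural statement yielding a canonical ``template'' inside some coloring of $X$, and the template will be shown to support an injective encoding from a combinatorial family of size $G_n$ into $X_n$.

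A convenient combinatorial reading of $G_n$ is that it counts compositions of $n$ with parts in $\{1,3\}$: the recurrence $G_n=G_{n-1}+G_{n-3}$ just strips off a final part of size $1$ or $3$, and the initial values match. This interpretation is natural for $k=3$: each composition $n=a_1+\cdots+a_r$ partitions $[n]$ into consecutive blocks of size $1$ or $3$. I plan to build a coloring $\chi$ on $\binom{[n]}{3}$ in which the triples contained in a size-$3$ block carry one distinguished color and all other triples carry colors dictated by a rigid background pattern. The composition is recoverable from $\chi$ (the size-$3$ blocks are exactly the intervals whose internal triple has the marked color), so distinct compositions give $\preceq$-inequivalent colorings, yielding $G_n$ elements in $X_n$ once all of them are seen to lie in $X$.

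Membership in $X$ requires a single sufficiently large ``universal template'' $(N,\chi_0)\in X$ of which every composition-coloring on $[n]$ is an induced ordered subhypergraph. Producing this template is the main step and the main obstacle. I would proceed by Ramsey-type extraction: super-polynomial growth forces, via an iterated Ramsey argument on the induced coloring of $\binom{[N]}{3}$, arbitrarily large colorings in $X$ containing a long $\preceq$-homogeneous stretch of a given order type. The delicate part is classifying which homogeneous types can arise: one must show that every type compatible with the hypothesis of super-polynomial growth simultaneously accommodates the size-$1$ and the size-$3$ block markers, while types that admit only one of the two markers already force $|X_n|$ to be polynomial, contradicting the assumption. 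Theorem~\ref{T1CD} will presumably be useful here to cleanly eliminate degenerate homogeneous types at an early stage of the case analysis.

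The threshold $n\ge 23$ is almost certainly an artifact of the overhead needed to stabilize the template: a constant-size boundary of vertices at the two ends of $[n]$ serves to host the background pattern and the initial/final separators, leaving the remaining vertices free for the compositional encoding. For $n$ below this threshold one would either verify $|X_n|\ge G_n$ directly or absorb the small cases into the construction. I expect the hardest piece of work to be the classification step in the Ramsey extraction, where the larger variety of local $3$-vertex colored order types (compared to the two-color graph case handled in \cite{Klazar08}) means the case analysis will be substantially longer and the precise bookkeeping of which homogeneous types support both block sizes will be the most technical part of the proof.
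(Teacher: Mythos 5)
Your plan identifies correctly that $G_n$ counts compositions of $n$ into parts $1$ and $3$, and that the extremal ideal $S(3)$ realizing $|X_n|=G_n$ is exactly the one encoding such compositions by marking the internal triples of disjoint $3$-intervals. That much agrees with the paper. But the central strategy — produce a single ``universal template'' $(N,\chi_0)\in X$ containing all composition-colorings — has a fundamental gap: it amounts to asserting that super-polynomial growth forces $S(3)\subset X$, which is false. Consider the ideal $X$ of all $(n,\chi)$ with $\chi(E)=0$ whenever $E\not\supset\{1,2\}$: it is downward closed, has $|X_n|=2^{n-2}$ (so super-polynomial, and indeed $\ge G_n$), yet it contains no coloring $(5,\chi)$ with $\chi(\{3,4,5\})\ne 0$, hence no composition-coloring marking a $3$-block away from $\{1,2\}$. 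Your proposed dichotomy ``either the template exists or the growth is polynomial'' already fails on this example, so the classification step in your Ramsey extraction cannot be completed as you describe it. The theorem's conclusion is only $|X_n|\ge G_n$, and different ideals meet it via entirely different exponential families, not all of them the $S(3)$ family.

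What the paper actually does is substantially finer. It defines several families of ``wealthy'' colorings $W_1,W_2,W_3,W_4$ (with subtypes), and for each shows a lower bound at least $G_n$ for ideals containing all $r$-wealthy colorings of that type — but the combinatorial families used for the encoding are different in each case (e.g.\ $2^{n-2}$ for $W_1$, $F_n$ for $W_2$ and $W_{3,3}$, a central-binomial-coefficient bound for $W_{3,1}/W_{3,2}$ and $W_{4,2}$, and exactly $G_n$ only for $W_{4,1}$). The polynomial side is handled by a notion of $p$-tameness on the nuclear decomposition of a coloring, controlled via three-dimensional $\{0,1,*\}$-valued crossing matrices, with Propositions~\ref{P1} and \ref{P2} showing that failure of $p$-tameness for all $p$ forces wealthy colorings of some type. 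Your two-case split conflates all of these wealthy types into one, and your appeal to Theorem~\ref{T1CD} to ``eliminate degenerate homogeneous types'' has no analogue in the paper; the constant/linear dichotomy is not used to prove the quasi-Fibonacci one. The threshold $23$ also arises not from boundary overhead around a template, but from comparing $G_n$ to the binomial lower bound of Proposition~\ref{LW312}. To repair the proposal you would need to replace the single template with a case analysis over the wealthy types and a genuine structural characterization (the crossing-matrix machinery or something equivalent) of when super-polynomial growth is forced.
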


\noindent
The lower bound is again tight, it is attained by the ideal of colorings $(n,\chi)\in\mathcal{C}_3$ such that $\chi(E)=2$ iff $E=I_j$ where
$I_1<I_2<\dots<I_r$ is a (possibly empty) family of disjoint $3$-element intervals in $[n]$. Indeed, the number of such colorings
is the same as the number of $s$-tuples $(x_1,x_2,\dots,x_s)\in\{1,3\}^s$ such that $x_1+x_2+\dots+x_s=n$. We made some effort to optimize the constant $23$ but with 
more effort it could be probably further lowered.

Our article has the following sections. Section 2 is devoted to the constant dichotomy in Theorem~\ref{T1CD}. The quasi-Fibonacci dichotomy (we explain this terminology 
at the beginning of Section 3) in Theorem~\ref{T1FD} is proven 
in Sections 3--5. In Section 3 we introduce various colorings of triples, which we call \emph{wealthy} colorings of \emph{types $W_1$--$W_4$}, and prove that presence 
of these colorings in an ideal makes it grow at least as fast as $G_n$. Section 4 concerns a technical tool of so called \emph{crossing matrices},
which are three-dimensional matrices with entries in $\{0,1,*\}$ that are associated to colorings. The proof of Theorem~\ref{T1FD} is completed in Section 5. 
Section 6 contains concluding remarks and two conjectures.

We recall and fix some notation. $\mathbb{N}=\{1,2,3,\dots\}$ are the natural numbers, $\mathbb{N}_0=\mathbb{N}\cup\{0\}$, 
$[n]=\{1,2,\dots, n\}$ and $[n]_{0}=\{0,1,\dots, n\}$ with $n \in \mathbb{N}_0$ and $[0]=\emptyset$. For $a,b\in\mathbb{N}$ with $a\le b$ we write $[a,b]\subset\mathbb{N}$ 
for the set $\{a,a+1,\dots,b\}$. $|X|$ is the cardinality of a finite set $X$. For $k\in\mathbb{N}$ and $A \subset [n]$ we denote by $\binom{A}{k}$ the set of 
$k$-element subsets of $A$. For a set $A$ of integers, $\min(A)$ and $\max(A)$ denotes the minimum and maximum element of $A$, respectively. For any two sets 
$A, B \subset\mathbb{N}$ we write $A < B$ if $\max A < \min B$, equivalently if $a<b$ for every $a\in A$ and $b\in B$. We write $A = \{a_1<a_2<\dots <a_k\}$ for the set $A = \{a_1, a_2, \dots, a_k\}\subset\mathbb{N}$ such that $a_1<a_2<\dots <a_k$. In the next section the numbers $k,l\ge2$ denote, if it is not said else, the cardinality of edges in colorings in $\mathcal{C}_k$ and the number of colors, respectively.

\section{Proof of the constant dichotomy in Theorem~\ref{T1CD}}

In this section we prove Theorem~\ref{T1CD}. Recall that $\mathcal{C}_k$ is the set of all edge $l$-colored ordered complete $k$-uniform hypergraphs $(n,\chi)$ 
where $n\in\mathbb{N}$ and $\chi:\;\binom{[n]}{k}\rightarrow [l]$. The elements of $\mathcal{C}_k$ are called colorings and for 
$K=(n,\chi)\in \mathcal{C}_k$, $n$ is the {\it size of $K$}. On $\mathcal {C}_k$ we have the relation $\preceq$ of induced ordered subgraph, defined in Section 1. 
The {\it reversal} of a coloring $K=(n,\chi)$ is the coloring $(n,\phi)$ where for every $E\subset[n]$ with $|E|=k$,
$$
\phi(E)=\chi(\{n-x+1:\ x\in E\})\,.
$$ 
For $X\subset{\cal C}_k$, $X_n$ is the set of colorings in $X$ with the vertex set $[n]$. For $(n,\chi)\in{\cal C}_k$ and $A\subset[n]$, $A$ is  
{\it $\chi$-homogeneous} or {\it $\chi$-monochromatic} if $\chi(E)$ is constant for any $E \in \binom{A}{k}$; we omit `$\chi$-' if it is clear from the context. 
We write $\chi(\binom{A}{k})=c$ if $\chi$ colors every $k$-set of $A$ with $c$. Let $C=(n,\chi)$ be a coloring and $X$ be a subset of $[n]$. We define a new coloring $D=([|X|],\chi')$ by restricting $\chi$ to $k$-subsets of $X$ and relabeling the elements of $X$ in increasing order as $1,2,\dots,|X|$. Of course, $D\preceq C$. 
We say that $D$ arises by {\em restriction and normalization of $C$ to $X$.}

To prove Theorem \ref{T1CD} we define special colorings producing linearly many subcolorings of a given size, and then we characterize simple colorings containing only a few subcolorings. The crucial Proposition~\ref{P0} says that an ideal either contains all special colorings or consists of only simple colorings. 

Let $f, h\in\mathbb{N}_{0}$ and $r,g\in\mathbb{N}$ satisfy $k=f+g+h$ and $r\geq k$. We set 
$$
    E_i=[f]\cup[f+i,f+g+i-1]\cup[n-h+1,n],\ i=1,2,\dots,r-k+2\,.
$$ 
A coloring $(n,\chi)$ is {\it $r$-rich of type $T_{f,g,h}$} if $n=2r-k+1$ and for two colors $a\ne b$ one has $\chi(E_i)=a$ for $i=1,2,\dots,r-k+1$ and $\chi(E_{r-k+2})=b$. Note that every $E_i$ is a $k$-set because $r\geq k$ and hence $n-h+1>r-h+1\ge f+g+i-1$. A coloring $(n,\chi)$ is {\em $r$-rich} if it is $r$-rich of type $T_{f,g,h}$ for some $f,h\in\mathbb{N}_0$ and $g\in\mathbb{N}$. 
Colors of other edges in $(n,\chi)$ are not restricted.

\begin{lemma}\label{L-RICH}
If an ideal $X\subset{\cal C}_k$ contains an $r$-rich coloring for every $r\geq k$ then $|X_n|\geq n-k+2$ for every $n\geq k$.
\end{lemma}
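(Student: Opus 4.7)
The plan is to fix $n\ge k$, invoke the hypothesis at $r=n\ge k$ to obtain an $r$-rich coloring $C=(N,\chi)\in X$ with $N=2n-k+1$ of some type $T_{f,g,h}$, and then produce $n-k+2$ pairwise distinct restrictions of $C$ that each land in $X_n$. Writing $E_i=[1,f]\cup[f+i,f+g+i-1]\cup[N-h+1,N]$ for the structured edges of $C$, by definition $\chi(E_i)=a$ for $i\in\{1,\dots,n-k+1\}$ and $\chi(E_{n-k+2})=b$. For each $j\in\{1,2,\dots,n-k+2\}$ I would set
\[
S_j=[1,f]\cup[f+j,\,n-h+j-1]\cup[N-h+1,N],
\]
which one checks is a size-$n$ subset of $[N]$, and let $\chi_j$ be the restriction and normalization of $C$ to $S_j$. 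Since $X$ is an ideal and $C\in X$, each $\chi_j$ belongs to $X_n$.

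The crucial computation is that the sliding middle of $S_j$ transports structured edges to structured edges. Under the normalization $[n]\to S_j$, positions in $[1,f]$ and $[n-h+1,n]$ are unchanged, while a middle position $f+t$ (with $1\le t\le n-k+g$) pulls back to $f+j+t-1$ in $C$. Writing $F_i=[1,f]\cup[f+i,f+g+i-1]\cup[n-h+1,n]$ for the analogous structured $k$-edges in $[n]$ (well-defined for $1\le i\le n-k+1$), the middle of $F_i$ pulls back to the consecutive interval $[f+(i+j-1),\,f+g+(i+j-1)-1]$, so the whole pullback of $F_i$ is exactly $E_{i+j-1}$. Hence
\[
\chi_j(F_i)=\chi(E_{i+j-1}) \qquad\text{whenever}\qquad i+j-1\le n-k+2;
\]
in particular $\chi_j(F_{n-k+3-j})=b$ for $j\ge 2$, while $\chi_1(F_i)=a$ for every valid $i$.

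Distinctness will then follow by choosing the right witnessing edge. For $1\le j<j'\le n-k+2$ take $F^{*}=F_{n-k+3-j'}$, which is well-defined because $j'\ge 2$ forces $1\le n-k+3-j'\le n-k+1$. The index in $\chi_{j'}$ is $(n-k+3-j')+(j'-1)=n-k+2$, giving $\chi_{j'}(F^{*})=b$, whereas the index in $\chi_j$ is $n-k+2-(j'-j)\in\{1,\dots,n-k+1\}$, giving $\chi_j(F^{*})=a$; so $\chi_j\neq\chi_{j'}$, the $n-k+2$ colorings are pairwise distinct, and $|X_n|\ge n-k+2$. I expect the main conceptual pitfall to be the natural first attempt of trying to preserve the $b$-edge $E_{n-k+2}$ inside every $S_j$ and reposition it at a chosen slot of the new coloring: that choice forces $S_j$ to contain the $b$-window at a prescribed location, which in turn makes the pullbacks of nearby $F_i$ straddle two blocks of $S_j$ and become nonstructured edges of undetermined colour, scuppering the distinctness argument. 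The sliding-window $S_j$ above sidesteps this entirely, because only the middle interval translates with $j$, so every pullback of a structured edge stays structured.
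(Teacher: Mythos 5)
Your proof is correct and is essentially the same construction as the paper's: you restrict the $r$-rich coloring on $[2n-k+1]$ to a sliding window $S_j$ of size $n$, preserve the prefix and suffix blocks, and slide the middle block, which is exactly what the paper does by deleting $j$ elements just after $[f]$ and $r-k+1-j$ elements just before the final $h$-block (your index $j\in\{1,\dots,n-k+2\}$ corresponds to the paper's $j\in[r-k+1]_0$ shifted by one). The distinctness argument via the witnessing edge $F^*$ matches the paper's comparison of the colour vectors $(\chi_j(L_i))_i$.
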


\begin{proof}Let $X\subset{\cal C}_k$ be an ideal, $r\ge k$, $n=2r-k+1$, and $K=(n,\chi)\in X$ be an $r$-rich coloring of type $T_{f,g,h}$. We consider the $r-k+2$ colorings $C_j=(r,\chi_j)\in X_r$, $j \in [r-k+1]_0$, obtained from $K$ by deleting $r-k+1$ numbers from $[n]$, $j$ of them immediately after $[f]$ and $r-k+1-j$ of them immediately before $[n-h+1,n]$, and normalising the remaining elements of $K$. We set $L_i=[f]\cup[f+i+1, f+i+g]\cup[r-h+1,r]$ for $i\in\mathbb{N}_0$ (the elements of $L_i$ are after normalization). Therefore
\begin{align*}
    (\chi_0(L_i),\, i \in [r-k]_0)   &=(a,\,a,\,\dots,\,a,\,a,\,a)\,, \\
    (\chi_1(L_i),\, i \in [r-k]_0) &=(a,\,a,\,\dots,\,a,\,a,\,b)\,, \\
    (\chi_2(L_i),\, i \in [r-k-1]_0) &=(a,\,a,\,\dots,\,a,\,b)\,, \\
    &\vdots \\
    (\chi_{r-k+1}(L_i),\, i \in [0]_0) &=(b)
\end{align*}
by the definition of $K$ and $C_j$. The colorings $C_j$, $j \in [r-k+1]_0$, are pairwise distinct and hence $|X_r|\geq r-k+2$. 
\end{proof}

Let $c$ with $c\geq k$ be a positive integer. A coloring $(n,\chi)$ is {\it $c$-simple} if the following holds.
\begin{itemize}
\item[C1.] The set $[c+1, n-c]$ is $\chi$-homogeneous.
\item[C2.] For every $k-1$ distinct vertices $v_1,v_2,\dots,v_{k-1}\in [n]$ where $v_1\in[c]\cup[n-c+1,n]$, the $k$-sets $\{v_1,\dots,v_{k-1},w\}$ with $w\in[2c+1,n-2c]$, $w\ne v_1,\dots,v_{k-1}$, have in $\chi$ the same color.
\end{itemize}
Any coloring with $n\leq\min(2c+k,4c+1)=2c+k$ is trivially $c$-simple (the set in C1 has at most $k$ elements or the set for $w$ in C2 is empty). An ideal $X$ is {\it $c$-simple} if all its colorings are $c$-simple.

\begin{lemma}\label{L-SIMPLE}
If an ideal $X$ is $c$-simple then $|X_n|$ is constant for every $n>5c$.
\end{lemma}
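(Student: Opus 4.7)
The plan is to construct, for every $n > 5c$, a bijection $\Psi \colon X_n \to X_{n-1}$ by deletion of the vertex at position $3c+1$ and increasing relabelling of the remaining $n-1$ vertices; iterating then yields $|X_n|=|X_{n-1}|$ for every $n > 5c$ and hence the desired constancy. Well-definedness of $\Psi$ is immediate from $X$ being an ideal.

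For injectivity, suppose $(n, \chi_1)$ and $(n, \chi_2)$ in $X_n$ share a $\Psi$-image $(n-1, \psi)$. Both are $c$-simple and already agree on $k$-sets avoiding $3c+1$, so it suffices to recover $\chi_i(E)$ for every $E \ni 3c+1$ from $\psi$. If $E$ meets the outer block $[c] \cup [n-c+1,n]$ at some vertex $v_1$, write $E = \{v_1, v_2, \dots, v_{k-1}, 3c+1\}$ and apply C2 with $w = 3c+1$: this forces $\chi_i(E) = \chi_i(\{v_1, v_2, \dots, v_{k-1}, w'\})$ for any $w' \in [2c+1, n-2c] \setminus E$. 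Since $n > 5c$ yields $|[2c+1, n-2c]| = n - 4c \geq c+1 \geq k+1$, such a $w'$ distinct from $3c+1$ exists, and the new $k$-set is read off from $\psi$. Otherwise $E \subset [c+1, n-c]$, and C1 forces $\chi_i(E)$ to equal the middle constant $\alpha_i$, which is itself recovered from $\psi$ on any $k$-subset of $[c+1, n-c] \setminus \{3c+1\}$ (an $(n-2c-1)$-element set that contains $k$-subsets since $n > 5c$). Either way $\chi_1(E) = \chi_2(E)$.

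Surjectivity is where I expect the main obstacle. For any $(n-1, \psi) \in X_{n-1}$, the identities of the injectivity step, read backwards, uniquely specify a $c$-simple $(n, \chi) \in \mathcal{C}_k$ with $\Psi(n, \chi) = (n-1, \psi)$: insert a fresh vertex at position $3c+1$, copy $\psi$ on $k$-sets avoiding it, and let C1 or C2 prescribe the colors of $k$-sets through $3c+1$. The crux is to show $(n, \chi) \in X$, since the ideal axiom only closes $X$ downwards. My plan is to exploit the interchangeability of deep-middle vertices that C1 and C2 provide: for every $j \in [2c+1, n-2c]$, the restriction of $(n, \chi)$ to $[n] \setminus \{j\}$ is (after relabelling) isomorphic to $(n-1, \psi)$, not only for $j = 3c+1$. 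Combined with the hypothesis that \emph{every} coloring of $X$ is $c$-simple, I expect this symmetry to force $(n, \chi) \in X$; the delicate balance between downward closure and the uniformity of $c$-simple structures is, I expect, the technical heart of the lemma.
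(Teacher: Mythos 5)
Your injectivity argument is essentially the paper's: the paper deletes vertex $2c+1$ rather than $3c+1$ (both lie in $[2c+1,n-2c]$ when $n>5c$, so the choice is immaterial) and phrases the step contrapositively — if $\chi_1\ne\chi_2$ disagree on an edge $E$ containing the deleted vertex, use C1 or C2 to migrate the disagreement to an edge $F$ avoiding it, then $F$ survives in both images. That part of your proposal is sound and matches the paper.

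The gap is the surjectivity half, which you yourself flag as unfinished. The idea you sketch — that because every deep-middle deletion of your candidate $(n,\chi)$ yields the same $(n-1,\psi)\in X$, membership of $(n,\chi)$ in $X$ should be ``forced'' by this symmetry — cannot work. An ideal is only closed \emph{downward}: knowing that every proper restriction of $(n,\chi)$ lies in $X$ gives no information about whether $(n,\chi)\in X$. Concretely, $X$ could contain $(n-1,\psi)$ and all its restrictions yet contain nothing of size $n$, in which case no preimage exists. There is no repair here; the deletion map need not be surjective, and trying to prove it so is a wrong turn.

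The resolution is that surjectivity is not needed. The paper stops after injectivity: this gives $|X_n|\le|X_{n-1}|$ for all $n>5c$, so $(|X_n|)_{n>5c}$ is a weakly decreasing sequence of non-negative integers and hence eventually constant — which is exactly what the proof of Theorem~\ref{T1CD} uses (``$|X_n|=c$ for every $n>n_0$''). You should discard the bijection framing, keep your injectivity argument, and finish by invoking the elementary stabilization of a weakly decreasing sequence of non-negative integers.
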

\begin{proof}Let $X$ be a $c$-simple ideal. 
For a coloring $C=(n,\chi)\in X$ with $n\ge5c+1$ we consider the coloring $C'=(n-1,\chi')$ obtained by restricting and normalizing $C$ to $[n]\setminus\{2c+1\}$. So $C'\preceq C$ and $C'\in X$. We show that for $n>5c$ the correspondence $C\mapsto C'$ is injective. Hence for $n>5c$ the numbers $|X_n|$ weakly decrease and the claim follows.

Let $n>5c$ and $C=(n,\chi)$ and $D=(n,\psi)$ be distinct colorings from $X$. So $\chi(E)\ne\psi(E)$ for some edge $E\in\binom{[n]}{k}$. If $2c+1\notin E$ then $E$ survives in both $C'$ and $D'$ hence $C'\ne D'$. Let $2c+1\in E$. We have either $E\subset[c+1,n-c]$ or there is an $u\in E\cap([c]\cup[n-c+1,n])$. In the former case, since $n>5c$ implies $|[c+1,n-c]|>3c>k$, we find an $x\in[c+1,n-c]\setminus E$ and set $F=(E\setminus\{2c+1\})\cup\{x\}$. Similarly, in the latter case, since $|[2c+1,n-2c]|>c\geq k$, there is an $x\in[2c+1,n-2c]\setminus E$ and we set $F=(E\setminus\{2c+1\})\cup\{x\}$. By $c$-simplicity of $X$, in either case $\chi(F)=\chi(E)\ne\psi(E)=\psi(F)$. Since $F$ survives in both $C'$ and $D'$, again $C'\ne D'$.
\end{proof}

\begin{lemma}\label{L3.5.}
Let $r\geq k$ be an integer, $(n,\chi)$ be a coloring, and $A\subset[n]$ be a $\chi$-homogeneous set of the maximum cardinality. Suppose that $A'\subset A$ arises by deleting $k(r-k+1)$ elements both from the beginning and the end of $A$. Suppose that $A'\ne\emptyset$ and that $A'$ is not an interval in $[n]$. Then $(n,\chi)$ contains an $r$-rich coloring.
\end{lemma}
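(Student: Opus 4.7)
The plan is to construct an $r$-rich sub-coloring of $(n,\chi)$ around a ``defect'' vertex $v \notin A$ that witnesses the maximality of $A$. Because $A'$ is nonempty and not an interval in $[n]$, there exists $v \in [n]$ with $\min A' < v < \max A'$ and $v \notin A'$. The set $A \setminus A'$ consists of the $k(r-k+1)$ smallest and the $k(r-k+1)$ largest elements of $A$, all lying strictly outside $[\min A',\max A']$; hence $v \notin A$, and moreover $|A \cap [1,v-1]|$ and $|A \cap [v+1,n]|$ are each at least $k(r-k+1)$. Let $a$ denote the color of $A$. By maximality of $A$, the set $A \cup \{v\}$ is not homogeneous, so there is a $(k-1)$-subset $S \subset A$ with $\chi(S \cup \{v\}) = b$ for some color $b \neq a$.

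Write $S = \{s_1 < \cdots < s_{k-1}\}$ and set $p = |S \cap [1,v-1]|$. I would realize an $r$-rich coloring of type $T_{p,1,k-1-p}$ on the vertex set $X = \{x_1 < \cdots < x_{2r-k+1}\} \subset [n]$ defined as follows: $x_j = s_j$ for $1 \leq j \leq p$; the positions $x_{p+1}, \ldots, x_{p+r-k+1}$ are filled by $r-k+1$ distinct elements of $A \cap (s_p, v)$ (with the convention $s_0 := 0$); $x_{p+r-k+2} = v$; the positions $x_{p+r-k+3}, \ldots, x_{2r-k-h+1}$ (with $h = k-1-p$) are $r-k$ distinct integers in $(v, s_{p+1})$ (with the convention $s_k := n+1$); and the last $k-1-p$ positions are $s_{p+1}, \ldots, s_{k-1}$. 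For $i \leq r-k+1$, the edge $E_i$ of the normalized coloring on $X$ corresponds in $[n]$ to $(S \cap [1,v-1]) \cup \{x_{p+i}\} \cup (S \cap [v+1,n])$, a $k$-subset of $A$, hence colored $a$; while $E_{r-k+2}$ corresponds to $S \cup \{v\}$, colored $b$. This verifies $r$-richness directly from the definition.

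The main obstacle is guaranteeing that the above construction is realizable: one needs $|A \cap (s_p, v)| \geq r-k+1$ (so the required sliding positions exist in $A$) and $(v, s_{p+1})$ to contain at least $r-k$ integers (so the buffer fits). In the extreme cases $p=0$ and $p=k-1$, one of these is automatic, since $A \cap (0,v) = A \cap [1,v-1]$ has at least $k(r-k+1) \geq r-k+1$ elements and $(v,n+1)$ contains $n-v \geq k(r-k+1) \geq r-k$ integers. For the remaining one-sided slack, the strategy is to choose $S$ by extremality, for instance by minimizing $\max(S \cap [1,v-1])$ among valid witnesses and, compatibly, maximizing $\min(S \cap [v+1,n])$; if such an $S$ still violated a slack condition, the maximality of $A$ together with the $k(r-k+1)$-element buffer removed from each end of $A$ should allow one to replace the offending elements of $A$ near $v$ by $v$ itself (plus surplus extremal elements) to produce a $\chi$-homogeneous set strictly larger than $A$, contradicting maximality. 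The factor $k$ in $k(r-k+1)$ is precisely what is needed to absorb the up to $k-1$ elements of $S$ that might sit near $v$ while still leaving $r-k+1$ free sliding slots on each side, so the argument is essentially tight.
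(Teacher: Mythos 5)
Your overall strategy (locate a non-$A$ vertex $v$ inside the range of $A'$, use maximality of $A$ to find a $(k-1)$-subset $S\subset A$ with $\chi(S\cup\{v\})\ne\chi\bigl(\binom{A}{k}\bigr)$, and then slide something through $A$ past $v$) is indeed in the same spirit as the paper's proof, but the crucial step diverges, and the divergence produces a genuine gap that the sketched fix cannot repair.

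The paper's argument is built around two features absent from yours. First, it does not fix a $(k-1)$-subset $S$ and slide a single vertex; it picks a full $k$-set $E$ with $e\in E$, $E\setminus\{e\}\subset A$, $\chi(E)\ne a$, \emph{chosen to minimize the number of elements of $E$ greater than $e$}. That minimality is what guarantees every shifted copy still containing $e$ has color $a$. Second, it shifts the whole \emph{block} $E_0$ (which can have several elements, including $e$) leftward into a buffer $B_{i_0}$ of size $r-k+1$ carved out of $A$. The resulting $r$-rich coloring is of type $T_{|E^-|,|E_0|,|E^+|}$, where $|E_0|\ge1$ and typically $|E_0|>1$. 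Your construction is locked into producing type $T_{p,1,k-1-p}$ (i.e.\ $g=1$) because you slide only one vertex, and your feasibility conditions (i) and (ii) reflect exactly this rigidity.

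The gap is real and not repairable by an extremal choice of $S$. Take $k=3$, $n=4k(r-k+1)$, $v=2k(r-k+1)$, $A=[n]\setminus\{v\}$. Color all of $\binom{A}{3}$ with $a$, set $\chi(\{v-1,v,v+1\})=b$, and color every other triple through $v$ with $a$. Then $A$ is a $\chi$-homogeneous set of maximum size (since $[n]$ itself is not homogeneous), and the truncated set $A'$ is nonempty and not an interval, so the hypotheses of the lemma hold. The only $(k-1)$-subset $S\subset A$ with $\chi(S\cup\{v\})=b$ is $S=\{v-1,v+1\}$, giving $p=1$; but then $A\cap(s_p,v)=A\cap(v-1,v)=\emptyset$ and $(v,s_{p+1})=(v,v+1)=\emptyset$, so both (i) and (ii) fail with no slack, and there is no alternative witness $S$ to optimize over. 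Moreover, the suggested maximality-contradiction cannot fire: $A$ already has the maximum possible size $n-1$, and no swap can produce a strictly larger homogeneous set. The paper's proof handles this example with $E=\{v-1,v,v+1\}$, $E^-=E^+=\emptyset$, $E_0=E$, yielding a coloring of type $T_{0,3,0}$ — a type your construction cannot reach. So the restriction to sliding one vertex, and the absence of the minimality property on the witness set, are genuine missing ideas, not just verification details.
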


\begin{proof}
By the assumption on $A'$ we may assume that $|A|\geq 2k(r-k+1)+2$. We have $2k+1$ pairwise disjoint sets 
$$
    A=B_1\cup\dots\cup B_k\cup A'\cup C_1\cup\dots\cup C_k
$$ 
where $B_1<\dots<B_k<A'<C_1<\dots<C_k$ and $|B_i|=|C_i|=r-k+1$. Since $A'$ is not an interval, an element $e\in[n]\setminus A$ exists with $B_k<e<C_1$. Since $|A|$ is maximum with respect to the monochromaticity, there is a $k$-set $E$ with $e\in E$, $E\setminus\{e\}\subset A$, and $\chi(E)\ne\chi(\binom{A}{k})$. We select $E$ that has the minimum number of elements greater than $e$. It follows from $|E\cap A|=k-1$ that there are two indices $i_0,j_0\in[k]$ with $E\cap B_{i_0}=E\cap C_{j_0}=\emptyset$. We define $E^-=E\cap(\cup_{i=1}^{i_0-1}B_i)$, $E^+=E\cap(\cup_{j=j_0+1}^{k} C_j)$, and $E_0=E\setminus(E^-\cup E^+)$. Clearly $E_0\ne\emptyset$ as $e\in E_0$. Consider the $k$-set $E'$ obtained from $E$ by exchanging the elements $\max B_{i_0}$ and $\max E_0$. It follows that $\chi(E')=\chi(\binom{A}{k})$ --- either by the minimality property of $E$ if $\max E_0>e$, or by $E'\subset A$ if $\max E_0=e$. Repeatedly shifting the middle part $E_0$ of $E$ to the left in $B_{i_0}$ (in the second step we exchange $\max(E'\setminus(E^-\cup E^+))$ and the second largest element of $B_{i_0}$, and so on) we obtain $r-k+1$ sets with size $k$ and the same color $\chi(\binom{A}{k})$. We define $D=B_{i_0}\cup E\cup C'$, where $C'\subset C_{j_0}$ is any subset with size $|C'|=r-k$ that completes $D$ to the right cardinality $|D|=2r-k+1$. Then $(|D|,\chi')$, obtained by restriction and normalization of $(n, \chi)$ to $D$, is an $r$-rich coloring of type $T_{|E^-|,|E_0|,|E^+|}$ and is contained in $(n,\chi)$. 
\end{proof}

\begin{lemma} \label{L3.6.}
Let $r$ with $r\geq k$ be an integer, $(n,\chi)$ be a coloring, and $s$ be the maximum size of a  $\chi$-homogeneous subset of $[n]$. Let $A\subset[n]$ be $\chi$-homogeneous with size $|A|=s-2k(r-k+1)$ and $B\subset[n]$ be  $\chi$-homogeneous with $A<B$ or $B<A$. If $|A|\geq k(r-k+1)$ and $|B|\geq (2k+2)r$ then $(n,\chi)$ contains an $r$-rich coloring.
\end{lemma}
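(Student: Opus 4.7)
The plan is to proceed by contradiction: assume $(n,\chi)$ contains no $r$-rich subcoloring, and denote by $a,b$ the colors of $A,B$ (WLOG $A<B$). For each $q\ge 0$ set $A^{(q)} := A\setminus\{\text{first }(r-k+q)\text{ elements of }A\}$, so that $|A^{(q)}|\ge|A|-(r-k+q)$; note in particular that $A^{(k-1)}$ is non-empty since $|A|\ge k(r-k+1)\ge r$ for $r\ge k\ge 2$.

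The core of the argument is an induction on $q=1,2,\ldots,k-1$ showing that every mixed edge of type $(k-q,q)$ whose $A$-part lies in $A^{(q)}$ and $B$-part lies in $B$ has color $a$. At step $q$ I would consider candidate rich colorings of the simplest type $T_{0,k,0}$ (so each $E_i=\{u_i,\ldots,u_{i+k-1}\}$ is a $k$-interval) on sequences $u_1<\cdots<u_{n'}$ (with $n'=2r-k+1$) in which $u_1,\ldots,u_{r-q+1}$ are chosen inside $A$ shifted deep enough (for instance $u_m$ is the $(m+q-1)$-th element of $A$), and $u_{r-q+2},\ldots,u_{n'}$ lie in $B$. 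Then $E_{r-k+2}$ is a mixed $(k-q,q)$-edge whose $A$-part lands in $A^{(q)}$, while each earlier $E_i$ with $i\le r-k+1$ is either fully inside $A$ (color $a$) or is a mixed edge of some type $(k-p,p)$ with $p<q$ whose $A$-part lies in $A^{(p)}$; by the induction hypothesis all these earlier $E_i$ have color $a$. The nonexistence of an $r$-rich subcoloring then forces $\chi(E_{r-k+2})=a$, and varying over all valid shifted sequences establishes the step.

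To derive the contradiction I would split into two cases. If $a=b$, then $A^{(k-1)}\cup B$ is $\chi$-homogeneous of color $a$ (edges inside $A^{(k-1)}$ are color $a$ by homogeneity of $A$, edges inside $B$ are color $b=a$, and mixed edges are color $a$ by the induction); since
\[
|A^{(k-1)}|+|B|\ge(|A|-(r-1))+(2k+2)r = s+r+2k^2-2k+1>s,
\]
this contradicts the maximality of $s$. If $a\ne b$, I would use the boundary $j=r-k+1$ in the type $T_{0,k,0}$ construction with $u_m$ equal to the $(r-1+m)$-th element of $A$ for $m=1,\ldots,r-k+1$ (valid because $|A|\ge k(r-k+1)\ge 2r-k$) and $u_{r-k+2},\ldots,u_{n'}\in B$. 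Then $E_{r-k+2}\subset B$ has color $b\ne a$, while each earlier $E_i$ is either fully inside $A$ (color $a$) or mixed with $A$-part in the appropriate $A^{(p)}$ (color $a$ by the induction), producing the rich pattern $a,\ldots,a,b$ and contradicting the assumption. The main obstacle is the careful bookkeeping of the shifted $u$-placements so that the $A$-parts of every mixed edge arising land in the correctly deep set $A^{(p)}$; the bounds $|A|\ge k(r-k+1)$ and $|B|\ge(2k+2)r$ provide exactly enough slack for this and for the Case~1 size inequality to be strict.
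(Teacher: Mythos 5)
Your overall strategy is recognizably different from the paper's: rather than directly locating a short ``transition'' between the two homogeneous blocks (the paper picks the first index $t$ with $\chi(F_t)\ne a$ when $a\ne b$, and an edge $F$ of non-$a$ color with minimal $f_k$ when $a=b$, then builds the $r$-rich coloring around that transition), you set up an induction on the degree of mixedness $q$ to show $\chi(E)=a$ for all mixed $(k-q,q)$ edges with $A$-part deep in $A$, and only then split into cases. The induction mechanism and the resulting contradiction in the $a=b$ case (a too-large monochromatic set) are sound ideas, and the index bookkeeping you carry out (the position constraints $\ell_1\ge r-k+q+1$ guaranteeing the $A$-parts of the intermediate $E_i$ lie in the right $A^{(p)}$) does check out.

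However, there is a genuine gap in the induction step. With your choice of type $T_{0,k,0}$, the last window $E_{r-k+2}$ occupies positions $r-k+2,\dots,r+1$ of the $u$-sequence, so the indices $r+2,\dots,2r-k+1$ are ``spare''; you place them in $B$ after the $B$-part. For this to succeed for an \emph{arbitrary} target edge, the last $B$-element $b_q$ must have at least $r-k$ successors in $[n]$, which fails when $b_q$ is among the last $r-k$ elements of $B$ and $B$ runs up to $n$. So the induction as stated does \emph{not} establish that \emph{every} mixed $(k-q,q)$ edge with $A$-part in $A^{(q)}$ and $B$-part in $B$ has color $a$; it only covers those whose $B$-part leaves $r-k$ elements of slack. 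Consequently, Case~1's claim that $A^{(k-1)}\cup B$ is $\chi$-homogeneous is not justified by what you proved.

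The fix is mechanical. Let $B'$ consist of $B$ with its last $r-k$ elements removed, so $|B'|\ge|B|-(r-k)$. The induction then really does show that $A^{(k-1)}\cup B'$ is $\chi$-homogeneous, and the size count still beats $s$:
\begin{equation*}
|A^{(k-1)}|+|B'|\ge\bigl(s-2k(r-k+1)-(r-1)\bigr)+\bigl((2k+2)r-(r-k)\bigr)=s+2k^2-k+1>s.
\end{equation*}
Your Case~2 ($a\ne b$) is unaffected, since there you deliberately place the $B$-part among the first $r$ elements of $B$, and $|B|\ge(2k+2)r$ leaves more than enough slack. With this one adjustment the proof goes through, so I would classify this as a correct-in-spirit alternative approach with a step that, as written, overstates what the construction delivers and needs to be tightened.
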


\begin{proof}
Let $A$ and $B$ be as stated, with $|A|\geq k(r-k+1)$ and $|B|\geq (2k+2)r$. Note that then both $|A|,|B|\ge r$ and $|A|,|B|\geq k(r-k+1)$.  
We assume that $A<B$, the case $A>B$ is treated by passing to the reversals. The first case is when $\chi(\binom{A}{k})=a\ne b=\chi(\binom{B}{k})$. We take the last $k$ vertices of $A$ and the first $k$ vertices of $B$, $a_k<\dots<a_1<b_1<\dots<b_k$, and consider the colors $\chi(F_i)=\chi(\{a_{k-i},\dots,a_1,b_1,\dots,b_i\})$ for $0\le i\le k$. Clearly, $\chi(F_0)=a$ and $\chi(F_k)=b$. Let $t \ge 1$ be the first index with $\chi(F_t)\ne a$ and let $D$ consist of the last $r-t+1$ vertices of $A$ and the first $r-k+t$ vertices of $B$ ($|A|,|B|\ge r$). Then $|D| = 2r-k+1$ and the first $r-k+1$ intervals in $D$ of size $k$ have color $a$ but the next one has a different color. Thus $(|D|,\chi')$, obtained by restriction and normalization of $(n, \chi)$ to $D$, is an $r$-rich coloring of type $T_{0,k,0}$ and is contained in $(n,\chi)$.

The second case is when $\chi(\binom{A}{k})=\chi(\binom{B}{k})=a$. Consider the coloring $L=(|A\cup B|,\psi)$ obtained by restricting and normalizing $(n,\chi)$ to $A\cup B$. Let $A'=[|A|]$ and $B'=[|A|+1,|A\cup B|]$ be the counterparts of $A$ and $B$ in the domain of $L$. Since $L\preceq (n,\chi)$, it suffices to find an $r$-rich coloring in $L$. We split the first $k(r-k+1)$ vertices of $A'$ and the last $k(r-k+1)$ vertices of $B'$ in tuples $A_i$ and $B_i$, respectively, satisfying
$$
    A_1<A_2<\dots<A_k<B_1<B_2<\dots<B_k,\, |A_i|=|B_i|=r-k+1\, .
$$
Since 
$$
    |A'|+|B'|-(r-k+1)\geq s-2k(r-k+1)+(2k+2)r-(r-k+1)=s-(2k+1)r+(2k+2)r>s\,,
$$
there is an edge $F\subset A'\cup B'$, $F=\{f_1<\dots<f_k\}$, such that $f_k<B_k$ and $\psi(F)\ne a$. Note that $f_k\in B'$. Among all such edges $F$ we take one with the minimum last element $f_k$. Since $|F\cap A'|\leq k-1$, we may take an index $i_0\in[k]$ such that $F\cap A_{i_0}=\emptyset$. We set $D=A_{i_0}\cup F\cup B_k^{-}$ and $t=|F\cap(\cup_{l=1}^{i_0-1} A_l)|$, where $B_k^{-}$ is an arbitrary subset of $B_k$ such that $|B_k^{-}|=r-k$. Shifting the part of $F$ after $A_{i_0}$ to the left in $A_{i_0}$, like in the proof of Lemma \ref{L3.5.}, and using minimality of the last element $f_k$, we get an $r$-rich coloring of the type $T_{t,k-t,0}$.
\end{proof}

By $R_m(a,l)$ we denote the Ramsey number for $m$-tuples and $l$ colors, $R_m(a,l)$ is the smallest $n\in\mathbb{N}$ such that every $l$-coloring of $\binom{[n]}{m}$ has a homogenous set $A\subset[n]$ with size $|A|=a$. 

\begin{lemma}\label{L3.7.}
Let $r\geq k$, $R = \max \{ R_{i}(r-1, l), i=1,2,\dots,k-1
\}$ and $(n,\chi)$ be a coloring. Let $A\subset[n]$ be a set with $|A|\geq 2(k-1)R$, $v_1,\dots,v_{k-1}\in[n]$ be distinct vertices such that $v_1<A$ or $v_1>A$, and let $A'\subset A$ arise by deleting both the first and last $(k-1)R$ elements of $A$. Suppose that not all edges $E_w=\{v_1,\dots,v_{k-1},w\}$, where $w\in A'\backslash\{v_1,\dots,v_{k-1}\}$, have the same color. Then $(n,\chi)$ contains an $r$-rich coloring.
\end{lemma}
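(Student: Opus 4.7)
The plan is to exhibit an $r$-rich coloring as the restriction-normalization of $(n,\chi)$ to a suitably chosen $D \subseteq [n]$ of size $2r-k+1$. By replacing $(n,\chi)$ with its reversal if necessary, I assume $v_1 < A$. Write $V := \{v_1,\dots,v_{k-1}\}$ and partition $V = V_L \cup V_M \cup V_R$ according to whether each element lies below $A$, inside $A$, or above $A$; since $v_1 \in V_L$, we have $p := |V_M| \leq k-2$.

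In the easy case $p = 0$, the sorted order of $V \cup \{w\}$ is the same for every $w \in A$, so $c(w) := \chi(V \cup \{w\})$ is a well-defined $l$-coloring of $A$. The natural target is an $r$-rich coloring of type $T_{|V_L|,\,1,\,|V_R|}$: place $V$ at the anchor positions, a chain $u_1 < \cdots < u_{r-k+2}$ from $A$ at the sliding middle positions, and $r-k$ padding elements from the right buffer $A \setminus A'$. The sliding edge $E_i$ then equals $V \cup \{u_i\}$ with colour $c(u_i)$, so I need $c(u_1) = \cdots = c(u_{r-k+1}) = a \neq b = c(u_{r-k+2})$. Since $R \geq R_1(r-1,l)$, pigeonhole on the left buffer (of size $(k-1)R$) produces a monochromatic sub-chain of size at least $r-1$ in some colour $a$, and because $c|_{A'}$ is non-constant some element $w \in A'$ has $c(w) = b \neq a$ (after taking the reversal of $(n,\chi)$ if the transition points the wrong way); combining the two delivers the required pattern.

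In the hard case $p \geq 1$, partition $A \setminus V_M$ into intervals $A_0 < A_1 < \cdots < A_p$ separated by the elements of $V_M$. For $w \in A_j$ the sorted position of $w$ in $V \cup \{w\}$ depends only on $j$. If $c|_{A_j \cap A'}$ is non-constant for some $j$, I repeat the $p = 0$ argument inside $A_j$, now targeting $T_{|V_L|+j,\,1,\,|V_R|+p-j}$. Otherwise $c$ is constant on each $A_j \cap A'$ with some value $c_j$, and these values are not all equal, so there is an index $j$ with $c_j \neq c_{j+1}$. The transition $c_j \to c_{j+1}$ physically occurs as the sliding vertex crosses the $V_M$-element separating $A_j$ from $A_{j+1}$; to place it at exactly slide-index $r-k+2$ of an $r$-rich coloring, I invoke Ramsey of arity $p+1 \leq k-1$ on $A_j$ or $A_{j+1}$. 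Pigeonhole yields some $A_j$ of size at least $R_{p+1}(r-1,l)$, giving a homogeneous sub-chain on which all ``transitional'' edges whose middle window straddles $V_M$ share the colour $c_j = a$. This is exactly what the bound $R = \max_{1 \leq i \leq k-1} R_i(r-1,l)$ is designed for.

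The principal difficulty is this last subcase. When the sliding middle of the $r$-rich target crosses a $V_M$-element, edges of several distinct types (determined by which subsets of the middle window lie on each side of the crossed element) must all be forced to colour $a$ so that the $a \to b$ jump at slide-index $r-k+2$ is unambiguous. Ensuring that the Ramsey-monochromatic sub-chain aligns with the $r-k$ padding vertices inside the $(k-1)R$-buffer of $A$, and that the transition lands at slide-index $r-k+2$ rather than some other index, is exactly what consumes the full hypothesis $|A| \geq 2(k-1)R$.
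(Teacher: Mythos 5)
Your outline follows the same broad strategy as the paper (place $V$ as anchors, find a monochromatic run followed by a transition, use Ramsey in a buffer), and your ``easy case'' $p = |V_M| = 0$ is essentially the paper's argument specialised to a middle window of size $1$. However, in the hard case $p\ge 1$ there are two genuine gaps.

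First, in the subcase where $c|_{A_j\cap A'}$ is non-constant, targeting $T_{|V_L|+j,\,1,\,|V_R|+p-j}$ forces \emph{all} of the $r-k+2$ sliding positions \emph{and} the $r-k$ padding positions to sit inside $A_j$, strictly after the witness $w$ and strictly before $v_M^{(j+1)}$ (the latter has to be an anchor). If some $V_M$-element lies just a few steps inside $A'$, that gap contains too few vertices for the padding, regardless of $|A|$. For a concrete failure take $k=3$, $v_1<A$, $v_2$ the third element of $A'$: then $A_0\cap A'$ has only two elements, so after any witness in $A_0\cap A'$ there is room for at most one padding vertex, while $r-k$ are needed. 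The paper sidesteps this entirely by not anchoring all of $V$: instead it partitions each buffer into $k-1$ pieces of size $R$, picks $V$-free pieces $B_{i_0}$, $C_{j_0}$, lets the middle window have size $s=q-p+1$ (so that the $V$-elements strictly between $B_{i_0}$ and $C_{j_0}$ travel \emph{with} the sliding window), and draws the padding from $C_{j_0}$, where there is guaranteed room.

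Second, the ``Ramsey of arity $p+1$'' step is not a construction. For a $T_{f,g,h}$-type rich coloring the $V_M$-elements cannot be both fixed anchors and vertices that the sliding window crosses; the only way to have the transition ``straddle'' $V_M$ is to enlarge $g$ so that some $V$-elements are inside the moving window, as the paper does with $g=s$. But then the relevant Ramsey arity is $s=q-p+1$ (which depends on where the chosen buffer pieces sit relative to $V$), not $|V_M|+1$, and the index of the color jump must be located by the paper's explicit sliding (take the minimal $t$ with $c_t\ne c$) rather than asserted to land at slide-index $r-k+2$. Your closing paragraph correctly senses that aligning the jump with the slide-index is the crux, but neither the partition of the buffers into $V$-free pieces nor the variable-size middle window — the two devices the paper actually uses to make the alignment possible — appears in the proposal, so the argument as written does not close.
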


\begin{proof}
Let $v_1<A$, the case $v_1>A$ is symmetric. We relabel the vertices so that $v_1<\dots<v_{k-1}$. By the assumption we have two (distinct) vertices $w_1,w_2\in A'$ such that $\chi(E_{w_1})=a_1\ne a_2=\chi(E_{w_2})$. Without loss of generality $w_1<w_2$. We divide $A$ into $2k-1$ disjoint sets
$$
    B_1<B_2<\dots< B_{k-1}<A'<C_1<C_2<\dots<C_{k-1},\, |B_i|=|C_i|=R\, .
$$ 
Clearly, $v_1<A$ implies that there are indices $i_0,j_0\in[k-1]$ such that $\{v_1,v_2,\dots,v_{k-1}\}$ is disjoint to both $B_{i_0}$ and $C_{j_0}$. We set $v_k=n+1$ and define the indices $p,q\in[k-1]$ by $v_p<B_{i_0}<v_{p+1}$ and $v_{q}<C_{j_0}<v_{q+1}$. Clearly,  $p\leq q$. We set $F=\{v_1,\dots,v_p\}\cup\{v_{q+1},\dots,v_{k-1}\}$, $s=q-p+1$, and $b=s+r-k$. Clearly, $s\in[k-1]$, $b\in[r-k+1,r-1]$, and $|F|+s=k$. From $|B_{i_0}| = R \geq R_{s}(b,l)$ we conclude that there is a set $X\in\binom{B_{i_0}}{b}$ such that $\chi(F\cup G)=c$ for some color $c\in[l]$ and all $G\in\binom{X}{s}$. Let $X=\{x_1<x_2<\dots<x_b\}$. We set $a=a_1 = \chi(E_{w_1})$ and $E=E_{w_1}\setminus F$ if $c\ne a_1$, and $a=a_2 = \chi(E_{w_2})$ and $E=E_{w_2}\setminus F$ if $c=a_1$. We denote
$$
    E =\{e_1<e_2<\dots<e_s\}=\{v_{p+1},\, \dots,\, v_q,\, w_i\}\quad (\mbox{$i=1$ or $i=2$})
$$
and for $t\in[s]$ consider the colors 
$$
    c_t=\chi(F\cup\{x_{b-s+t+1}<\dots<x_b<e_1<\dots<e_t\})\,.
$$
Since $c_s=a\ne c$, we may take the minimum index $I\in\{1,\dots,s\}$ such that $c_I\ne c$. We set
$$
    D=F\cup\{x_I,\, \dots,\, x_b\}\cup E\cup\ C_{j_0}^{-}\, ,
$$
where $C_{j_0}^{-}$ is the set of the first $b-2s+I$ elements of $C_{j_0}$. Then by restricting and normalizing of $(n,\chi)$ to $D$ we get an $r$-rich coloring of type $T_{p,s,k-q-1}$ that is contained in $(n,\chi)$ (the middle part of size $s$ moving to the left in the set $X$ starts as $\{x_{b-s+I+1}<\dots<x_b<e_1<\dots<e_I\}$).
\end{proof}

\begin{proposition}\label{P0}
For every $r\geq k$ there is a constant $c=c(r)\in\mathbb{N}$ such that every ideal $X$ of colorings either contains an $r$-rich coloring or is $c$-simple.
\end{proposition}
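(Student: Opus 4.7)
The plan is to contrapose: assume $X$ contains no $r$-rich coloring and derive that every $(n,\chi)\in X$ is $c$-simple for some constant $c=c(r,k,l)$ built from Ramsey numbers. I would set $R=\max_{1\le i\le k-1}R_i(r-1,l)$ and $c_1=R_k((2k+2)r,l)-1$, and then take $c$ to be any sufficiently large constant exceeding $c_1+(k-1)R$ (large enough to absorb the remaining Ramsey thresholds that appear below). For $(n,\chi)\in X$ with $n\le 2c+k$ the conditions C1--C2 are vacuous, so the entire work will take place in the regime $n>2c+k$.

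The core argument is to chain the three preceding lemmas in sequence. First, I would pick a maximum-size $\chi$-homogeneous set $A\subset[n]$, of size $s$, and use Ramsey's theorem on $\binom{[n]}{k}$ to force $s\ge 3k(r-k+1)$, so that the trimmed set $A'$ (obtained by deleting the first and last $k(r-k+1)$ elements of $A$) is non-empty; Lemma~\ref{L3.5.} then forces $A'$ to be an interval $[p,q]\subset[n]$, since otherwise an $r$-rich coloring would appear in $X$. Second, I would apply Lemma~\ref{L3.6.} with $A'$ in the role of its $A$: neither $[1,p-1]$ nor $[q+1,n]$ can contain a $\chi$-homogeneous set of size $(2k+2)r$, and hence by Ramsey $p-1,n-q\le c_1$, which gives $[c_1+1,n-c_1]\subseteq A'$ homogeneous and therefore condition C1 for every $c\ge c_1$. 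Third, I would put $H=[c+1,n-c]\subseteq[c_1+1,n-c_1]$ (still $\chi$-homogeneous, of size at least $2(k-1)R$ by the choice of $c$) and apply Lemma~\ref{L3.7.} to it: for any $v_1\in[c]\cup[n-c+1,n]$ and distinct $v_2,\dots,v_{k-1}$, the vertex $v_1$ lies strictly on one side of $H$, so all $k$-sets $\{v_1,\dots,v_{k-1},w\}$ share one color as $w$ ranges over the $(k-1)R$-shrunk interior of $H$; since $c\ge c_1+(k-1)R$, the set $[2c+1,n-2c]$ lies inside this interior, which is exactly condition C2.

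The one genuinely subtle ingredient is the calibration of $c$: it must simultaneously dominate $c_1$ (for C1), exceed $(k-1)R$ (so $[2c+1,n-2c]$ fits in the shrunk region from Lemma~\ref{L3.7.}), and be large enough that $n>2c+k$ already triggers every Ramsey threshold invoked above. These are routine numerical comparisons, and I would simply take $c$ to be the maximum of the resulting finitely many Ramsey-type constants in $r,k,l$. I do not anticipate any conceptual obstacle beyond this bookkeeping; the structural heart of the argument is the three-step sequence using Lemmas~\ref{L3.5.}, \ref{L3.6.}, and \ref{L3.7.} in order, where each step trims the outside of the current homogeneous set and passes control inward.
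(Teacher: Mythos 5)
Your proof is correct and follows essentially the same route as the paper: fix a maximum $\chi$-homogeneous set, use Lemma~\ref{L3.5.} to force the trimmed set to be an interval, Lemma~\ref{L3.6.} (via Ramsey) to push its endpoints to within a constant of $1$ and $n$ (giving C1), and Lemma~\ref{L3.7.} to get C2. The paper just makes the constant explicit, setting $c=\max\{(k-1)R,\,R_k(\max\{(2k+2)r,\,3k(r-k+1)\},l)\}$, and separately notes that C2 is vacuous when $n\le 4c+1$ — the very bookkeeping you correctly flag as routine.
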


\begin{proof}
We assume that $r\geq k$ and that $X$ is an ideal of colorings not containing an $r$-rich coloring. We set $R = \max \{ R_{i}(r-1, l), i=1,2,\dots,k-1\}$ (as in Lemma \ref{L3.7.}),
$$
    d=\max\{(2k+2)r,\, 3k(r-k+1)\} \quad \text{and} \quad c=\max\{(k-1)R,\, R_k(d,l)\}\, .
$$
We prove that $X$ is $c$-simple. Let $(n,\chi)\in X$ be arbitrary. We may suppose $n>2c+k$ since smaller colorings are trivially $c$-simple. We take a $\chi$-homogenous set $A\subset[n]$ with the maximum cardinality. By the definition of $c$, $|A|\geq d$. Let $B\subset A$ be the $\chi$-homogenous set obtained from $A$ by deleting both the first and the last $k(r-k+1)$ elements. By Lemma \ref{L3.5.}, $B$ is an interval in $[n]$. By Lemma \ref{L3.6.} we have $\min B<c+1$ and $\max B>n-c$, because $|B|\geq k(r-k+1)$ and $c\geq R_k((2k+2)r,l)$. Thus $[c+1,n-c]$ is a $\chi$-homogenous set and condition C1 in the definition of $c$-simplicity is satisfied.

Now we assume that $n > 4c+1$, for else the remaining condition C2 in the definition of $c$-simplicity is satisfied trivially. Let $v_1\in[c]\cup[n-c+1,n]$ and $v_2,\dots,v_{k-1}\in[n]$ be arbitrary $k-1$ distinct vertices. Because $|[c+1, n-c]| \geq 2c \geq 2(k-1)R$, we can use Lemma~\ref{L3.7.} with the set $[c+1, n-c]$ and obtain that all the edges $\{v_1,\dots,v_{k-1},w\}$, where $w\in[2c+1,n-2c]$ and $w\ne v_i$, have the same color. We see that $(n,\chi)$ is $c$-simple.
\end{proof}

\begin{proof}(Proof of Theorem \ref{T1CD}.) Let $X$ be an ideal of colorings. If $X$ contains an $r$-rich coloring for every $r\geq k$ then $|X_n|\geq n-k+2$ for every $n\geq k$ by Lemma \ref{L-RICH}. Otherwise, by Proposition \ref{P0}, the ideal $X$ is $c$-simple for some $c\in\mathbb{N}$. Applying Lemma \ref{L-SIMPLE} we get that $|X_n|$ is constant for all $n>n_0$.
\end{proof}

\section{Wealthy colorings}\label{sec_wealthy}

In this section we begin the proof of Theorem~1.2 on the quasi-Fibonacci dichotomy which is restated below. The proof will be completed in Section 5. 
The main goal of this section is to introduce certain ``wealthy'' colorings of triples and to estimate from below the growth functions of ideals containing 
these colorings. Before that say few more words on the quasi-Fibonacci dichotomy. The word ``Fibonacci'' refers to the sequence
$$
(F_n)_{n\geq 1}=(1,\,1,\,2,\,3,\,5,\,8,\,13,\,21,\,\dots)
$$
of the well known Fibonacci numbers, defined by the recurrence $F_1=F_2=1$ and $F_n=F_{n-1}+F_{n-2}$ for $n>2$. This resembles the recurrence 
$G_n=G_{n-1}+G_{n-3}$ defining the key sequence for Theorem~1.2, which is one reason why we refer to the Fibonacci numbers in the name of the 
dichotomy. Another reason is that several of the lower bounds for wealthy colorings in fact use $F_n$. The growths of the two sequences are  
$F_n\approx 1.618^n$ and $G_n\approx 1.466^n$. Yet another reason is that our Theorem~1.2 is an analogue of and is inspired by the following 
theorem in Klazar \cite{Klazar08}. This theorem was independently obtained, alongside with other results on ideals of ordered graphs, by Balogh, Bollob\'as and Morris 
\cite{BBM_Nes_sb}.

\begin{theorem}[\cite{Klazar08}, \cite{BBM_Nes_sb}]\label{T0}
Let $k=l=2$. Then for every ideal of colorings $X\subset\mathcal{C}_2$ (i.e. hereditary property of ordered graphs) there is a constant $c>0$ such that either 
$|X_n|<n^c$ for every $n\ge 2$ or $|X_n|\geq F_{n-1}$ for every $n\ge 1$.
\end{theorem}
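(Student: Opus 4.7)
The plan is to run the same two-phase scheme that the present paper uses for its own quasi-Fibonacci dichotomy in Theorem~\ref{T1FD}, specialised to the graph case $k=l=2$. First, a short list of ``wealthy'' 2-colored ordered graphs will be identified, each forcing a Fibonacci-type lower bound on the growth of any ideal that contains arbitrarily large instances. Second, an ideal that avoids all the wealthy types will be shown to be structurally simple and hence to have at most polynomial growth. The desired dichotomy then follows: either some wealthy type appears in $X$ on every sufficiently large vertex set (yielding $|X_n|\ge F_{n-1}$), or it does not (yielding $|X_n|\le n^c$).

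For Phase~1, the natural wealthy patterns are modelled on the extremal example for the statement, namely colorings of $[n]$ whose minority-colored edges form a family of pairwise disjoint ``short'' intervals $\{i,i+1\}$. Enumerating the subsets of $[n-1]$ containing no two consecutive integers gives $F_{n+1}$, so showing that a single wealthy coloring (together with a constant number of symmetric variants accounting for swapping the two colors and reflecting through the centre) is present in $X_m$ for every $m$ should yield $|X_n|\ge F_{n-1}$ comfortably. The formal step, for each wealthy type, is to exhibit an explicit injection from the $\{1,2\}$-compositions of $n$ into the set of induced subcolorings of the wealthy parent that lie in $X_n$, in the same spirit as the proof of Lemma~\ref{L-RICH} but with a Fibonacci-shaped family of destinations in place of the linear one.

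For Phase~2, I would combine a two-colour Ramsey extraction with a ``no-wealthy-pattern'' structural lemma. Given $C=(n,\chi)\in X$, Ramsey produces a monochromatic clique $A\subseteq[n]$ whose size grows with $n$; then the $k=2$ analogues of Lemmas~\ref{L3.5.}--\ref{L3.7.}, which go through essentially verbatim once the replacement targets are specified to be wealthy rather than $r$-rich, force (a trimmed version of) $A$ to be an interval and force the edges with exactly one endpoint in $A$ to be of constant colour on each side, with deviations confined to a bounded neighbourhood of the two ends of $A$. In that regime $C$ is determined by a constant amount of free data together with at most two ``break positions'' in $[n]$, giving $|X_n|\le n^c$ for a constant $c$ depending only on the avoided wealthy types.

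The hard part, as in the hypergraph case occupying Sections~3--5 of this paper, is Phase~2: converting the avoidance of a finite list of wealthy patterns into a clean, uniform structural description of every coloring in the ideal. The subtlety is that one must analyse not only the monochromatic cliques extracted by Ramsey but also how disjoint monochromatic cliques interact across the ``crossing'' edges between them; it is precisely to encode this interaction that Section~4 of the present paper introduces its auxiliary three-dimensional crossing matrices in the triple case. For ordered graphs a lighter case analysis suffices, but one still has to enumerate the possible crossing patterns between two large monochromatic intervals and verify, case by case, that each non-trivial option is witnessed by one of the wealthy types and therefore cannot occur when they are all avoided.
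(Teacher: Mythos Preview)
The paper does not itself prove Theorem~\ref{T0}; it is cited from \cite{Klazar08} and \cite{BBM_Nes_sb}. The relevant two-dimensional machinery from \cite{Klazar08} is, however, reviewed in Section~\ref{subsec_review2dim} (Lemmas~\ref{KlazarL3.12} and~\ref{KlazarL3.13}) and reused throughout Sections~4--5. The actual argument runs parallel to the present paper's proof of Theorem~\ref{T1FD}: a coloring of pairs is \emph{$m$-tame} when its nuclear decomposition into monochromatic intervals has length $\le m$ and for each pair $I<J$ of intervals the two-dimensional crossing matrix $M_{I,J}$ has $\al(M_{I,J})\le m$ and $|C(M_{I,J})|\le m$; an $m$-tame ideal satisfies $|X_n|\le n^{c(m)}$ by a direct count; and an ideal that is not $m$-tame for any $m$ contains arbitrarily large $r$-wealthy colorings of one of four fixed types (the types for pairs are recalled before Proposition~\ref{new_variation_onKlazar} and in the proof of Lemma~\ref{LW33}), each of which forces $|X_n|\ge F_{n-1}$.

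Your two-phase plan has the right shape, but Phase~2 as written has a real gap. You propose to adapt Lemmas~\ref{L3.5.}--\ref{L3.7.} and conclude that every coloring is ``determined by a constant amount of free data together with at most two break positions.'' That is the machinery of the \emph{constant} dichotomy in Section~2: what it produces is the $c$-simple structure, for which $|X_n|$ is eventually \emph{constant} (Lemma~\ref{L-SIMPLE}), not merely polynomial. Avoiding the wealthy patterns does not force anything that rigid; it permits up to $p$ monochromatic intervals with genuinely polynomial freedom in the breakpoint positions and in the bounded-alternation crossing patterns between them. Lemmas~\ref{L3.5.}--\ref{L3.7.} are built to locate a single dominant monochromatic block, not a bounded \emph{system} of blocks, so no direct adaptation of them yields the polynomial upper bound you claim.

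The missing ingredient is precisely the crossing-matrix step you gesture at in your final paragraph but never carry out: unbounded nuclear-decomposition length gives a type~2 wealthy coloring; with bounded length, unbounded $\al(M_{I,J})$ gives type~1; and with bounded $\al$, unbounded $|C(M_{I,J})|$ gives, via Lemma~\ref{KlazarL3.13}, a submatrix similar to $I_r$ or $U_r$, i.e.\ a type~3 or type~4 wealthy coloring. Replacing your Section~2-style argument by this analysis yields a correct proof.
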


\noindent
The reader will have no problem to find an ideal $X\subset\mathcal{C}_2$ showing that the lower bound is tight. So our second main result, 
restated next, is a close analog to the previous theorem; we actually use some parts of the proof of the previous theorem in the proof of the next one.

\Fibonaccidichotomy*

\noindent
The proof will proceed along similar lines as in \cite{Klazar08} but is considerably more complicated. In its first third we define here various 
``wealthy'' colorings and prove that the growth function of an ideal containing a large number of wealthy coloring grows at least as $G_n$. We call these colorings 
\emph{$r$-wealthy colorings of type $W_i$}, $r\in\mathbb{N}$ and $i\in[4]$. Some of them also have subtypes $W_{i,j}$. The meaning of the parameter $i$ is
that the underlying set of the coloring is $[ir]$ or $[ir+1]$. In the second third of the proof of Theorem~\ref{T1FD} in Section 4 we associate to colorings $(n,\chi)\in\mathcal{C}_3$
three-dimensional ``crossing'' matrices $M\colon[r]\times[s]\times[t]\to\{0,1,*\}$. In the last third of the proof in Section 5 we complete it by combining the results on wealthy colorings 
and crossing matrices. From now on always $k=3$ and $l=2$. In fact, we will use the two colors $\{0,1\}$, not $[2]$.

\subsection{Wealthy colorings $W_1$}

Let $r\ge 3$. A coloring $K=(r,\chi)$ is {\em $r$-wealthy of type $W_1'$} if, for $a, b \in \{ 0,1 \}$ with $a \ne b$, $K$ or its reversal satisfies
\begin{equation} \label{W1_1}
	\chi(\{1, 2, i\}) = \left\{ 
	      \begin{array}{ll}
	 		   	a        \quad & \text{for even } i \in [3, r]\, ,  \\
	 			b 		 \quad & \text{for odd } i \in [3, r]\, .
	 			\end{array} \right.
\end{equation}
 Similarly, $K$ is {\it $r$-wealthy of type $W_1''$ } if, for $a,b\in\{0,1\}$ with $a\ne b$, $K$ satisfies
\begin{equation} \label{W1_2}
	\chi(\{1, i, r\}) = \left\{ 
	      \begin{array}{ll}
	 		   	a        \quad & \text{for even } i \in [2, r-1]\, ,   \\
	 			b 		 \quad & \text{for odd } i \in [2, r-1]\, .
	 			\end{array} \right.
\end{equation}
Other edges may have any color. We call the $r$-wealthy colorings of type $W_1'$ and $W_1''$ summarily {\it $r$-wealthy of type $W_1$}. For $r=1,2$ these are 
just empty colorings (with no edge). Note that $r$-wealthy colorings of type $W_1$ are closed to taking reversals. 

\begin{lemma} \label{LW1}
If an ideal of colorings $X$ contains for every $r\geq3$ an $r$-wealthy coloring of type $W_1$ then $|X_n|\geq 2^{n-2}$ for every $n\in\mathbb{N}$.
\end{lemma}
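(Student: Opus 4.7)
The plan is to show that for each $n \geq 3$ one can read off $2^{n-2}$ pairwise distinct induced sub-colorings of size $n$ from a single $r$-wealthy coloring of type $W_1$ with sufficiently large $r$, by using its alternating pattern to encode arbitrary binary strings. The cases $n\le 2$ are trivial because $X$ is nonempty (it contains large colorings by hypothesis) and both $|X_1|=|X_2|=1\ge 2^{n-2}$ by downward closure.

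Fix $n\ge 3$, choose $r\ge 2n-3$ (so that $[3,r]$ contains at least $n-2$ integers of each parity), and let $K=(r,\chi)\in X$ be an $r$-wealthy coloring of type $W_1$. I treat first the main case, in which $K$ itself satisfies \eqref{W1_1}. For each $t=(t_1,\dots,t_{n-2})\in\{0,1\}^{n-2}$ I choose an increasing sequence
$$
3\le i_1(t)<i_2(t)<\dots<i_{n-2}(t)\le r
$$
whose parities encode $t$, in the sense that $i_j(t)$ is even iff $t_j=0$; such a sequence exists by the choice of $r$. Let $C_t$ be the coloring obtained by restricting and normalizing $K$ to $\{1,2,i_1(t),\dots,i_{n-2}(t)\}$; then $C_t\preceq K$ and so $C_t\in X_n$.

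The key observation is that in $C_t=(n,\chi_t)$ the vertex labelled $j+2$ is the image of $i_j(t)$ under normalization, so the edge $\{1,2,j+2\}$ inherits the color $\chi(\{1,2,i_j(t)\})$. By \eqref{W1_1} this color depends only on the parity of $i_j(t)$, hence only on $t_j$. Thus if $t\ne t'$ and they disagree in some coordinate $j$, the edge $\{1,2,j+2\}$ has different colors in $C_t$ and $C_{t'}$, and the $2^{n-2}$ colorings are pairwise distinct, giving $|X_n|\ge 2^{n-2}$.

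The remaining cases are symmetric variants of the same construction. If it is the reversal of $K$ that satisfies \eqref{W1_1} rather than $K$ itself, then the alternating pattern translates, in $K$, into an alternation of the colors $\chi(\{j,r-1,r\})$ in the parity of $j\in[1,r-2]$; I repeat the construction with the two anchor vertices $r-1,r$ in place of $1,2$ and encoding vertices chosen from $[1,r-2]$. If $K$ is of type $W_1''$ (a class closed under reversal by \eqref{W1_2}), I anchor at $1$ and $r$ and pick the $n-2$ encoding vertices in $[2,r-1]$ of prescribed parity. I do not anticipate any serious obstacle here; the argument is essentially careful parity bookkeeping. The only quantitative point is to ensure that $r$ is large enough to realize every parity pattern on $n-2$ indices, which is exactly what the bound $r\ge 2n-3$ provides.
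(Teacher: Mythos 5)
Your proposal is correct and follows exactly the same idea as the paper's own proof: from a sufficiently large $r$-wealthy coloring of type $W_1$, extract $2^{n-2}$ pairwise distinct colorings on $[n]$ by choosing the $n-2$ ``encoding'' vertices with prescribed parities, so that the anchored edges $\{1,2,\cdot\}$ (or $\{1,\cdot,n\}$) inherit any prescribed $0/1$ pattern. The paper states this more compactly (``for every $A\subset[3,n]$ there is $K_A\preceq L_{2n-2}'$ with $\chi_A(\{1,2,i\})=0$ iff $i\in A$'') but the mechanism is identical to your parity bookkeeping, and your handling of the reversal and $W_1''$ variants matches the paper's remark that the argument is symmetric.

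One small numerical slip: with $r=2n-3$ the interval $[3,r]$ contains only $n-3$ even integers (e.g.\ $n=3$, $r=3$ gives no even integer at all), so the parenthetical ``$[3,r]$ contains at least $n-2$ integers of each parity'' fails for $r=2n-3$, and the all-even pattern cannot be realized. This is harmless, since by hypothesis you may simply take $r\ge 2n-2$ (which is what the paper does with $L_{2n-2}'$), but the stated threshold should be adjusted.
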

\begin{proof}
Let $X$ be as given. It follows that either for infinitely many $r\in\mathbb{N}$ the ideal $X$ contains an $r$-wealthy coloring $L_r'$ of type $W_1'$, 
or for infinitely many $r\in\mathbb{N}$ it contains an $r$-wealthy coloring $L_r''$ of type $W_1''$. In fact, we may replace `infinitely many' with `every'. 

For $n = 1, 2$ the bound is trivial. Let $n \geq 3$. In the former case, for every $A \subset [3, n]$ there is a coloring $K_A = (n,\chi_A)\preceq L_{2n-2}'$ such that $\chi_A(\{1, 2, i\}) = 0$ if and only if $i\in A$. In the latter case, for every $B \subset [2, n-1]$ there is a coloring $K_B = (n, \chi_B)\preceq L_{2n-2}''$ such that $\chi_B(\{1, i, n\}) = 0$ if and only if $i \in B$. The sets $A$, resp. $B$, may be chosen in $2^{n-2}$ ways and for different $A$s, resp. $B$s, the colorings $K_A$, resp. $K_B$, are different. The bound follows.
\end{proof}

\noindent
The bound is actually a tight one, for consider the set of colorings $X\subset\mathcal{C}_3$ defined by $(n,\chi)\in X$ if and only if for every $E\in\binom{[n]}{3}$ 
with $E\not\supset\{1,2\}$ one has $\chi(E)=0$. Then it is easy to see that $X$ is in fact an ideal, that for every $r\in\mathbb{N}$ it contains an $r$-wealthy coloring
of type $W_1'$, and that for every $n\ge2$ one has $|X_n|=2^{n-2}$.

\subsection{Wealthy colorings $W_2$}

Let $r \in \mathbb{N}$. We say that a coloring $K = (n, \chi)$ is {\it $r$-wealthy of type $W_{2,1}'$} if $n = 2r+1$ and 
\begin{align} \label{W2_1}
	\chi(\{i,\,r+j,\, 2r+1\}) = \left\{ \begin{array}{ll}
     1 & \mbox{ for }\ i=j\, , \\
     0 & \mbox{ for }\ i\neq j
    \end{array} \right. 
\end{align}
where $i, j \in [r]$. Similarly we say that $K$ is {\it $r$-wealthy of type $W_{2,2}'$} if $n = 2r+1$ and 
\begin{align} \label{W2_2}
	\chi(\{i,\, r+j,\, 2r+1\}) = \left\{ \begin{array}{ll}
     1 & \mbox{ for }\ i \leq j\, , \\
     0 & \mbox{ for }\ i > j
    \end{array} \right.
\end{align}
where $i, j \in [r]$. In both colorings, colors of unspecified edges may be arbitrary. We visualize $K$ as an $r\times r$ matrix $(a_{i,j})$ 
such that $\chi(i,r+j,2r+1)$ is the entry in row $i$ and column $j$. A coloring $H$ is {\it $r$-wealthy of type $W_{2,1}$} if $H$ can be obtained either from 
an $r$-wealthy coloring $K$ of type $W_{2,1}'$ by swapping colors $0$ and $1$ and/or reversing the 
order of vertices in the interval $[1,r]$ and/or reversing the order of vertices in the interval $[r+1, 2r]$ and/or permuting the order of 
the three intervals $[1, r]$, $[r+1, 2r]$ and $\{2r+1\}$. Colorings of \emph{type $W_{2,2}$} are defined in an analogous way. 
A coloring of \emph{type $W_2$} is of type $W_{2,1}$ or type $W_{2,2}$. 
 
Swapping of the colors simply means that in the above definitions $1$ and $0$ are exchanged. Reversal of the order in an interval, for example 
in $[1,r]$ for type $W_{2,1}'$, means that in equation (\ref{W2_1}) we replace the left side with
$\chi(\{r-i+1,r+j,2r+1\})$. Permuting the order of the three intervals means, for example for type $W_{2,2}''$ and the reversing permutation 
(sending $123$ to $321$), that in equation (\ref{W2_2}) we replace the left side with $\chi(\{1,1+j,r+1+i\})$. We say more on symmetries of colorings and 
matrices at the beginning of Section~\ref{subsec_review2dim}. 

If $K=(2r+1,\chi)$ is an $r$-wealthy coloring of type $W_{2,1}$ (resp. $W_{2,2}$) and if
$K$ was obtained from an $r$-wealthy coloring $K'=(2r+1,\chi')$ of type $W_{2,1}'$ (resp. $W_{2,2}'$) by the above symmetries so that the intervals $[r]$, $[r+1,2r]$ and $\{2r+1\}$ of $K'$ were permuted in the intervals $A$, $B$ and $\{c\}$, we say that $A$, $B$ and $\{c\}$ are the \emph{base sets of $K$}. They play a role in the end of the proof of Theorem~\ref{T1FD}.

We use the next characterizations of the sequences $F_n$ and $G_n$. A \emph{binary string} is one from $\{0,1\}^*$.

\begin{lemma} \label{LFibG}
For the Fibonacci sequence $(F_n)_{n\geq1}$ and sequence $(G_n)_{n\geq1}$ the following holds ($n\ge2$).
\begin{enumerate}
\item $F_n$ equals to the number of binary strings $w = w_1w_2\ldots w_{n-2}$ of length $n-2$ not containing substring $00$. The same holds 
for substring $11$.
\item $F_n$ equals to the number of binary strings $w = w_1w_2\ldots w_{n-2}$ of length $n-2$ not containing substrings $w_{2i}w_{2i+1} = 10$ and $w_{2i-1}w_{2i} = 01$, resp. vice versa.
\item We have $2^n>F_n\ge G_n$ for every $n\in\mathbb{N}$ and the last inequality is strict for $n>4$.
\end{enumerate}
\end{lemma}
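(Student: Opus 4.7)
The plan is to dispatch the three parts independently using elementary combinatorial arguments, with the Fibonacci recurrence at the heart of each.

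For part~1 I would let $a_m$ denote the number of binary strings of length $m$ avoiding $00$ and split on the last letter: strings ending in $1$ extend an arbitrary valid string of length $m-1$, while strings ending in $0$ must be preceded by $1$ and hence extend a valid string of length $m-2$. This yields $a_m = a_{m-1} + a_{m-2}$ with $a_0 = 1$, $a_1 = 2$, so $a_m = F_{m+2}$ and $a_{n-2} = F_n$. The case of strings avoiding $11$ is identical via the $0 \leftrightarrow 1$ swap.

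For part~2 I would biject the strings in question with those of part~1 by a parity-flip. Given a binary string $w$ of length $m = n-2$, set $v_i := w_i$ if $i$ is odd and $v_i := 1 - w_i$ if $i$ is even. The map is an involution, hence a bijection on $\{0,1\}^m$. A direct check shows $w_{2i-1}w_{2i} = 00$ corresponds to $v_{2i-1}v_{2i} = 01$, and $w_{2i}w_{2i+1} = 00$ corresponds to $v_{2i}v_{2i+1} = 10$. Therefore $w$ avoids $00$ if and only if $v$ avoids the two parity-dependent patterns in the statement, and part~1 finishes the count. The ``vice versa'' variant is handled by running the same bijection on $11$-avoiding strings: analogously, $w$ avoids $11$ if and only if $v$ avoids $v_{2i-1}v_{2i} = 10$ and $v_{2i}v_{2i+1} = 01$.

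For part~3, the bound $F_n < 2^n$ is a trivial induction: $F_n = F_{n-1} + F_{n-2} < 2^{n-1} + 2^{n-2} < 2^n$, with $F_1 = 1 < 2$ and $F_2 = 1 < 4$ as base cases. For $F_n \geq G_n$ I would induct on $n$, verifying $n \in \{1,2,3,4\}$ directly (equality holds there). For $n \geq 5$ the hypothesis gives $F_n = F_{n-1} + F_{n-2} \geq G_{n-1} + G_{n-2}$, and since the recurrence yields $G_m - G_{m-1} = G_{m-3} \geq 1$ for $m \geq 4$, the sequence $(G_m)$ is strictly increasing from index $2$ onward, so $G_{n-2} > G_{n-3}$. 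Hence $F_n > G_{n-1} + G_{n-3} = G_n$, simultaneously delivering the strict inequality required for $n > 4$. The only mildly nontrivial ingredient is the parity-flip bijection in part~2; everything else reduces to one-line inductions, and no real obstacle is anticipated.
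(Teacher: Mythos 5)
Your proof is correct. The paper's own proof consists of the single sentence ``All these results follow easily by induction on $n$,'' so there is no detailed argument to compare against; your write-up supplies the omitted details, and the parity-flip bijection reducing part~2 to part~1 is a clean way to avoid a case-heavy direct induction on the position-dependent forbidden patterns. One tiny cosmetic point: in part~3 your identity $G_m-G_{m-1}=G_{m-3}\ge1$ requires $m\ge4$, so for $n=5$ the needed inequality $G_3>G_2$ is a separate (trivial) direct check rather than an instance of that identity; this does not affect correctness.
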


\begin{proof}
All these results follow easily by induction on $n$. 
\end{proof}

Recall that the identity matrix $I_n$ has size $n\times n$, has $1$s on the main diagonal, and $0$s elsewhere. Similarly, the upper triangular matrix $U_n$ 
has size $n\times n$, has $1$s on the main diagonal and above it, and $0$s elsewhere. A matrix $M$ is contained in another matrix $N$, or $M$ is a submatrix of $N$, 
if $M$ can be obtained from $N$ by deleting rows and columns. 

\begin{lemma} \label{L:DiagonalPaths}
For $n\in\mathbb{N}$, let $I_n$ and $U_n$ be the $n\times n$ identity matrix and the $n\times n$ upper triangular matrix, respectively.
\begin{enumerate}
\item For any binary string $w = w_1w_2\ldots w_{2n-1}$ of length $2n-1$ avoiding any substring $w_iw_{i+1}=11$ there exists a matrix $M$ of size $n \times n$ that is contained in $I_{2n}$ and such that 
$M(i,i) = w_{2i-1}$ for $i \in [n]$ and $M(i, i+1) = w_{2i}$ for $i \in [n-1]$.
\item For any binary string $w = w_1w_2\ldots w_{2n-1}$ of length $2n-1$ avoiding any substring $w_{2i-1}w_{2i} = 10$ and $w_{2i}w_{2i+1} = 01$ there is a matrix $M$ of size $n\times n$ that is  contained in $U_{3n}$ and such that $M(i,i) = w_{2i-1}$ for $i \in [n]$ and $M(i, i+1) = w_{2i}$ for $i \in [n-1]$.
\end{enumerate}
\end{lemma}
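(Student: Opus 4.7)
Both parts call for a construction: given $w$, exhibit row and column index sets $R = \{r_1 < \dots < r_n\}$ and $C = \{c_1 < \dots < c_n\}$ in $[2n]$ (respectively $[3n]$) so that the induced submatrix $M$ has the required entries on the diagonal and superdiagonal. Since $I_{2n}(r,c) = 1$ iff $r = c$ and $U_{3n}(r,c) = 1$ iff $r \le c$, the conditions on $M$ translate to (1) $r_i = c_i \Leftrightarrow w_{2i-1} = 1$ and $r_i = c_{i+1} \Leftrightarrow w_{2i} = 1$, or (2) the same with $=$ replaced by $\le$.

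For part 1 the plan is to arrange the $2n$ variables in the linear order $c_1 \le r_1 \le c_2 \le r_2 \le \dots \le c_n \le r_n$, turning each $\le$ into equality exactly when the corresponding letter of $w$ equals $1$. Assigning integer positions starting from $1$ and advancing whenever a comparison is strict, the largest position used is $1 + (\text{number of $0$s in }w) \le 2n$, so $R, C \subseteq [2n]$. Monotonicity $c_i < c_{i+1}$ follows from the subchain $c_i \le r_i \le c_{i+1}$: if both $\le$ were equalities we would have $w_{2i-1} w_{2i} = 11$, which is forbidden; the strict monotonicity of the $r_j$ is analogous via $r_i \le c_{i+1} \le r_{i+1}$. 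Monotonicity also automatically gives $r_i \ne c_j$ for $j \notin \{i, i+1\}$, so the unconstrained entries of $M$ cause no interference.

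For part 2 I would work block by block. Write $e_i = w_{2i-1}$ and $t_i = w_{2i}$; the two forbidden substrings are equivalent to the implications $e_i = 1 \Rightarrow t_i = 1$ and $e_{i+1} = 1 \Rightarrow t_i = 1$, which are precisely the implications $r_i \le c_i \Rightarrow r_i \le c_{i+1}$ and $r_{i+1} \le c_{i+1} \Rightarrow r_i \le c_{i+1}$ forced by $c_i < c_{i+1}$ and $r_i < r_{i+1}$. Assign to pair $i$ the block $B_i = \{3i-2, 3i-1, 3i\}$. If $e_i = 1$ I set $r_i = c_i = 3i - 2$; if $e_i = 0$ I set $r_i = 3i$ and by default $c_i = 3i - 2$, but whenever $i \ge 2$ and $e_{i-1} = t_{i-1} = 0$ I shift $c_i$ back to the middle slot of $B_{i-1}$, so $c_i = 3i - 4$.

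Verification reduces to arithmetic in the finitely many configurations of $(e_i, t_i, e_{i+1})$. Monotonicity of $R$ follows from $r_i \le 3i < 3i + 1 \le r_{i+1}$, and monotonicity of $C$ from a short check of the four shifted-or-not combinations for $(c_i, c_{i+1})$. The diagonal condition $r_i \le c_i \Leftrightarrow e_i = 1$ is immediate. The superdiagonal condition is where the shift is decisive: in every admissible configuration except B--B with $t_i = 0$ the default placements yield $r_i < c_{i+1}$, giving $M(i, i+1) = 1$, consistent with $t_i = 1$ being forced in those cases; in the exceptional B--B, $t_i = 0$ case the shift produces $c_{i+1} = 3i - 1 < 3i = r_i$, giving $M(i, i+1) = 0$ as required. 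The main obstacle is precisely this B--B case, where a naive block placement fails; the shift resolves it and remains compatible with further shifts along a maximal run of $0$-transitions.
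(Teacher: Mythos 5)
Your proof is correct and takes essentially the same approach as the paper: both constructions interleave the chosen row and column indices and perturb their spacing according to the letters of $w$ (the paper via the explicit formulas $r_i = 2i - w_{2i-1}$, $c_{i+1} = 2i+1-w_{2i}$ in part~1 and $r_i = 3i-2w_{2i-1}$, $c_{i+1} = 3i-1+2w_{2i}$ in part~2, which coincide with your greedy chain in part~1 and with your shift construction in part~2 once the pattern restrictions on $w$ are used to rule out the impossible cases). A small plus of your write-up is the remark that the two forbidden patterns in part~2 are precisely the implications forced by monotonicity of the index sets, which the paper's computational presentation leaves implicit.
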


\begin{proof}
1. Suppose that $w = w_1w_2\ldots w_{2n-1}$ avoids consecutive substrings $11$. To get $M$, for each $i \in [n]$ we choose a row and a column of $I_{2n}$. For different $i$ the chosen rows are different, and so are the chosen columns. The matrix $M$ will consists of the chosen rows and columns. 

We proceed as follows. For $w_{2i-1} = a\in\{0,1\}$ we choose the row $2i-a$. For $w_{2i} = b\in\{0,1\}$ we choose the column $2i+1-b$, except for $i=n$ when we always choose the column number one. For an example with $n=4$ see Fig.~\ref{fig:forbidden} (a). It follows that the resulting matrix $M$ has the stated property.

\begin{figure}
    \centering
    \subfloat[$w=0100101$]{{\includegraphics[width=6cm]{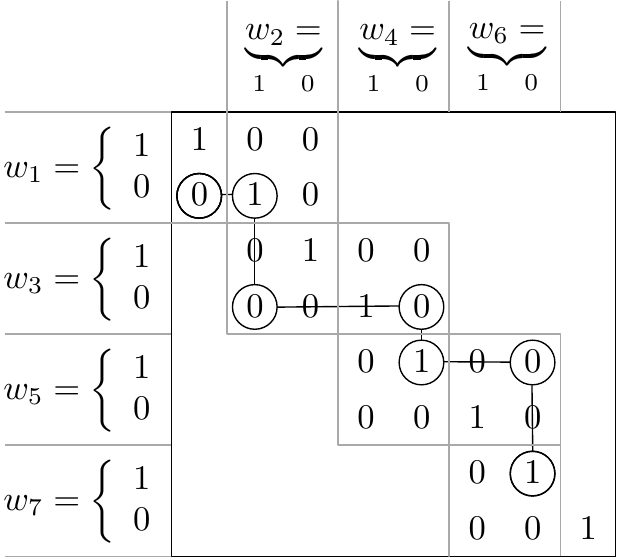} }}%
    \qquad
    \subfloat[$w=01110$]{{\includegraphics[width=6cm]{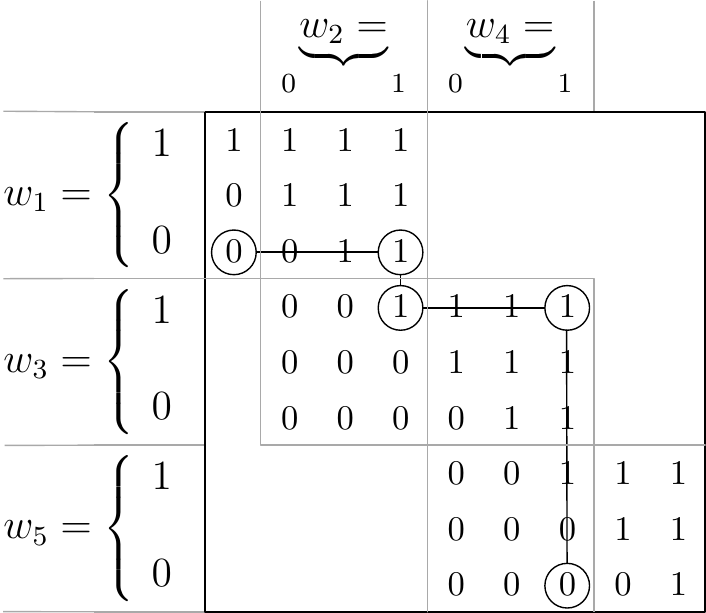} }}%
    \caption{The submatrices of $I_{2n}$ and $U_{3n}$ for sequences $w$ avoiding certain patterns.}%
    \label{fig:forbidden}%
\end{figure}

2. Suppose that $w = w_1w_2\ldots w_{2n-1}$ avoids consecutive substrings $w_{2i-1}w_{2i} = 10$ and $w_{2i}w_{2i+1} = 01$. 
We choose rows and columns like in part 1. If $w_{2i-1} = a\in\{0,1\}$ we choose the row $3i-2a$. If $w_{2i} = b\in\{0,1\}$ we choose the column $3i-1+2b$, except for $i=n$ when we choose the column number one. For an example  with $n=3$ see Fig.~\ref{fig:forbidden} (b). Again, the resulting matrix $M$ has the stated property.
\end{proof}

Probably, the sizes of the matrices $I_{2n}$ and $U_{3n}$ in Lemma~\ref{L:DiagonalPaths} are not minimal. We think that we could prove the lemma with matrices $I_{3n/2}$ and $U_{2n}$. 

\begin{proposition} \label{LW2}
If an ideal of colorings $X$ contains for every $r \in \mathbb{N}$ an $r$-wealthy coloring of type $W_2$ then $|X_n| \geq F_n$ for every $n \in \mathbb{N}$.
\end{proposition}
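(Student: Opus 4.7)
The plan is to find, for each $N \in \mathbb{N}$, $F_N$ distinct induced subcolorings of a suitable $r$-wealthy coloring $K_r \in X$, indexed by binary strings. By the hypothesis, for every $r \in \mathbb{N}$ some $r$-wealthy coloring of type $W_2$ lies in $X$. Each such coloring has one of finitely many ``shapes'' (subtype $W_{2,1}$ or $W_{2,2}$ composed with one of the finitely many symmetries used to define these types), so by pigeonhole one shape occurs for infinitely many $r$. Without loss of generality I work with the canonical form $W_{2,1}'$; the case $W_{2,2}'$ is handled symmetrically, using Lemma~\ref{L:DiagonalPaths} part 2 and Lemma~\ref{LFibG} part 2 in place of their part 1 counterparts, and all other symmetries reduce to the canonical case after relabeling of vertices and/or colors.

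For $N \leq 2$ the bound $|X_N| \geq 1 = F_N$ is immediate. For $N \geq 3$ I fix $r \geq N - 1$ in the infinite set above and let $K_r \in X$ be the $r$-wealthy coloring of type $W_{2,1}'$. The edges of $K_r$ containing vertex $2r+1$ encode the identity matrix $I_r$ with rows indexed by $[r]$ and columns by $[r+1, 2r]$; a subset $A \cup B \cup \{2r+1\}$ with $A \subseteq [r]$ and $B \subseteq [r+1, 2r]$ restricts and normalizes to a coloring in $X_{|A|+|B|+1}$ whose apex-incident edges form the $|A| \times |B|$ submatrix of $I_r$ picked out by $(A, B)$. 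Hence distinct such submatrices yield distinct restricted colorings.

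By Lemma~\ref{LFibG} part 1 there are $F_N$ binary strings $w = w_1 \ldots w_{N-2}$ avoiding the substring $11$. I embed each $w$ into a submatrix of $I_r$ as follows. If $N = 2n+1$ is odd, Lemma~\ref{L:DiagonalPaths} part 1 supplies directly an $n \times n$ submatrix of $I_{2n}$ with $M(i,i) = w_{2i-1}$ and $M(i, i+1) = w_{2i}$. If $N = 2n+2$ is even, an analogous construction in $I_{2n+1}$ gives an $n \times (n+1)$ submatrix: take rows $\{2i - a_i : i \in [n]\}$ and columns $\{1\} \cup \{2i + 1 - b_i : i \in [n]\}$ where $a_i = w_{2i-1}$ and $b_i = w_{2i}$; a direct check shows $M(i, i) = w_{2i-1}$ and $M(i, i+1) = w_{2i}$ for $i \in [n]$, the no-$11$ hypothesis on $w$ ruling out exactly the potential coincidences. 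Since $w$ can be read off from $M$, the encoding is injective, so I obtain $F_N$ distinct restricted colorings and $|X_N| \geq F_N$.

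The main obstacle is the parity of $N$: Lemma~\ref{L:DiagonalPaths} only covers the square (odd-$N$) case, so the even case requires the small rectangular modification described above. As in the lemma itself, the argument hinges on the avoidance of $11$ ruling out exactly the conflicts that would otherwise collapse the encoding.
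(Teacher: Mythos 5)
Your proposal is correct and takes essentially the same approach as the paper: reduce to the canonical subtype $W_{2,1}'$ (or $W_{2,2}'$) via the finitely many symmetries, then use Lemma~\ref{L:DiagonalPaths} together with the string-counting characterization in Lemma~\ref{LFibG} to embed $F_N$ distinct colorings. The one place you diverge is the even case $N = 2n+2$: the paper invokes Lemma~\ref{L:DiagonalPaths} unchanged to produce an $m \times m$ matrix while restricting to strings with $w_{2m-1}=0$ and then deleting one vertex from the resulting coloring to drop the size by one; you instead extend the lemma's row/column-selection rule by one extra column to produce an $n \times (n+1)$ submatrix of $I_{2n+1}$ directly encoding a full length-$2n$ string. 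Your rectangular modification is a clean alternative that avoids the vertex-deletion step and the extra constraint on $w$, and your verification that the no-$11$ condition makes the designated entries read off $w$ correctly (hence the encoding $w \mapsto M$ is injective) is accurate.
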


\begin{proof}
Let $X$ be an ideal of colorings. We first assume that for every $r\in\mathbb{N}$, $K_r = (2r+1, \chi)$ is an $r$-wealthy coloring of type $W_2'$ such that $K_r\in X$. Let $n=2m+1\in\mathbb{N}$ be odd. We take the $2m$-wealthy coloring $K_{2m}=(4m+1,\chi)\in X$ of type $W_2'$ and apply part 1 of Lemma~\ref{L:DiagonalPaths}. By it, for each binary string $w=w_1w_2\dots w_{2m-1}$ avoiding consecutive substrings $11$ there  is a coloring $K_w=(n,\chi_w)=(2m+1,\chi_w)$ contained in $K_{2m}$, thus in $X$, such that for every $i\in[m]$ one has $\chi_w(\{i,m+i,2m+1\})=w_{2i-1}$ and for every $i\in[m-1]$ one has $\chi_w(\{i,m+1+i,2m+1\})=w_{2i}$. For different strings $w$ these colorings are different, and using part 1 of Lemma~\ref{LFibG} we get the lower bound $|X_{2m+1}| \geq F_{2m+1}$.

To bound $|X_{n}| = |X_{2m}|$ for even $n \in \mathbb{N}$ we use a variant of the latter justification. We take any binary string $w = w_1w_2\dots w_{2m-2}0$ of length $2m-1$ that avoids consecutive substrings $11$. By Lemma~\ref{L:DiagonalPaths} there is an $m \times m$ matrix $M_{\omega}$ contained in $I_{2m}$ such that $M_{\omega}(i,i) = w_{2i-1}$ for $i \in [n]$ and $M_{\omega}(i, i+1) = w_{2i}$ for $i \in [n-1]$. Any of these matrices produces a different coloring $K_{\omega} = (2m+1, \chi_{\omega})$ such that $\chi_{\omega}(\{i,m+i, 2m+1\}) = w_{2i-1}$ for $i \in [m]$ and $\chi_{\omega}(\{i, m+1+i\}) = w_{2i}$ for $i \in [m-1]$. We denote, for any latter string $w$, a colorings $K_{\omega}'$ by restricting and normalizing $K_{\omega}$ to $[2m+1] \setminus \{m\}$. For different strings $\omega$ the colorings $K_{\omega}' = (n, \chi')$ are different, because $w_{2m-1} = 0$, and hence by part 1 of Lemma~\ref{LFibG} we have $|X_{2m}| \geq F_{2m}$.

When $X$ contains for every $r\in\mathbb{N}$ an $r$-wealthy coloring of type $W_2''$, we argue similarly. For odd $n = 2m+1$ we take the $3m$-wealthy coloring $K_{3m}=(6m+1,\chi)\in X$ of type $W_2''$, apply part 2 of Lemma~\ref{L:DiagonalPaths} and part 2 of Lemma~\ref{LFibG} and obtain $|X_{2m+1}| \geq F_{2m+1}$. For even $n = 2m$ we argue similarly and restrict to the particular strings $w$ with $w_{2m-1} = 0$ and again apply part 2 of Lemma~\ref{L:DiagonalPaths} and part 2 of Lemma~\ref{LFibG}. Now we restrict our colorings to the set $[2m+1] \setminus \{m\}$ and obtain $|X_{2m}| \geq F_{2m}$ since $w_{2m-1} = 0$ guarantees that the restricted colorings are mutually different.

In the general case when $X$ contains an $r$-wealthy coloring of type $W_2$, we consider one of the above described transformations $T$ transforming the ``canonical'' coloring $W_2'$ or $W_2''$ to the given $r$-wealthy coloring of type $W_2$. The images of the colorings $K_w$ under $T$ give then the stated lower bound for the general $r$-wealthy coloring of type $W_2$.
\end{proof}

\subsection{Wealthy colorings $W_3$}

Let $r \in \mathbb{N}$. We introduce wealthy colorings of type $W_3$. A coloring $K = (n,\chi)$ is 
\begin{itemize}
 \item {\it $r$-wealthy of type $W_{3, 1}'$} if $n = 3r$ and
  \begin{align} \label{W3_1}
	\chi(\{i,\, r+i,\, 2r+j\}) = \left\{ \begin{array}{ll}
     1 & i=j\, , \\
     0 & i \not= j\, ,
    \end{array} \right.
  \end{align}
  where $i, j \in [r]$. Colors of the remaining edges are not specified. We say that a coloring $H$ is {$r$-wealthy of type $W_{3, 1}$} if $H$ can be obtained from $K$ by swapping colors $0$ and $1$ and/or reversing the order of vertices in some of the intervals $[1, r]$, $[r+1, 2r]$ and $[2r+1, 3r]$ and/or permuting these intervals.
 \item {\it $r$-wealthy of type $W_{3, 2}'$} if $n = 3r$ and 
  \begin{align} \label{W3_2}
	\psi(\{i,\, r+i,\, 2r+j\}) = \left\{ \begin{array}{ll}
     1 & i \leq j\, , \\
     0 & i > j\, ,
    \end{array} \right.
  \end{align}
  where $i, j \in [r]$. Again, colors of the remaining edges are arbitrary. We say that coloring $H$ is {$r$-wealthy of type $W_{3, 2}$} if $H$ can be obtained from $K$ by swapping colors $0$ and $1$ and/or reversing the order of vertices in some of the intervals $[1, r]$, $[r+1, 2r]$ and $[2r+1, 3r]$ and/or permuting these intervals.
 \item {\it $r$-wealthy of type $W_{3, 3}$ } if $n = 3r+1$ and either $K$ or its reversal satisfy that for any $i \in [r]$ there are distinct numbers $a_i, b_i, c_i \in \{3i-2, 3i-1, 3i\}$ such that $\chi(\{a_i, b_i, 3r+1 \}) \ne \chi(\{a_i, c_i, 3r+1\})$.
\end{itemize}
We call these colorings summarily $W_3$ type colorings. Their symmetries are the same as those explained in the definition of type $W_2$ colorings.

As for type $W_2$ colorings but more simply here we define the \emph{base sets} of an $r$-wealthy coloring $(3r,\chi)$ of type $W_{3,1}$ or $W_{3,2}$ to be the intervals $[1, r]$, $[r+1, 2r]$ and $[2r+1, 3r]$.

Let $k\leq n$ and $A=(a_1,a_2, \dots, a_k)$ be a $k$-tuple with $a_i=(c_i,d_i)\in [n]^2$ satisfying
$$
1 \leq c_1 < c_2 < \dots < c_k \leq n \quad \text{and} \quad 1 \leq d_1 < d_2 < \dots < d_k \leq n
$$
---we call $A$ a \emph{$n$-chain}. We denote by $A^*$ the $n\times n$ \emph{binary}---with entries $0$ and $1$---matrix that has $1$s exactly in the positions $a_i$, $i \in [k]$.

\begin{lemma}\label{L:Dictionary}
Let $k\leq n$, $A = (a_1, a_2, \dots, a_k)$ be an $n$-chain, and $A^*$ be the corresponding $n\times n$ binary matrix. Then $A^*$ is a submatrix of the identity matrix $I_{2n}$.
\end{lemma}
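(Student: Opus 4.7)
To show $A^*$ is a submatrix of $I_{2n}$, I need to produce indices $r_1 < r_2 < \cdots < r_n$ and $s_1 < s_2 < \cdots < s_n$ in $[2n]$ such that $(I_{2n})_{r_i, s_j} = A^*_{i,j}$ for every $(i,j) \in [n]^2$. Since $(I_{2n})_{r_i, s_j} = 1$ iff $r_i = s_j$, this reduces to arranging $r_i = s_j$ precisely when $(i,j) = (c_l, d_l)$ for some $l \in [k]$.

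The plan is to make this explicit. For each $l \in [k]$ I set the \emph{shared position} $t_l := c_l + d_l - l$ and declare $r_{c_l} = s_{d_l} = t_l$. Strict monotonicity of $(c_l)$ and $(d_l)$ gives $t_{l+1} - t_l \geq 1$, whence $1 \leq t_1 < t_2 < \cdots < t_k \leq 2n-k$. The remaining \emph{non-shared} indices $r_i$ (for $i \notin \{c_1,\ldots,c_k\}$) and $s_j$ (for $j \notin \{d_1,\ldots,d_k\}$) are placed into $[2n] \setminus \{t_1, \ldots, t_k\}$ so that the gap $(t_l, t_{l+1})$ receives exactly $c_{l+1}-c_l-1$ row-labels and $d_{l+1}-d_l-1$ column-labels, and the initial segment $[1, t_1 - 1]$ receives $c_1 - 1$ row-labels and $d_1 - 1$ column-labels. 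The identities $t_{l+1} - t_l - 1 = (c_{l+1}-c_l-1) + (d_{l+1}-d_l-1)$ and $t_1 - 1 = (c_1-1)+(d_1-1)$ ensure that each such interval has precisely enough room; the tail $[t_k + 1, 2n]$ has $k$ spare positions which I leave unused.

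Two items remain to check. First, that $r_{c_l} = t_l$: the number of positions $\leq t_l$ labelled as a row is $l$ (shared) plus $c_l - l$ (non-shared, counted across the initial segment and the first $l-1$ gaps), totalling $c_l$, so $t_l$ is indeed the $c_l$-th smallest row-position; the check $s_{d_l} = t_l$ is symmetric. Second, that $r_i = s_j$ occurs \emph{only} at the prescribed pairs $(c_l, d_l)$: this is automatic, since the non-shared positions are chosen pairwise distinct and distinct from every $t_l$. Hence the submatrix of $I_{2n}$ at rows $(r_i)$ and columns $(s_j)$ equals $A^*$.

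The proof is essentially bookkeeping once one guesses the formula $t_l = c_l + d_l - l$, so I do not expect a real obstacle. The only mildly conceptual point is that the chain hypothesis on $A$---both coordinates strictly increasing---is exactly what makes the plan consistent: were $d_l$ not monotone, two forced coincidences $r_{c_l} = s_{d_l}$ and $r_{c_{l'}} = s_{d_{l'}}$ with $c_l < c_{l'}$ but $d_l > d_{l'}$ would require $t_l < t_{l'}$ and $t_l > t_{l'}$ simultaneously.
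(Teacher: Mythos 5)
Your proof is correct, and it is essentially the paper's own argument in different clothing: the shared positions $t_l = c_l + d_l - l$ are precisely the right endpoints of the interval blocks $J_i$ in the paper's block decomposition, and your bookkeeping that each gap $(t_l, t_{l+1})$ absorbs exactly $c_{l+1}-c_l-1$ row labels plus $d_{l+1}-d_l-1$ column labels is the same count the paper performs when it deletes the first $d_i'-1$ rows and next $c_i'-1$ columns of $J_i$. The only differences are cosmetic: the paper passes through an augmented $(n+1)\times(n+1)$ matrix $\overline{A}$ with a sentinel $1$ in the corner, and thereby records the sharper bound that $A^*$ embeds already in $I_{2n-k}$ (your $k$ "spare positions" at the tail are exactly what you would trim to recover that).
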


\begin{proof}
In fact, we prove that $A^*$ is a submatrix of the identity $I_{2n-k}$. Let $A$ be an $n$-chain. We obtain some $n$ rows and $n$ columns of $I_{2n}$ forming the matrix $A^*$ by deleting some rows and columns from $I_{2n}$ as follows. We additionally set $a_0 = (0, 0)$, $a_{k+1} = (n+1, n+1)$, and consider the differences 
$$
(c'_i,\, d'_i):=(c_i-c_{i-1},\, d_i-d_{i-1})=a_i-a_{i-1},\, i \in [k+1]\, .
$$
We denote by $\overline{A}$ the $(n+1)\times(n+1)$ matrix obtained from $A^*$ by adding at the bottom of $A^*$ and to the right of it a zero row and a zero column, and changing the zero in their intersection 
to $1$. 

To any $a_{i} = (c_{i}, d_{i}) \in A \cup \{(n+1, n+1)\}$ for $i\in[k+1]$ we assign the submatrix $J_i$ of $I_{2n+1-k}$ formed by the rows and columns (with indices) in the interval $[c_{i-1}+d_{i-1}-(i-2), c_i+d_i-i]$, $i\in [k+1]$, and the submatrix $M_i$ of $\overline{A}$ formed by the rows in $[c_{i-1}+1, c_i]$ and the columns in $[d_{i-1}+1, d_i]$. Note that every $J_i$
is the identity matrix with size $c_i'+d_i'-1$, and that the (in general non-square) $c_i'\times d_i'$ matrix $M_i$ has $1$ in the south-east corner and zeros elsewhere. 

We show that each $J_i$ contains $M_i$. This, together with the fact that the matrices $J_i$, resp. $M_i$, cover all ones in $I_{2n+1-k}$, resp. in $\overline{A}$, and follow one after another, proves the stated claim. One easily checks that deleting the first $d_i'-1$ rows of $J_i$ and the following $c_i'-1$ columns of $J_i$ yields $M_i$. 
Indeed, since the first $d_i'-1$ ones of $J_i$ are contained in deleted rows and the following $c_i'-1$ ones are contained in the deleted columns, only one $1$ (the last one) survives for $M_i$. 
The size of the resulting matrix is that of $M_i$ as it has $c_i'+d_i'-1 - (d_i'-1) = c_i'$ rows and $c_i'+d_i'-1 - (c_i'-1) = d_i'$ columns. 

\begin{figure}
    \centering
    \subfloat[$10 \times 10$ matrix $\overline{A}$]{{\includegraphics[width=4.5cm]{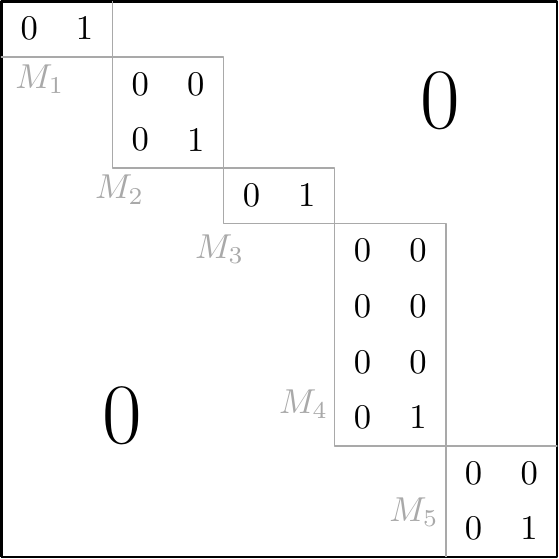} }}%
    \qquad
    \subfloat[Deleted rows in $I_{2n+1-k}$]{{\includegraphics[width=5cm]{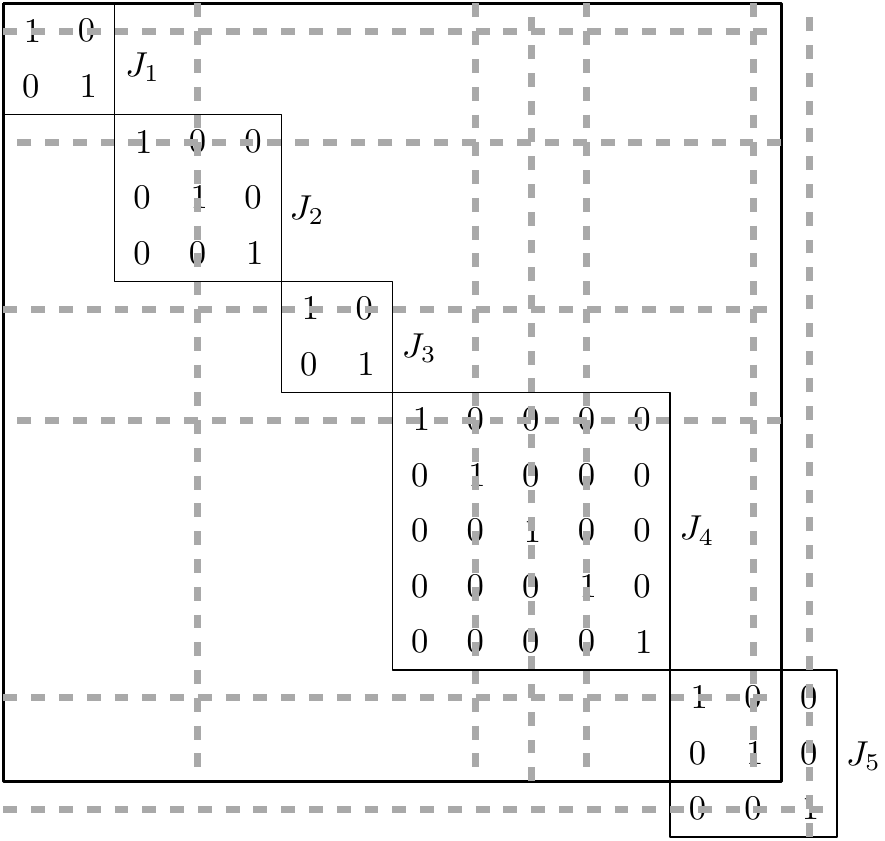} }}%
    \caption{Finding $n$-chain $A = \{(1, 2), (3, 4), (4, 6), (8, 8)\}$ with $n=9$ and $k=4$ as a submatrix $\overline{A}$ in the identity matrix $I_{2n+1-k}$.}%
    \label{fig:forbiddenM}%
\end{figure}

Thus $\overline{A}$ is a submatrix of $I_{2n+1-k}$ and $A^*$ is a submatrix of $I_{2n-k}$. The reader can follow the construction on an example in Figure \ref{fig:forbiddenM}, where $A = \{(1, 2), (3, 4), (4, 6), (8, 8)\}$ and $n=9$.
\end{proof}

For $m\in\mathbb{N}$ we call $C = (c_1, c_2, \dots, c_{2m+1})\subset[m+1]^2$ a {\it southeast path in $[m+1]^2$} if 
$$
c_1 = (1, 1),\, c_{i+1} - c_i \in \{(0,\, 1),\,\ (1,\, 0)\},\, \mathrm{and}\ c_{2m+1} = (m+1, m+1)\, .
$$
Clearly, the number of southeast paths in $[m+1]^2$ equals $\binom{2m}{m}$ because they correspond to the $m$-element subsets (of the steps $(0,1)$, say) of the set $[2m]$ (of all steps).

\begin{proposition} \label{bij_chain_path}
For $m\in\mathbb{N}$ the set of $m$-chains is in bijection with the set of southeast paths in $[m+1]^2$, and therefore we have exactly $\binom{2m}{m}$ $m$-chains.
\end{proposition}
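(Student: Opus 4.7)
My plan is to prove the proposition by exhibiting an explicit bijection between $m$-chains and southeast paths in $[m+1]^2$. Since the text already observes that the number of southeast paths equals $\binom{2m}{m}$, the cardinality assertion for chains will follow immediately.

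First I would reparametrize southeast paths. A southeast path $C$ in $[m+1]^2$ is completely specified by the sequence $(\gamma_1, \gamma_2, \dots, \gamma_m) \in [1, m+1]^m$, where $\gamma_i$ is the column at which the path takes its $i$-th south step. The sequence is weakly increasing because east steps never decrease the column. Conversely, every weakly increasing sequence with values in $[1, m+1]$ arises from a unique path: starting at $(1,1)$, one travels east to column $\gamma_1$, south to row $2$, east to column $\gamma_2$, south to row $3$, and so on, finally traveling east from $(m+1, \gamma_m)$ to $(m+1, m+1)$.

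Then I would define the bijection $\Phi$ from $m$-chains to such sequences. Given $A = ((c_1, d_1), \dots, (c_k, d_k))$, put $\gamma_0 := 1$ and for $i \in [m]$ set
\[
\gamma_i := \begin{cases}
d_j + 1 & \text{if } i = c_j \text{ for some (necessarily unique) } j \in [k]\,, \\
\gamma_{i-1} & \text{otherwise.}
\end{cases}
\]
Strict monotonicity of the chain entries forces $(\gamma_i)$ to be weakly increasing with values in $[1, m+1]$. The inverse $\Phi^{-1}$ reads a weakly increasing sequence $(\gamma_1, \dots, \gamma_m)$ (with $\gamma_0 := 1$), extracts the jump positions $i_1 < i_2 < \dots < i_k$ (those $i \in [m]$ with $\gamma_i > \gamma_{i-1}$), and outputs the $m$-chain $((i_1, \gamma_{i_1} - 1), \dots, (i_k, \gamma_{i_k} - 1))$. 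The required inequalities $\gamma_{i_j} - 1 \in [1, m]$ and strict increase in both coordinates follow directly from the jump condition and weak monotonicity. A routine check confirms that $\Phi$ and $\Phi^{-1}$ are mutual inverses.

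I do not anticipate any significant obstacle; the argument is essentially a combinatorial repackaging of a chain as a staircase path. The only care needed is for the boundary cases, namely the empty chain $\leftrightarrow$ the constant sequence $(1, \dots, 1)$ $\leftrightarrow$ the path going entirely south and then entirely east, and the full diagonal chain $((1,1), \dots, (m,m))$ $\leftrightarrow$ the sequence $(2, \dots, m+1)$ $\leftrightarrow$ the tightest staircase path. For an independent sanity check on the cardinality, one may alternatively observe that an $m$-chain of length $k$ is determined by choosing its $k$-element row-set and $k$-element column-set independently, giving $\binom{m}{k}^2$ chains of length $k$, whence Vandermonde's identity $\sum_{k=0}^{m} \binom{m}{k}^2 = \binom{2m}{m}$ recovers the total.
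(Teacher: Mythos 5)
Your proof is correct. Both you and the paper establish the result by an explicit bijection between $m$-chains and southeast paths, but the bijections themselves and the way they are presented differ. The paper works geometrically: it draws the chain as a $0$-$1$ matrix $D^*$ in the plane and threads a path through the corner lattice so that the path makes a ``left turn'' (a south step followed by an east step) around each $1$ of $D^*$; recovering the chain from the path means reading off these south-then-east corners. You instead reparametrize a path as the weakly increasing sequence $(\gamma_1,\dots,\gamma_m)$ of columns at which its south steps occur and let the chain record the jump positions $\gamma_i>\gamma_{i-1}$ together with the jump values $\gamma_i-1$; that is, your chain elements mark the east-then-south (right) turns of the path rather than the left turns, so for a given chain the two constructions actually produce different paths, yet each is a valid bijection. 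Your encoding via monotone sequences makes well-definedness and invertibility essentially immediate, which is cleaner to verify than the paper's geometric weaving argument; the price is a little less visual intuition. The closing observation that $\sum_k\binom{m}{k}^2=\binom{2m}{m}$ gives an independent check on the count is a nice touch and does not appear in the paper.
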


\begin{proof}
We consider an $m\times m$ matrix drawn in the plane as an $m\times m$ square array of $m^2$ unit squares which we label by the coordinates $(i,j)\in[m]^2$ in the matrix way,
top to bottom and left to right. We label their corners by the elements of $[m+1]^2$ also in the matrix way. Then any southeast path $C$ in $[m+1]^2$ consists of $2m+1$ of these
corners, starts in the corner $(1,1)$ and ends in the corner $(m+1,m+1)$. To each $C$ we associate the binary $m\times m$ matrix $D^*$ with $1$s exactly in the squares around which 
$C$ makes a left turn, that is, of the squares whose both bottom and both left side array points lie in $C$. Clearly, $D^*$ is a matrix of a unique $m$-chain. In the other way, to any $m$-chain $D$ we associate a southeast 
path $C$ in $[m+1]^2$ that starts in the corner $(1,1)$, then goes horizontally until it reaches the left boundary of the column containing the first $1$ of $D^*$, then $C$ makes a right turn and 
goes vertically until it reaches the bottom boundary of the row containing the first $1$, then $C$ makes a left turn around the first $1$, then continues in the similar way to make a 
left turn around the second $1$ in $D^*$, and so on until $C$ finishes in the corner $(m+1,m+1)$. If $D^*$ is the zero matrix or if $C$ made turns around all $1$s in $D^*$, $C$ goes horizontally 
to the right boundary of column $m$ and then vertically to the corner $(m+1,m+1)$. The two described  associations are inverses of one another and give the required bijection.
\end{proof}

\begin{corollary}\label{new_cor}
For any $m\ge1$ there are at least $\binom{2m-2}{m-1}$ $m$-chains $D$ such that the last column of $D^*$ contains only zeros. The same bound holds for the number of $m$-chains $E$ such that the last row of $E^*$ contains only zeros.
\end{corollary}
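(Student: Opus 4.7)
The plan is to exhibit, in each case, an injection from the set of $(m-1)$-chains into the collection of $m$-chains in question. Proposition~\ref{bij_chain_path} counts the former as $\binom{2(m-1)}{m-1}=\binom{2m-2}{m-1}$, so both bounds will then follow immediately.

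For the first statement, the key observation will be that every $(m-1)$-chain $E=((c_1,d_1),\dots,(c_k,d_k))\subset[m-1]^2$ is, verbatim, an $m$-chain $D\subset[m]^2$; the only thing that changes is that the associated matrix $D^*$ now has size $m\times m$ instead of $(m-1)\times(m-1)$ and is obtained from $E^*$ by appending a zero bottom row and a zero right column. Since every $1$ of $D^*$ sits in a column of index $d_i\le m-1$, the last column of $D^*$ is identically zero. The assignment $E\mapsto D$ is tautologically injective (the tuple itself is unchanged), which produces $\binom{2m-2}{m-1}$ distinct $m$-chains having the required property.

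For the second statement the idea is to invoke the transposition map
$$
((c_1,d_1),\dots,(c_k,d_k)) \longmapsto ((d_1,c_1),\dots,(d_k,c_k)),
$$
which is a well-defined bijection on the set of all $m$-chains because the chain condition is symmetric in the two coordinates, and which transposes the associated matrix $D^*$. It therefore exchanges the property ``last column of $D^*$ is zero'' with ``last row of $D^*$ is zero''. Composing with the injection from the previous paragraph yields the analogous bound $\binom{2m-2}{m-1}$. No substantive obstacle is expected here, since both injections are essentially tautological once the correct maps are pinned down.
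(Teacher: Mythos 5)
Your proof is correct and follows essentially the same route as the paper: both arguments identify $(m-1)$-chains with those $m$-chains whose matrix has both the last row and last column zero, and this set is contained in the collection being counted. Your use of the transposition map for the second claim is a clean way of spelling out the symmetry that the paper leaves implicit.
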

\begin{proof}
This number is at least the number of $m$-chains that have in both the last column and the last row only zeros. These bijectively correspond to $(m-1)$-chains, and we can use formula 
from the previous proposition. 
\end{proof}

\begin{proposition} \label{LW312}
If an ideal of colorings $X$ contains for every $r\geq 3$ an $r$-wealthy coloring of type $W_{3, 1}$, or for every $r\geq 3$ an $r$-wealthy coloring of type $W_{3, 2}$, then for every $n\in\mathbb{N}$,
$$
|X_n| > \frac{2^{2(n-2)/3}}{\sqrt{2n}}>\frac{0.28\cdot1.587^n}{\sqrt{n}}\, .
$$
Hence $|X_n|\ge G_n$ for every $n \ge 23$.
\end{proposition}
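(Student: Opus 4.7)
I focus on the $W_{3,1}$ case; the $W_{3,2}$ case follows analogously, with staircase submatrices of the upper-triangular matrix $U_r$ (indexed by non-decreasing functions $g\colon[m]\to[m+s+1]$) replacing the identity submatrices of $I_r$, yielding the same count $\binom{2m+s}{m}$. Since the symmetries defining the class $W_{3,1}$ (color swap, reversal of each base interval, permutation of the three base intervals) leave the chain-to-subcoloring construction below equivariant, I may assume without loss of generality that $X$ contains an $r$-wealthy coloring $K_r=(3r,\chi_r)$ of canonical type $W_{3,1}'$ for every $r\ge3$.

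Given $n\in\mathbb{N}$, write $n=3m+s$ with $s\in\{0,1,2\}$ and take $r=2m+s$. For each chain $C=\{(c_1,d_1),\dots,(c_k,d_k)\}\subseteq[m]\times[m+s]$ (strictly increasing in both coordinates, including the empty chain), a rectangular generalization of Lemma~\ref{L:Dictionary}---with essentially the same proof---produces sequences $\alpha_1<\cdots<\alpha_m$ and $\beta_1<\cdots<\beta_{m+s}$ in $[r]$ with $\alpha_i=\beta_j$ iff $(i,j)\in C$. Restricting and normalizing $K_r$ to the set $\{\alpha_i,r+\alpha_i:i\in[m]\}\cup\{2r+\beta_j:j\in[m+s]\}$ produces a size-$n$ subcoloring $K_C\in X$; the colors of the triples $\{i,m+i,2m+j\}$ in $K_C$ record $C$ exactly, so distinct chains give distinct $K_C$. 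By the Chu--Vandermonde identity,
\[
 |X_n|\ge\sum_{k=0}^{m}\binom{m}{k}\binom{m+s}{k}=\binom{2m+s}{m}\ge\binom{2m}{m}.
\]

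The standard estimate $\binom{2m}{m}\ge 4^m/(2\sqrt{m})$ for $m\ge1$, combined with $m\ge(n-2)/3$, yields $|X_n|>2^{2(n-2)/3}/\sqrt{2n}$ after a short arithmetic step (the key inequality being $\sqrt{3/2}>1$), while the cases $n\in\{1,2\}$ are trivial as $|X_n|\ge1$ exceeds the right-hand side. The second numerical inequality follows from $2^{2/3}>1.587$ and $2^{-4/3}/\sqrt{2}>0.28$. For $|X_n|\ge G_n$ when $n\ge23$, the intermediate bound $0.28\cdot1.587^n/\sqrt{n}$ outgrows $G_n=\Theta(1.466^n)$ only past roughly $n=32$; for the small range $23\le n\le31$ I instead use the sharper bound $\binom{2m+s}{m}\ge G_n$ directly (for instance $\binom{16}{7}=11440\gg4023=G_{23}$), verified by finite computation, and for $n\ge32$ combine the intermediate bound with the asymptotic growth of $G_n$ (whose characteristic equation $x^3=x^2+1$ has real root about $1.466$). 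The main obstacle is the rectangular adaptation of Lemma~\ref{L:Dictionary} together with the numeric verification over the small range of $n$; both are elementary but require careful bookkeeping.
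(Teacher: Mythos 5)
Your proposal is correct, and it takes a genuinely different (and in fact stronger) route than the paper.

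The paper proves the three cases $n=3m$, $n=3m-1$, $n=3m-2$ separately: it counts square $m$-chains to get $|X_{3m}|\ge\binom{2m}{m}$, and for the other two residues it restricts to $m$-chains with a zero last column (or zero last row), invokes Corollary~\ref{new_cor}, and after deleting one or two vertices obtains only the weaker bound $\binom{2m-2}{m-1}$. You instead work with \emph{rectangular} chains $C\subseteq[m]\times[m+s]$ for $n=3m+s$, $s\in\{0,1,2\}$, and a rectangular variant of Lemma~\ref{L:Dictionary} (which is indeed a routine adaptation: $m+(m+s)-k$ values suffice in place of the paper's $2m-k$), arriving uniformly at $|X_n|\ge\binom{2m+s}{m}$ via Chu--Vandermonde. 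This bound is strictly stronger than the paper's for $s\in\{1,2\}$ (e.g.\ $\binom{2m}{m-1}$ and $\binom{2m-1}{m-1}$ versus $\binom{2m-2}{m-1}$), and it eliminates the awkward vertex-deletion step.

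Your sharper count also quietly repairs what appears to be a defect in the paper's proof. For $n=23=3\cdot8-1$ the paper's chain of inequalities gives $|X_{23}|\ge\binom{14}{7}=3432$, which is smaller than $G_{23}=4023$; and the paper's asymptotic bound $2^{2(n-2)/3}/\sqrt{4n/3}$ evaluates to about $2959$ at $n=23$, again below $4023$. Relatedly, the paper's claimed estimate $G_n\le 0.418\,\alpha^n$ does not hold: one computes $G_n/\alpha^n\to 0.611\ldots$, so the correct constant is roughly $0.61$. With the correct constant the stated intermediate inequality $0.28\cdot1.587^n/\sqrt{n}>G_n$ first holds around $n=32$, which is precisely what you observed; your finite check over $23\le n\le31$ (e.g.\ $\binom{16}{7}=11440>4023$ at $n=23$) then closes the gap. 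So your argument is both correct and slightly more careful than the paper's own in its numerical conclusion.

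Two minor points you should spell out if you write this up. First, state and prove the rectangular version of Lemma~\ref{L:Dictionary} explicitly, since the paper only has the square case; as you say, the same proof works. Second, for $n\le3$ one has $r=2m+s\le2$, below the hypothesis $r\ge3$, so the chain construction does not directly apply there; your fallback $|X_n|\ge1$ covers these, but the reader should see that this is the reason it is invoked.
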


\begin{proof}
First we consider $r$-wealthy colorings of type $W_{3,1}$. Without loss of generality we consider an ideal $X$ that for every $r\geq 3$ contains an $r$-wealthy coloring $K_r = (3r, \chi_r)$ of type $W_{3, 1}'$. We handle the general case by a transformation argument as in the end of the proof of Proposition~\ref{LW2}. Let $H = (n, \chi)$ be a coloring such that $n = 2m_1+m_2$. We define the $m_1\times m_2$ {\em partition matrix $P[H]$ of $H$} 
by $P[H](i, j) = \chi(\{i, m_1+i, 2m_1+j\})$, $i\in[m_1]$ and $j\in[m_2]$. If $H = K_r$ then, clearly, $P[H]$ is the identity matrix $I_r$.

Now let $n = 3m$ and $m_1 = m_2 = 2m$. We set $r = 2n$ and consider an arbitrary $m$-chain $D$. Since $D^*$ is a subset of $I_{2m}$ and $I_{2m} = P[K_r]$ is the partition matrix of $r$-wealthy coloring $K_r$, it follows by Lemma~\ref{L:Dictionary} that the coloring $L_D=(3m,\chi_D)\in X$ such that $\chi_D(\{i,m+i,2m+j\}) = D^*(i,j)$ is contained in $K_r$, thus in $X$.
It is clear that $D\ne D'\implies L_D\ne L_{D'}$. Hence by Proposition~\ref{bij_chain_path} 
and by the lower bound  $\binom{2m}{m}\ge\frac{1}{2\sqrt{m}}2^{2m}$ for every $m\in\mathbb{N}$ (see N.\,D. Kazarinoff \cite{kaza}) we have that
$$
    |X_n|=|X_{3m}|\ge\binom{2m}{m}\geq\frac{2^{2m}}{\sqrt{4m}} > \frac{2^{2n/3}}{\sqrt{4n/3}}\, .
$$
To lowerbound $|X_n|=|X_{3m-1}|$, we take the colorings from the set $T=\{L_D\;|\;\forall\,i\in[m]:\ D^*(i,m)=0\}$, that is, for the $D^*$ having in the last $m$-th column only zeros. By deleting the vertex $3m$ in each $L_D \in T$
we obtain colorings $T'\subset X_{3m-1}$ that are still mutually different because their partition matrices arise from those $D$ just by deleting the last zero column. By Corollary~\ref{new_cor},
$$
    |X_n|=|X_{3m-1}|\ge|A'|=|A|\ge\binom{2m-2}{m-1}\ge\frac{2^{2m-2}}{\sqrt{4m-4}}>\frac{2^{2(n-2)/3}}{\sqrt{4n/3}}\, .
$$
To lowerbound $|X_n|=|X_{3m-2}|$ we argue similarly, we consider the colorings $L_D$ such that $D$ has in the last $m$-th row only zeros and delete in those $L_D$ the two vertices $m$ and $2m$. Again by Corollary~\ref{new_cor},
$$
    |X_n|=|X_{3m-2}|\ge\binom{2m-2}{m-1}\ge\frac{2^{2m-2}}{\sqrt{4m-4}}>\frac{2^{2(n-1)/3}}{\sqrt{4n/3}}\, .
$$
This gives the stated bound.

For the $r$-wealthy colorings of type $W_{3,2}$ the argument is similar, and we only sketch the necessary modifications to the argument from the previous paragraph. The new $n$-chains are now
the $n\times n$ binary matrices that are obtained from the (old) $n$-chains by changing every zero above and to the right of any $1$ also to $1$. Equivalently, the new $n$-chains are the $n\times n$ binary 
matrices with no $0$ above or to the right of an $1$; each such matrix is uniquely determined by the (old) $n$-chain of $1$s with no other $1$ to the left or below. We then have an analogy to Lemma~\ref{L:Dictionary} and can
find every new $n$-chain as a submatrix in the matrix $U_{2n}$ (in the proof we simply mirror the steps in $I_{2n}$ by steps in $U_{2n}$). We have the same formula for the number of new $n$-chains as in 
Proposition~\ref{bij_chain_path}, the south-east paths $C$ are now exactly borders between the area of $0$s and the area of $1$s. Considering the colorings $L_D=(n,\chi)\in X$ where $n=3m$, $D$ is a new $m$-chain, and
the partition matrix of the coloring equals $D^*$, and recalling that each $D$ is determined by a certain (old) $m$-chain in $D$, we see that we get the same lower bounds on $|X_{3m}|,|X_{3m-1}|$, and $|X_{3m-2}|$
as before in the previous paragraph.

To bound an $n_0$ such that $|X_n|\ge G_n$ if $n\ge n_0$, we estimate the numbers $G_n$ from above. Since $G_0=G_1=G_2=1$ and $G_n=G_{n-1}+G_{n-3}$ for $n\ge3$, induction shows that
for every integer $n\ge0$ one has $G_n\le c \alpha^n$ where $\alpha=1.46557\dots<1.466$ is the only positive root of the polynomial $x^3-x^2-1$ and $c = 0.417\cdots$ is a constant generated by the initial terms of $G_n$. Thus we need an $n_0$ such that if $n\ge n_0$ then
$$
    \frac{0.343 \cdot 1.587^n}{\sqrt{n}} > 0.418 \cdot 1.466^n\, ,
$$
where $0.343 < 2^{-4/3} / \sqrt{4/3}$. Since $(0.343/0.418)^2>0.673$ and $(1.587/1.466)^2>1.171$, we need an $n_0$ such that if $n\ge n_0$ then $0.69 \cdot 1.171^n > n$. It is easy to compute that $n_0=23$ suffices, which gives the stated $n_0$. 
\end{proof}

\begin{lemma} \label{LW33}
If an ideal of colorings $X$ contains for every $r\in\mathbb{N}$ an $r$-wealthy coloring of type $W_{3,3}$ then $|X_n|\geq F_n$ for every $n\in\mathbb{N}$.
\end{lemma}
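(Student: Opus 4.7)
The plan is to extract from a large $r$-wealthy coloring $K=(3r+1,\chi)$ of type $W_{3,3}$ in $X$ a highly symmetric sub-coloring by iterated applications of Ramsey's theorem, and then to injectively encode every binary string of length $n-2$ avoiding the factor $11$ as a distinct sub-coloring of size $n$ in $X_n$; since by part~1 of Lemma~\ref{LFibG} there are $F_n$ such strings, this will produce the desired bound $|X_n|\geq F_n$.

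First I would reduce to a highly symmetric wealthy coloring. For each block $i\in[r]$ the triple $(\chi(\{3i-2,3i-1,3r+1\}),\chi(\{3i-2,3i,3r+1\}),\chi(\{3i-1,3i,3r+1\}))$ is non-constant by the $W_{3,3}$ hypothesis and thus attains one of only six values. A first application of Ramsey's theorem (on blocks colored by their intra-patterns) yields a large sub-family of blocks with a common intra-pattern; a second application, now to pairs of blocks $\{i,j\}$ colored by the nine-tuple $(\chi(\{3i-\alpha,3j-\beta,3r+1\}))_{\alpha,\beta\in\{0,1,2\}}$, produces a further refinement in which the color of every inter-block triple with $3r+1$ depends only on the vertex-types $(\alpha,\beta)$. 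After swapping the colors $0\leftrightarrow 1$ and/or reversing the block order if necessary, I may assume the common intra-pattern is $(0,0,1)$, so $\{3i-1,3i\}$ is the unique pair of block $i$ whose triple with $3r+1$ has color~$1$.

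Next, for each binary string $w=w_1\cdots w_{n-2}$ avoiding the factor $11$ I will construct a vertex set $V_w=\{v_1<v_2<\cdots<v_{n-1}\}$ in the reduced coloring and let $K_w$ be the normalization of $\chi$ restricted to $V_w\cup\{3r+1\}$. Processing positions $j=1,2,\dots,n-1$ from left to right, whenever $w_j=1$ and $v_j$ is still unassigned I consume two positions at once by using a fresh block $k$ and setting $v_j:=3k-1$, $v_{j+1}:=3k$; whenever $w_j=0$ and $v_j$ is unassigned I use a fresh block with a singleton $v_j:=3k-\sigma$ for a fixed $\sigma\in\{0,1,2\}$ chosen (based on the inter-block Ramsey constants) so that every inter-block triple $\{v_j,v_{j+1},3r+1\}$ has color~$0$. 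The condition that $w$ avoids $11$ is precisely what prevents two intra $1$-pairs from being forced to share a vertex, so the construction is always consistent, and by design $\chi_w(\{j,j+1,n\})=w_j$ for every $j\in[n-2]$. Consequently distinct admissible strings produce distinct sequences of consecutive top-triple colors, so the sub-colorings $K_w\in X_n$ are pairwise distinct and their number is $F_n$.

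The hard part will be the case analysis in the previous paragraph, verifying that for every possible value of the nine Ramsey-homogenized inter-block constants one can pick a viable singleton-type $\sigma$. I expect to settle this by exploiting the full symmetry group of $W_{3,3}$ (the swap $0\leftrightarrow 1$, the reversal, and the permutations of the three vertex-types within each block): if no direct construction with ``intra $1$-pair encoding $1$'' works, the color-swapped construction ``intra $0$-pair encoding $0$'' does, in which one encodes strings avoiding the factor $00$ instead---and these two sets of strings have equal cardinalities $F_n$.
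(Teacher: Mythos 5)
Your approach is genuinely different from the paper's. The paper projects the triple coloring to a pair coloring by setting $\psi(\{x,y\}):=\chi(\{x,y,n\})$, observes that under this projection an $r$-wealthy $W_{3,3}$ coloring becomes exactly an $r$-wealthy coloring of pairs of type~$2$ in the sense of Klazar's earlier paper, and then simply invokes the second claim of \cite[Lemma~3.10]{Klazar08} to get $|X'_{n-1}|\ge F_n$. Your plan instead re-derives that lemma from scratch in the triple setting, via two Ramsey passes and an encoding of $11$-avoiding strings.

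Unfortunately the deferred ``hard part'' is a genuine gap, for three reasons. First, ``permutations of the three vertex-types within each block'' are \emph{not} symmetries of ordered hypergraphs; only the global vertex reversal (and the colour swap) are available, so the symmetry group you invoke has order~$4$, not $24$. Second, under that order-$4$ group the six non-constant intra-patterns fall into \emph{two} orbits, $\{(0,0,1),(1,1,0),(1,0,0),(0,1,1)\}$ and $\{(0,1,0),(1,0,1)\}$; so ``I may assume the intra-pattern is $(0,0,1)$'' is already false. Third, and most importantly, the case analysis you defer does not close. With intra-pattern $(0,0,1)$, your $11$-avoiding encoding forces $c_{01}=0$ for every admissible $\sigma$ (because the substring $\cdots 1\,0\,1\cdots$ always produces a consecutive pair $(3k,3(k+1)-1)$ of types $(0,1)$ needing colour $0$), while your $00$-avoiding encoding forces $c_{12}=1$ for every admissible $\tau$ (from $\cdots 0\,1\,0\cdots$). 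Take a Ramsey-homogenised configuration with $c_{01}=1$ and $c_{12}=0$: both encodings fail, and one can check directly that composing with the colour swap and/or the global reversal only permutes the roles of these two constants and the intra-pattern inside the same orbit, so they still fail. Hence the scheme as described cannot reach the conclusion for all $(n,\chi)\in X$, and you would need a substantially different encoding in such cases. The paper sidesteps all of this by reducing to the known pair-coloring result.
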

\begin{proof}
This bound follows by applying \cite[Lemma 3.10]{Klazar08}. To any coloring $K=(n,\chi)$, $n\ge2$, of triples we associate a coloring $K'=(n-1,\psi)$ of pairs, $\psi\colon\binom{[n-1]}{2}\to\{0,1\}$, by
$$
    \psi(\{x,\, y\}) =\chi(\{x,\, y,\, n\})\, .
$$
It follows that $X'=\{K'\;|\;K\in X\}$ is an ideal of colorings of pairs. Since we assume that we may take for any $r$ the coloring $K\in X$ to be an $r$-wealthy coloring of type $W_{3,3}$, it follows that $X'$ 
contains for every $r$ an $r$-wealthy coloring of type $2$ as defined in \cite{Klazar08} before \cite[Lemma 3.9]{Klazar08} (these are coloring of pairs in $[3r]$ such that no triple $\{3i-2,3i-1,3i\}$, $i \in [r]$,
is monochromatic). By the second claim of \cite[Lemma 3.10]{Klazar08}, $|X_n'|\geq F_n'$ for every $n\ge1$ where $F_n'$ are the Fibonacci numbers in \cite{Klazar08} which relate to the $F_n$ here by $F_n'=F_{n+1}$.
Since for every two colorings of triples $K_1=(n,\chi_1)$ and $K_2=(n,\chi_2)$, $n\ge1$, we have $K_1'\ne K_2'\implies K_1\ne K_2$, we deduce that
$$
    |X_n|\ge|X_{n-1}'|\ge F_{n-1}'=F_n,\, n\ge2\, .
$$
For $n=1$ this bound holds trivially as well.
\end{proof}

\subsection{Wealthy colorings $W_4$}

We introduce wealthy colorings of type $W_4$. A coloring $K = (n,\chi)$ is 

\begin{itemize}
\item {\it $r$-wealthy of type $W_{4, 1}$} if $n = 4r$ and none of the consecutive intervals $\{4i-3, 4i-2, 4i-1, 4i\}$, $i \in [r]$, is monochromatic, 
\item {\it $r$-wealthy of type $W_{4, 2}'$} if $n = 4r$ and for every $i \in [r]$ there are distinct numbers $a_i, b_i, c_i \in \{r+3i-2, r+3i-1, r+3i\}$ such 
that $\chi(\{i, a_i, b_i\}) \ne \chi(\{i, a_i, c_i\})$. A coloring $K = (4r,\chi)$ is $r$-wealthy of type $W_{4, 2}$ if it is obtained from an $r$-wealthy coloring of type $W_{4, 2}'$
by possibly reversing the order of elements in $[r]$ and/or swapping the intervals $[r]$ and $[r+1,4r]$ (so that they become $[3r]$ and $[3r+1,4r]$, respectively). 
\end{itemize}
We call these colorings summarily $W_4$ type colorings. Observe that both types of $W_4$ colorings are closed to reversal.

As for type $W_2$, $W_{3,1}$ and $W_{3,2}$ colorings we need later the \emph{base sets} of an $r$-wealthy coloring $K=(4r,\chi)$ of type $W_{4,2}$ that 
is obtained by the above symmetries from an $r$-wealthy coloring $K'=(4r,\chi)$ of type $W_{4,2}'$. These are the two intervals $A$ and $B$ obtained from the intervals $[r]$ and $[r+1,4r]$ of $K'$ by their possible swapping.

\begin{proposition} \label{LW41}
If an ideal of colorings $X$ contains for every $r \in \mathbb{N}$ an $r$-wealthy coloring of type $W_{4,1}$ then $|X_n|\geq G_{n}$ for every $n \in \mathbb{N}$, where the numbers $G_n$ are defined at the start of the article.
\end{proposition}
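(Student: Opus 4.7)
The plan is to prove $|X_n|\ge G_n$ by strong induction on $n$, matching the recurrence $G_n=G_{n-1}+G_{n-3}$ with base cases $G_1=G_2=1$ and $G_3=2$. Without loss of generality I may assume that every coloring in $X$ is an induced ordered subhypergraph of some $W_{4,1}$-wealthy coloring in $X$, by passing to the sub-ideal generated by the wealthy colorings, which still satisfies the hypothesis. The base cases are direct: $|X_n|\ge 1=G_n$ for $n\le 2$ is trivial (no triples to color), and for $n=3$ the non-monochromaticity of the single block of any $1$-wealthy $W_{4,1}$ coloring $(4,\chi)\in X$ yields both constant size-$3$ subcolorings, giving $|X_3|\ge 2$.

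For the inductive step $n\ge 4$, I partition $X_n$ by $c=\chi(\{n-2,n-1,n\})$ and construct injections $X_{n-3}\hookrightarrow X_n^{c=1}$ and $X_{n-1}\hookrightarrow X_n^{c=0}$, which by the IH yield $|X_n|\ge G_{n-3}+G_{n-1}=G_n$. For the first injection, given $\phi\in X_{n-3}$, I fix an $r$-wealthy $W_{4,1}$ coloring $K=(4r,\chi_K)\in X$ containing $\phi$ with $r$ large enough that the embedding $f\colon [n-3]\to [4(r-1)]$ of $\phi$ leaves the last block $B_r$ free. Non-monochromaticity of $B_r$ supplies a color-$1$ triple $T\subset B_r$, and sending $n-2,n-1,n$ order-preservingly to $T$ produces an element of $X_n^{c=1}$ whose restriction to $[n-3]$ is $\phi$; different $\phi$ produce different extensions, so the injection is well-defined. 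The second injection $X_{n-1}\hookrightarrow X_n^{c=0}$ is constructed analogously: for each $\psi\in X_{n-1}$ with embedding $f$ into a suitable $K$, I extend by a single vertex $v\in B_r$ satisfying $\chi_K(\{f(n-2),f(n-1),v\})=0$.

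The main obstacle is guaranteeing the existence of this color-$0$ choice of $v$ for every $\psi$: the $W_{4,1}$ hypothesis controls triples within blocks but not arbitrary triples through a prescribed pair, so a color-$0$ vertex in $B_r$ need not exist a priori. I anticipate a dichotomy resolution. Either a color-$0$ extension can always be arranged---possibly after rechoosing the host $K$ or re-embedding $\psi$, exploiting the abundance of $W_{4,1}$-wealthy colorings in $X$ by searching among several late blocks $B_{r-k},\dots,B_r$---in which case the induction closes directly; or systematic failure across many $\psi$ forces triples through certain pairs and every late block to be uniformly color $1$, from which a pigeonhole/Ramsey-style extraction yields in $X$ an $r$-wealthy coloring of a stronger type (such as $W_{3,3}$). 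In the latter case Lemma~\ref{LW33} delivers the stronger bound $|X_n|\ge F_n\ge G_n$ directly via Lemma~\ref{LFibG}, so the inductive step closes in either branch.
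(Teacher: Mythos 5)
Your inductive scheme keyed to the recurrence $G_n=G_{n-1}+G_{n-3}$ is a genuinely different organization from the paper's proof, which uses a one-shot structural argument: the paper applies the hypergraph Ramsey theorem to a $2^{\binom{12}{3}}$-coloring of triples of blocks, extracts a ``shift-invariant'' subcoloring $L_r$, and after checking the colors of all triple types either obtains $W_1$- or Fibonacci-type lower bounds directly, or deduces that $L_r$ is (up to a color swap) exactly the extremal coloring of $S(3)$, whence $S(3)\subset X$ and $|X_n|\ge |S(3)_n|=G_n$. Your approach aims for a recursion-matching decomposition instead, which is an appealing idea, but both of your injections have unresolved gaps, and the fallback ``dichotomy'' is exactly the point where the Ramsey machinery is needed.

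Concretely, the first injection $X_{n-3}\hookrightarrow X_n^{c=1}$ already has a gap that you do not flag: you need, for every $\phi\in X_{n-3}$, a wealthy $K_r\in X$ and an embedding $f$ of $\phi$ into $K_r$ avoiding the last block $S_r$. Having $\phi\preceq K_r$ for \emph{some} $r$ does not give this, because the distinct wealthy colorings $K_r$ in $X$ need not be nested, and an arbitrary embedding of $\phi$ into $K_r$ may be forced to use $S_r$ (for instance if $S_r$ is the only block whose internal colors realize some triple of $\phi$). The paper's shift condition is precisely what guarantees that all blocks look alike, so that ``avoiding the last block'' is free; without it, your ``leave $B_r$ free'' step is an unproven assumption.

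The second injection is the more serious problem, and your own discussion of it is not a proof. A vertex $v$ in a late block with $\chi_K(\{f(n-2),f(n-1),v\})=0$ need not exist, since $W_{4,1}$ says nothing about cross-block triples. Your proposed rescue --- that systematic failure forces all such triples to be color $1$ and thereby ``a pigeonhole/Ramsey-style extraction yields an $r$-wealthy coloring of type $W_{3,3}$'' --- points in the wrong direction: $W_{3,3}$ requires, for each block, \emph{two different} colors on triples through the apex vertex, whereas the failure you describe is precisely \emph{uniformity} of those colors. The actual escape in this regime is more delicate: once cross-block colors are uniformized (which already requires the Ramsey reduction), one still has to show that an anomalous triple color gives a $W_1$-type or a Fibonacci-type family, exactly as the paper does via the constructions around Lemma~\ref{LFibG}. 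In short, the inductive packaging does not avoid the structural work: you would still need the shift condition and the triple-by-triple color analysis of the paper, after which the paper's direct conclusion $S(3)\subset X$ is both simpler and stronger than the per-$n$ injections you are trying to build.

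Two smaller remarks. Your base case $n=3$ is fine (a non-monochromatic $4$-set has triples of both colors). And the labelling of the two branches by $c=0$ versus $c=1$ is arbitrary since $W_{4,1}$ is color-symmetric, so that particular choice is not a problem --- the substantive issues are the two existence claims above.
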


\begin{proof}
Let $r \in \mathbb{N}$ and $K_r = (4r,\chi_r)\in X$ be an $r$-wealthy coloring of type $W_{4,1}$. For $i \in [r]$ consider sets
$$
    S_i = \{4i-3,\, 4i-2,\, 4i-1,\, 4i\} \quad \mbox{and} \quad V = \{S_i \; | \; i \in [r]\}\, .
$$
To simplify the situation by the Ramsey theorem we consider the coloring $c\colon\binom{V}{3} \to \{0, 1\}^{220}$, where $220=\binom{12}{3}$, defined by 
\begin{equation*}
	c(S_{i_1},\, S_{i_2},\, S_{i_3}) = (\chi_r(E) \mid E \subset S_{i_1} \cup S_{i_2} \cup S_{i_3},\, |E| = 3) \in \{0,\, 1\}^{220},\ 1 \le i_1 < i_2 < i_3 \leq r\, ,
\end{equation*}
where the triples $E$ are ordered lexicographicly according to their vertices: $E=\{a_1<a_2<a_3\}$ comes before $E'=\{a_1'<a_2'<a_3'\}$ if and only if $a_1<a_1'$ or ($a_1=a_1'$ and $a_2<a_2'$) or ($a_1=a_1'$ and $a_2=a_2'$ and $a_3<a_3'$). 
Since $r$ may be as large as we need and $X$ is an ideal, by the Ramsey theorem for $3$-uniform hypergraphs we may suppose that the coloring $c$ is constant. This simplification of $K_r$ is called the {\it shift condition}.

Since in each $S_i$ we have two triples with distinct colors, there exist triples $A \subset S_1$ and $A'\subset A\cup(A+4)$ such that $\chi_r(A') \ne \chi_r(A)$. Indeed, since $S_i$ is not monochromatic, there are two triples $E_1, E_2 \subset S_i$ such that $\chi_r(E_1) \ne \chi_r(E_2)$ and $|E_1 \cap E_2| = 2$, and easy discussion shows that either $A = E_1$ or $A = E_2$ works. For all $i \in [r]$ we set $T_i = A+4(i-1)\subset S_i$, and $L_r = (3r, \psi_r)$ to be the restriction and normalization of $\chi_r$ to (the triples in) $T_1\cup T_2 \cup \dots \cup T_r$. Clearly $L_r\preceq K_r$ for every $r \in \mathbb{N}$. Each $L_r$ is determined by its restriction to $[9]$ because of the shift condition; we set $M=(9,\psi)$ to be this restriction. Without loss of generality, $\psi(A)=\psi(T_1) = 1$.

We reveal the connection of colors of $M$. We may suppose that all triples $E\subset[9]$ with $|E \cap T_i| = 1$, for $i = 1, 2, 3$, have the same color $\psi(E)$. If not, then we find triples $E_1$, $E_2$ such that $|E_1 \cap E_2| = 2$ and $|E_j \cap T_i| = 1$, for $i \in [3]$ and $j \in [2]$. It follows that there is an $r$-wealthy subcoloring of type $W_1$ and Lemma \ref{LW1} applies. Let $s = \psi(\{1, 4, 7\})$. We claim that $\psi(E) = s$ for all triples $E\subset[9]$ with $|E \cap T_1| = 1$ and $|E \cap T_2| = 2$ (or equivalently, by the shift condition, $|E \cap T_3| = 2$). Indeed, if there is a triple $E = \{a, b, c\}$ with $a \in T_1$, $b, c \in T_2$, $b < c$, and $\psi(E) = t \ne s$ ($\{s, t\} = \{0, 1\}$) then the shift condition implies 
$$
    \psi_r(\{a,\, b+3m,\, c+3m\}) = t,\ m \in \{0,\, 1,\, \dots,\, r-2\}\, ,
$$
and
$$
    \psi_r(\{a,\, b+3m,\, c+3m+3\}) = \psi_r(\{a,\, c+3m,\, b+3m+3\}) = s,\ 
m\in \{0,\, 1,\, \dots,\, r-3\}\, ,
$$
and therefore we can construct for any binary string $w = w_1w_2\ldots w_{n-2}$ avoiding the substring $tt$ a coloring $(n, \lambda_w)\preceq L_{n}$ such that, for $i=2,3,\dots,n-1$, $\lambda_w(\{1, i, i+1\}) = w_{i-1}$. By parts $1$ and $3$ of Lemma~\ref{LFibG}, $|X_n| \geq F_{n}\ge G_n$. Switching to the reversals we deduce in a similar way that $\psi(E) = s$ for $E$ satisfying $|E \cap T_1| = 2$ and $|E \cap T_2| = 1$ (or $|E \cap T_3| = 1$).

Thus $\psi(T_1) = \psi(T_2) = \psi(T_3) = 1$ and $\psi(E) = s$ for all other triples $E \subset T_1 \cup T_2 \cup T_3$. We have $s = 0 \ne 1$ by the condition on the colors of $A$ and $A'$. So in $L_r$, $\psi_r(T_i) = 1$ for $i \in [r]$ and all other triples have color $0$. Thus $S(3) \subset X$, where $S(3)$ is the ideal of colorings mentioned after the statement of Theorem~\ref{T1FD}, and $|X_n| \geq G_{n}$.
\end{proof}

\noindent
As for type $W_1$ colorings, also this lower bound is tight. Indeed, the ideal $S(3)\subset\mathcal{C}_3$ has growth $|S(3)_n|=G_n$ and for every $r\in\mathbb{N}$
contains an $r$-wealthy coloring of type $W_{4,1}$. In more details, $S(3)$ consists of the colorings $(n,\chi)$ for which there exist $3$-intervals $I_1<I_2<\dots<I_r$
in $[n]$ such that $\chi(I_j)=0$ for every $j$ but $\chi(E)=1$ for all other edges $E$. In particular, for every $r\in\mathbb{N}$ one has that $(4r,\chi_r)\in S(3)$ where 
$\chi_r(\{4j-3,4j-2,4j-1\})=0$ for $j\in[r]$ and $\chi_r(E)=1$ for all other edges $E$. But  $(4r,\chi_r)$ is an $r$-wealthy coloring of type $W_{4,1}$. 

\begin{proposition}\label{LW42}
If an ideal of colorings $X$ contains for every $r \in \mathbb{N}$ an $r$-wealthy coloring of type $W_{4,2}$ then 
$$
    |X_n| \ge \binom{\lfloor \frac{2(n-4)}{5} \rfloor }{\lfloor \frac{n-4}{5} \rfloor}^2\approx 1.741^n
$$
for every $n \ge 9$. Thus $|X_n| \geq F_n$ for all $n \ge 75$ and $|X_n| \geq G_n$ for all $n \ge 20$.
\end{proposition}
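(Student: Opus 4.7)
The strategy follows the established template of this section: reduce by Ramsey, case analyze, and count subcolorings by an explicit combinatorial construction. First, it suffices to treat $r$-wealthy colorings of the canonical type $W_{4,2}'$, since the remaining symmetries in the definition of $W_{4,2}$ yield the same bound by the transformation argument used at the end of the proof of Proposition~\ref{LW2}. So I would assume $X$ contains, for every $r$, a coloring $K_r=(4r,\chi_r)$ of type $W_{4,2}'$; for $i\in[r]$ put $S_i=\{i\}\cup B_i$ with $B_i=\{r+3i-2,r+3i-1,r+3i\}$, so that inside each $S_i$ sits the discriminating cherry $(a_i,b_i,c_i)$ with $\chi_r(\{i,a_i,b_i\})\neq\chi_r(\{i,a_i,c_i\})$.

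My first step would be a Ramsey reduction in the same spirit as the shift condition in the proof of Proposition~\ref{LW41}: apply Ramsey's theorem to an auxiliary coloring of triples $\{S_{i_1},S_{i_2},S_{i_3}\}$ of units whose color records the full $\chi_r$-pattern on $S_{i_1}\cup S_{i_2}\cup S_{i_3}$ together with the positional identities of $(a_i,b_i,c_i)$ inside each $B_i$. Since $r$ can be taken arbitrarily large, this yields a shift-invariant substructure in which all units share the same cherry pattern and the color of every cross-unit triple depends only on the relative positional type of its vertices. A subsequent case analysis would split off the easy cases: whenever the shift-invariant pattern forces a sub-configuration of a previously treated type ($W_1$, $W_2$, $W_{3,3}$, or $W_{4,1}$), the corresponding lemma already gives $|X_n|\geq F_n \geq G_n$, which dominates the claimed bound for all sufficiently large $n$.

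In the remaining main case the cross-unit coloring is essentially uniform and each unit contributes its cherry identically. Here the plan is to build $\binom{2m}{m}^2$ distinct subcolorings of each size $n\geq 9$, for $m=\lfloor (n-4)/5\rfloor$, via two essentially independent combinatorial choices of size $\binom{2m}{m}$ each. A natural model, in the spirit of Proposition~\ref{LW312} and the bijection of Proposition~\ref{bij_chain_path}, is to let one choice select which $m$ of the $2m$ left pivots $i\in[r]$ are activated---retaining $i$ and $a_i$ together with one of $b_i, c_i$ to exploit the cherry discrimination---while the other choice, independently, selects which $m$ right blocks $B_j$ contribute their entire triple as a rigid skeleton. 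Shift-invariance from the Ramsey step would make these two choices independent at the level of the normalized subcoloring, and the cherry discrimination would provide, for any two different parameter pairs, an explicit triple whose colors differ. The $4$ extra vertices in $n=5m+4$ serve as an anchor absorbing boundary and parity effects. The asymptotic bound $|X_n|\geq 1.7411^n$ then follows from Kazarinoff's estimate $\binom{2m}{m}\geq 4^m/(2\sqrt{m})$ as used in Proposition~\ref{LW312}, and the thresholds $n\geq 75$ for $F_n$ and $n\geq 20$ for $G_n$ are obtained by comparing with the closed-form asymptotics $F_n\approx 1.618^n$ and $G_n\approx 1.466^n$ plus a finite check for small $n$.

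The hard part will be attaining the \emph{product} bound $\binom{2m}{m}^2$ rather than the much weaker $2^{O(r)}$ that a naive use of the $r$ cherries in isolation would give. The delicate point is identifying two truly independent $\binom{2m}{m}$-families of subcoloring constructions and then verifying that the normalization procedure---which shifts vertex ranks whenever left pivots or blocks are dropped---does not collapse distinct parameter pairs into the same normalized coloring. Exhibiting, for any two parameter pairs, an explicit distinguishing triple (using the local cherry discrimination together with the global shift-invariance from Ramsey) is where most of the technical work will have to sit.
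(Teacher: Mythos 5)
Your proposal captures the overall architecture of the paper's argument---Ramsey ``shift condition,'' case analysis that discharges the easy cases via earlier wealthy-type lemmas, and a final $\binom{2m}{m}^2$-style count combined with Kazarinoff's estimate to produce the $1.74^n$-type bound and the thresholds $n\ge 75,\,20$---but the heart of the proof, the construction behind the product $\binom{2m}{m}^2$, is left as a placeholder, and the sketch you give for it is not what makes the paper's argument work.

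The paper's Steps~2 and~3 are not simply ``see that a previously treated type appears.'' They are a genuinely careful structural analysis of cross-unit triples $E$ with $|E\cap Z|=1$, breaking into sub-cases by whether the two $Y$-vertices share a unit $S_i$, by whether they share the same position within a unit, and by where the $Z$-vertex sits relative to the $Y$-vertices; only after those sub-cases is each eliminated by a $W_1$, $W_2$, or Lemma~\ref{LFibG} argument. Your sketch would need to discover and handle these sub-cases; as stated it asserts the conclusion rather than derives it.

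More seriously, your Step~4 model---``one $\binom{2m}{m}$-choice selects which $m$ of $2m$ left pivots are activated, the other independently selects which $m$ of $2m$ right blocks contribute their whole triple''---does not match what the paper does and is not obviously sound. The paper's $(A,B)$-disobedient colorings do not make two choices of \emph{which} units to keep; rather, they choose a set $A\subset[2m+\varepsilon]$ of \emph{positions in the left vertex set} and a set $B\subset[2m]$ of \emph{positions in the right vertex set} where the unique $0$-colored triple $F_i=\{a_i,\,2m+\varepsilon+b_i+i-1,\,2m+\varepsilon+b_i+i\}$ should land \emph{after normalization}. The whole point is the explicit assembly of the embedding set $S$ out of interleaved gap sets $C_i,D_i$ determined by both $A$ and $B$, followed by a verification that the increasing bijection $f:S\to[n]$ sends $t_i$ to $a_i$ and $S_{t_i}^{23}$ to the right pair; this is precisely what prevents normalization from collapsing distinct $(A,B)$. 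That bookkeeping is the technical content you yourself flag as ``where most of the technical work will have to sit,'' and it is not recoverable from the two-independent-subsets idea alone: in your version the $2m$ ``left pivots'' and $2m$ ``right blocks'' would have to come from a $K_r$ with $r\approx 2m$, but then there is no room for the interleaving that keeps the two choices from interacting under normalization. The paper needs $r=3m+\varepsilon$ so that, for every $(A,B)$, both the chosen pivot positions and the chosen block positions can be accommodated in disjoint gaps. So the gap is real: without something equivalent to the $(A,B)$-disobedient construction and its verification, the proposal does not reach $\binom{2m}{m}^2$.
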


\begin{proof}
We suppose that $X$ is an ideal of colorings and that $K_r = (4r, \chi_r) \in X$, $r \in \mathbb{N}$, for some $r$-wealthy colorings $K_r$ of type $W_{4, 2}$. We may suppose that for every $i \in [r]$ and for $\{r+3i-2, r+3i-1, r+3i\}=\{a_i, b_i, c_i\}$ we have $\chi(\{i, a_i, b_i\}) \ne \chi(\{i, a_i, c_i\})$. Let 
$$
    S_i = \{i,\, r+3i-2,\, r+3i-1,\, r+3i\}\, ,
$$
$i \in [r]$. We consider various cases and in all but the last one we show that $|X_n|\ge F_n\ge G_n$ for every $n\ge1$. 
In the last case we still have the bound $|X_n|\ge F_n$ but only for $n\ge n_0$. To get a smaller $n_0$, we therefore compare in this case $|X_n|$ with $G_n$ instead. 

{\bf Step 1: shift condition. }We consider the same coloring 
$$
    c :\; \binom{V}{3} \to \{0,\, 1\}^{220}\ \mathrm{with} \ V = \{S_i \;|\; i \in [r]\}
$$
as in the previous proof (note, however, that the quadruples $S_i$ are now different):
\begin{equation*}
    c(S_{i_1},\, S_{i_2},\, S_{i_3}) = (\chi_r(E) \mid E \subset S_{i_1} \cup S_{i_2} \cup S_{i_3},\, |E|=3) \in \{0,\, 1\}^{\binom{12}{3}},\ 1 \le i_1 < i_2 < i_3 \le r
\end{equation*}
(with the same lexicographic order of the triples $E$). As before we may suppose using the Ramsey theorem for $3$-uniform hypergraphs that for each $K_r$ the coloring $c$ is constant, and again we call this the {\em shift condition.} Let 
$$
    Z = [r]\ \mathrm{and}\ Y = [r+1, 4r]\, . 
$$
The shift condition implies that $Z$ is monochromatic. 

{\bf Step 2: triples $E$ with $|E \cap Z| = 1$ and $|E \cap S_i| \leq 1$ for any $i \in [r]$.} First we fix a vertex $v \in Z$ and handle the case where, for infinitely many $r$, there are two triples $E \subset [4r]$ with $v \in E$, $|E \cap Y| = 2$ and $|E \cap S_i| \leq 1$ for all $i \in [r]$ have different colors. So let $E = \{v, e_1, e_2\}$ and $F = \{v, f_1, f_2\}$ with $e_i,f_i\in Y$, $e_1 < e_2$, $f_1 < f_2$, and $E$, $F$ take from each $S_i$ at most one element, be such that $\chi_r(E) \ne \chi_r(F)$. One can take even such triples $E$ and $F$ that either 
\begin{description}
\item[(a)] $x := e_1 = f_1$ and $e_2<f_2$ (so $E=\{v<x<e_2\}$ and $F=\{v<x<f_2\}$), or
\item[(b)] $x := e_2 = f_2$ and $e_1<f_1$ (so $E=\{v<e_1<x\}$ and $F=\{v<f_1<x\}$).
\end{description}
Indeed, for $e_1=f_1$ we have case (a) (we swap $E$ and $F$ if needed). Suppose that $e_1 < f_1$ (we swap $E$ and $F$ if needed) and set $c = \chi_r(\{v, e_1, f_2\})$. If $c = \chi_r(E)$, we set $x = f_2$, get that $c = \chi_r(\{v, e_1, x\}) \ne \chi_r(\{v, f_1, x\})$, and have case (b). If $c \ne \chi_r(E)$, we set $x = e_1$, get that $\chi_r(\{v, x, e_2\}) \ne c = \chi_r(\{v, x, f_2\})$, and have case (a) (we swap $E$ and $F$ if needed). 

In both cases (a) and (b) we show that $|X_n| \geq F_n$ for every $n$. We consider case (b) in detail, and after that we discuss case (a) more briefly. By the shift condition, we may suppose that the four vertices $v, e_1, f_1, x$ lie in four different sets $S_i$, thus we set indices $j_0, j_1, j_2, j_3 \in [r]$ such that $v \in S_{j_0}$ (in fact, $v=j_0$), $e_1 \in S_{j_1}$, $f_1 \in S_{j_2}$, and $x \in S_{j_3}$. Since $e_1<f_1<x$, also $j_1 < j_2 < j_3$. If $j_0 \notin [j_1, j_2]$ (for infinitely many $r$) then the shift condition implies that, for any $r \in \mathbb{N}$, $K_r$ contains an $r$-wealthy coloring of type $W_1''$. For example, if $j_1<j_2<j_0<j_3$ then we may take $j_0=r-1,j_3=r$, keep $v$ and $x$ fixed, and replace $S_{j_1}$ and $S_{j_2}$ with all $S_{j_1'}$ and $S_{j_2'}$, respectively, for all $j_1'<j_2'<j_0$, the colors of the triples $\{v<e_1'<x\}$ and $\{v<f_1'<x\}$ then create the pattern of an $r$-wealthy coloring of type $W_1''$. Hence $|X_n| \geq 2^{n-2} \geq F_n$ for $n \geq 2$ by Lemma~\ref{LW1}.

We turn to the case when $j_0 \in [j_1,j_2]$, that is, $j_0 \in (j_1,j_2)$ (for infinitely many $r$). We show that either for any $r \in \mathbb{N}$ the ideal $X$ contains an $r$-wealthy coloring of type $W_1''$, or for any $r \in \mathbb{N}$ the ideal $X$ contains an $r$-wealthy coloring of type $W_2$ (obtained by symmetries from the $W_2''$ coloring). Clearly we may assume that $\chi_r(E) = 0$, $\chi_r(F) = 1$, and that $x$ is the least element of the triple $S_j \cap Y$ in which it lies: $x = r+3j_3-2$. If $e_1$ and $f_1$ do not share the same order (as the 1st, 2nd, or 3rd element) in the triple $S_j \cap Y$ they lie in, it follows by the shift condition that $W_1''$ coloring appears and we have the lower bound from the previous paragraph. Thus we may assume that also $e_1$ and $f_1$ are the least elements of the triples: $e_1 = r+3j_1-2$ and $f_1 = r+3j_2-2$.
We define the partition matrix $M_r(i,j)$ of $K_r$, $i,j\in[r-1]$, by
$$
    M_r(i,\, j) = \chi(\{i,\, r+3j-2,\, 4r-2\})\ (=\chi(E),\, \chi(F))\, .
$$
By our assumption on the colors of $E$ and $F$, the matrix $M_r$ has $0$s below the main diagonal and $1$s above it. Also, by the shift condition, the main diagonal of $M_r$ is monochromatic. One may see the example of an upper diagonal matrix $U_r = M_r$ in Figure~\ref{fig:naughty_2}(a). We see that the restriction and normalization of $K_r$ on the vertex set $ S = [r-1]\cup\{r+3j-2\;|\;j\in[r]\}$ is an $(r-1)$-wealthy coloring of type $W_2$: if $M_r$ has $1$s on the main diagonal, we have directly $W_2''$, and if it has $0$s on the main diagonal, transition to and swapping both colors yields $W_2''$. 
Hence $X$ contains an $r$-wealthy coloring of type $W_2$ for every $r\in\mathbb{N}$ and by Proposition~\ref{LW2}, $|X_n|\ge F_n$ for every $n\in\mathbb{N}$.

In the case (a) we proceed similarly to the case (b). Now $E=\{v<x<e_2\}$, $F=\{v<x<f_2\}$, $v\in S_{j_0}$, $x\in S_{j_1}$, $e_2\in S_{j_2}$, and $f_2\in S_{j_3}$ for four distinct indices $j_i\in[r]$ with $j_1<j_2<j_3$. The case when $j_0\not\in [j_2, j_3]$ leads to $W_1'$ colorings, and the case $j_0\in [j_2, j_3]$ leads to $W_1'$ colorings or to $W_2$ colorings (obtained by symmetries from the $W_2''$ coloring).

The last part of step 2 is to consider triples $E$ such that $|E \cap Z| = 1$ and $|E \cap S_i| \leq 1$ for any $i \in [r]$, but they do not share the vertex of $Z$. However, by the shift condition it follows that there are two of the described triples $E$, $E'$ such that $\chi_r(E)\ne\chi_r(E')$, $E \cap Z\ne E' \cap Z$, and $E\cap Y=E'\cap Y$. Hence $X$ contains for every $r$ an $r$-wealthy coloring of type $W_1'$ and we are done by Lemma \ref{LW1}.

So all the described triples have the same color which we call $s$, and by $t\not= s$ we denote the other color.


{\bf Step 3: triples $E$ that for two distinct $i,j \in [r]$ satisfy $|E \cap S_i| = 2$ and $|E \cap S_j| = 1$.} 

First we show that all triples $E$ that for two distinct $i,j \in [r]$ satisfy $|E \cap Y \cap S_i| = 2$ and $|E \cap Z \cap S_j| = 1$ have the same color $\chi_r(E) = s$ and defer the case $i \in E$, $|E \cap Y \cap S_i| = 1$ (and $|E \cap Y \cap S_j| = 1$) to the end of this step. 
Suppose not: there is a triple $E = \{v, e_1, e_2\}$ such that for two different indices $i,j\in[r]$ one has $v \in Z \cap S_j$, $e_1, e_2 \in Y \cap S_i$, and $\chi_r(E) = t$. Without loss of generality, $j<i$. Using the shift condition we may assume that $j = v = 1$ and $i = 2$, and further we may take $e_1 = r+4$ and $e_2 = r+5$. We show that for infinitely many, and hence for every, $r\geq 3$ and every binary string $w = w_1w_2 \dots w_{r-2}$ avoiding consecutive substring $tt$ there exists a coloring $K_w=(r,\chi_w)\in X$ such that $\chi_w(\{1, i, i+1 \}) = w_{i-1}$ for $i = 2, 3, \dots, r-1$. Indeed, we set 
$$
C = \{3i+r+4 \mid w_i = s\}, \quad D = \{3i+r+2 \mid w_i = t\} \quad \mbox{and} \quad S=\{1,\, r+4\}\cup C \cup D\, ,
$$
and consider restriction and normalization of $K_r = (4r, \chi_r)$ to the set $S$. Since the triples $E = \{v_1, v_2, v_3\} \subset S$ with fixed first vertex $v_1 = 1$ and second vertex $v_2$ successing by third vertex $v_3$ satisfy $\chi_r(E) = 1$ if and only if $v_2 \in C$ and $v_3 \in D$, it is easy to see that the result is $K_w$. By part 1 of Lemma~\ref{LFibG} we have $|X_n| \geq F_n$ for every $n$.

\begin{figure}
    \centering
    \subfloat[Step 2. Here, $r=4$, partition matrix $M_r(i, j)$ is the upper triangular matrix with ones on the diagonal and above and the set $S = \{1, 2, 3, 5, 8, 11, 14\}$. ]{{\includegraphics[width=0.36\textwidth]{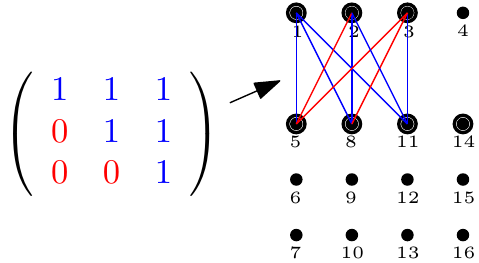} }}%
    \qquad
    \subfloat[Step 3. Containment $K_w\preceq K_r$ for $w = 01010001010$, $r=13$, 
    $C = \{1, 3, 5, 6, 7, 9, 11\}$, $C'=(3C+r+4)\cup\{r+4\}$, $D = \{2, 4, 8, 10\}$, $D'=3D+r+2$ and $S = \{1, 17, 20, 21, 26, 27, 32, 35, 38, 39, 44, 45, 50\}$. ]{{\includegraphics[width=0.57\textwidth]{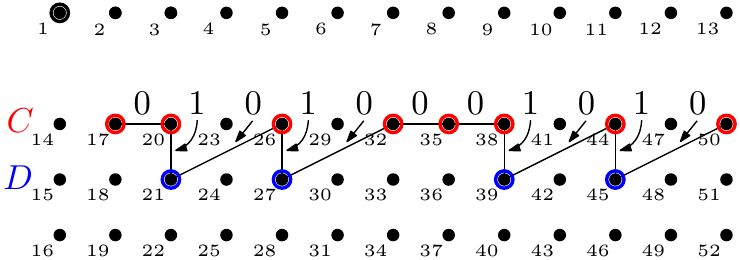} }}%
    \caption{Two examples of the restrictions of the original wealthy coloring $K_r$of type $W_{4, 2}$ to the set $S$ (marked by bigger dots).}%
    \label{fig:naughty_2}%
\end{figure}

An example of the coloring $K_w$ for $r=13$, $w = 01010001010$, and $t=1$ is given in Figure~\ref{fig:naughty_2}~(b). The elements $j\in[r]$ are in the first row, the triples $Y\cap S_j=\{r+3j-2,r+3j-1,r+3j\}$ are placed vertically below them, the elements of $S$ are circled and the mentioned triples of $K_w$
consist of $1$ and two elements from $C\cup D$ that are joined in the picture by segments labeled $0$ or $1$, according to  the $\chi_w$-color of the triple $E$. 

Now we show that $\chi_r(E) = s$ for all triples $E$ with $|E \cap Z| = 1, |E \cap Y| = 2$ that for some $i\in [r]$ satisfy $i\in E$ and $|E\cap S_i|=2$. For contrary, let $\chi_r(E) = t$ for an triple $E = \{i, e_1, e_2\}$ of this form, where we additionally may suppose that $e_1 = r+3i-2$, $e_2 = r+3j-2$, and $j>i$ (the case $j<i$ is treated similarly). It is clear that for
$r\geq 1$ the restriction and normalization of $K_r$ to the set $[r-1]\cup(3[r-1]-2)\cup\{4r-2\}$ is an $(r-1)$-wealthy coloring of type $W_2'$. By Proposition~\ref{LW2}, $|X_n| \geq F_n$ for every $n$.

{\bf Step 4: Conclusion. }Thus we may assume, due to the shift condition and due to the previous steps, that in the coloring $K_r$ all triples 
$E$ with $|E\cap Z| = 1$, $|E\cap Y| = 2$, and $|E\cap S_i|\leq 2$ for every $i \in [r]$ have the same color $s$, say $\chi_r(E) = s = 1$. Since $K_r$ is an $r$-wealthy coloring 
of type $W_{4, 2}$, for every $i\in [r]$ there is an triple $F\subset S_i$ such that $i\in F$ and $\chi_r(F) = 0$. Without loss of generality we suppose that $F=\{i, r+3i-2, r+3i-1\}$.

We finally show that $|X_n| \geq F_n$ for $n\ge n_0$. Let $n = 5m + \varepsilon$ where $\varepsilon \in \{0, 1, 2, 3, 4\}$ and
$$
    A = \{a_1 < a_2 < \dots < a_m\}\subset[2m + \varepsilon]\;\text{ and } B = \{b_1 < b_2 < \dots < b_m\}\subset[2m]
$$ 
be two $m$-element sets. A coloring $K = (n,\chi)$ is {\it $(A, B)$-disobedient} if $\chi(E) = 1$ for all triples $E$ such that 
$|E \cap [2m+\varepsilon]| = 1$ and $|E \cap[2m+\varepsilon+1, 5m+\varepsilon]| = 2$, except for the triples
$$
    F_i = \{a_i,\, 2m+\varepsilon+b_i+i-1,\, 2m+\varepsilon+b_i+i\},\ i \in [m],\;\text{ with }\;\chi(F_i) = 0\, .
$$
To mark $K$ as $(A,B)$-disobedient we write $K = K_{A,B}$ (more precisely it means that $K \in K_{A,B}$). Clearly, for different pairs $(A, B) \ne (A', B')$ of $m$-element subsets of $[2m]$ we always have $K_{A, B} \ne K_{A', B'}$ (more precisely, $K_{A, B} \cap K_{A', B'} = \emptyset$). Let $r=3m+\varepsilon$. We prove that for every pair $(A,B)$ of two $m$-element sets as displayed above the coloring $K_r = K_{3m+\varepsilon} = (12m + 4 \varepsilon, \chi_r)$ contains an $(A, B)$-disobedient coloring $K_{A, B}$. 

For it we define $a_0 = b_0 = 0$, $a_{m+1} = 2m + \varepsilon + 1$ and $b_{m+1} = 2m+1$, and for $i=0,1,\dots, m$ the quantities $\alpha_i = a_{i+1} - a_i - 1$ and $\beta_i = b_{i+1} - b_i -1 $ which count the elements of the ground sets $[2m+\varepsilon]$, resp. $[2m]$, in the gaps determined by the elements in $A$ and in $B$. Further, let $t_0 = 0$ and 
$$
    t_i = i + \sum_{j=0}^{i-1} (\alpha_j + \beta_j) = a_i + b_i - i
$$ 
for $i \in [m]$. For $i=0,1,\dots, m$ we define the gap sets $C_i = \{t_i+1, \dots, t_i+\alpha_i\}$ and $D_i = \{t_i+\alpha_i+1, \dots, t_i+\alpha_i+\beta_i\}$. Clearly, $|C_i| = \alpha_i$, $|D_i| = \beta_i$, and 
\begin{align} \label{eq:disj_part_[r]}
    C_0 < D_0 < \{t_1\} < C_1 < D_1 < \{t_2\} < C_2 < D_2 < \dots < \{t_m\} < C_m < D_m 
\end{align}
is a partition of $[r] = [3m + \varepsilon]$ into $3m+2$ (possibly empty) sets. 
Finally, we label some subsets of $S_i = \{i,r+3i-2,r+3i-1,r+3i\}$: $S_i^1 = \{i\}$, $S_i^2 = \{r+3i-2\}$, $S_i^{23} = \{r+3i-2,r+3i-1\}$, $S_i^{123} = S_i^{1} \cup S_i^{23}$, and set
$$
	S =  \bigcup_{i=1}^m S_{t_i}^{123} \cup \bigcup_{i=0}^m \left(\bigcup_{j \in C_i} S_j^{1} \cup \bigcup_{j \in D_i} S_j^{2}\right) \subset [12m + 4\varepsilon]=[4r]\, .
$$
The partition (\ref{eq:disj_part_[r]}) of $[3m+\varepsilon]$ implies that $|S| = 5m + \varepsilon=n$. We show that restriction and normalization of $K_r = (4r, \chi_r)$ to the set $S$ defines an $(A,B)$-disobedient coloring. Let $f : S\to[n]$ be the increasing bijection. First, $f(t_i) = a_i$, that is, $f(S_{t_i}^{1}) = \{a_i\}$, 
because  (note that $|S_j^x|$ equals to the length of the word $x$) $t_i$ has in $S$ exactly
$$
    \sum_{l=1}^{i-1}|S_{t_l}^1| + \sum_{l=0}^{i-1} \sum_{j \in C_l}|S_j^1| = i-1 + \sum_{l=0}^{i-1} |C_l| = i-1 + (a_i - a_0 - i) = a_i - 1
$$
predecessors. Also, 
\begin{align} \label{eq:map_of_Z}
   f(Z\cap S)=f([r]\cap S)= f\left( \bigcup_{i=1}^m S_{t_i}^1 \cup \bigcup_{i=0}^m \bigcup_{j \in C_i} S_j^1 \right) = [2m + \varepsilon]\, .
\end{align}
because the set inside the large brackets has $m+\sum_{i=0}^m |C_i|=2m+\varepsilon$ elements. Similarly,  
$f(S_{t_i}^{23}) = \{2m + \varepsilon + b_i + i - 1, 2m + \varepsilon + b_i + i\}$ because $S_{t_i}^{23}$ has in $Y = [r+1, 4r]\cap S$ exactly
$$
  \sum_{l=1}^{i-1}|S_{t_l}^{23}| + \sum_{l=0}^{i-1} \sum_{j \in D_l}|S_j^2| = 2(i-1) + \sum_{l=0}^{i-1} |D_l| = 2(i-1) + b_i - b_0 - i = b_i + i - 2
$$
predecessors. Hence $f(S_{t_i}(123))=F_i$, the $i$-th triple of an $(A, B)$-disobedient coloring. In view of all of this (especially recall the form of the coloring $\chi_r$ mentioned at the beginning of 
Step 4), it follows that the restriction and normalization of $K_r=(4r,\chi_r)$ to $S$ (which is the coloring $(n,\chi_r\circ f^{-1})$ where for $f^{-1}$ we abuse notation a little in the obvious way)
is an $(A, B)$-disobedient coloring $K_{A, B}$---see Figure \ref{fig:naughty_4} for a concrete example (like in Figure \ref{fig:naughty_2}, 
the quadruples $S_i$ are visualized by columns of four dots, and their first ``elements'' $S_i^1$ forming $Y=[r]$ lie in the topmost row).

\begin{figure}
    \centering
    \includegraphics[width=0.9\textwidth]{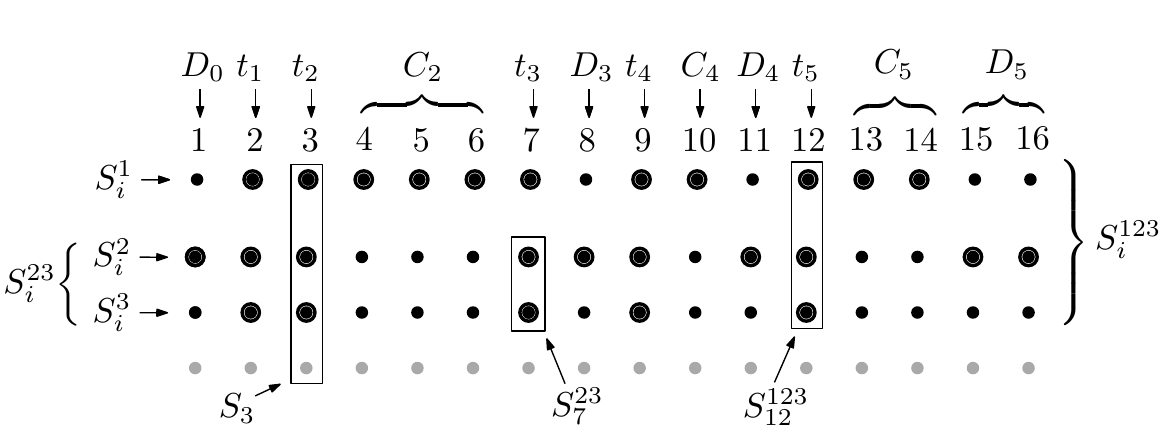}
    \caption{\label{fig:naughty_4} Step 4. An example of containment $K_{A, B} = (26, \chi)\preceq K_r = (64, \chi)$ where the set $S$
    is marked by bigger dots. Here $n=26$, $r=16$, $A = \{1, 2, 6, 7, 9\}$, and $B = \{2, 3, 4, 6, 8\}$.}
\end{figure}

Different pairs $A,B$ of $m$-element subsets $A\subset[2m+\varepsilon]$ and $B\subset[2m]$ give distinct $(A, B)$-disobedient coloring $K_{A, B}$, therefore for every $m\ge1$ we have
$$
	|X_n| \geq \binom{2m+\varepsilon}{m}\binom{2m}{m}\ge\binom{2m}{m}^2\;\text{ where }\;n=5m+\varepsilon\le5m+4
$$
and the value $m = (n-4)/5$ completes the proof of the first part of the statement.

It remains to find an $m_0$, resp. $m_1$, such that if $m\ge m_0$, resp. $m \ge m_1$ then 
$$
    |X_n|\ge\binom{2m}{m}^2\ge G_{5m+4}\ge G_n\,, \; 
    \mbox{ resp. }\; 
    |X_n|\ge\binom{2m}{m}^2\ge F_{5m+4}\ge F_n\, .
$$
To find $m_0$, we again use the bounds on middle binomial coefficient of N.\,D. Kazarinoff \cite{kaza}. Thus, for every $m\in\mathbb{N}$, 
$$
    \binom{2m}{m}^2\ge\frac{16^m}{4m}\;\text{ and }\; 0.418 \cdot 1.466^m>G_m\, .
$$
Since $\frac{1}{4 \cdot 0.418 \cdot 1.466^4}>0.129$ and $\frac{16}{1.466^5}>2.362$, we need an $m_0$ with $m\ge m_0$ implying $0.129\cdot2.362^m\ge m$. It is easy to check that this holds with $m_0=4$. Thus in Step 4, $|X_n|\ge G_n$ holds for every $n\ge5\cdot4=20$. 

To find $m_1$ we proceed similarly with
$$
    |X_n| \geq \binom{2m}{m}^2 \geq \frac{16^m}{4m} 
    > 3.064 \cdot 11.090^m \geq F_{5m+4} \geq F_n
$$
for any $m \geq 15$, thus $n \geq 75$. That concludes the second part od the statement and the proof of Proposition~\ref{LW42} is complete.
\end{proof}


\section{Crossing matrices}

This section contains the second third of the proof of Theorem~\ref{T1FD}. First we review some results on two-dimensional matrices. Then we introduce 
three-dimensional ``crossing'' matrices with entries in $\{0,1,*\}$ derived from colorings, and reduce by Lemmas~\ref{3DKlazar}, \ref{3DKlazar2} and \ref{lemma_on_diagonals} 
problems on three-dimensional matrices to two-dimensional situation. Finally we obtain by means of crossing matrices a polynomial upper bound on the number 
of ``$p$-tame'' colorings. We call a two- or a three-dimensional matrix simply a matrix when the dimension is clear from the context. 

\subsection{Results on two-dimensional matrices}\label{subsec_review2dim}

We review some results in the article \cite{Klazar08} by the second author in order that we can use them later; they also inspired this article. Then we give and prove one new
result on two-dimensional matrices that we also use later.
For $r,s\in\mathbb{N}$ let
$$
N\colon[r]\times[s]\to\{0,\,1\}
$$ 
be an $r\times s$ binary matrix. Every row and column of $N$ consists of alternating intervals of zeros and ones, and we denote by $\al(N)$ the maximum 
number of these intervals in a row or a column of $N$. Equivalently, $\al(N)$ is one plus the maximum number of subwords $01$ and $10$ in a row or a column of $N$. 
For $j\in [s]$ we let $C(N,j)\subset [r]$ denote the set of the row indices of the largest entries of these intervals 
in column $j$, with $r$ omitted: $i\in C(N,j)$ if and only if $N(i,j)\ne N(i+1,j)$. We set $C(N) = \bigcup_{j=1}^{s} C(N,j)$. Similarly, for $i \in [r]$ we let 
$R(N,i)\subset [s]$ be the column indices $j$ such that $N(i,j)\ne N(i,j+1)$ and $R(N) = \bigcup_{i=1}^{r} R(N,i)$. Unlike $|C(\cdot)|$, the quantity $|R(\cdot)|$ 
is in \cite{Klazar08} defined only implicitly 
as the number $a$ in \cite[Lemma 3.12]{Klazar08}, which is the next lemma. Below in Proposition~\ref{new_variation_onKlazar} and in the proof of Proposition~\ref{P2} 
we consider more general matrices $N$ with entries $0$, $1$ and $*$. Then, as before,
$\al(N)$ is one plus the maximum number of subwords $01$ and $10$ in a row or a column of $N$, $R(N,i)$ is the set of $j$ such that 
$\{N(i,j),N(i,j+1)\}=\{0,1\}$, $R(N) = \bigcup_{i=1}^{r} R(N,i)$ and also $C(N,j)$ and $C(N)$ are defined as before. The following paraphrases a lemma in \cite{Klazar08}.

\begin{lemma}[Lemma 3.12 in \cite{Klazar08}]\label{KlazarL3.12}
For every binary matrix $N$,
$$
|R(N)|\leq(\al(N)-1)(2|C(N)|+1).
$$
\end{lemma}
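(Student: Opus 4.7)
The plan is to exploit the fact that $C(N)$ controls row variation. Specifically, if $i\in[r-1]\setminus C(N)$ then, by the definition of $C(N,j)$, we have $N(i,j)=N(i+1,j)$ for every $j\in[s]$, so the $i$-th and $(i+1)$-st rows of $N$ coincide as vectors of length $s$. Writing $C(N)=\{i_1<i_2<\dots<i_c\}$ with $c=|C(N)|$, the rows of $N$ therefore fall into the $c+1$ consecutive blocks $[1,i_1],[i_1+1,i_2],\dots,[i_c+1,r]$, and within each block all rows agree. Call the common row of the $k$-th block $P_k$; these are $c+1$ binary vectors of length $s$ (some of which may coincide, but this does not matter for our bound).

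Next I would observe that $R(N,i)$ depends only on the row vector at position $i$, so
$$
R(N)=\bigcup_{i=1}^{r}R(N,i)=\bigcup_{k=0}^{c}R(P_k),
$$
where (abusing notation) $R(P_k)=\{j\in[s-1]:P_k(j)\ne P_k(j+1)\}$ denotes the set of transitions of the vector $P_k$. Because $P_k$ is itself a row of $N$, it consists of at most $\al(N)$ alternating intervals and therefore contains at most $\al(N)-1$ transitions, so $|R(P_k)|\le\al(N)-1$.

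Combining the above by the union bound then gives
$$
|R(N)|\le\sum_{k=0}^{c}|R(P_k)|\le(c+1)(\al(N)-1)\le(2|C(N)|+1)(\al(N)-1),
$$
which in fact delivers a slightly stronger inequality than the one stated. The argument is entirely elementary; the only real observation is that indices in $[r-1]\setminus C(N)$ force the corresponding consecutive rows to agree, and once this is noted everything else is bookkeeping, so I do not anticipate any genuine obstacle.
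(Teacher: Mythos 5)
Your proof is correct and in fact delivers the sharper bound $|R(N)|\le(|C(N)|+1)(\al(N)-1)$; since $c+1\le 2c+1$ the lemma follows immediately. Note that the present paper does not prove this lemma itself: it is quoted verbatim from \cite{Klazar08} (Lemma 3.12 there), so there is no in-text proof to compare against. It is worth contrasting your argument with the paper's proof of the three-dimensional analogue, Lemma~\ref{rema_RCS}. There the authors use a charging scheme: for each index $k\in S(M)$ they pick a lexicographically minimal position $(x,y)$ witnessing a shaft-transition, show that the map $(x,y)\mapsto(x-1,y-1)$ injects such witnesses into $R(M)\times C(M)$, and then multiply by $\al(M)-1$ to account for repeated charges to the same $(x,y)$. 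Your block decomposition is the cleaner two-dimensional specialization of the same underlying idea: rather than charging via a minimal witness, you observe outright that $R(N,i)$ is constant on each of the $|C(N)|+1$ maximal runs of identical consecutive rows and apply a union bound, which costs you no factor of two.
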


\noindent
By transposing the matrix we get the same bound with exchanged $R(N)$ and $C(N)$. Thus if a sequence $(\al(N_n))_{n\geq 1}$, where $N_n$ are binary matrices, 
is bounded then either both sequences $(R(N_n)|)_{n \geq 1}$ and $(|C(N_n)|)_{n \geq 1}$ are bounded, or both are unbounded. 

Let $I_r$ be the $r\times r$ identity matrix with 1's on the main diagonal and 0's elsewhere and $U_r$ be the $r\times r$ upper diagonal matrix with 1's 
on the main diagonal and above it and 0's below it. Two square matrices $N'$ and $N$ are {\it similar} if $N=N'$ or one arises from the other by vertical and/or horizontal 
flip and/or by exchanging $0$ and $1$. This matrix symmetries result from the symmetries of colorings appearing in Section~\ref{sec_wealthy} in the definitions of colorings of types $W_2$ and $W_3$. 
If the horizontal flip is not allowed, we say that $N'$ and $N$ are {\it strongly similar}. Strong similarity implies similarity. There are four matrices that are both similar and strongly similar to $I_r$, 
four matrices strongly similar to $U_r$ and eight matrices similar to $U_r$. For two matrices $N$ and $N'$ we say that  {\it $N'$ is contained 
in $N$}, and write $N'\preceq N$, if $N'$ arises from $N$ by deleting some rows and some columns. In other words, if 
$$
N'\colon[r']\times[s']\to\{0,\,1,\,*\}\;\text{ and }\;N\colon[r]\times[s]\to\{0,\,1,\,*\}
$$ 
then for some increasing injections $f\colon[r']\to[r]$ and $g\colon[s']\to[s]$ we have for every $i\in[r']$ and $j\in[s']$ that
$$
N'(i,\,j)=N(f(i),\,g(j)).
$$
We then also say that $N'$ is a {\it submatrix} of $N$. 
Another result from \cite{Klazar08} used in the end of the proof of Theorem~\ref{T1FD} is the next one.

\begin{lemma}[Lemma 3.13 in \cite{Klazar08}]\label{KlazarL3.13}
Let $(N_n)_{n \geq 1}$ be an infinite sequence of binary matrices such that the sequence $(\al(N_n))_{n \geq 1}$ is bounded but the sequence $(|C(N_n)|)_{n \geq 1}$ 
is unbounded. Then one of the following holds.
\begin{enumerate}
\item[(i)] For every $r\in\mathbb{N}$ there is an $n$ such that $N_n$ contains a matrix $I_r'$ strongly similar to $I_r$.
\item[(ii)] For every $r\in\mathbb{N}$ there is an $n$ such that $N_n$ contains a matrix $U_r'$ strongly similar to $U_r$.
\end{enumerate}
\end{lemma}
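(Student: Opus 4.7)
The plan is to extract from each $N_n$ a large monotone chain of witnesses to the row changes, apply Ramsey's theorem on pairs to obtain a chain with highly structured off-diagonal values, and then invoke the $\al$-bound to force either an $I_L$-like or a $U_L$-like pattern. Let $A$ be a bound with $\al(N_n) \le A$ for every $n$. For each $n$ and each $i \in C(N_n)$ I choose a \emph{witness column} $j_n(i)$ satisfying $N_n(i, j_n(i)) \ne N_n(i+1, j_n(i))$. Since every column has at most $A-1$ change rows, the map $i \mapsto j_n(i)$ is at most $(A-1)$-to-one, so the number of distinct witness columns is at least $|C(N_n)|/(A-1) \to \infty$. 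Pigeonholing on the transition type ($0 \to 1$ vs.\ $1 \to 0$), I may assume all witnesses are $0 \to 1$. Applying the Erd\H{o}s--Szekeres theorem to the witness columns listed in the order of their witness rows and pigeonholing on the direction of the monotone subsequence, for arbitrarily large $M$ some $N_n$ contains witnesses $(i_1, j_1), \dots, (i_M, j_M)$ with $i_1 < \dots < i_M$, with $j_1, \dots, j_M$ strictly monotone, and (after thinning by a factor of two) with $i_{k+1} \ge i_k + 2$ for every $k$.

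Next, I apply the multicolor Ramsey theorem to the $2$-element subsets of $[M]$ with $2^4 = 16$ colors, where the pair $\{k < l\}$ receives the color
\[
\bigl(N_n(i_k,\, j_l),\ N_n(i_k+1,\, j_l),\ N_n(i_l,\, j_k),\ N_n(i_l+1,\, j_k)\bigr) \in \{0,1\}^4.
\]
This yields a sub-chain of length $L = L(M) \to \infty$ on which this $4$-tuple is a constant $(\alpha, \beta, \gamma, \delta)$. Along the chain, the values of column $j_{l_0}$ at the $2L$ rows $i_1, i_1+1, \dots, i_L, i_L+1$ form the pattern
\[
\underbrace{\alpha, \beta, \alpha, \beta, \dots, \alpha, \beta}_{2(l_0-1)\text{ terms}},\ 0,\ 1,\ \underbrace{\gamma, \delta, \gamma, \delta, \dots, \gamma, \delta}_{2(L-l_0)\text{ terms}}.
\]
If $\alpha \ne \beta$ and $l_0$ lies in the middle third of $[L]$, this sequence alone exhibits more than $A-1$ adjacent sign-changes, which lowerbounds the number of change rows in column $j_{l_0}$ and contradicts $\al(N_n) \le A$ once $L$ is large. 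Hence $\alpha = \beta$, and symmetrically $\gamma = \delta$.

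Now I run a case analysis on $(\alpha, \gamma) \in \{0, 1\}^2$. In each case, restricting to the rows $\{i_k\}_{k \in [L]}$ or $\{i_k+1\}_{k \in [L]}$ together with the columns $\{j_k\}_{k \in [L]}$ produces an $L \times L$ submatrix of the right shape: $(\alpha, \gamma) = (0, 0)$ gives the isolated-$1$ pattern on $\{i_k+1\}$, strongly similar to $I_L$; $(0, 1)$ gives a strict lower-triangular pattern on $\{i_k\}$, strongly similar to $U_L$ via color swap; $(1, 0)$ gives $U_L$ directly on $\{i_k+1\}$; and $(1, 1)$ gives the isolated-$0$ pattern on $\{i_k\}$, strongly similar to $I_L$ via color swap. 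When the $j_k$'s are decreasing, reading columns in natural order amounts to a vertical flip of each of the above, which is also permitted by strong similarity. A final pigeonhole on $n$ selects the outcome --- $I$-like or $U$-like --- that recurs for infinitely many $n$, giving (i) or (ii) with $r$ arbitrarily large.

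The main obstacle is the step forcing $\alpha = \beta$ and $\gamma = \delta$: this is the only place where the $\al$-bound is essentially used, and it requires counting sign-changes across a block of length $\sim L$ in a single column, which is why $l_0$ is taken in the middle of the chain. After those equalities the four-case bookkeeping is mechanical, but one has to check that the absence of horizontal flip in strong similarity is not an obstruction --- the two permitted operations, vertical flip and color swap, turn out to be exactly what is needed to absorb, respectively, a decreasing choice of witness columns and an initial choice of $1 \to 0$ transitions.
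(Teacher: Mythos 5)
The paper does not prove this lemma; it is imported from Klazar \cite{Klazar08}, so there is no in-paper argument for me to compare with, and I can only assess your proof on its own merits. It is correct. The skeleton is sound: the $\al$-bound caps the number of change positions per column, so many change rows force many distinct witness columns; Erd\H{o}s--Szekeres together with a factor-of-two thinning produces a long, well-row-spaced chain of witnesses with monotone columns; Ramsey on pairs with $16$ colors homogenizes the off-diagonal quadruple $(N(i_k,j_l),N(i_k+1,j_l),N(i_l,j_k),N(i_l+1,j_k))$; and the $\al$-bound applied to a single interior witness column forces $\alpha=\beta$ and $\gamma=\delta$, after which the four residual cases on $(\alpha,\gamma)$ yield $I_L$- or $U_L$-shaped submatrices on either $\{i_k\}$ or $\{i_k+1\}$. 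The one point you pass over a little quickly is the decreasing branch of Erd\H{o}s--Szekeres: there the submatrix read in natural row/column order is the \emph{column reversal} of the logical $I_L$ or $U_L$, and you identify column reversal as the permitted vertical flip. That identification is in fact the paper's convention (in the proof of Lemma~\ref{pate_lemma}, the matrix $U_m'$ with $1$s on and above the antidiagonal---which is exactly the column reversal of $U_m$---is treated as strongly similar to $U_m$), so your argument closes; but since the flip terminology is not universal, it would be cleaner to also record the robust fallback that the column reversal of $U_L$ contains, on rows and columns $\{2,\dots,L\}$, a submatrix strongly similar to $U_{L-1}$, so that the conclusion is insensitive to the convention. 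Finally, the $16$-color Ramsey is mild overkill---once the $\al$-bound has killed within-block oscillation only the pair $(\alpha,\gamma)$ matters, so four colors suffice---but this affects only the constants, not correctness.
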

By transposing the matrices $N_n$ or by applying the remark after Lemma~\ref{KlazarL3.12} we deduce that this lemma holds also when $(|C(N_n)|)_{n \geq 1}$ is replaced with 
$(|R(N_n)|)_{n \geq 1}$. In Lemma~\ref{3DKlazar} we lift Lemma~\ref{KlazarL3.13} to three-dimensional matrices.

\begin{remark}\label{rema_on_symmetry}
In \cite[bottom of p. 15]{Klazar08} strong similarity is called similarity. Our relation of similarity is not used in \cite{Klazar08}.
\end{remark}

In the end of the proof of Theorem~\ref{T1FD} we need the next result which does not appear in \cite{Klazar08} and which 
we prove here. We recall further terminology from \cite{Klazar08}. A coloring $\chi=([3r],\chi)$, $r\in\mathbb{N}$, of pairs (i.e. $\chi\colon\binom{[3r]}{2}\to\{0,1\}$) 
is \emph{$r$-wealthy of type $2$} if none of the $r$ triples $\{3i-2,3i-1,3i\}$, $i\in[r]$, is $\chi$-monochromatic.

\begin{proposition}\label{new_variation_onKlazar}
Let $(N_n)_{n\geq 1}$ be a sequence of symmetric square matrices with $*$s on the main diagonal and $0$s and $1$s outside it and such that the sequence 
$(\al(N_n))_{n \geq 1}$ is bounded but the sequence 
$$
(|R(N_n)|)_{n\ge1}=(|C(N_n)|)_{n\ge1}
$$ 
is unbounded. Each matrix $N_n$ defines a binary coloring of pairs $K_n=([r(n)],\psi_n)$, where $r(n)$ is the number of rows or columns in $N_n$, by 
$$
\psi_n(\{i,\,j\})=N_n(i,\,j)=N_n(j,\,i).
$$
Then ($\alpha$) or ($\beta$) holds.
\begin{enumerate}
    \item[($\alpha$)] For every $r\in\mathbb{N}$ there is an $n$ such that $K_n$ contains an $r$-wealthy coloring of type $2$. 
    \item[($\beta$)] For every $r\in\mathbb{N}$ there is an $n$ such that $N_n$ contains a binary matrix $I_r'$ strongly similar to $I_r$, or for every 
    $r\in\mathbb{N}$ there is an $n$ such that $N_n$ contains a binary matrix $U_r'$ strongly similar to $U_r$. Moreover, for every $r$ the matrices $I_r'$ and $U_r'$ lie above the diagonal of the matrix $N_{n(r)}$.
\end{enumerate}
\end{proposition}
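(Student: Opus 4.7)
The plan is to mimic Lemma~\ref{KlazarL3.13} with the added constraint that the guaranteed pattern lies strictly above the diagonal of $N_n$, so that it is free of $\ast$'s; the new outcome $(\alpha)$ handles the scenario in which all color changes of $N_n$ cluster close to the diagonal, so that no useful above-diagonal binary rectangle can be extracted. Fix a target $r$ and let $k$ bound $\al(N_n)$. For each $n$ and each $j\in R(N_n)$ pick a witness row $i_j\notin\{j,j+1\}$ with $N_n(i_j,j)\ne N_n(i_j,j+1)$, and choose a threshold $C=C(r,k)$. Call $j$ \emph{near} if $|i_j-j|\le C$ and \emph{far} otherwise. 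After pigeonholing on the sign of $i_j-j$ in the far case and passing to a subsequence of $n$'s, I may assume that whenever $j$ is far, $i_j<j$; the opposite situation is treated symmetrically.

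If, on an infinite subsequence of $n$'s, the number of near columns is unbounded, then each near column $j$ yields a non-monochromatic triple $T_j=\{i_j,j,j+1\}$ in $K_n$ of diameter at most $C+1$, because $\psi_n(\{i_j,j\})\ne\psi_n(\{i_j,j+1\})$. A greedy selection picking successive near columns whose indices differ by more than $2C$ produces in each such $K_n$ arbitrarily long chains of pairwise block-disjoint non-monochromatic triples $T_{j_1},T_{j_2},\dots,T_{j_r}$ with $\max T_{j_l}<\min T_{j_{l+1}}$. Writing the elements of $T_{j_l}$ in increasing order as $v_{3l-2}<v_{3l-1}<v_{3l}$ yields an increasing sequence $v_1<\dots<v_{3r}$ in $[r(n)]$ in which each consecutive triple is non-monochromatic, so $K_n$ contains an $r$-wealthy coloring of type~$2$. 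This gives~$(\alpha)$.

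Otherwise, for all sufficiently large $n$ the set $S_n$ of far columns with $i_j<j$ is unbounded. Here the aim is to build a sub-matrix $N_n[A_n,B_n]$ with $A_n<B_n$ (hence binary and entirely above the diagonal), $\al(N_n[A_n,B_n])\le k$, and $|R(N_n[A_n,B_n])|$ unbounded. Feeding this sequence to Lemma~\ref{KlazarL3.13} produces $I_r'$ or $U_r'$ strongly similar to $I_r$ or $U_r$, and the condition $A_n<B_n$ places them above the diagonal of $N_n$, delivering $(\beta)$ together with the \emph{moreover} clause. The main obstacle is producing this rectangle: the naive choice $A_n=[1,m]$, $B_n=[m+1,r(n)]$ with $m$ maximising the crossing count $L_m=|\{j\in S_n:i_j\le m<j\}|$ gives, by telescoping, $\max_m L_m\ge|S_n|(C+1)/r(n)$, which can remain bounded if $|S_n|$ is sub-linear in $r(n)$. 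To overcome this I would classify far pairs dyadically by width $w=j-i_j\in[2^dC,2^{d+1}C)$, select a dominant class, and run the averaging inside sub-intervals of length $\Theta(2^dC)$ rather than $r(n)$, so the resulting ratio grows with $n$. Alternatively, a recursive halving of $[1,r(n)]$---at each level descending into the half containing more far pairs---must terminate because sub-intervals of length $\le C$ carry no far pairs, and the terminating level furnishes the required rectangle. Once the rectangle is in hand, Lemma~\ref{KlazarL3.13} closes the proof.
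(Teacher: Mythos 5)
Your near/far split on the witness distance $|i_j-j|$ is a genuinely different organizing principle from the paper's, which runs the whole argument through the interval (``nuclear'') decomposition of $K_n$ into maximal $\psi_n$-monochromatic intervals $I_{n,1}<\dots<I_{n,k(n)}$. The near branch is fine: unboundedly many near columns do give arbitrarily long chains of block-disjoint non-monochromatic triples, hence $(\alpha)$. (Note that the paper's triples realizing $(\alpha)$ are not ``near'' in your sense at all --- they straddle a whole interval $I_{n,2i-1}$ plus the first element of $I_{n,2i}$ --- so the two routes to $(\alpha)$ really are different.)

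The far branch, however, has a genuine gap, and you have already put your finger on exactly where: you need one above-diagonal binary rectangle $A_n\times B_n$ carrying unboundedly many column changes, but you observe yourself that the naive telescoping average only gives $\ge C|S_n|/r(n)$, which is bounded when $|S_n|=o(r(n))$. Your two proposed repairs are not carried out, and at least the recursive halving does not manifestly work: if we always descend into the half with more internal far pairs, the count after $d$ levels is roughly $|S_n|/2^d$ minus the accumulated crossing counts, and by the time the interval has length $\le C$ (after $d\approx\log_2(r(n)/C)$ levels) this can vanish without any single level ever exhibiting an unbounded crossing count. The dyadic-by-width idea is a sketch of a sketch. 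So as written the $(\beta)$ branch is not established.

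The paper sidesteps this entirely. When the number $k(n)$ of intervals in the decomposition is unbounded, case $(\alpha)$ follows directly from the non-monochromatic triples at consecutive interval boundaries. When $k(n)\le c$, every $j\in R(N_n)$ with $j,j+1$ in the same interval $J$ has its witness row $i_j$ in a \emph{different} interval $I$ (because the diagonal block $M_{I,I}$ is constant off its $*$-diagonal and hence carries no $01$/$10$), so these ``first kind'' $j$'s are distributed among at most $c^2$ off-diagonal block rectangles $M_{I,J}$, with only $\le c-1$ remaining boundary $j$'s. Pigeonhole then hands you one off-diagonal rectangle $M_{I_n,J_n}$, $I_n<J_n$, with $|R(M_{I_n,J_n})|\ge(|R(N_n)|-c)/c^2$, which is unbounded; Lemma~\ref{KlazarL3.13} then gives $(\beta)$ with the pattern automatically above the diagonal. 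The boundedness of $k(n)$ is precisely the structure that controls where the changes must live, and this is what your witness-distance classification does not capture. To fix your proof you would essentially have to rediscover the interval decomposition (or some equivalent block structure): the raw fact that many far witnesses exist does not, by itself, force them into a single rectangle.

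A smaller point: choosing the threshold $C=C(r,k)$ per target $r$ makes the dichotomy between the $(\alpha)$ branch and the $(\beta)$ branch depend on $r$, whereas the statement asserts that one of the two alternatives holds uniformly for all $r$. This can be patched (e.g.\ by letting $C$ range over $\mathbb{N}$ and observing that if for some fixed $C$ the near count stays bounded then all larger $C$ give a subset of the far pairs), but as written it is not cleanly set up.
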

\begin{proof}
For a given $K_n$ we consider (as in \cite{Klazar08}) the \emph{interval decomposition of $K_n$} which is the interval partition 
$I_{n,1}<I_{n,2}<\dots<I_{n,k(n)}$ 
of $[r(n)]$ such that $I_{n,1}$ is the longest initial interval in 
$[r(n)]$ with all pairs in $\binom{I_{n,1}}{2}$ having the same color in $\psi_n$, $I_{n,2}$ is the longest monochromatic interval following after  $I_{n,1}$, and so on.
In Section~\ref{subsec_ptame} we use the same decomposition (called there nuclear decomposition) for colorings of triples.
Clearly, for every $i<k(n)$ we have $|I_{n,i}|\ge 2$ and every interval $I_{n,i}$ with $i<k(n)$ contains elements $a=a_{n,i},b=b_{n,i}$ such that $a\ne b$ and 
$\psi_n(\{a,b\})\ne\psi_n(\{b,\min(I_{n,i+1})\}$. There are two cases, either the sequence $(k(n))_{n\ge1}$ of lengths of the interval decompositions is unbounded or it is
bounded. 

In the former case, when the quantity $k(n)$ attains arbitrarily large values, we consider for $i=1,2,\dots,\lfloor k(n)/2\rfloor$ the triples 
$T_{n,i}=\{a_{n,2i-1},b_{n,2i-1},\min(I_{n,2i})\}$. These triples are disjoint, in fact
$$
T_{n,1}<T_{n,2}<\dots<T_{n,\lfloor k(n)/2\rfloor},
$$
and each triple $T_{n,i}$ is non-monochromatic in $\psi_n$. Thus we get case ($\alpha$) of the present proposition.

The latter case is that the quantity $k(n)$ as a function of $n$ is bounded, say $k(n)\le c$ for every $n$ and a constant $c\in\mathbb{N}$. 
Then we consider for every pair of intervals $I=I_{n,i},J=I_{n,j}$ with $1\le i,j\le k(n)$ the submatrix $M_{I,J}$ of $N_n$ formed by the positions in 
the rows with indices in $I$ and in the columns with indices in $J$. Note that every diagonal matrix $M_{I,I}$ consists of $*$s on the main diagonal and of only $0$s or 
only $1$s elsewhere because every interval $I_{n,i}$ is $\psi_n$-monochromatic. Also, $M_{I,J}=M_{J,I}^T$ and if $i<j$ then $M_{I,J}$ lies above the diagonal of $N$. 

We claim that for any $n \in \mathbb{N}$ there exist indices $i_n,j_n\in[k(n)]$ with $i_n<j_n$
and such that for $I_n=I_{n,i_n},J_n=I_{n,j_n}$ the sequence $(|R(M_{I_n,J_n})|)_{n\ge 1}$ is unbounded. To see it, recall that $R(N_n)$ is the set of the column indices
$j\in[k(n)]$ such that $\{N_n(i,j),N_n(i,j+1)\}=\{0,1\}$ for some row index $i=i_j\in[k(n)]$. The column indices $j\in R(N_n)$ are of two kinds. The first $j$s
are such that the two positions $(i_j,j)$ and $(i_j,j+1)$ lie in one matrix $M_{I,J}$; both $j$ and $j+1$ lie in one interval $J$ of the interval decomposition of 
$K_n$. The second $j$s are the remaining ones when $j$ and $j+1$ lie in two consecutive intervals of the 
interval decomposition of $K_n$. So if a $j$ is of the second kind then $j = \max(J)$ for an interval $J$ of the interval decomposition of $K_n$ and there are at
most $c$ of them. Also, we already noted that for no $j$ of the first kind the two positions $(i_j,j)$ and $(i_j,j+1)$ lie in a diagonal matrix $M_{I,I}$ and we know
there are at most $c^2$ matrices $M_{I,J}$. Thus there exist intervals $I_n<J_n$ such that
$$
|R(M_{I_n,J_n})|\ge\frac{|R(N_n)|-c}{c^2}
$$
and the sequence $(|R(M_{I_n,J_n})|)_{n\ge 1}$ is unbounded. But since the 
sequence $(\al(N_n))_{n \geq 1}$ is bounded, $(\al(M_{I_n,J_n})))_{n \geq 1}$ is bounded too. We may therefore apply Lemma~\ref{KlazarL3.13} to the sequence of binary matrices 
$(M_{I_n,J_n})_{n \geq 1}$ and get case ($\beta$) of the present proposition.
\end{proof}

\subsection{Three-dimensional matrices}\label{subs_def_al}

A (three-dimensional) matrix $M$ is a map $M\colon[r]\times[s]\times[t]\to\{0,1,*\}$, $r,s,t\in\mathbb{N}$, we also say that $M$ is {\it $*$-binary}. We visualize three-dimensional matrices $M$ as cubes in $\mathbb{R}^3$, with edges parallel to the coordinate axes and such that the origin in $M$ is the front top left corner, the first coordinate increases in the left-to-right direction, the second one in the top-to-bottom direction, and the third one in the front-to-back direction, as shown in Figure~\ref{fig:Cuttingmatrices}. We will work with special $*$-binary matrices derived from colorings which have entry $*$ when two of the three elements in a triple in the binary coloring  coincide, as given in the next formal definition.

Let $H = (n,\chi)$ be a coloring of triples and 
$$
X = \{x_1 < \dots < x_r\},\;Y = \{y_1 < \dots < y_s\}\;\text{ and }\;Z = \{z_1 < \dots < z_t\}
$$ 
be nonempty subsets of $[n]$. We call the $r\times s\times t$ matrix $M\colon[r]\times[s]\times[t]\to\{0,1,*\}$, given as
\begin{equation*}
M(i, j, k) = \begin{cases}
               \chi(\{x_i,\, y_j,\, z_k\})   & \text{if $x_i$, $y_j$ and
              $z_k$ are three distinct elements and} \\
               $*$					   & \text{else (when $|\{x_i, y_j, z_k\}|\le2$)}\;,
             \end{cases}
\end{equation*}
the {\it crossing matrix of $H$} and denote it $M_{X,Y,Z}$. The sets $X, Y$ and $Z$ are the {\it base sets} of the crossing matrix $M$. 
Clearly, a crossing matrix is binary if and only if its base sets are disjoint.

Let $M$ be an $r \times s \times t$ $*$-binary matrix. For fixed $J \in [s]$ and $K \in [t]$, the finite sequence 
$$
\overline{r}(J,\,K) := (M(i,\,J,\,K))_{i=1}^r\in\{0,\,1,\,*\}^r
$$ 
is a {\it row} of $M$. In some situations we understand under a row also the set 
$$
\overline{r}(J,\,K)=\{(i,\,J,\,K):\ i\in[r]\}
$$ 
of its positions in $M$.
We define {\it columns} $\overline{c}(I,K)$ and {\it shafts} $\overline{s}(I,J)$ in $M$ similarly by fixing the first and third, respectively the first and second, coordinate of $M$. Rows, columns, and shafts are the {\em lines} of $M$.

A row in a two-dimensional matrix has the first coordinate fixed and the second one variable. However, a row in a three-dimensional matrix here has the first coordinate variable and the second and third one fixed. Similarly for columns in which case the second coordinate is variable, while the first and third one are fixed. This discrepancy in terminology for two- and three-dimensional matrices forces us later to exchange coordinates in layer and cross-matrices (defined below). A three-dimensional $r\times s\times t$ matrix has $st$ rows, $rt$ columns and $rs$ shafts, but they cannot be ordered in a natural way as rows and columns in two-dimensional matrices.

Let $\al(M)$ be one plus the maximum number of the pairs $a_ia_{i+1} \in \{01,10\}$ in a line of $M$, taken over all $st + rt + rs$ lines of $M$. As for two-dimensional matrices, for any row given by the coordinates $(J,K)$ we define $R(M;J,K) \subset [r]$ to be the indices $i$ such that $M(i,J,K) M(i+1,J,K) \in \{01, 10\}$. For example, for the row 
$$
    \overline{r}(J,\,K) = 00011**11*010 
    \qquad \mbox{we have} \qquad
    R(M;J,K) = \{3,11,12\}\, .
$$
We set $R(M) = \bigcup_{J,K} R(M;J,K)$, with the union over all rows of $M$. So $R(M) \subset [r]$ contains exactly the indices $i$ such that the two $(j,k)$-entries in the two-dimensional matrices $M(i,j,k)$ and $M(i+1,j,k)$ are $0$ and $1$ or vice versa.  

Similarly we define $C(M;I,K) \subset [s]$, $S(M;I,J) \subset [t]$, $C(M) = \bigcup_{I,K} C(M;I,K)$, and $S(M) = \bigcup_{I,J} S(M;I,J)$. In the case of a row given by a pair $\overline{r} = (J,K)$ we also write $R(M;\overline{r})$ instead of $R(M;J,K)$, and similarly we write $C(M;\overline{c})$ and $S(M;\overline{s})$. Clearly, $\al(M)$ is the maximum of the numbers $|R(M;\overline{r})|$, $|C(M;\overline{c})|$ and $|S(M;\overline{s})|$, when $\overline{r}$ runs through all $st$ rows, $\overline{c}$ through all $rt$ columns  and $\overline{s}$ through all $rs$ shafts of $M$. Note that if $M$ is a crossing matrix of a coloring then any line of $M$ either contains at most two stars or consists only of stars. We prove an analogue of Lemma~\ref{KlazarL3.12}.

\begin{lemma}\label{rema_RCS}
For every $r \times s \times t$ binary matrix $M$, if $\al(M) \leq l$ and both $|R(M)|$ and $|C(M)|$ are bounded by $m$ then
$$
    |S(M)| \leq (l-1)(m+1)^2\, .
$$
Moreover, two symmetric variants hold: $|S(M)|$ may be switched with $|R(M)|$ or with $|C(M)|$.
\end{lemma}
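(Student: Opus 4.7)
My plan is to lift the partition idea used in the two-dimensional analogue (Lemma~\ref{KlazarL3.12}) by one dimension. The central observation is definitional: if $I\notin R(M)$, then since $R(M)=\bigcup_{J,K}R(M;J,K)$, for every $J\in[s]$ and every $K\in[t]$ we have $M(I,J,K)=M(I+1,J,K)$, so the two ``planes'' $\{I\}\times[s]\times[t]$ and $\{I+1\}\times[s]\times[t]$ coincide entry-by-entry. Consequently $[r]$ partitions into at most $|R(M)|+1\le m+1$ maximal intervals of consecutive indices on which the corresponding planes are all equal. The analogous statement in the second coordinate, using $C(M)=\bigcup_{I,K}C(M;I,K)$, gives a partition of $[s]$ into at most $|C(M)|+1\le m+1$ maximal intervals on which the planes $[r]\times\{J\}\times[t]$ are constant.

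I would then combine these two partitions into a grid partition of $[r]\times[s]$ into at most $(m+1)^2$ rectangular blocks with the property that, for any two pairs $(I,J)$ and $(I',J')$ lying in the same block, the shafts $\overline{s}(I,J)$ and $\overline{s}(I',J')$ agree as elements of $\{0,1\}^t$: one moves from $(I,J)$ to $(I',J)$ using the equalities of planes in the first coordinate, and then from $(I',J)$ to $(I',J')$ using the equalities in the second coordinate. Hence $M$ exhibits at most $(m+1)^2$ distinct shafts. Each individual shaft, being a line of $M$, has at most $\al(M)-1\le l-1$ alternations of $0$ and $1$, contributing at most $l-1$ indices to its set $S(M;I,J)$. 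Therefore
\[
|S(M)|=\Bigl|\bigcup_{I,J}S(M;I,J)\Bigr|\le (l-1)(m+1)^2,
\]
which is the required bound.

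The two symmetric variants follow by cyclically permuting the roles of the three coordinates: the identical argument, reading the partitions in the appropriate two of the three directions, bounds $|R(M)|$ when $|S(M)|$ and $|C(M)|$ are at most $m$, and bounds $|C(M)|$ when $|R(M)|$ and $|S(M)|$ are at most $m$.

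I do not foresee any substantial obstacle: the statement is essentially a direct three-dimensional transcription of the two-dimensional lemma, and the only place that requires a moment's care is the step where non-membership of an index in $R(M)$, which a priori is only a union over many shafts, is used to force equality of an entire plane rather than of a single line; this is immediate from the definition of $R(M)$ as a union over all fixed pairs $(J,K)$.
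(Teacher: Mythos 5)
Your proof is correct, and it takes a somewhat different route from the paper's. The paper bounds $|S(M)|$ by introducing the auxiliary set $D$ of pairs $(x,y)$ that are $x{+}y$-minimal among those where the shaft changes at some fixed position $k$; it then shows $(x,y)\in D$ with $x,y>1$ forces $x-1\in R(M)$ and $y-1\in C(M)$, bounds $|D|\le(m+1)^2$ via an injection into $R(M)\times C(M)$ (plus boundary cases), and finishes by a pigeonhole argument: a map $S(M)\to D$ hitting some representative $\ge l$ times would give a shaft with too many alternations. Your approach is structurally cleaner: you observe directly that $I\notin R(M)$ forces the entire planes $\{I\}\times[s]\times[t]$ and $\{I+1\}\times[s]\times[t]$ to coincide (and symmetrically for $C(M)$), so the shaft map $(I,J)\mapsto\overline{s}(I,J)$ is constant on each of the at most $(m+1)^2$ rectangular blocks of a grid partition of $[r]\times[s]$. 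The bound then follows immediately since each of the $\le(m+1)^2$ distinct shafts contributes at most $\al(M)-1\le l-1$ indices to $S(M)$. Both give the same quantitative conclusion; yours makes the constancy structure explicit and avoids the minimality bookkeeping, while the paper's version more closely mirrors the two-dimensional Lemma~\ref{KlazarL3.12} it is adapting. One small point worth recording in your writeup: the inference that $I\notin R(M)$ forces whole-plane equality relies on $M$ being binary, so that $\{M(I,J,K),M(I+1,J,K)\}\ne\{0,1\}$ collapses to an equality; this hypothesis is indeed part of the lemma, but it is worth flagging since elsewhere the paper works with $*$-binary matrices where this step would fail.
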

\begin{proof}
We assume that $\al(M)\le l$, $|R(M)|\le m$ and $|C(M)|\le m$. The other two symmetric variants are treated in the same way and we omit their proof. We first bound the cardinality of the set
$D$ consisting of the pairs $(x,y)\in[r]\times[s]$ such that $M(x,y,k)\ne M(x,y,k+1)$ for some $k\in[t-1]$ but $M(x',y',k)=M(x',y',k+1)$ whenever $x'+y'<x+y$. Let $(x,y)\in D$ with $x,y>1$. Then
$$
    M(x,\,y,\,k)\ne M(x,\,y,\,k+1)\;\text{ but }\; M(x-1,\,y,\,k)=M(x-1,\,y,\,k+1)
$$ 
and $x-1 \in R(M)$. Also  $y-1\in C(M)$, by a symmetric argument. We have $x,y>1$ and the map
$$
    D \rightarrow R(M)\times C(M) 
    \quad \mbox{such that} \quad
    (x,\,y) \mapsto (x-1,\,y-1) 
$$
is an injection, hence
$D$ has at most $m^2$ such elements $(x,y)$. By a similar argument $D$ has at most $2m$ elements of the form $(1,y)$ and $(x,1)$ with $x,y>1$. Thus $|D|\le m^2+2m+1=(m+1)^2$. 
We consider the map 
$$
    S(M) \rightarrow D
    \quad \mbox{such that} \quad
    k \mapsto (x,\,y)
    \;\text{ (with $M(x,\,y,\,k)\ne M(x,\,y,\,k+1)$)}\, .
$$
If $|S(M)| > (l-1)(m+ 1)^2$ then there exists a pair $(x,y)\in D$ and $l$ shaft indices $k_i$ with $1 \leq k_1<k_2< \dots <k_l < t$ 
such that $M(x,y,k_i) \not= M(x,y,k_i+1)$ for 
every $i \in [l]$. The shaft $\overline{s}(x,y)$ has at least $l+1$ alternating intervals of zeros and ones, in contradiction with the bound $\al(M) \leq l$.
\end{proof}

\noindent
Thus in the definition of $p$-tame colorings in Section~\ref{subsec_ptame} it does not matter which two of the three quantities $R(M)$, $C(M)$ and $S(M)$ 
we chose; we select $R(M)$ and $C(M)$.

For two $*$-binary matrices 
$$
    M\colon[r]\times[s]\times[t]\to\{0,\,1,\,*\}\;\text{ and }\;M'\colon[r']\times[s']\times[t']\to\{0,\,1,\,*\}
$$ 
we say that $M'$ is a {\em submatrix} of $M$, and write $M'\preceq M$, if there are increasing injections 
$f\colon[r']\to[r]$,  $g\colon[s']\to[s]$ and $h\colon[t']\to[t]$ such that for every $(I,J,K)\in[r']\times[s']\times[t']$ one has $M'(I,J,K)=M(f(I),g(J),h(K))$. 
We also say that $M'$ is \emph{contained in $M$}. Easily, $M''\preceq M'$ and $M'\preceq M$ imply $M''\preceq M$.  It is clear that if 
$M'\preceq M$ and $M=M_{X,Y,Z}$ is a crossing matrix of a coloring, then there are sets $X'\subset X$, $Y'\subset Y$ and $Z'\subset Z$ such that $M'=M_{X',Y',Z'}$
is a crossing matrix of the same coloring. More precisely, if we introduce for sets $A=\{a_1<a_2<\dots<a_r\}\subset\mathbb{N}$, $A\ne\emptyset$, and $B\subset[r]$ 
the set
$$
    A(B):=\left\{a_b:\ b\in B\right\}\, ,
$$
then the sets $X'$, $Y'$ and $Z'$ are given by
$$
    X'=X(f([r'])),\ Y'=Y(g([s']))\;\text{ and }\;Z'=Z(h([t']))
$$
where $X$, $Y$, $Z$, $f$, $g$, $h$, $r'$, $s'$ and $t'$ are as above. We call $X'$, $Y'$ and $Z'$  the \emph{base sets of the submatrix $M'$ of $M$}. 

For a three-dimensional $r\times s\times t$ $*$-binary matrix $M$ we define in a moment two-dimensional $*$-binary layer and cross- matrices $N$. They are motivated by colorings of type $W_2$ and $W_3$, respectively.
We fix the order of kinds of lines in two- and three-dimensional matrices as: rows first, then columns and shafts last. We define the $N$ derived from $M$ 
so that this order is preserved when we go from the lines in $N$ to their counterparts in $M$. Therefore we define a layer matrix $N$ of $M$ with exchanged 
coordinates; we already mentioned this terminological peculiarity. We do the same for cross-matrices (and illustrate it by the examples below) 
even if for them some lines in $N$ do not have corresponding lines in $M$. 
\begin{itemize}
\item For $i\in[3]$, $N$ is an {\it $i$-layer matrix of $M$} if $N$ arises from $M$ by fixing the $i$-th coordinate and swapping the remaining ones. 
\begin{enumerate}
    \item $N$ is a $1$-layer matrix of $M$ if  $N(a,b) = M(z,b,a)$ with a fixed $z$.
    \item $N$ is a $2$-layer matrix of $M$ if  $N(a,b) = M(b,z,a)$ with a fixed $z$.
    \item $N$ is a $3$-layer matrix of $M$ if  $N(a,b) = M(b,a,z)$ with a fixed $z$.
    \end{enumerate}
For example, in part 1 a row in $N$ determines a column in $M$, a column in $N$ determines a shaft in $M$, and the orders `row' $<$ `column' and 
`column' $<$ `shaft' agree. 

We call $N$ a {\it layer matrix of $M$} if it is a $i$-layer matrix of $M$ for some $i\in [3]$. If $M=M_{X,Y,Z}$ is a crossing matrix of a coloring with
$r=|X|$, $s=|Y|$ and $t=|Z|$, and if $N$ is the $1$-layer matrix of $M$ given by the fixed coordinate $z\in[r]$, then we call the three sets 
$$
    (X',\,Y',\,Z'):=(X(\{z\}),\,Y,\,Z)
$$
the \emph{base sets of the $1$-layer matrix $N$ of $M$}. Note that $N$ can be viewed as the crossing matrix $M'=M_{X',Y',Z'}$ of the same coloring as $M$ because $N(a,b)=M'(1,b,a)$.
If $N$ is a $2$-layer (resp. a $3$-layer) matrix of $M$, we define its base sets in the analogous way.
\item For $\{i<j\}\subset[3]$, the {\it $(i,j)$-d-cross-matrix $N$ of $M$} is defined by equating the $i$-th and $j$-th coordinate in $M$ 
and swapping them with the remaining coordinate.
\begin{enumerate}
    \item $N$ is the $(1,2)$--d-cross-matrix of $M$ if $N(a,b) = M(b,b,a)$ (we assume that $r=s$). 
    \item $N$ is the $(1,3)$--d-cross-matrix of $M$ if $N(a,b) = M(b,a,b)$ (we assume that $r=t$).
    \item $N$ is the $(2,3)$--d-cross-matrix of $M$ if $N(a,b) = M(b,a,a)$ (we assume that $s=t$).
\end{enumerate}
For example, in part 2 a row in $N$ determines variable coordinates $\{1,3\}$ in $M$, a column in $N$ determines variable coordinate $\{2\}$ in $M$, 
and the orders  `row' $<$ `column' and $\{1,3\}<\{2\}$ (by the minimal elements) agree. 
\item Similarly, for $\{i<j\}\subset[3]$ we define the {\it $(i,j)$-ad-cross-matrix $N$ of $M$} as follows. 
\begin{enumerate}
    \item $N$ is the $(1,2)$--ad-cross-matrix of $M$ if $N(a,b) = M(b,s-b+1,a)$ (we assume that $r=s$). 
    \item $N$ is the $(1,3)$--ad-cross-matrix of $M$ if $N(a,b) = M(b,a,t-b+1)$ (we assume that $r=t$).
    \item $N$ is the $(2,3)$--ad-cross-matrix of $M$ if $N(a,b) = M(b,a,t-a+1)$ (we assume that $s=t$).
\end{enumerate}
For example, in part 3 a row in $N$ determines variable coordinate $\{1\}$ in $M$, a column in $N$ determines variable coordinates $\{2,3\}$ in $M$, 
and the orders `row' $<$ `column' and $\{1\}<\{2,3\}$ (by the minimal elements) agree. 

\item We say that $N$ is an \emph{$(i,j)$-cross-matrix} of $M$ if it is an $(i,j)$-d-cross-matrix or an $(i,j)$-ad-cross-matrix of $M$. We say that $N$ is a \emph{cross-matrix} of $M$ if it is an $(i,j)$-cross-matrix  of $M$ for some $\{i<j\}\subset[3]$. 

If $M=M_{X,Y,Z}$ is a crossing matrix 
of a coloring $K=(n,\chi)$ with $r=|X|$, $s=|Y|$ and $t=|Z|$, and if $N$ is a cross-matrix of $M$, then we call the three sets $X,Y,Z$ the \emph{base sets of the cross-matrix 
$N$ of $M$}. Thus a cross-matrix $N$ of $M=M_{X,Y,Z}$ has the same base sets as $M$. Unlike for submatrices and layer matrices, now $N$ cannot be viewed as a crossing matrix of the coloring $K$ 
but is of course still determined by $K$. For example, if $N$ is the $(1,3)$-ad-cross-matrix of $M=M_{X,Y,Z}$ then 
$$
    N(i,\,j)=\chi(\;\{X(\{j\}),\,Y(\{i\}),\,Z(\{t-j+1\})\}\;)
$$
or $=*$ if two of the three elements in $\{\cdot,\cdot,\cdot\}$ coincide. 
\end{itemize}
\begin{figure}
    \centering
    \subfloat{{\includegraphics[width=0.25\textwidth]{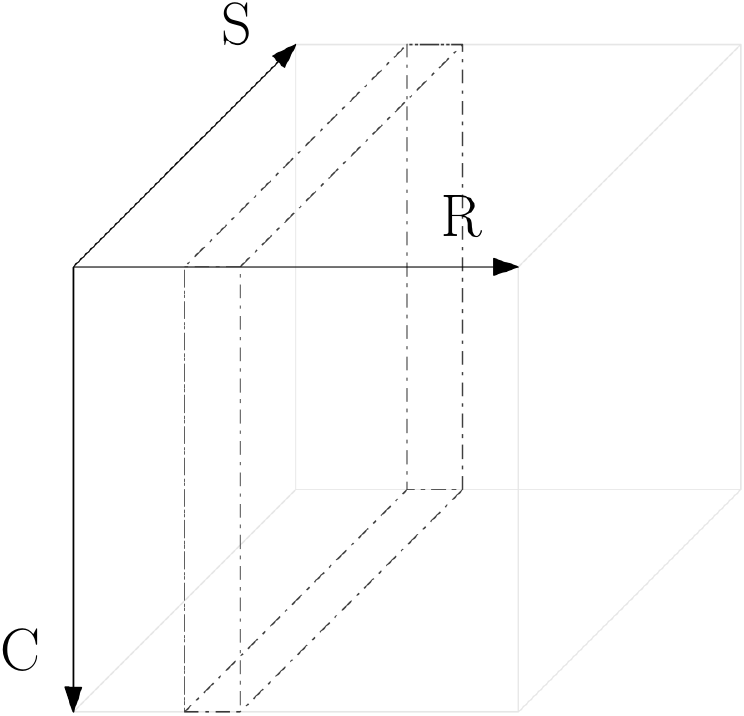} }}%
    \qquad
    \subfloat{{\includegraphics[width=0.25\textwidth]{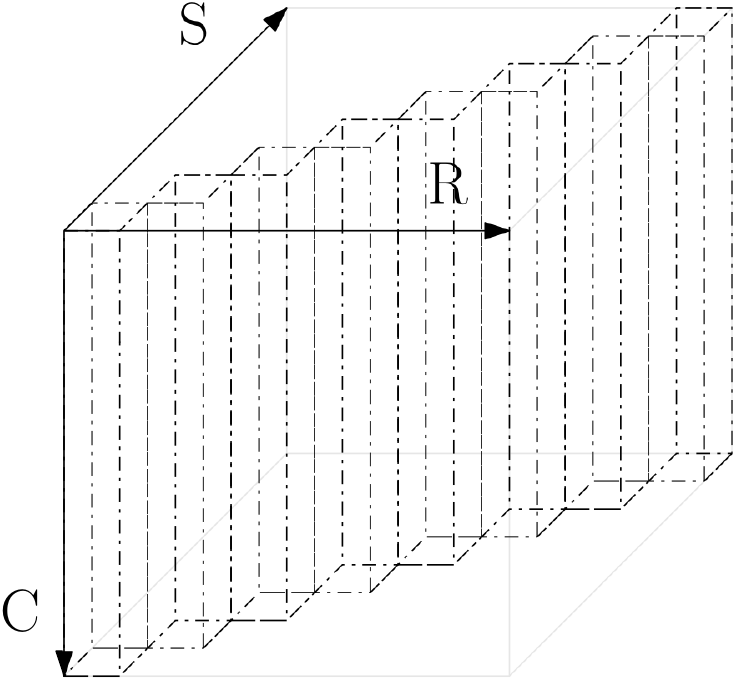} }}%
    \qquad
    \subfloat{{\includegraphics[width=0.25\textwidth]{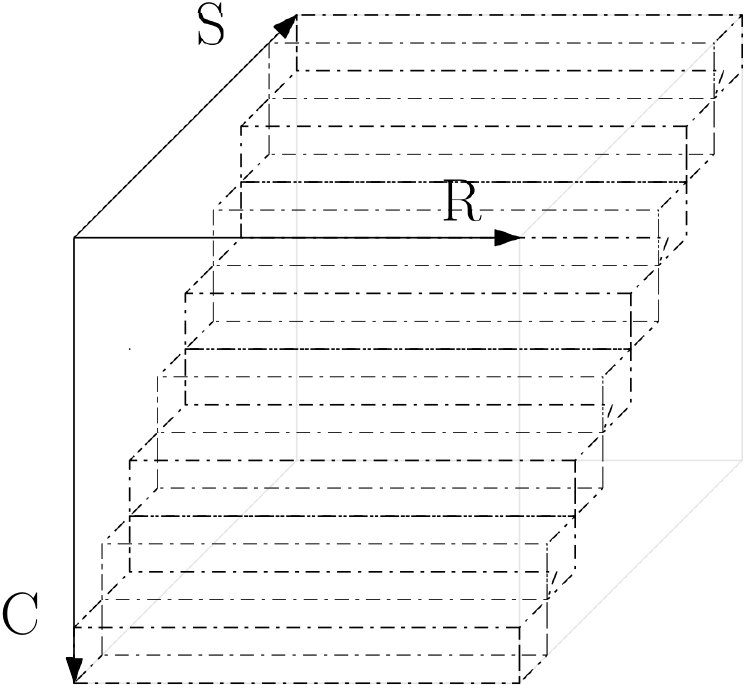} }}%
    \caption{The dashed part of the matrix $M$ displays a $1$-layer matrix, the $(1, 3)$-d-cross-matrix and the $(2, 3)$-ad-cross-matrix.}%
    \label{fig:Cuttingmatrices}%
\end{figure}
The above defined two-dimensional matrices $N$ related to $M$ are illustrated in Figure~\ref{fig:Cuttingmatrices}. In the following lemma whose easy  proof we omit we review some relations between the operations
of taking a submatrix, taking a layer matrix and taking a cross-matrix.

\begin{lemma}\label{subm_layer_crossm}
Let $M$ be a three-dimensional $*$-binary matrix. Then for any two-dimensional $*$-binary matrix $N'$ the following holds.
$$
    1.\ (\exists\;\text{layer matrix $N$ of $M$}:\ N'\preceq N)\iff(\exists\,M':\ M'\preceq M\;\&\;\text{$N'$ is a layer matrix of $M'$})\, .
$$
$$
    \hspace{-0.2cm}2.\ (\exists\;\text{cross-matrix $N$ of $M$}:\ N'\preceq N)\implies(\exists\,M':\ M'\preceq M\;\&\;\text{$N'$ is a cross-matrix of $M'$})
$$
but the opposite implication in general does not hold. 
\end{lemma}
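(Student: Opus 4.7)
My plan is to split this into three parts: the two directions of item~1, the forward direction of item~2, and the counterexample to its converse. Items~1 and the forward part of item~2 are pure definition chasing; the substantive content lies in the counterexample.

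For item~1 I will directly unwind the definitions. In the ($\Longleftarrow$) direction, given $M'\preceq M$ via increasing injections $f_1,f_2,f_3$ and $N'$ equal to, say, the $1$-layer matrix of $M'$ at fixed coordinate $z'$, so that $N'(a,b)=M'(z',b,a)=M(f_1(z'),f_2(b),f_3(a))$, I take $N$ to be the $1$-layer matrix of $M$ at $z:=f_1(z')$ and observe that the injections $f_3$ (for the first coordinate of $N$) and $f_2$ (for the second) witness $N'\preceq N$. For the ($\Longrightarrow$) direction I simply reverse the construction: given $N$ the $1$-layer matrix of $M$ at $z$ and $N'\preceq N$ via $\varphi,\psi$, I let $M'$ be the submatrix of $M$ whose first coordinate is restricted to the singleton $\{z\}$ and whose second and third coordinates are restricted to $\psi([s'])$ and $\varphi([t'])$; the $1$-layer of $M'$ at its unique fixed first coordinate then equals $N'$. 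The $2$- and $3$-layer variants are identical after relabelling.

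For the forward direction of item~2 the argument is analogous, with one twist: a cross-matrix forces two of the coordinates of $M$ to coincide (possibly after reversing one), so when I pull back $N'\preceq N$ to a submatrix $M'$ of $M$ I must use the same increasing injection in both of the equated coordinates. Concretely, for the $(1,2)$-d-cross-matrix $N(a,b)=M(b,b,a)$ and $N'\preceq N$ via $\varphi,\psi$, the identity $N'(a,b)=M(\psi(b),\psi(b),\varphi(a))$ shows that the submatrix $M'\preceq M$ indexed by $\psi([s'])$ in both of its first two coordinates and by $\varphi([r'])$ in its third has $N'$ as its $(1,2)$-d-cross-matrix. The ad-variants work the same way, using the elementary fact that if $\psi\colon[s']\to[s]$ is increasing then so is $b\mapsto s-\psi(s'-b+1)+1$, which absorbs the order reversal in the definition.

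The failure of the converse of item~2 is the main content. The obstruction is conceptual: a submatrix of $M$ chooses its three coordinate sets independently, whereas any cross-matrix of $M$ forces two of the coordinates to agree (or to mirror each other). I will exhibit an explicit small witness: take $M$ of size $3\times 3\times 1$ with zeros on both the main diagonal and the anti-diagonal of its single horizontal layer and ones in the remaining four positions. Since $t=1\ne 3$, the $(1,3)$- and $(2,3)$-cross-matrices of $M$ are not defined, and by construction both the $(1,2)$-d- and the $(1,2)$-ad-cross-matrix of $M$ are identically zero. However, the submatrix $M'\preceq M$ obtained by keeping the rows and columns indexed by $\{1,2\}$ has $(1,2)$-ad-cross-matrix equal to the $1\times 2$ row $(1,1)$, which is manifestly not contained in any cross-matrix of $M$. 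The only obstacle I anticipate is notational, namely keeping the six cross-matrix variants straight through the order reversals in the ad-case; the conceptual content of the lemma is exhausted by the definition chasing plus this single counterexample.
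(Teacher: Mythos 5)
Your proof is correct. The paper explicitly omits the proof of this lemma, labelling it easy, so there is no in-paper argument to compare against; your definition-unwinding for item~1 and the forward direction of item~2 is exactly what is intended, and the only nontrivial content is the counterexample to the converse of item~2. That counterexample checks out: for your $3\times3\times1$ matrix $M$ with zeros on both diagonals of its single layer, only the two $(1,2)$-cross-matrices are defined (the third dimension $1\ne3$ rules out the rest), and both equal the all-zero $1\times3$ row; yet the $2\times2\times1$ submatrix $M'$ on rows and columns $\{1,2\}$ has $(1,2)$-ad-cross-matrix $M'(b,3-b,1)$ equal to $(1,1)$, which is not contained in an all-zero matrix. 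One tiny slip: you call the offending cross-matrix of $M'$ both ``$(1,2)$-ad'' and ``the $1\times2$ row $(1,1)$'' correctly, but earlier you phrase the example as ``both the $(1,2)$-d- and the $(1,2)$-ad-cross-matrix of $M$ are identically zero'' while the $(1,2)$-d-cross-matrix of $M'$ is also $(0,0)$ --- so it is specifically the ad-variant of $M'$ that witnesses the failure; your statement is consistent with this, just worth being explicit that the d-variant of $M'$ does not work.
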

In the direction $\impliedby$ in part 1 matrices $N'$ and $M'$ determine $N$ uniquely, and similarly in the direction $\implies$ in part 2. 
In the direction $\implies$ in part 1 the matrix $M'$ is not determined uniquely by $N'$ and $N$, and to make it unique we may take for example the smallest $M'$.

In Section~\ref{sec_five} we will work with matrices $N'$ obtained from matrices $M$, $N$ and $M'$ in the ways described in the left and right
sides in parts 1 and 2, and the base sets of $N'$ will be important. It is straightforward to determine them if $M=M_{X,Y,Z}$ is a crossing matrix of a coloring, 
if we know the fixed coordinate for the layer matrix $N$, and if we know how $M'$ and $N'$ embed in their supermatrices. For illustration we describe it 
in detail in one of the four situations, the left side in part 1. Suppose that $M=M_{X,Y,Z}$ is a crossing matrix of a coloring and with $|X|=r$, $|Y|=s$ 
and $|Z|=t$, that $N$ is the $t\times r$ $2$-layer matrix of $M$ given by the fixed coordinate $z\in[s]$, and that $N'$ is the $t'\times r'$ submatrix of $N$ given by 
the increasing injections $f\colon[t']\to[t]$ and  $g\colon[r']\to[r]$. Then $M$ has the base sets $X,Y$ and $Z$, $N$ has the  base sets $X,Y(\{z\})$ and $Z$, and 
$N'$ has the base sets $X(g([r'])),Y(\{z\})$ and $Z(f([t']))$.

Let $N\colon[r]\times[s]\to \{0,1,*\}$ be a two-dimensional $*$-binary matrix. We say that $N$ is \emph{$R$-full} if there exist $r$ distinct column indices 
$s_i\in[s-1]$ such that 
$$
    \{N(i,\,s_i),\,N(i,\,s_i+1)\}=\{0,\,1\}\;\text{ for }\;i=1,\,2,\,\dots,\,r\, . 
$$
We define similarly that $N$ is \emph{$C$-full} if for some $s$ distinct row indices $r_i$, for $i \in [s]$ the 
values $N(r_i,i)$ and $N(r_i+1,i)$ are $0$ and $1$ or vice versa. The following crucial lemma is a three-dimensional version of Lemma~\ref{KlazarL3.13} and reduces three-dimensional matrices to two-dimensional ones.

\begin{lemma}\label{3DKlazar}
Let $(M_n)_{n \geq 1}$ be an infinite sequence of three-dimensional $*$-binary matrices such that the sequence $(\al(M_n))_{n \geq 1}$ is bounded but the sequence $(|R(M_n)|)_{n \geq 1}$ 
is unbounded. Then (i) or (ii) holds. 
\begin{enumerate}
\item[(i)] Every matrix $M_n$ has a $2$-layer matrix $N_n$ such that the sequence $(|R(N_n)|)_{n \geq 1}$ is unbounded, or every matrix $M_n$ has a $3$-layer matrix 
$N_n$ such that the sequence $(|R(N_n)|)_{n \geq 1}$ is unbounded.
\item[(ii)] Every matrix $M_n$ has a submatrix $M_n'$ which has a $(2,3)$-cross-matrix $N_n$ that is $R$-full and such that the sequence $(|R(N_n)|)_{n \geq 1}$ is unbounded.
\end{enumerate}
\end{lemma}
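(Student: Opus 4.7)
The plan is to prove the contrapositive: assuming (i) fails, I derive (ii). Failure of (i) means that both $A_n := \max\{\,|R(N)|\colon N\text{ is a }2\text{-layer matrix of }M_n\,\}$ and the analogous $B_n$ for $3$-layer matrices are bounded sequences in $n$; say $A_n, B_n \le L$. For each $i \in R(M_n)$ fix a witness pair $(J(i), K(i))$ with $M_n(i, J(i), K(i)) \ne M_n(i+1, J(i), K(i))$. The key bookkeeping observation is that each value of $J$ is attained at most $L$ times by this map: if $J(i_1) = \dots = J(i_k) = z$ then all these $i_l$ belong to $\bigcup_K R(M_n; z, K)$, which equals $R$ of the $2$-layer matrix of $M_n$ obtained by fixing the second coordinate at $z$ and hence has size at most $A_n \le L$. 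Symmetrically each value of $K$ is attained at most $L$ times.

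By a greedy selection I extract a subsequence $i_1 < \dots < i_p$ of $R(M_n)$ with $p \ge |R(M_n)|/L^2$ along which both $J$ and $K$ take pairwise distinct values. Two successive applications of the Erd\H{o}s--Szekeres theorem then yield a sub-subsequence $i_{k_1} < \dots < i_{k_q}$ of length $q \ge p^{1/4}$ along which $J$ and $K$ are both strictly monotone. Since $|R(M_n)|$ is unbounded, so is $q$.

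For the final construction, set $X := \{\,i_{k_j},\, i_{k_j}+1 \colon j \in [q]\,\}$, $Y := \{J(i_{k_j}) \colon j \in [q]\}$, $Z := \{K(i_{k_j}) \colon j \in [q]\}$, and let $M_n'$ be the submatrix of $M_n$ with these base sets (so $|Y| = |Z| = q$). Depending on whether $J$ and $K$ have the same or opposite monotonicity directions along the sub-subsequence, I take $N_n$ to be the $(2,3)$-d-cross-matrix or the $(2,3)$-ad-cross-matrix of $M_n'$. The main obstacle is the case-analysis in this step: the sorted order of $Y \subset \mathbb{N}$ matches the order of the subsequence precisely when $J$ is increasing along it, and otherwise reverses, and similarly for $Z$; hence the d- versus ad- choice must be made to align the pair $(J(i_{k_j}), K(i_{k_j}))$ with a single common row of $N_n$ in all four monotonicity combinations. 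Once the pairings are tracked carefully, one checks that for each $j \in [q]$ two consecutive columns of $N_n$ corresponding to $\{i_{k_j}, i_{k_j}+1\} \subset X$ carry the values $M_n(i_{k_j}, J(i_{k_j}), K(i_{k_j}))$ and $M_n(i_{k_j}+1, J(i_{k_j}), K(i_{k_j}))$, which differ by the witness property. Since these witness columns are pairwise distinct in $j$, the matrix $N_n$ is $R$-full and $|R(N_n)| \ge q \to \infty$, establishing (ii).
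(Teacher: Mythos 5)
Your proof is correct and follows essentially the same strategy as the paper's: after controlling the multiplicity of the $(J,K)$-coordinates of witness rows, you apply Erd\H{o}s--Szekeres twice to extract a chain along which both coordinates are monotone, and then assemble the appropriate $(2,3)$-d- or $(2,3)$-ad-cross-matrix. The only organizational differences are that you argue contrapositively and use the failure of (i) (rather than the bound on $\al(M_n)$, which your argument in fact never needs) to bound the multiplicities, and that you trim the first coordinate to $\{i_{k_j}, i_{k_j}+1\}$ where the paper retains the full range; both choices give the required $R$-fullness and unbounded $|R(N_n)|$.
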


\begin{proof}
Let the sequences $(\al(M_n))_{n \geq 1}$ and $(|R(M_n)|)_{n \geq 1}$ be bounded and unbounded, respectively. Hence for every $n \in \mathbb{N}$ there is a finite sequence 
$$
    \left((J_{n,1},\, K_{n,1}),\,(J_{n,2},\, K_{n,2}),\,\dots,\,(J_{n,m(n)},\, K_{n,m(n)})\right)
$$
of mutually distinct coordinates $(J_{n,j}, K_{n,j})\in\mathbb{N}^2$ of rows in $M_n$ such that the sequence $(m(n))_{n \geq 1}$ is unbounded 
and for every $j \in [m(n)]$ we have that
$$
    R\left(M_n;\,J_{n,j},\,K_{n,j}\right) \setminus \bigcup_{i=1}^{j-1} R(M_n;\,J_{n,i},\, K_{n,i})\ne\emptyset\, .
$$ 
First we suppose that there is a sequence $(J_n)_{n\ge1}\subset\mathbb{N}$ such that the sequence 
$$
    \left(|\{j\in[m(n)]\mid J_{n,j} = J_n\}|\right)_{n\ge1}
$$ 
is unbounded, i.e. some second coordinate $J$ has unbounded multiplicity. Then by the above displayed non-equality the $2$-layer matrices 
$$
    N_n(a,\,b) = M_n(b,\,J_n,\,a)
$$ 
of $M_n$ are such that the sequence $(|R(N_{n})|)_{n\ge1}$ is unbounded and we have case (i). Similarly, if there is a sequence $(K_n)_{n\ge1}\subset\mathbb{N}$ 
such that the numbers of indices $j\in[m(n)]$ with $K_{n,j} = K_n$ form an unbounded sequence, then the $3$-layer matrices $N_n(a,b) = M_n(b,a,K_n)$ of $M_n$ 
give an unbounded sequence $(|R(N_{n})|)_{n\ge1}$.

Suppose that the maximum multiplicity of both the $J$ and the $K$ coordinate is bounded and therefore the numbers of distinct $J$ coordinates 
and of distinct $K$ coordinates are unbounded. Using twice the Erd\H os--Szekeres lemma, which says that every sequence of $(k-1)^2+1$ numbers has a $k$-term 
monotone subsequence, we deduce that there is a sequence $(Z_n)_{n \geq 1}$ of sets of indices 
$$
    Z_n = \{j_{n,1} < j_{n,2} < \dots < j_{n,|Z_n|}\} \subset [m(n)]
$$ 
such that the sequence 
$(|Z_n|)_{n\ge1}$ is unbounded and for every $n$ both
sequences 
$$
    \overline{J_n}=\left(J_{n,j_{n,i}}\right)_{1 \le i \le |Z_n|}\;\text{ and }\;\overline{K_n}=\left(K_{n,j_{n,i}}\right)_{1 \le i \le |Z_n|}
$$ 
are monotonic, each strictly increases or strictly decreases. We define
$$
    M_n^0(a,\,b,\,c) = M_n(a,\,J_{n,j_{n,b}},\, K_{n,j_{n,c}})
$$
and consider the submatrix $M_n'$ of $M_n$ with the positions $(a, J_{n,j_{n,b}}, K_{n,j_{n,c}})$ where $a$ runs in the first dimension of $M_n$ 
and $b,c$ run in $[|Z_n|]$.
When the sequences $\overline{J_n}$ and $\overline{K_n}$ are monotonic in the same sense, both increase or both decrease, we take the $(2,3)$-d-cross-matrix $$
    N_n(a,\,b) = M_n^0(b,\,a,\,a)\;\text{ or }\;N_n(a,\,b) = M_n^0(b,\,|Z_n|-a+1,\,|Z_n|-a+1)
$$
of $M_n'$, respectively. When $\overline{J_n}$ increases and $\overline{K_n}$ decreases
or vice versa, we take the $(2,3)$-ad-cross-matrix 
$$
    N_n(a,\,b) = M_n^0(b,\,a,\,|Z_n|-a+1)\;\text{ or }\;N_n(a,\,b) = M_n^0(b,\,|Z_n|-a+1,\,a)
$$
of $M_n'$, respectively. By the above displayed non-equality every matrix $N_n$ is $R$-full. In particular, since $(|Z_n|)_{n\ge1}$ is unbounded, the sequence $(|R(N_n)|)_{n \geq 1}$ is unbounded 
and we have case (ii). 
\end{proof}

When we replace rows with columns, we obtain the following symmetric result. It has an analogous (or symmetric) proof which we
omit. But note that because of the interchange of coordinates in the definitions of layer and cross-matrices, we cannot just replace 
in Lemma~\ref{3DKlazar} `R' with `C'.

\begin{lemma}\label{3DKlazar2}
Let $(M_n)_{n \geq 1}$ be an infinite sequence of three-dimensional $*$-binary matrices such that the sequence $(\al(M_n))_{n \geq 1}$ is bounded 
but the sequence $(|C(M_n)|)_{n \geq 1}$ is unbounded. Then (i) or (ii) holds. 
\begin{enumerate}
\item[(i)] Every matrix $M_n$ has a $1$-layer matrix $N_n$ such that the sequence $(|R(N_n)|)_{n \geq 1}$ is unbounded, or every matrix $M_n$ has a $3$-layer matrix 
$N_n$ such that the sequence $(|C(N_n)|)_{n \geq 1}$ is unbounded.
\item[(ii)] Every matrix $M_n$ has a submatrix $M_n'$ which has a $(1,3)$-cross-matrix $N_n$ that is $C$-full and such that the sequence $(|C(N_n)|)_{n \geq 1}$ is unbounded.
\end{enumerate}
\end{lemma}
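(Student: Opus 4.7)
The plan is to mirror the proof of Lemma~\ref{3DKlazar}, with the roles of rows and columns exchanged and the choice of layer and cross-matrices adjusted to accommodate the coordinate-interchange conventions pointed out in the remark preceding the statement. First, since $(|C(M_n)|)_{n\ge 1}$ is unbounded, I would extract for each $n$ a list
$$
    \bigl((I_{n,1},K_{n,1}),\,(I_{n,2},K_{n,2}),\,\dots,\,(I_{n,m(n)},K_{n,m(n)})\bigr)
$$
of pairwise distinct column coordinates of $M_n$, with $(m(n))_{n\ge 1}$ unbounded, such that for every $j$ the set $C(M_n;I_{n,j},K_{n,j})$ contains an index missing from $\bigcup_{i<j}C(M_n;I_{n,i},K_{n,i})$. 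I would then split into three cases according to the multiplicity pattern of the first and third coordinates of these columns, passing if necessary to a subsequence in $n$ so that the same case applies uniformly.

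In the first case, some fixed $I_n$ occurs unboundedly often among the $I_{n,j}$. I would then take the $1$-layer matrix $N_n(a,b)=M_n(I_n,b,a)$: varying $b$ with $a$ fixed traces the column of $M_n$ at $(I_n,K=a)$, so $R(N_n;a)=C(M_n;I_n,K=a)$, and the unbounded multiplicity of $I_n$ together with the selection step gives an unbounded $|R(N_n)|$. This yields the first branch of (i). In the second case some fixed $K_n$ occurs unboundedly often; now I would choose the $3$-layer matrix $N_n(a,b)=M_n(b,a,K_n)$, for which varying $a$ with $b$ fixed traces the column of $M_n$ at $(I=b,K_n)$, so $C(N_n;b)=C(M_n;I=b,K_n)$ and $|C(N_n)|$ is unbounded, yielding the second branch of (i).

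In the remaining case both coordinates have bounded multiplicity, so the numbers of distinct $I_{n,j}$ and of distinct $K_{n,j}$ are both unbounded. Two applications of the Erd\H{o}s--Szekeres lemma then produce a further subsequence $Z_n\subset[m(n)]$ with $|Z_n|$ unbounded along which both $\overline{I_n}=(I_{n,j_{n,i}})_i$ and $\overline{K_n}=(K_{n,j_{n,i}})_i$ are strictly monotone. I would restrict $M_n$ to the submatrix $M_n'$ indexed by the selected first and third coordinates and the whole second dimension, and then take the $(1,3)$-d-cross-matrix of $M_n'$ if $\overline{I_n}$ and $\overline{K_n}$ are monotone in the same sense, or the $(1,3)$-ad-cross-matrix otherwise. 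In either arrangement a column of $N_n$ corresponds to one of the distinguished columns of $M_n$ at $(I_{n,j_{n,b}},K_{n,j_{n,b}})$; the ``new'' index selected for each such column is distinct across $b$, so $N_n$ is $C$-full with $|C(N_n)|$ unbounded, yielding (ii).

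The main obstacle, and the reason the lemma does not follow by a mechanical $R\leftrightarrow C$ substitution in Lemma~\ref{3DKlazar}, is the bookkeeping of the coordinate interchange between $3$D rows/columns/shafts and $2$D rows/columns. I must check that a change along a column of $M_n$ lands in the $R$-set of the $1$-layer matrix (where the second dimension of $M_n$ becomes the second dimension of $N_n$) but in the $C$-set of the $3$-layer matrix (where it becomes the first dimension of $N_n$); this is precisely what forces the asymmetric pairing $1$-layer with $R$ and $3$-layer with $C$ in the statement. Once this is pinned down, the four sub-cases for the $(1,3)$-cross-matrix fall out in the same way as the $(2,3)$-cross-matrix sub-cases of Lemma~\ref{3DKlazar}.
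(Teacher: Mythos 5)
Your proof is correct and reconstructs exactly the symmetric argument the paper alludes to (and omits). In particular, you correctly handle the subtlety that the paper flags just before the lemma: a change along a column of $M_n$ contributes to $R$ of a $1$-layer matrix (where $N_n(a,b)=M_n(I_n,b,a)$ sends the second coordinate of $M_n$ to the second coordinate of $N_n$) but to $C$ of a $3$-layer matrix (where $N_n(a,b)=M_n(b,a,K_n)$ sends it to the first coordinate of $N_n$), and you correctly pair the Erd\H{o}s--Szekeres case with the $(1,3)$-cross-matrix so that columns of $N_n$ realize the distinguished columns of $M_n$, giving $C$-fullness.
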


\begin{remark} \label{rem:fullrow}
Note that in the case (ii) of Lemma~\ref{3DKlazar} each row of the matrix of $N_n$ is also a row of the matrix $M_n$, in the sense that if $M_n=M_{X_n,Y_n,Z_n}$ is a crossing matrix of a coloring then 
the three first base sets of the matrices $N_n$, $M_n'$ and $M_n$ are all equal to $X_n$. Similarly, in the case (ii) of Lemma~\ref{3DKlazar2} each column of $N_n$ is in the analogous sense  a column of the matrix $M_n$.
\end{remark}

\begin{lemma}\label{lemma_on_diagonals}
Suppose that $(M_n)_{n\ge1}$ is a sequence of three-dimensional $*$-binary matrices, that every matrix $M_n$ has a binary submatrix $M_n'$
such that the sequence $(\al(M_n'))_{n\ge 1}$ is bounded, and that every matrix $M_n'$ has a cross-matrix $N_n$ such that the sequence 
$(\al(N_n))_{n\ge 1}$ is unbounded. Then every matrix $M_n'$ has a (binary) layer matrix $P_n$ such that the sequence $(\al(P_n))_{m\ge 1}$ is bounded and 
both sequences $(|R(P_n)|)_{n\ge 1}$ and $(|C(P_n)|)_{n\ge 1}$ are unbounded.
\end{lemma}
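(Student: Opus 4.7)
The plan is to trace the unbounded alternation of the cross-matrices $N_n$ back to main (anti)diagonals of layer matrices of $M_n'$, and then to convert this into unbounded $|R|$ and $|C|$ via Lemma~\ref{KlazarL3.12}.

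First I would inspect the six definitions of cross-matrix in Subsection~\ref{subs_def_al} and tabulate the two kinds of lines of each. In every case one direction of the cross-matrix consists of entire lines (rows, columns or shafts) of $M_n'$ itself, while the other direction consists of main diagonals or antidiagonals of some layer matrix of $M_n'$. For example, for the $(2,3)$-d-cross-matrix $N(a,b) = M_n'(b,a,a)$ the rows (fixed $a$) are rows of $M_n'$, while the columns (fixed $b$) are the main diagonals of the $1$-layer matrices of $M_n'$ at fixed first coordinate $b$; the $(2,3)$-ad-cross-matrix gives antidiagonals in place of diagonals. Since every line of $M_n'$ has alternation at most $\al(M_n')$, and hence bounded by assumption, the unbounded alternation of $N_n$ must arise in the (anti)diagonal direction. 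Applying the pigeonhole principle over the finitely many cross-matrix types and the choice diagonal versus antidiagonal, I may pass to a subsequence and fix one layer type so that for each such $n$ there is a layer matrix $P_n$ of $M_n'$ whose main diagonal (the antidiagonal case being entirely analogous) has alternation tending to infinity; for the remaining indices $n$ one lets $P_n$ be any layer matrix of $M_n'$.

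Since every line of the layer matrix $P_n$ is a line of $M_n'$, we have $\al(P_n) \le \al(M_n')$, so $(\al(P_n))_{n \ge 1}$ is bounded. Let $d_n$ denote the number of adjacent changes on the main diagonal of $P_n$; by construction $d_n \to \infty$ along the subsequence. An elementary path argument then yields
\begin{equation*}
    d_n \le |R(P_n)| + |C(P_n)|,
\end{equation*}
namely: if $P_n(i,i) \ne P_n(i+1,i+1)$, then along the two-edge path $(i,i) \to (i+1,i) \to (i+1,i+1)$ at least one edge records a change, which places either $i \in C(P_n)$ (when the vertical edge changes in column $i$) or $i \in R(P_n)$ (when the horizontal edge changes in row $i+1$); distinct diagonal positions $i$ give distinct elements of $R(P_n) \cup C(P_n)$.

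Therefore $(|R(P_n)| + |C(P_n)|)_{n \ge 1}$ is unbounded, so at least one of $(|R(P_n)|)$ and $(|C(P_n)|)$ is unbounded. Since $(\al(P_n))$ is bounded, Lemma~\ref{KlazarL3.12} applied to $P_n$ and, by transposition, to $P_n^T$ yields
\begin{equation*}
    |R(P_n)| \le (\al(P_n)-1)(2|C(P_n)|+1) \quad \text{and} \quad |C(P_n)| \le (\al(P_n)-1)(2|R(P_n)|+1),
\end{equation*}
which force $(|R(P_n)|)$ to be bounded if and only if $(|C(P_n)|)$ is bounded. Hence both sequences are unbounded and $P_n$ is the required layer matrix. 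The main obstacle is the case analysis in the first step: one must verify in each of the six cross-matrix types that the two families of lines split cleanly into lines of $M_n'$ and into (anti)diagonals of layer matrices of $M_n'$. This is routine from the definitions but requires careful bookkeeping with the coordinate swaps built into the definitions of layer and cross-matrices.
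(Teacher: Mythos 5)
Your proof is correct and follows essentially the same route as the paper's own argument: both observe that the unbounded alternation of a cross-matrix must reside in the (anti)diagonal direction of some layer matrix $P_n$ of $M_n'$ (since the other direction consists of genuine lines of $M_n'$, whose alternation is bounded), both use the two-step path $(i,i)\to(i+1,i)\to(i+1,i+1)$ to convert diagonal alternation into membership in $R(P_n)\cup C(P_n)$, and both then invoke Lemma~\ref{KlazarL3.12} to upgrade unboundedness of one of $|R(P_n)|,|C(P_n)|$ to unboundedness of both. The only cosmetic difference is that the paper isolates the quantity $\al_{23d}(\cdot)$ and works through the $(2,3)$-d-cross-matrix case explicitly while dismissing the other five as analogous, whereas you phrase the case split up front via a pigeonhole pass to a subsequence; the substance is identical.
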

\begin{proof}
By passing to a subsequence of $n=1,2,\dots$ we may assume that the type of cross-matrix $N_n$ of $M_n'$ is constant in $n$. For concreteness we assume that each $N_n$ is a 
$(2,3)$-d-cross-matrix of $M_n'$, the other five cases for other cross-matrices are treated by similar arguments. Thus, by the above definition of cross-matrices, 
$$
    N_n(a,\,b)=M_n'(b,\,a,\,a)\, . 
$$
Let $L$ be a general three-dimensional $r\times s\times s$ binary matrix. We define a
\emph{$(2,3)$-diagonal of $L$} as $L(x,y,y)$ where $x\in[r]$ is fixed and $y$ runs in $[s]$ and set
$$
    \al_{23d}(L)=1+ \max_{x\in[r]}\left| \{i\in[s-1]:\ \{L(x,\,i,\,i),\,L(x,\,i+1,\,i+1)\}=\{0,\,1\}\} \right| \, .
$$
Unboundedness of the sequence $(\al(N_n))_{n\ge 1}$ implies unboundedness of the sequence $(\al_{23d}(M_n'))_{n\ge 1}$ because rows in $N_n$ are rows in $M_n'$ and thus contribute to $\al(N_n)$ by a bounded amount, but columns in $N_n$ are $(2,3)$-diagonals in $M_n'$ to which boundedness of $\al(M_n')$ does 
not apply. Thus we see that every matrix $M_n'$ has a $1$-layer matrix $P_n$ (see Figure~\ref{fig:cross_c} in Section~\ref{sec_endof proof} for a similar situation) 
such that, if $s(P_n)$ denotes the number of subwords $01$ and $10$
on the main diagonal of $P_n$, the sequence $(s(P_n))_{n\ge1}$ is unbounded. Now let $N$ be any two-dimensional binary $r\times r$ matrix with $s(N)=k\ge1$
and $\al(N)=l$. Clearly, $l\ge2$. Let $i\in[r-1]$ be such that 
$$
    \{N(i,\,i),\,N(i+1,\,i+1)\}=\{0,\,1\}\, , 
$$
that is, the position $(i,i)$ on the diagonal of $N$ contributes $1$ to $s(N)$. Considering the 
position $(i+1,i)$ in $N$ we see that either $i\in R(N)$ or $i\in C(N)$. Thus either $|R(N)|\ge k/2$ or $|C(N)|\ge k/2$. If the latter inequality holds then by Lemma~\ref{KlazarL3.12}
we have that 
$$
    |R(N)|\ge\frac{1}{2}\left(\frac{|C(N)|}{l-1}-1\right)\ge\frac{1}{2}\left(\frac{k}{2(l-1)}-1\right)\, .
$$
As this is less than $k/2$, the displayed lower bound on $|R(N)|$ is true also when the former inequality holds. Since the sequence $(\al(P_n))_{n\ge1}$ is bounded ($P_n$ is a layer matrix of $M_n'$) and the sequence $(s(P_n))_{n\ge1}$ is unbounded, by the displayed lower bound on $|R(N)|$ we see that the sequence $(|R(P_n)|)_{n\ge1}$ 
is unbounded. By symmetry, this argument shows that also the sequence $(|C(P_n)|)_{n\ge1}$ is unbounded.
\end{proof}

\noindent
To be precise, we will use this lemma in the situation when not $\al(M_n')$ but only the numbers $\al(M_n)$ are bounded, but $M_n$ will be such that this still implies boundedness of 
$\al(M_n')$ for any (not necessarily binary) submatrix $M_n'$ of $M_n$. This is explained in the remark just before Proposition~\ref{P1}.

\subsection{Crossing matrices and $p$-tame colorings}\label{subsec_ptame}

For a coloring $H = (n,\chi)$ we define its \emph{nuclear decomposition $\gr(H)$} to be the interval partition $I_1 < I_2 < \dots < I_s$, 
$s\in\mathbb{N},$ of $[n]$ such that $I_1$ is the longest initial $\chi$-monochromatic interval, $I_2$ is the longest $\chi$-monochromatic 
interval following after $I_1$, and so on. Then each interval $I_i$ is $\chi$-monochromatic and for every $i=1,2,\dots,s-1$ we have that $|I_i|\ge3$ and the set $I_i\cup\{\min(I_{i+1})\}$ is not $\chi$-monochromatic. We note that if the nuclear decomposition of $H$ has length $s\ge 2r$ then 
$H\succeq K$ where $K$ is an $r$-wealthy coloring of type $W_{4,1}$. Indeed, it is easy to find triples $a_i,b_i,c_i\in I_{2i-1}$, 
$i\in[r]$, of mutually distinct elements such that none of the $r$ 
quadruples
$$
\{\,a_i,\,b_i,\,c_i,\,\min(I_{2i})\}\subset I_{2i-1}\cup I_{2i},\ i\in[r]\,,
$$
is $\chi$-monochromatic. Nuclear decompositions are analogous to the decompositions in the proof of Proposition~\ref{new_variation_onKlazar}.
Let $p\in\mathbb{N}$. We say that a coloring $H = (n,\chi)$ is {\it $p$-tame} if its nuclear decomposition
$$
\gr(H)=\{I_1 < I_2 < \dots < I_s\}
$$ 
satisfies the following five conditions (cf. the remark after Lemma~\ref{rema_RCS}). Recall that for subsets (here actually intervals) 
$I,J,K\subset[n]$, $M_{I,J,K}$ denotes the crossing  matrix of $H$ with the base sets $I,J$ and $K$, as defined in Section~\ref{subs_def_al}.

\begin{enumerate}
\item The length of the nuclear decomposition $s\le p$.
\item For all integers $u,v,w$ with $1 \leq u < v < w \leq s$ one has $\al(M_{I_u,I_v,I_w}) \leq p$ (see Section~\ref{subs_def_al} for the definition of $\al(\cdot)$).
\item For all integers $u,v,w$ with $1 \leq u < v < w \leq s$ one has $|R(M_{I_u,I_v,I_w})| \leq p$ and $|C(M_{I_u,I_v,I_w})| \leq p$ (see Section~\ref{subs_def_al} for the definition of $R(\cdot)$ and $C(\cdot)$).
\item For all integers $u,v$ with $1\leq u < v \leq s$ one has $\al(M_{I_u,I_u,I_v}) \leq p$ and $\al(M_{I_u,I_v,I_v}) \leq p$.
\item For all integers $u,v$ with $1\leq u < v \leq s$ one has $|R(M_{I_u,I_u,I_v})| \leq p$, $|C(M_{I_u,I_u,I_v})| \leq p$, 
$|R(M_{I_u,I_v,I_v})| \leq p$ and $|C(M_{I_u,I_v,I_v})| \leq p$.
\end{enumerate}

\begin{proposition}\label{M-TAME}
For every $n\ge2$ and $p\geq 3$ there are at most $n^{10p^6}$ $p$-tame colorings $(n,\chi)$.
\end{proposition}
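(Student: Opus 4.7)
The plan is to encode each $p$-tame coloring $(n,\chi)$ by a small amount of data and then bound the choices for each piece. A $p$-tame coloring is completely determined by (i) its nuclear decomposition $I_1<I_2<\dots<I_s$ with $s\le p$, specified by at most $p-1$ interior boundary points in $[n-1]$ and contributing at most $\binom{n-1}{\le p-1}\le n^p$ choices; (ii) the colors of the intervals, contributing at most $2^p\le n^p$ choices (using $n\ge 2$); and (iii) for each triple $1\le u\le v\le w\le s$ with $(u,v,w)$ not all equal, the crossing matrix $M_{I_u,I_v,I_w}$. The number of such triples is at most $\binom{s}{3}+2\binom{s}{2}\le p^3$ for $p\ge 3$.

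The heart of the proof is bounding the number of crossing matrices $M$ of size at most $n\times n\times n$ satisfying the $p$-tame constraints $\al(M), |R(M)|, |C(M)|\le p$. First I would specify the transition sets $R(M)\subseteq[r-1]$ and $C(M)\subseteq[s-1]$, each of size at most $p$: this contributes at most $\binom{n-1}{\le p}^2\le n^{2p}$ choices. These sets partition the $r\times s$ plane into at most $(p+1)^2$ rectangular blocks, and I claim \emph{block-constancy}: within each block, the non-star entries $M(i,j,k)$ depend only on $k$. For the non-degenerate matrices $M_{I_u,I_v,I_w}$ with $u<v<w$ this is immediate, since $i\notin R(M)$ forces $M(i,J,K)=M(i+1,J,K)$ for every $(J,K)$ and analogously for $C(M)$. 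For the degenerate crossing matrices $M_{I_u,I_u,I_v}$ and $M_{I_u,I_v,I_v}$, whose stars lie along the ``diagonal'' where two base-set indices coincide, a short case analysis is needed to argue that binary values can still be propagated across any block via rows and columns avoiding the diagonal positions. Assuming block-constancy, each block is encoded by a single binary shaft in $\{0,1\}^t$ of length at most $n$ with at most $\al(M)\le p$ alternations, and the number of such shafts is at most $2\binom{n-1}{\le p-1}\le n^p$. Multiplying, the total number of crossing matrices is at most $n^{2p}\cdot(n^p)^{(p+1)^2}=n^{p(p+1)^2+2p}=n^{p^3+2p^2+3p}$.

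Combining all three pieces of data, the number of $p$-tame colorings is at most
\[
n^p\cdot 2^p\cdot\bigl(n^{p^3+2p^2+3p}\bigr)^{p^3}\le n^{p^6+2p^5+3p^4+2p},
\]
and one verifies that for every $p\ge 3$ this exponent is at most $10p^6$ (at $p=3$ the exponent is $1464\le 7290$, and $(p^6+2p^5+3p^4+2p)/p^6\to 1$ as $p\to\infty$). The elementary inequalities $\binom{n-1}{\le p}\le n^p$, $2\binom{n-1}{\le p-1}\le n^p$ and $2^p\le n^p$ used above are easily checked for $n\ge 2$ and $p\ge 3$. The principal obstacle I expect is the rigorous verification of block-constancy in the degenerate case, since the diagonal of stars interferes with the direct propagation argument used in the non-degenerate case and requires handling the interaction between the $R(M)$ and $C(M)$ transition sets and the star pattern dictated by the coinciding base sets.
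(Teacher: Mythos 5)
Your proposal follows essentially the same encoding strategy as the paper: bound the nuclear decomposition and interval colors, then bound each crossing matrix by exploiting the $\al$, $|R|$, $|C|\le p$ constraints. The organization differs cosmetically (you specify $R(M),C(M)$ up front and count one shaft per $(p+1)^2$ block, while the paper recursively counts shafts, then 1-layer matrices as sequences of shafts with at most $|C|$ breaks, then matrices as sequences of layers with at most $|R|$ breaks), but this is the same information and the same arithmetic; both land in exponent $O(p^6)$.

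The one real issue is exactly the one you flag, and your proposed fix would not close it. For $M_{I_u,I_v,I_v}$, fix a shaft index $k$ lying strictly inside a column block $[c,d]$. The stars in that block are precisely the whole column $j=k$ of the $(i,j)$-slice. There is therefore \emph{no} path within that slice avoiding the stars that links an entry with $j<k$ to an entry with $j>k$: the star column disconnects the block in the $(i,j)$-plane. ``Propagating via rows and columns avoiding the diagonal positions'' fails here. What saves block-constancy is the symmetry $M(i,j,k)=M(i,k,j)$, which the degenerate crossing matrix inherits from the coloring (and likewise $M(i,j,k)=M(j,i,k)$ for $M_{I_u,I_u,I_v}$). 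Concretely, for $c<k<d$ one has $k-1,k\notin C(M)$, and then
\[
M(i,k-1,k)=M(i,k,k-1)=M(i,k+1,k-1)=M(i,k-1,k+1)=M(i,k,k+1)=M(i,k+1,k),
\]
using symmetry at the odd steps and $k,k-1\notin C(M)$ (evaluated at the shaft indices $k-1$ and $k+1$) at the even steps. This is a ``shaft-direction'' detour, not a row/column path in the slice. Once this is in place, the rest of your count goes through. The paper sidesteps the same issue more slickly by invoking symmetry to restrict to the triangle $j<i$ and then reusing the nondegenerate counting there, which amounts to the same observation. So: right approach, genuine (acknowledged) gap, and the specific mechanism you suggest for closing it is not the right one.
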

\begin{proof}
Let $p,n\in\mathbb{N}$ with $n \ge 2$ and $p \geq 3$ and $H=(n,\chi)$ be a $p$-tame coloring.
We upper-bound the number of different colorings $H$. Recall the well known formula $\binom{n-1}{k-1}$ for the number of partitions of $[n]$ into 
$k$ nonempty intervals $I_1<I_2<\dots<I_k$. The number of possibilities for the interval partitions $I_1 < I_2 < \dots < I_s$ of $[n]$ satisfying condition 
$1$ and for the colors $\chi\, |\, \binom{I_i}{3}\equiv0, 1$ as $i=1,2,\dots,s$ is at most 
$$
    c_1 = \sum_{s=1}^p \binom{n-1}{s-1} 2^s \le n^{p-1}(2+2^2+\dots+2^p) \leq (2n)^p\, .
$$
The colors of the edges outside $\binom{I_i}{3}$ are determined by crossing matrices of three types:
\begin{itemize}
\item[(i)] $M_{I_u,I_v,I_w}$ for $u$, $v$ and $w$ with $1\leq u < v < w \leq s$,
\item[(ii)] $M_{I_u,I_u,I_v}$ for $u$ and $v$ with $1\leq u < v \leq s$, and
\item[(iii)] $M_{I_u,I_v,I_v}$ for $u$ and $v$ with $1\leq u < v \leq s$. 
\end{itemize}
First we bound, for fixed $u$, $v$ and $w$, the number of matrices $M = M_{I_u,I_v,I_w}$ of type (i) satisfying conditions $2$ and $3$. Let $p_1 = |I_w| \leq n$. Using $\al(M) \leq p$ we bound the number of all possible types of shafts (as binary strings) in $M$ by 
$$
    c_2 = 2\sum_{i=1}^{p} \binom{p_1-1}{i-1}\leq c_1\le(2n)^{p}\, .
$$
We set $p_2 = |I_v| \leq n$ and bound the number of two-dimensional 1-layer matrices of $M$ obtained by fixing the first (row) coordinate. Each of these layer 
matrices consists of $p_2$ shafts, parametrized by the second coordinate, which form at most $|C(M)|+1$ different intervals of shafts because two consecutive shafts 
may differ only when the first shaft has its (second) coordinate in $C(M)$. Each of these intervals of shafts can be selected in at most $c_2$ ways (by the type of shafts in 
the interval) and positions of these intervals (the second coordinate of the first shaft in the interval) in at most $\binom{p_2-1}{|C(M)|}$ ways. Hence we have 
an upper bound on the number of those two-dimensional 1-layer matrices
$$
    c_3 = \sum_{i=0}^{p} \binom{p_2 - 1}{i}c_2^{i+1} \leq (p+1)(n-1)^p(2n)^{p^2+p} \leq (2n)^{2p^2}\, .
$$
Finally, we bound the number of whole matrices $M$. Let $p_3 = |I_u| \leq n$. Like before, $M$ consists of $p_3$ 1-layer matrices (parametrized by the row coordinate) that form exactly $|R(M)|+1$ 
different intervals. Thus the number of matrices $M$ of type (i) is bounded by
$$
    c_4 = \sum_{i=0}^{p} \binom{p_3-1}{i}c_3^{i+1} \leq (p+1)(n-1)^{p}(2n)^{2p^3+2p^2} \leq (2n)^{4p^3}
$$
(as before we use that $(p+1)(n-1)^{p}\le(2n)^p$).

To bound, for fixed $u$ and $v$, the number of crossing matrices of types (ii) and (iii) satisfying conditions 4 and 5 we note two things. First, these numbers for types (ii) and (iii) 
are equal (consider reversals). Second, the number of matrices of type (ii) has the same upper bound $(2n)^{4p^3}$ as that of matrices of type (i). 
This is because any matrix $M$ of type (ii) is symmetric in the first two coordinates ($M(i,i,k) = *$ and $M(i,j,k) = M(j,i,k)$ for $i \ne j$) and therefore 
in every 1-layer matrix of $M$ obtained by fixing the first coordinate to $i$ it suffices to consider the triangle with the second coordinate $j$ satisfying $j<i$. Now the bounds 
used in the previous paragraph apply also here and bound the number of these triangles. 

Thus the number of interval partitions $I_1 < I_2 < \dots < I_s$ of $[n]$, $s\le p$, with monochromatic intervals is at most $c_1$, the triples $\{a < b < c \}$ with elements from three distinct intervals can be colored in at most $c_4^{\binom{s}{3}}$ ways, and those from two distinct intervals (either $a, b$ or $b, c$ lie in the same interval) in at most $c_4^{\binom{s}{2}}$ ways. Together there are at most 
$$
    c_1 c_4^{2\binom{s}{2}} c_4^{\binom{s}{3}} \leq c_1 c_4^{p^3} \leq (2n)^{4p^6+p} \leq n^{10p^6}
$$
$p$-tame different colorings $H$. 
\end{proof}

\section{The final part of the proof}\label{sec_five}

We have arrived in the last third of the proof of Theorem~\ref{T1FD}. First we state and prove six lemmas ensuring presence of $r$-wealthy colorings of type $W_i$ in the colorings 
whose crossing matrices satisfy certain conditions, usually involving quantities $\al(\cdot)$, $|R(\cdot)|$ and  $|C(\cdot)|$. The lemmas are given here in the order in which they are employed in the proofs of Propositions~\ref{P1} and \ref{P2}. Then we combine the lemmas with the previous results and finish in Propositions~\ref{P1} and \ref{P2} the proof of Theorem~\ref{T1FD}.

\subsection{Six auxiliary lemmas on matrices and colorings}

Finite sets $A_1,A_2,\dots,A_k\subset\mathbb{N}$ are \emph{non-intertwined} if for some permutation 
$\{i_1,i_2,\dots,i_k\}=\{1,2,\dots,k\}$ one has 
$$
    A_{i_1}<A_{i_2}<\dots<A_{i_k}\, .
$$ 
A three-dimensional crossing matrix $M$ or its submatrix $M'$ 
or its layer matrix $N$ or its cross-matrix $N'$ is {\em non-intertwined} if the base sets of the respective matrix are non-intertwined.  
Clearly if $M$ is non-intertwined then so are the matrices $M'$, $N$ and $N'$. It may happen that $M$ is not non-intertwined but $M'$ or $N$ is (but not $N'$ that has the same base sets as $M$). Non-intertwined base sets are motivated by enforcing colorings of type $W_2$ and 
type $W_3$ in the next three lemmas and their corollaries. 

Recall that $I_r$ denotes the two-dimensional $r\times r$ identity matrix 
and $U_r$ denotes the two-dimensional $r\times r$ upper-triangular matrix. Every  non-intertwined crossing matrix is binary, has no $*$ entry. 

\begin{lemma} \label{poz_W34}
Let $K=(n,\chi)$ be a coloring, $M=M_{X,Y,Z}$ be a crossing matrix of $K$, $M'=M_{X',Y',Z'}$ be a submatrix of $M$ and $N$ be a non-intertwined layer matrix of $M'$. If $N$ is similar 
to the matrix $I_r$ or to the matrix $U_r$, then $K\succeq K'$ where $K'$ an $r$-wealthy coloring of type $W_2$. Moreover, each base set of the coloring $K'$ is mapped in the containment $K'\preceq K$ in one of the sets $X$, $Y$ and $Z$.
\end{lemma}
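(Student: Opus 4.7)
The plan is to verify the conclusion essentially by unpacking the definitions and matching the three symmetry operations defining matrix similarity with the four symmetry operations defining colorings of type $W_2$.

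First I would reduce to a canonical case. By the definition of layer matrix, $N$ arises from $M' = M_{X', Y', Z'}$ by fixing one of its three coordinates; the three cases are symmetric, so assume $N$ is the $1$-layer matrix of $M'$ for some fixed index $z$. Setting $c := X'(\{z\})$, the entries of $N$ read
\begin{equation*}
    N(a,\,b) \;=\; M'(z,\,b,\,a) \;=\; \chi\bigl(\{c,\,Y'(\{b\}),\,Z'(\{a\})\}\bigr)
\end{equation*}
whenever the three arguments of $\chi$ are distinct. The base sets of $N$ in the sense of Section~\ref{subs_def_al} are $\{c\}, Y', Z'$, with $|Y'| = |Z'| = r$. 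Non-intertwining forces them to be pairwise disjoint and linearly ordered under $<$; in particular $N$ has no $*$ entries and is binary, which is consistent with the hypothesis that $N$ is similar to $I_r$ or $U_r$.

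Next I would translate the similarity of $N$ to $I_r$ or $U_r$ into a sequence of symmetry operations on the underlying coloring. A swap of the colors $0$ and $1$ in $N$ is literally the color swap in the defining symmetries of $W_{2,1}$ (resp.\ $W_{2,2}$); a vertical flip of $N$ reverses the rows, i.e.\ reverses the order of $Z'$; a horizontal flip of $N$ reverses the columns, i.e.\ reverses the order of $Y'$. These are three of the four allowed symmetries in the definition of type $W_2$ colorings. After applying them, $N$ becomes exactly $I_r$ (in the $I_r$ case) or exactly $U_r$ (in the $U_r$ case), which is precisely the pattern \eqref{W2_1} resp.\ \eqref{W2_2} on the triples $\{c, y, z\}$ with $y \in Y'$ and $z \in Z'$.

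Finally, let $K'$ be the restriction and normalization of $K$ to the $(2r+1)$-element set $\{c\} \cup Y' \cup Z'$. Non-intertwining places these three sets as three consecutive blocks of sizes $1, r, r$ in the normalized ground set $[2r+1]$, in one of the six possible orders. Any of these orderings is brought to the canonical layout $[r] < [r+1, 2r] < \{2r+1\}$ of $W_{2,1}'$ / $W_{2,2}'$ by the fourth allowed symmetry, namely permutation of the three intervals. Combined with the previous paragraph, $K'$ is an $r$-wealthy coloring of type $W_{2,1}$ in the $I_r$ case and of type $W_{2,2}$ in the $U_r$ case. The moreover clause is immediate, since the three base sets of $K'$ are $\{c\}, Y', Z'$ and they are contained in $X, Y, Z$ respectively. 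The $2$-layer and $3$-layer subcases are handled by the same argument after cyclically relabelling the coordinates.

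I do not foresee any genuine obstacle: the argument is a bookkeeping of symmetries. The only point that requires care is to check that the three matrix operations generating similarity, together with the six orderings of the non-intertwined base sets, realize exactly the four symmetry operations in the definition of type $W_2$ colorings; this correspondence is already hinted at in the remark following the definition of similarity in Section~\ref{subsec_review2dim}.
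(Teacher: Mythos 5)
Your proof is correct and takes essentially the same route as the paper's: unpack the layer-matrix and crossing-matrix definitions, observe that non-intertwining makes the three base sets of $N$ pairwise disjoint and linearly ordered, and then match the matrix-similarity operations (color swap, vertical flip, horizontal flip) together with the six possible orderings of the blocks against the four defining symmetries of type $W_2$ colorings. The paper phrases the same argument in the $3$-layer case (you chose the $1$-layer case) and writes the effect of the similarity operations directly as substitutions $i \mapsto r-i+1$, $j \mapsto r-j+1$ inside $\chi(\cdots)$ rather than listing them as flips, but the content is identical, including the observation that the moreover clause follows because the base sets $\{c\}, Y', Z'$ are subsets of $X, Y, Z$.
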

\begin{proof}
We assume that the matrix $N$ is similar to $I_r$ (the case of $U_r$ is analogous and leads to colorings of type $W_{2,2}$) and that $N$ is a 3-layer matrix of $M'$ 
(the cases of 1-layer and 2-layer  matrices are analogous). Thus $M'$ has dimensions $r \times r \times t$ with $r=|X'|=|Y'|$ and $t=|Z'|$. We see by the definition of crossing matrices, layer matrices, base sets and of similarity to $I_r$ that if
$$
    X'=\{a_1<a_2<\dots<a_r\},\  Y'=\{b_1<b_2<\dots<b_r\}\;\text{ and }\;Z'=\{c_1<c_2<\dots<c_t\}
$$
(these are subsets of $[n]$) then for some $k\in[t]$ the sets $X'$, $Y'$ and $\{c_k\}$ are non-intertwined and for every $i,j\in [r]$ we have 
\begin{align*} 
	N(i,\,j) = M'(j,\,i,\,k) =\chi(\{a_j,\, b_i,\, c_k\}) =   \left\{ \begin{array}{ll}
     1 & \mbox{if }i = j\mbox{ and} \\
     0 & \mbox{if }i \not= j\, ,
    \end{array} \right.
\end{align*}
or $1$ and $0$ are switched and $i$ and/or $j$ may be in $\chi(\cdots)$ replaced with $r-i+1$ and/or $r-j+1$, respectively. Then the coloring $K'\preceq K$ obtained by normalization of the restriction of $K$ to the set $X'\cup Y'\cup\{c_k\}\subset[n]$ is an $r$-wealthy coloring of type $W_{2,1}$.  It is clear that each base set of 
$K'$ is mapped in the containment in one of the sets $X$, $Y$ and $Z$.
\end{proof}

\begin{corollary}\label{cor_prvniholemm}
Suppose that $(K_m)_{m\ge1}$ is a sequence of colorings. We assume for every $m$ that $M_m$ is a crossing matrix of $K_m$, $M_m'\preceq M_m$, and that 
$N_m$ is a non-intertwined layer matrix of $M_m'$ such that the sequence $(\al(N_m))_{m\geq 1}$ is bounded but the sequence 
$(|R(N_m)|)_{m \geq 1}$ or the sequence $(|C(N_m)|)_{m \geq 1}$ is unbounded. Then for every $r\in\mathbb{N}$ there is an $m$ such that $K_m\succeq K_m'$ where $K_m'$
is an $r$-wealthy coloring of type $W_2$. Moreover, each base set of the coloring $K_m'$ is mapped in the containment $K_m'\preceq K_m$ in one of the base sets of $M_m$.
\end{corollary}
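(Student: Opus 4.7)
The plan is to chain together Lemma~\ref{KlazarL3.13} and Lemma~\ref{poz_W34} via part~1 of Lemma~\ref{subm_layer_crossm}. First, I would pass to a subsequence so that exactly one of $(|R(N_m)|)_{m\ge1}$, $(|C(N_m)|)_{m\ge1}$ is the unbounded one; by transposing each $N_m$ and invoking the transpose remark after Lemma~\ref{KlazarL3.12}, the two cases are symmetric, so I treat the $|C(N_m)|$ case. A key preliminary observation is that each $N_m$ is a binary matrix: its base sets (inherited from the layer-matrix construction) are non-intertwined by hypothesis, hence pairwise disjoint, so no $*$ entries appear. Thus Lemma~\ref{KlazarL3.13} applies to the sequence $(N_m)_{m\ge1}$, and one of its two conclusions holds: for every $r\in\mathbb{N}$ there is an $m$ (depending on $r$) such that $N_m$ contains a submatrix $N_m^{\ast}$ strongly similar either to $I_r$ or to $U_r$.

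Next, fix $r$ and the corresponding $N_m^{\ast}\preceq N_m$. By the $\impliedby$ direction of part~1 of Lemma~\ref{subm_layer_crossm}, there exists $M_m''\preceq M_m'$ such that $N_m^{\ast}$ is a layer matrix of $M_m''$ (of the same type, $1$-, $2$- or $3$-layer, as $N_m$ is of $M_m'$). The three base sets of this layer matrix $N_m^{\ast}$ of $M_m''$ are, by construction, subsets of the three base sets of the layer matrix $N_m$ of $M_m'$, and are therefore still non-intertwined. Consequently $N_m^{\ast}$ is a non-intertwined layer matrix of $M_m''\preceq M_m'\preceq M_m$.

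Finally, since strong similarity implies similarity, $N_m^{\ast}$ is similar to $I_r$ or to $U_r$. Applying Lemma~\ref{poz_W34} to the coloring $K_m$, the crossing matrix $M_m$, the submatrix $M_m''$ of $M_m$, and the non-intertwined layer matrix $N_m^{\ast}$ of $M_m''$, we obtain an $r$-wealthy coloring $K_m'$ of type $W_2$ with $K_m'\preceq K_m$; furthermore Lemma~\ref{poz_W34} directly gives the ``moreover'' assertion, namely that each base set of $K_m'$ maps into one of the three base sets of $M_m$.

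The routine obstacle is purely bookkeeping: verifying that non-intertwinedness passes from $N_m$ through the submatrix $N_m^{\ast}$ and then to the associated layer matrix of $M_m''$, and that the embeddings compose correctly so that the base sets of $K_m'$ end up inside the base sets of the original $M_m$ rather than merely inside those of $M_m''$. Neither step requires any new combinatorial idea beyond quoting the three lemmas already at hand.
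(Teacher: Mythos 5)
Your proof follows the paper's own argument exactly: apply Lemma~\ref{KlazarL3.13} (with the transpose remark to cover the $|R|$-unbounded case) to obtain submatrices strongly similar to $I_r$ or $U_r$, convert them into layer matrices of submatrices of $M_m'$ via part~1 of Lemma~\ref{subm_layer_crossm}, and then invoke Lemma~\ref{poz_W34}. Two cosmetic slips that do not affect correctness: you cannot pass to a subsequence in which ``exactly one'' of $|R(N_m)|$, $|C(N_m)|$ is unbounded (by the remark after Lemma~\ref{KlazarL3.12}, when $\al$ is bounded they are both bounded or both unbounded), and the direction of part~1 of Lemma~\ref{subm_layer_crossm} you use is $\implies$, not $\impliedby$.
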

\begin{proof}By Lemma~\ref{KlazarL3.13} and the remark after it, for every $r\in\mathbb{N}$ there is an $m=m(r)$ such that $N_m$ has a submatrix $I_r'$ strongly 
similar to $I_r$ or a submatrix $U_r'$ strongly similar to $U_r$. We apply Lemma~\ref{poz_W34} to the non-intertwined matrices 
$I_r'$ and $U_r'$ (each is also a layer matrix of a submatrix of $M_{m(r)}'$, see part 1 of Lemma~\ref{subm_layer_crossm}) and get that for every $r\in\mathbb{N}$ 
there is an $m$ such that the coloring $K_m$ contains an $r$-wealthy coloring $K_m'$ of type $W_2$. It is clear that each base set of 
$K'_m$ is mapped in the containment in one of the base sets of $M_m$.
\end{proof}

\begin{lemma}\label{poz_W56}
Let $K=(n,\chi)$ be a coloring, $M=M_{X,Y,Z}$ be a crossing matrix of $K$, $M'=M_{X',Y',Z'}$ be a submatrix of $M$ and $N$ be a non-intertwined cross-matrix of $M'$. 
If $N$ is similar to the matrix $I_r$ or to the matrix $U_r$, then $K\succeq K'$ where $K'$ an $r$-wealthy coloring of type $W_{3,1}$ or type $W_{3,2}$. Moreover, each base set of the coloring $K'$ is mapped in the containment $K'\preceq K$ in one of the sets $X$, $Y$ and $Z$.
\end{lemma}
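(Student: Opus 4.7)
The plan is to mirror the argument of Lemma~\ref{poz_W34}, but with cross-matrices replacing layer matrices and with the wealthy type $W_{3}$ replacing $W_2$. By the definition of a cross-matrix there are six possibilities for $N$; I will write out only the case that $N$ is the $(2,3)$-d-cross-matrix of $M'$, and at the end indicate how the remaining five cases are handled by the symmetries used to define the subtypes $W_{3,1}$ and $W_{3,2}$.

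Assume $N$ is the $(2,3)$-d-cross-matrix of $M'$, so $r=|X'|=|Y'|=|Z'|$ and $N(i,j)=M'(j,i,i)$. Write
\[
  X'=\{a_1<\dots<a_r\},\quad Y'=\{b_1<\dots<b_r\},\quad Z'=\{c_1<\dots<c_r\}
\]
as subsets of $[n]$. Since $N$ is non-intertwined, the three base sets $X',Y',Z'$ are pairwise disjoint and pairwise ordered, so for every $i,j\in[r]$ the elements $a_j,b_i,c_i$ are distinct and hence $N(i,j)=\chi(\{a_j,b_i,c_i\})$ (no star appears). First suppose $N$ is similar to $I_r$: after possibly swapping the two colors and/or reversing the order of rows and/or columns we have $N(i,j)=1$ if $i=j$ and $N(i,j)=0$ otherwise, i.e.
\[
  \chi(\{a_j,b_i,c_i\})\;=\;\begin{cases}1 & i=j,\\ 0 & i\neq j.\end{cases}
\]
Let $K'$ be the restriction-and-normalization of $K$ to the set $X'\cup Y'\cup Z'$, which has $3r$ elements. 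Because $X',Y',Z'$ are non-intertwined, the normalization sends them to three consecutive intervals of length $r$ in $[3r]$ in the order in which they appear in $[n]$. Comparing with the defining equation~(\ref{W3_1}) of $W_{3,1}'$, the coloring $K'$ then satisfies $W_{3,1}'$ up to permuting these three intervals (according to which of $X',Y',Z'$ is first, second and third in $[n]$) and up to the row- and column-reversals already absorbed, that is, $K'$ is an $r$-wealthy coloring of type $W_{3,1}$.

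The case where $N$ is similar to $U_r$ is identical except that now $N(i,j)=1$ iff $i\le j$ (up to the same symmetries), yielding by comparison with~(\ref{W3_2}) that $K'$ is $r$-wealthy of type $W_{3,2}$. To cover the five remaining kinds of cross-matrix, observe the following: the three choices $(1,2),(1,3),(2,3)$ for the pair of identified coordinates merely rotate the roles of $X',Y',Z'$ (which permutation of the three intervals is used), while passing from a d-cross-matrix to the corresponding ad-cross-matrix amounts to reversing the order of one of the three base sets before identification. Both of these modifications are precisely symmetries allowed in the definition of $W_{3,1}$ and $W_{3,2}$, so the same conclusion holds in all six cases. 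Finally, since $X'\subset X$, $Y'\subset Y$, $Z'\subset Z$, each base set of $K'$ is the image under normalization of one of $X',Y',Z'$, hence is mapped in the containment $K'\preceq K$ into one of $X,Y,Z$, as claimed. The only mild obstacle is the bookkeeping of which symmetry of $W_3$ corresponds to which type of cross-matrix, but since the definition of $W_{3,1}$ and $W_{3,2}$ explicitly closes under all such symmetries, nothing delicate is needed.
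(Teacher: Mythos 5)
Your proof is correct and takes essentially the same approach as the paper's: fix one representative case of cross-matrix, use the non-intertwined hypothesis to make $N$ binary, read off the coloring of the triples $\{a_j,b_i,c_i\}$ from similarity to $I_r$ or $U_r$, and restrict-and-normalize $K$ to $X'\cup Y'\cup Z'$ to obtain a $W_{3,1}$ or $W_{3,2}$ coloring up to the built-in symmetries. The paper works the $(1,2)$-d-cross-matrix case rather than your $(2,3)$-d-cross-matrix, but that is an immaterial symmetric choice, and your explicit accounting of how the other five cross-matrix types correspond to permuting/reversing the three intervals (symmetries already absorbed into the definition of $W_{3,1}$ and $W_{3,2}$) is a fair expansion of the paper's brief ``the other five cases of cross-matrices are analogous.''
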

\begin{proof}Unlike in Lemma~\ref{poz_W34} here the matrices $N$ and $M'$ have the same base sets. 
We assume that the matrix $N$ is similar to $I_r$  (the case of $U_r$ is analogous and leads to colorings of type $W_{3,2}$) and that $N$ 
is a $(1,2)$-d-cross-matrix of $M'$ (the other five cases of cross-matrices are analogous). Thus $M'$ has dimensions $r \times r \times r$ with 
$r=|X'|=|Y'|=|Z'|$. We see by the definition of crossing matrices, cross-matrices, base sets and of similarity to $I_r$ that the sets
$$
    X'=\{a_1<a_2<\dots<a_r\},\  Y'=\{b_1<b_2<\dots<b_r\}\;\text{ and }\;Z'=\{c_1<c_2<\dots<c_r\}
$$
are non-intertwined subsets of $[n]$. Thus for every $i,j\in [r]$ we have 
\begin{align*} 
	N(i,\,j) = M'(j,\,j,\,i) = \chi(\{a_j,\,b_j,\, c_i\}) =   \left\{ \begin{array}{ll}
     1 & \mbox{if } i = j\mbox{ and} \\
     0 & \mbox{if } i \not= j\,,
    \end{array} \right.
\end{align*}
or $1$ and $0$ are switched and $i$ and/or $j$ may be in $\chi(\cdots)$ replaced with $r-i+1$ and/or $r-j+1$, respectively. Then the coloring $K'\preceq K$ obtained 
by normalization of the restriction of $K$ to the set $X'\cup Y'\cup Z'\subset[n]$ is an $r$-wealthy coloring of type $W_{3, 1}$. It is clear that each base set of 
$K'$ is mapped in the containment in one of the sets $X$, $Y$ and $Z$.
\end{proof}

\begin{corollary}\label{cor_druheholemm}
Suppose that $(K_m)_{m\ge1}$ is a sequence of colorings. We assume for every $m$ that $M_m$ is a crossing matrix of $K_m$, $M_m'\preceq M_m$, and that 
$N_m$ is a non-intertwined cross-matrix of $M_m'$ such that the sequence $(\al(N_m))_{m\geq 1}$ is bounded but the sequence 
$(|R(N_m)|)_{m \geq 1}$ or the sequence $(|C(N_m)|)_{m \geq 1}$ is unbounded. Then for every $r\in\mathbb{N}$ there is an $m$ such that $K_m\succeq K_m'$ 
where $K_m'$ is an $r$-wealthy coloring of type $W_{3,1}$ or type $W_{3,2}$. Moreover, each base set of the coloring $K_m'$ is mapped in the containment $K_m'\preceq K_m$ in one of the base sets of $M_m$.
\end{corollary}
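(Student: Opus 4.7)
The plan is to mirror the proof of Corollary~\ref{cor_prvniholemm}, with Lemma~\ref{poz_W56} in the role played there by Lemma~\ref{poz_W34}. First I would apply Lemma~\ref{KlazarL3.13} to the sequence $(N_m)_{m\ge 1}$. Since $(\al(N_m))_m$ is bounded but $(|R(N_m)|)_m$ or $(|C(N_m)|)_m$ is unbounded, the conclusion of Lemma~\ref{KlazarL3.13}, combined with the remark following Lemma~\ref{KlazarL3.12} that allows us to swap the roles of $R$ and $C$ via transposition, yields for every $r\in\mathbb{N}$ an index $m=m(r)$ such that $N_m$ contains a submatrix $P_r$ that is strongly similar either to $I_r$ or to $U_r$.

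Next, I would lift $P_r$ from the two-dimensional cross-matrix $N_m$ back to a three-dimensional submatrix of $M_m'$. By part~2 of Lemma~\ref{subm_layer_crossm}, since $P_r\preceq N_m$ and $N_m$ is a cross-matrix of $M_m'$, there exists $M_m''\preceq M_m'$ such that $P_r$ is itself a cross-matrix of $M_m''$. Because the base sets of a cross-matrix coincide with the base sets of its underlying three-dimensional matrix, the base sets of $P_r$ (as a cross-matrix of $M_m''$) equal the base sets of $M_m''$, and these are subsets of the base sets of $M_m'$. The hypothesis that $N_m$ is non-intertwined means precisely that the base sets of $M_m'$ are non-intertwined, so their subsets are too; thus the cross-matrix $P_r$ of $M_m''$ is non-intertwined.

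Finally I would invoke Lemma~\ref{poz_W56} applied to the crossing matrix $M_m$ of $K_m$, with $M_m''\preceq M_m$ in the role of its submatrix (using transitivity $M_m''\preceq M_m'\preceq M_m$) and $P_r$ as the non-intertwined cross-matrix. Since $P_r$ is similar to $I_r$ or $U_r$, Lemma~\ref{poz_W56} produces an $r$-wealthy coloring $K_m'$ of type $W_{3,1}$ or $W_{3,2}$ with $K_m'\preceq K_m$. The ``moreover'' clause then propagates: Lemma~\ref{poz_W56} guarantees that each base set of $K_m'$ is mapped into one of the base sets of $M_m''$, and these in turn are subsets of the base sets of $M_m$.

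I do not anticipate any real obstacle: the corollary is a pure assembly of Lemmas~\ref{KlazarL3.13}, \ref{subm_layer_crossm}, and \ref{poz_W56}, exactly parallel to Corollary~\ref{cor_prvniholemm}. The one bookkeeping point worth being explicit about is that the non-intertwined property transfers correctly along the chain $N_m \leadsto P_r \leadsto M_m''$, which is immediate from the fact that a cross-matrix shares its base sets with its underlying three-dimensional matrix and that passing to a submatrix only shrinks base sets.
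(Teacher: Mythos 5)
Your proposal matches the paper's proof essentially step by step: apply Lemma~\ref{KlazarL3.13} (together with the transposition remark) to obtain a submatrix of $N_m$ strongly similar to $I_r$ or $U_r$, lift it via part~2 of Lemma~\ref{subm_layer_crossm} to a cross-matrix of a submatrix of $M_m'$, and then invoke Lemma~\ref{poz_W56}. Your bookkeeping about the base sets — that a cross-matrix shares its base sets with the underlying three-dimensional matrix, so non-intertwinedness and the ``moreover'' clause propagate — is precisely the point the paper highlights when contrasting this corollary with Corollary~\ref{cor_prvniholemm}.
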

\begin{proof}
Again, unlike in Corollary~\ref{cor_prvniholemm} here the matrices $N_m$ and $M_m'$ have the same base sets.
By Lemma~\ref{KlazarL3.13} and the remark after it, for every $r\in\mathbb{N}$ there is an $m=m(r)$ such that $N_m$ has a submatrix $I_r'$ strongly 
similar to $I_r$ or a submatrix $U_r'$ strongly similar to $U_r$. We apply Lemma~\ref{poz_W56} to the  non-intertwined matrices 
$I_r'$ and $U_r'$ (each is also a cross-matrix of a submatrix of $M_{m(r)}'$, see part 2 Lemma~\ref{subm_layer_crossm}) and get that for every $r\in\mathbb{N}$ 
for an $m$ the coloring $K_m$ contains an $r$-wealthy coloring $K_m'$ of type $W_{3,1}$ or type $W_{3,2}$. It is clear that each base set of 
$K'_m$ is mapped in the containment in one of the base sets of $M_m$.
\end{proof}

\begin{lemma}\label{lema_dvaapul}
Suppose that $(K_m)_{m\ge1}$ is a sequence of colorings. We assume for every $m$ that $M_m$ is a crossing matrix of $K_m$, that $M_m'\preceq M_m$ and the sequence 
$(\al(M_m'))_{m\ge1}$ is bounded, and that $N_m$ is a non-intertwined cross-matrix of $M_m'$ such that the sequence $(|R(N_m)|)_{m \geq 1}$ or the sequence 
$(|C(N_m)|)_{m \geq 1}$ is unbounded. Then for every $r\in\mathbb{N}$ there is an $m$ such that $K_m\succeq K_m'$ where $K_m'$ is an $r$-wealthy coloring of 
one and the same type $W_2$ or type $W_{3,1}$ or type $W_{3,2}$. Moreover, each base set of $K_m'$ is mapped in the containment $K_m'\preceq K_m$ in one of the base sets of $M_m$.
\end{lemma}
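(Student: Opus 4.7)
The plan is to split the argument according to whether the sequence $(\al(N_m))_{m\ge1}$ is bounded or unbounded. In either case I will pass to an infinite subsequence and invoke one of the two corollaries already established; the final ``one and the same type'' conclusion is then enforced by pigeonhole over the three types $W_2$, $W_{3,1}$, $W_{3,2}$.

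If $(\al(N_m))_{m\ge1}$ is bounded along an infinite subsequence, then along that subsequence all hypotheses of Corollary~\ref{cor_druheholemm} are satisfied: $N_m$ is a non-intertwined cross-matrix of $M_m' \preceq M_m$, $(\al(N_m))$ is bounded, and by assumption at least one of $(|R(N_m)|)$ and $(|C(N_m)|)$ is unbounded. That corollary produces, for every $r$, some $K_m$ containing an $r$-wealthy coloring of type $W_{3,1}$ or $W_{3,2}$, with the base sets of $K_m'$ mapped into the base sets of $M_m$ as required.

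Otherwise $\al(N_m) \to \infty$ along some subsequence. I would first observe that $M_m'$ is binary: non-intertwined cross-matrices share their base sets with the parent matrix, so the base sets of $M_m'$ are non-intertwined and hence pairwise disjoint, forcing $M_m'$ to have no $*$ entries. Together with the standing hypothesis that $(\al(M_m'))_{m\ge1}$ is bounded, Lemma~\ref{lemma_on_diagonals} then applies and yields a binary layer matrix $P_m$ of $M_m'$ with $(\al(P_m))_{m\ge1}$ bounded and both $(|R(P_m)|)_{m\ge1}$ and $(|C(P_m)|)_{m\ge1}$ unbounded. Any layer matrix of $M_m'$ has base sets obtained from those of $M_m'$ by replacing one of them with a singleton subset, so $P_m$ is also non-intertwined. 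Corollary~\ref{cor_prvniholemm}, applied with $P_m$ in the role of $N_m$, then yields $r$-wealthy colorings of type $W_2$ for every $r$, again with the required base-set mapping.

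The main obstacle I anticipate is the bookkeeping in Case 2: tracking base sets through the chain $M_m \succeq M_m' \to P_m \to K_m'$ and verifying that non-intertwinedness is inherited at each step. Neither point requires a new idea, but both must be traced carefully through the definitions, especially the fact that a layer matrix of a non-intertwined three-dimensional matrix is itself non-intertwined (one base set is replaced by a singleton, preserving the disjoint linear order on the three base sets). After that, pigeonhole on the three produced types finishes the ``one and the same type'' clause.
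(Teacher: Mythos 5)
Your overall strategy matches the paper's proof exactly: dichotomize on $\al(N_m)$, apply Corollary~\ref{cor_druheholemm} to $N_m$ in the bounded branch, and apply Lemma~\ref{lemma_on_diagonals} followed by Corollary~\ref{cor_prvniholemm} to a layer matrix $P_m$ of $M_m'$ in the unbounded branch, verifying non-intertwinedness along the way.

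One slip worth flagging: you phrase the first branch as ``$(\al(N_m))$ is bounded along an infinite subsequence,'' and then claim that along that subsequence one of $|R(N_m)|$, $|C(N_m)|$ is still unbounded. That inference does not hold in general --- the hypothesis guarantees unboundedness of $|R|$ or $|C|$ over the \emph{whole} sequence, not along an arbitrary subsequence on which you happen to control $\al$. The correct dichotomy (and the one the paper uses) is simply whether $(\al(N_m))_{m\ge1}$ is bounded as a whole sequence or not: if bounded globally, Corollary~\ref{cor_druheholemm} applies directly to the $N_m$; if not, pass to a subsequence with $\al(N_m)\to\infty$ and run your second branch there, where you do not need any information about $|R(N_m)|$, only about $\al(N_m)$. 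With this repair the two branches become exhaustive and exclusive and the argument closes. Two smaller remarks: your observation that non-intertwined base sets are pairwise disjoint, hence $M_m'$ is binary, is exactly the justification needed to invoke Lemma~\ref{lemma_on_diagonals}, and is a useful clarification; and the closing pigeonhole over the three types is harmless but not actually needed, since Lemma~\ref{KlazarL3.13} (inside Corollary~\ref{cor_druheholemm}) already produces \emph{one and the same} of $I_r'$ or $U_r'$ for all $r$, hence one and the same of $W_{3,1}$, $W_{3,2}$.
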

\begin{proof}We assume that the sequence $(|R(N_m)|)_{m \geq 1}$ is unbounded, the other case when the sequence $(|C(N_m)|)_{m \geq 1}$ is unbounded is very similar.
We consider two cases depending on whether the sequence $(\al(N_m))_{m\geq 1}$ is bounded or not. If it is bounded then we apply Corollary~\ref{cor_druheholemm} to the matrices $N_m$ and conclude 
that for every $r\in\mathbb{N}$ there is an $m$ such that the coloring $K_m$ contains an $r$-wealthy coloring of type $W_{3,1}$ or type $W_{3,2}$ and that the condition on bases sets holds. If the 
sequence $(\al(N_m))_{m\geq 1}$ is unbounded then by Lemma~\ref{lemma_on_diagonals} every matrix $M_m'$ has a (square) layer matrix $P_m$ such 
that the sequence $(\al(P_m))_{m\geq 1}$ is bounded but the sequence $(|R(P_m)|)_{m\geq 1}$ is unbounded. Note that $P_m$ is non-intertwined because 
$N_m$ and $M_m'$ are non-intertwined. We apply Corollary~\ref{cor_prvniholemm} to the matrices $P_m$ and conclude that for every $r\in\mathbb{N}$ there is an $m$ such that the coloring $K_m$ 
contains an $r$-wealthy coloring of type $W_2$. It is clear that the condition on base sets is again satisfied.
\end{proof}

\begin{lemma}\label{treti_lemma}
Let $r\in\mathbb{N}$, $K=(n,\chi)$ be a coloring, $M=M_{X,X,Y}$ with $X<Y$ be a crossing matrix of $K$, and $M'$ be a submatrix of $M$ whose 
$(2,3)$-d-cross-matrix $N$ 
has dimensions $r\times 3r$ and satisfies 1 or 2.
\begin{enumerate}
    \item For $i\in[r]$, $N(i,3i-2)=*$ and $\{N(i,3i-1),N(i,3i)\}=\{0,1\}$.
    \item For $i\in[r]$, $\{N(i,3i-2),N(i,3i-1)\}=\{0,1\}$ and $N(i,3i)=*$.
\end{enumerate}
Then $K$ contains an $r$-wealthy coloring $K'$ of type $W_{4,2}$. Moreover, each base set of $K'$ is mapped in the containment in one of the 
sets $X$ and $Y$.
\end{lemma}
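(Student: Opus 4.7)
The plan is to construct the required $r$-wealthy coloring of type $W_{4,2}$ directly by restricting and normalising $K$ to a carefully chosen $4r$-vertex subset $S\subseteq[n]$ that is read off from the hypothesis about $N$. Write the base sets of $M'$ as $X_1,X_2\subseteq X$ and $Y'\subseteq Y$ with $|X_1|=3r$ and $|X_2|=|Y'|=r$; enumerate them in increasing order as $X_1=\{x_1<\dots<x_{3r}\}$, $X_2=\{u_1<\dots<u_r\}$ and $Y'=\{w_1<\dots<w_r\}$. By the definition of the $(2,3)$-d-cross-matrix we have $N(i,j)=M'(j,i,i)=\chi(\{x_j,u_i,w_i\})$ unless two of these three coincide. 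Since $X<Y$ forces $w_i$ to be distinct from $x_j$ and $u_i$, each $*$ entry of $N$ means $x_j=u_i$. Thus under condition~1 the equation $N(i,3i-2)=*$ gives $x_{3i-2}=u_i$, and I set $v_i=x_{3i-2}=u_i$, $p_i=x_{3i-1}$, $q_i=x_{3i}$; the entries $N(i,3i-1),N(i,3i)\in\{0,1\}$ then translate into
$\chi(\{v_i,p_i,w_i\})\ne\chi(\{v_i,q_i,w_i\})$.
Under condition~2 the same analysis gives $v_i=x_{3i}=u_i$, $p_i=x_{3i-2}$, $q_i=x_{3i-1}$, with the same colour inequality.

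Now I let $S=\bigcup_{i=1}^{r}\{v_i,p_i,q_i,w_i\}\subseteq[n]$. Because the $x_j$'s are distinct and the $w_i$'s lie in $Y>X$, $|S|=4r$. After normalisation the $w_i$'s occupy positions $3r+1,\dots,4r$ in order, while for each $i$ the three elements $\{v_i,p_i,q_i\}$ occupy exactly the consecutive positions $\{3i-2,3i-1,3i\}$ (their internal order differs between the two conditions, but this makes no difference later).

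Next I match this configuration to the image of $W_{4,2}'$ under the symmetry ``swap the intervals $[r]$ and $[r+1,4r]$''. In that swapped template the leader indexed by $i$ sits at position $3r+i$ and the $i$-th triple occupies $\{3i-2,3i-1,3i\}$; the wealthy-of-type-$W_{4,2}$ condition then asks, for each $i$, for distinct $a_i,b_i,c_i\in\{3i-2,3i-1,3i\}$ with $\chi'(\{3r+i,a_i,b_i\})\ne\chi'(\{3r+i,a_i,c_i\})$. In our restricted coloring $3r+i$ is $w_i$ and $\{3i-2,3i-1,3i\}=\{v_i,p_i,q_i\}$, so choosing $a_i$ to be the position of $v_i$ and $\{b_i,c_i\}$ the positions of $\{p_i,q_i\}$ converts the requirement back to the already-established $\chi(\{w_i,v_i,p_i\})\ne\chi(\{w_i,v_i,q_i\})$. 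Hence the restriction of $K$ to $S$ is an $r$-wealthy coloring $K'$ of type $W_{4,2}$. The ``moreover'' clause is then immediate: the leader interval $[3r+1,4r]$ of $K'$ embeds into $\{w_1,\dots,w_r\}\subseteq Y$ and the triple interval $[1,3r]$ of $K'$ embeds into $\{v_i,p_i,q_i:i\in[r]\}\subseteq X$.

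The only genuinely delicate point is the bookkeeping, specifically recognising that the $W_{4,2}$ template realised here is the swapped one and not $W_{4,2}'$ itself; this is what forces the $X$-elements to play the role of the triples and the $Y$-element $w_i$ to play the role of leader~$i$. Once the $*$-diagonal element $v_i\in X_1\cap X_2$ is correctly identified and grouped with its $X_1$-neighbours $p_i,q_i$ and the $Y$-vertex $w_i$, the $W_{4,2}$ pattern assembles itself.
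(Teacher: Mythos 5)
Your proof is correct and follows essentially the same route as the paper's own argument: you read off from the $*$-entries of $N$ that each $i$-th element of the middle base set coincides with one vertex of the $i$-th triple of the first base set, restrict to $X_1\cup Y'$ (which is exactly $X'\cup Y'$ in the paper's notation), and observe that after normalisation the result is the swapped $W_{4,2}$ template. The only difference is cosmetic — you spell out condition~2 and the $a_i,b_i,c_i$ matching explicitly, whereas the paper treats condition~2 as analogous and leaves the template identification implicit — so there is nothing to flag.
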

\begin{proof}
We assume that 1 holds, assumption 2 is treated similarly.
We have that $N(a,b)=M'(b,a,a)$, $M'=M_{X',X'',Y'}$ for some sets $X',X''\subset X$ and $Y'\subset Y$ with
$$
    X'=\{x_1<x_2<\dots<x_{3r}\},\ X''=\{x_1<x_4<x_7<\dots<x_{3r-2}\},\ Y'=\{y_1<y_2<\dots<y_r\}
$$
and that for every $i\in[r]$,
$$
    \{\chi(\{x_{3i-2}<x_{3i-1}<y_i\}),\;\chi(\{x_{3i-2}<x_{3i}<y_i\})\}=\{0,\,1\}\, .
$$
Normalization of the restriction of $K$ to the set $X'\cup Y'\subset[n]$ is an $r$-wealthy coloring $K'$ of type $W_{4,2}$. It is clear that 
the first base set of $K'$ is in the containment mapped in $X$ and the second one in $Y$.
\end{proof}

Let $K$ be a coloring, $M=M_{X,Y,Y}$ with $X<Y$ be a three-dimensional $*$-binary crossing matrix of $K$ and $M'$ be a submatrix of $M$ 
with the base sets $\{r_1<\dots<r_s\}\subset X$ and $\{c_1<\dots<c_r\},\{s_1<\dots<s_r\}\subset Y$. If $c_i<s_i$ (resp. $c_i>s_i$) for every $i\in[r]$, we say that 
the $r\times s$ $(2,3)$-d-cross-matrix $N$ of $M'$, $N=N(i,j) = M(r_j,c_i,s_i)$, is an \emph{above-diagonal} (resp. a \emph{below-diagonal}) cross-matrix.  Clearly, 
each such $N$ is binary. 

\begin{lemma}\label{ctvrte_lemma}
Let $(K_m)_{m \geq 1}$ be a sequence of colorings and $M_m = M_{X_m,Y_m,Y_m}$ with $X_m<Y_m$ be $*$-binary crossing matrices of $K_m$. Let each matrix $M_m$ have a submatrix $M_m'$ whose (two-dimensional) $(2,3)$-d-cross-matrix 
$N_m'$ is above-diagonal, resp. below-diagonal. Suppose that the sequence $(\al(M_m))_{m\ge1}$ is bounded but
$$
    \text{ both sequences $(|R(N_m')|)_{m\ge1}$ and  $(\al(N_m'))_{m\ge1}$ are unbounded}\,.
$$
Then for every $r\in\mathbb{N}$ there is an $m$ such that $K_m$ contains an $r$-wealthy coloring of one and the same type $W_{3,3}$ or type $W_2$.
\end{lemma}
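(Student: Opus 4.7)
The plan is to use the unboundedness of $\al(N_m')$ (together with the boundedness of $\al(M_m)$) to locate a column of $N_m'$ whose many alternations become, via a symmetric $1$-layer matrix $P_m$ of $M_m$, many alternations along an above-diagonal path; from these one derives that $P_m$ has unbounded $|R(P_m)|=|C(P_m)|$, and then Proposition~\ref{new_variation_onKlazar} splits the conclusion into the two promised types. Concretely, a row of $N_m'$ indexed by $a$ is a sub-row of the row of $M_m$ obtained by fixing the second and third coordinates to $(c_a,s_a)$, so it has at most $\al(M_m)-1$ consecutive alternations; hence the unboundedness of $\al(N_m')$ forces columns of $N_m'$ to have unboundedly many alternations. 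After passing to a subsequence, fix an index $b_m$ so that column $b_m$ of $N_m'$ has $k(m)\to\infty$ alternations, giving many indices $a$ with $\chi(\{r_{b_m},c_a,s_a\})\ne\chi(\{r_{b_m},c_{a+1},s_{a+1}\})$.

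Next I would introduce the symmetric $*$-binary $1$-layer matrix $P_m$ of $M_m$ obtained by fixing the first coordinate to $r_{b_m}$: thus $P_m(i,j)=\chi(\{r_{b_m},y_i,y_j\})$ for $i\ne j$ and $P_m(i,i)=*$, where $Y_m=\{y_1<y_2<\cdots\}$. Because $P_m$ is a layer matrix of $M_m$, the sequence $(\al(P_m))_m$ inherits the bound from $(\al(M_m))_m$. In the above-diagonal case the alternations found above correspond to alternations along the path $(c_a,s_a)_a$ in $P_m$, where $c_a<s_a$ and both sequences are strictly increasing. Following the template of Lemma~\ref{lemma_on_diagonals}, each alternation at $a$ yields either $P_m(c_a,s_a)\ne P_m(c_a,s_{a+1})$, contributing a new index in the interval $[s_a,s_{a+1}-1]$ to $R(P_m)$, or $P_m(c_a,s_{a+1})\ne P_m(c_{a+1},s_{a+1})$, contributing to $[c_a,c_{a+1}-1]\subset C(P_m)$. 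Thinning to every other alternation position makes these intervals pairwise disjoint, so $|R(P_m)|+|C(P_m)|\to\infty$; by the symmetry $P_m=P_m^T$ one has $|R(P_m)|=|C(P_m)|$, and both are unbounded. The below-diagonal case is entirely symmetric via the reflected path.

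Finally I would apply Proposition~\ref{new_variation_onKlazar} to the sequence $(P_m)_m$. In case $(\alpha)$, for every $r$ the pair-coloring $\psi_m(\{i,j\})=P_m(i,j)$ on $Y_m$ contains a two-dimensional $r$-wealthy coloring of type $2$, i.e.\ a $3r$-tuple of vertices in $Y_m$ whose $r$ consecutive triples are not $\psi_m$-monochromatic; prepending the vertex $r_{b_m}\in X_m$ and normalizing gives a coloring on $3r+1$ vertices whose reversal is an $r$-wealthy coloring of type $W_{3,3}$ contained in $K_m$ (the reversal is needed only because $r_{b_m}$ is the smallest vertex rather than the largest). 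In case $(\beta)$, for every $r$ some $P_m$ contains a submatrix strongly similar to $I_r$ or $U_r$ above the diagonal; inspecting the proof of Proposition~\ref{new_variation_onKlazar} shows that this submatrix actually sits in a block $M_{I,J}$ with $I<J$ intervals of the nuclear decomposition of $\psi_m$, so its row-set $\{y_{u_i}\}$ and column-set $\{y_{v_j}\}$ lie in disjoint intervals of $Y_m$ with $\max y_{u_i}<\min y_{v_j}$. Together with $r_{b_m}\in X_m<Y_m$ this makes $\{r_{b_m}\}<\{y_{u_i}\}<\{y_{v_j}\}$ non-intertwined; part~1 of Lemma~\ref{subm_layer_crossm} realizes the submatrix as a non-intertwined $1$-layer matrix of a submatrix of $M_m$, and Lemma~\ref{poz_W34} delivers an $r$-wealthy coloring of $K_m$ of type $W_2$. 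The main obstacle is this last step: one must extract non-intertwinement not from the statement but from the structure inside the proof of Proposition~\ref{new_variation_onKlazar}, and one must also verify that in case $(\beta)$ a single common type ($I_r'$ throughout, or $U_r'$ throughout) can be chosen across $r$, so that the final $W_2$-type conclusion is of one and the same type as required by the statement.
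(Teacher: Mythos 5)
Your proof is correct and follows the same route as the paper: bounded row-alternations in $N_m'$ (its rows are rows of $M_m$) force a column of $N_m'$ with unboundedly many alternations, that column sits inside a symmetric $1$-layer matrix $P_m$ of $M_m$ where the above-diagonal zig-zag path shows $|R(P_m)|=|C(P_m)|$ is unbounded, and Proposition~\ref{new_variation_onKlazar} then yields $W_{3,3}$ in case $(\alpha)$ and, via Lemma~\ref{subm_layer_crossm} and Lemma~\ref{poz_W34}, $W_2$ in case $(\beta)$. The two ``obstacles'' you flag at the end are not actual gaps: case $(\beta)$ of Proposition~\ref{new_variation_onKlazar} already states that $I_r'$ and $U_r'$ lie above the diagonal of $N_{n(r)}$, which together with $X_m<Y_m$ immediately gives non-intertwinement without inspecting that proof, and there is no need to pin down $I_r'$ versus $U_r'$ uniformly in $r$ since both produce colorings of the single type $W_2$, which already encompasses both subtypes $W_{2,1}$ and $W_{2,2}$.
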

\begin{proof}
We assume that all matrices $N_m'$ are above-diagonal and treat the other case when they are below-diagonal at the end. As we know, every $N_m'$ is binary.
Since the numbers of subwords $01$ and $10$ in the rows of 
$N_m'$ are bounded (each row of $N_m'$ is a row of $M_m'$), for every $m$ there exists a column of $N_m'$ with index $c_m$ such that 
the sequence $(\al(N_m'(\cdot,c_m)))_{m \geq 1}$ is unbounded, where $\al(N_m'(\cdot,c_m))$ is defined as one plus the number of subwords $01$ and $10$ in the $c_m$-th column of $N_m'$. Let $P_m$ be the $1$-layer matrix of $M_m$ containing the positions of the $c_m$-th column of $N_m'$ (see Figure~\ref{fig:cross_c} at the end of the proof of Proposition~\ref{P2}). 
Clearly, $P_m$ is a square symmetric matrix with $*$s on the main diagonal and $0$s and $1$s elsewhere. Let $r(m)$ be the number of rows in $N_m'$ and 
$$
    (c_m,\,x_{m,i},\,y_{m,i}),\ i=1,\, 2,\, \dots,\, r(m)\, ,
$$
with
$$
    x_{m,1}<x_{m,2}<\dots<x_{m,r(m)}\;\text{ and }\;y_{m,1}<y_{m,2}<\dots<y_{m,r(m)}
$$
be the positions in $M_m$ of the $c_m$-th column of $N_m'$. Hence
$$
    N_m'(a,\,c_m)=M_m(c_m,\,x_{m, a},\,y_{m, a})=P_m(y_{m, a},\,x_{m, a}).
$$
The sequence $(\al(N_m'(\cdot,c_m)))_{m \geq 1}$ is unbounded and thus so is  $(|R(P_m)|)_{m \geq 1}=(|C(P_m)|)_{m \geq 1}$. Indeed, 
consider in $P_m$ the zig-zag path 
$$
    Z_m=\left(u_1,\,u_1',\,u_2,\,u_2',\,u_3,\,\dots,u_{r(m)-1}',\,u_{r(m)}\right),\  u_i=(y_{m,i},\,x_{m,i})\;\text{ and }\;u_i'=(y_{m,i+1},\,x_{m,i})\, ,
$$
which has turns in the $u_i'$s and the $u_i$s with $1<i<r(m)$. Since columns and shafts of $M$ correspond to rows and columns of $N$, the path $Z_m$ lies in $P_m$ below the main diagonal of $*$s and contains 
therefore only $0$s and $1$s. Let $e_m =\al(N_m'(\cdot,c_m))-1$ be the number of subwords $01$ and $10$ in the binary sequence
$$
    \left(P_m(u_1),\,P_m(u_2),\,\dots,\,P_m(u_{r(m)})\right)
$$
where $P_m(u_1)$ means $P(y_{m,1},x_{m,1})$ etc. It follows that at least $e_m/2$ of the vertical segments $(u_i,u_i')$ of $Z_m$ 
contain a subword $01$ or $10$, or this 
holds for the horizontal segments $(u_i',u_{i+1})$. Thus $(|R(P_m)|)_{m \geq 1}=(|C(P_m)|)_{m \geq 1}$ is unbounded. 

Hence we can apply Proposition~\ref{new_variation_onKlazar} to the sequence $(P_m)_{m \geq 1}$  and its case ($\alpha$) or ($\beta$) holds. 
It is not hard to see that in case ($\alpha$) because of the type 2 colorings of pairs produced by the matrices $P_m$, for every $r\in\mathbb{N}$ there is an $m$ such that 
the coloring $K_m$ contains an $r$-wealthy coloring  of type $W_{3,3}$ (determined by the fixed vertex corresponding to the fixed first coordinate and by an 
$r$-wealthy coloring of pairs of type $2$ that follows after it). In case ($\beta$) we consider submatrices $P_m'\preceq P_m$ strongly similar to the identity matrix 
$I_r$ or to the upper triangular matrix $U_r$. Each $P_m'$ is non-intertwined as it lies above the diagonal 
of $P_m$. By part 1 of Lemma~\ref{subm_layer_crossm} each $P_m'$ is also a layer matrix of a submatrix of $M_m$. We apply Lemma~\ref{poz_W34} to the matrices $P_m'$ and conclude that for every $r\in\mathbb{N}$ there is an $m$ such that the coloring $K_m$ contains 
an $r$-wealthy coloring of type $W_2$.

If all matrices $N_m'$ are below-diagonal, the same argument works for the zig-zag path 
$$
    Z_m'' = \left(u_1,\,u_1'',\,u_2,\,u_2'',\,u_3,\,\dots,u_{r(m)-1}'',\,u_{r(m)}\right),\  u_i=(y_{m,i},\,x_{m,i})\;\text{ and }\;u_i''=(y_{m,i},\,x_{m,i+1})\, ,
$$ 
which lies above the diagonal of $*$s.
\end{proof}

\begin{lemma}\label{pate_lemma}
Let $(K_m)_{m \geq 1}$ be a sequence of colorings and $M_m = M_{X_m,Y_m,Y_m}$ with $X_m<Y_m$ be $*$-binary crossing matrices of $K_m$. Let each matrix $M_m$ have a submatrix $M_m'$ whose (two-dimensional) $(2,3)$-d-cross-matrix $N_m'$ is above-diagonal, resp. below-diagonal. Suppose that
$$
    \text{the sequence $(|R(N_m')|)_{m\ge1}$ is unbounded but the sequence $(\al(N_m'))_{m\ge1}$ is bounded}\,.
$$
Then for every $r\in\mathbb{N}$ there is an $m$ such that 
$K_m$ contains an $r$-wealthy coloring of one and the same type $W_{3,1}$ or type $W_{3,2}$ or type $W_{4,2}$.
\end{lemma}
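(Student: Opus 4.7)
The plan is as follows. We treat the above-diagonal case; the below-diagonal one is analogous by swapping the roles of $c_i$ and $s_i$ throughout. Combining boundedness of $\al(N_m')$ with unboundedness of $|R(N_m')|$, Lemma~\ref{KlazarL3.12} forces $(|C(N_m')|)_{m\ge1}$ to be unbounded, and then Lemma~\ref{KlazarL3.13} (or its transpose) produces, for every $r$, an $m$ and a submatrix of $N_m'$ strongly similar to $I_r$ or to $U_r$; by passing to a subsequence I fix one of these two types. Absorbing the color-swap and vertical-flip symmetries of ``strongly similar'', I fix increasing row indices $a_1<\cdots<a_r$ and column indices $b_1<\cdots<b_r$ in $N_m'$ so that, writing $r_{b_l}\in X_m$ and $(c_{a_k},s_{a_k})\in Y_m^2$ for the corresponding base elements (with $c_{a_k}<s_{a_k}$), the identity case reads $\chi(\{r_{b_l},c_{a_k},s_{a_k}\})=[k=l]$ and the upper triangular case reads $\chi(\{r_{b_l},c_{a_k},s_{a_k}\})=[k\le l]$.

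Next I Ramsey on the interleaving of the pairs. For $k<l$ only the order of $s_{a_k}$ and $c_{a_l}$ is unforced; color the pair (I) if $s_{a_k}<c_{a_l}$ and (II) if $c_{a_l}<s_{a_k}$. The Ramsey theorem for pairs, applied after taking $r$ sufficiently large, yields a monochromatic sub-selection of rows (and the matched columns) of arbitrary size. In pattern (II) applied to $(k,l)=(1,r)$ one gets $c_{a_r}<s_{a_1}$, so the three sets $R'=\{r_{b_l}\}$, $C'=\{c_{a_k}\}$, $S'=\{s_{a_k}\}$ are non-intertwined, $R'<C'<S'$. The submatrix $M_{R',C',S'}\preceq M_m$ has our $I_r$- or $U_r$-like sub-selection as its $(2,3)$-d-cross-matrix, which is now non-intertwined and similar to $I_r$ or $U_r$, so Lemma~\ref{poz_W56} directly provides an $r$-wealthy coloring of type $W_{3,1}$ or $W_{3,2}$ inside $K_m$.

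In pattern (I) one has $c_{a_1}<s_{a_1}<c_{a_2}<s_{a_2}<\cdots<c_{a_r}<s_{a_r}$, the base sets remain intertwined, and I would build $W_{4,2}$ directly. For each $i\in[r-1]$ record the pair $(T_2^i,T_3^i):=(\chi(\{r_{b_i},c_{a_i},c_{a_{i+1}}\}),\chi(\{r_{b_i},s_{a_i},c_{a_{i+1}}\}))\in\{0,1\}^2$ and extract, by pigeonhole on its four values and then dropping every other index, a subset $S'\subset[r-1]$ of size at least $(r-1)/8$ with no two consecutive members on which $(T_2,T_3)$ is constant. The $I_r$ or $U_r$ structure guarantees $\chi(\{r_{b_i},c_{a_i},s_{a_i}\})=1$ (the diagonal entry) and $\chi(\{r_{b_i},c_{a_{i+1}},s_{a_{i+1}}\})=0$ (the entry at row $i+1$, column $i$, below the diagonal in both cases) for every $i\in S'$. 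If $(T_2,T_3)\ne(1,1)$ set $B_i=\{c_{a_i},s_{a_i},c_{a_{i+1}}\}$; the three gadget triples then carry the colors $1$, $T_2$, $T_3$, which are not all equal. If $(T_2,T_3)=(1,1)$ set instead $B_i=\{s_{a_i},c_{a_{i+1}},s_{a_{i+1}}\}$; the corresponding triples carry $T_3=1$, $\chi(\{r_{b_i},s_{a_i},s_{a_{i+1}}\})$, and $0$, again not all equal. Since $S'$ has no consecutive integers the blocks $B_i$ are pairwise disjoint, and restricting $K_m$ to $\{r_{b_i}:i\in S'\}\cup\bigcup_{i\in S'}B_i$ and normalizing produces an $|S'|$-wealthy coloring of type $W_{4,2}$, with the $X_m$-singletons preceding the consecutive $Y_m$-triples.

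Because the starting $r$ is unbounded, we obtain arbitrarily large $r$-wealthy colorings of type $W_{3,1}$ or $W_{3,2}$ (from pattern (II)) or of type $W_{4,2}$ (from pattern (I)), and a final pigeonhole over the three types, combined with the fact that an $r$-wealthy coloring of each of these types contains an $r'$-wealthy one of the same type for every $r'\le r$ (take the first $r'$ indices of each base set), fixes a single type that works for every $r$. The hard part is pattern (I): the $Y_m$-base sets are genuinely intertwined, so neither Lemma~\ref{poz_W34} nor Lemma~\ref{poz_W56} applies, and one must instead combine a ``$1$'' on the diagonal with the ``$0$'' at the off-diagonal position $(i+1,i)$ to force non-monochromaticity of the $W_{4,2}$-gadget, alternating between two candidate 3-blocks according to the Ramsey-fixed pair $(T_2,T_3)$.
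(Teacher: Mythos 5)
Your argument is correct and runs parallel to the paper's: invoke Lemma~\ref{KlazarL3.13} (together with the remark about interchanging $R$ and $C$) to pull $I_r$- or $U_r$-like submatrices out of $N_m'$, then split according to how the $Y_m$-pairs $(c_{a_k},s_{a_k})$ interleave, sending the non-intertwined case through Lemma~\ref{poz_W56} to types $W_{3,1}$/$W_{3,2}$ and building a $W_{4,2}$ coloring by hand in the interleaved case.

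The one place you genuinely diverge is the interleaving dichotomy. The paper introduces the quantity $T_m$, the maximum number of the open intervals $(c_{m,l},s_{m,l})$ sharing a common point, and branches on whether $(T_m)$ is bounded, extracting in the bounded case a long pairwise-disjoint subfamily of intervals. You instead $2$-color pairs $\{k<l\}$ by whether $s_{a_k}<c_{a_l}$ or $c_{a_l}<s_{a_k}$ and apply the Ramsey theorem for pairs. These yield the same two outcomes: a class monochromatic in your color (II) is a pairwise-overlapping interval family, which by one-dimensional Helly has a common point (exactly the situation of $T_m$ large), and color (I) is a pairwise-disjoint family. Ramsey is a quantitatively wasteful way to get the dichotomy, but that is harmless here since any unboundedness suffices. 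Two small stylistic points: your preliminary pigeonhole on the four values of $(T_2^i,T_3^i)$ is not needed --- the $W_{4,2}$ definition lets the choice of $3$-block $B_i$ inside each quadruple depend on $i$, so dropping every other index of $[r-1]$ already makes the $B_i$ disjoint --- and your ``absorbing'' of the vertical-flip symmetry is sound but quietly uses the fact, which the paper spells out, that the antidiagonal variants of $I_r$ and $U_r$ merely reverse the order of the $[r]$-part of a $W_{4,2}$ (respectively permute the three blocks of a $W_3$) coloring, a freedom the definitions explicitly allow.
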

\begin{proof} Let $K_m = (n_m,\chi_m)$. We assume that all matrices $N_m'$ are above-diagonal and treat the other case when they are below-diagonal at the end. As we know, every $N_m'$ is binary.
Using Lemma~\ref{KlazarL3.13} to the sequence of matrices $(N_m')_{m \geq 1}$ and passing to a subsequence of $m=1,2,\dots$ and to submatrices of the corresponding matrices $N_m'$, we may suppose 
that every matrix $M_m$ has a submatrix $M_m'$ whose $(2, 3)$-d-cross-matrix $N_m'$ is above-diagonal and strongly similar to the $m\times m$ identity matrix 
$I_m$, or to the $m\times m$ upper triangular matrix $U_m$. We first assume that for every $m$ one has $N_m'=I_m$ and then we consider 
other matrices $N_m'$ strongly similar to $I_m$. The case of $N_m'$ strongly similar to $U_m$ is deferred to the end. Let 
$$
    \{r_{m,1}<\dots<r_{m,m}\}\subset X_m \ \text{and} \  \{c_{m,1}<\dots<c_{m,m}\},\{s_{m,1}<\dots<s_{m,m}\} \subset Y_m
$$
be the base sets of $N_m'$ and $M_m'$, so for any $i, j \in [m]$ we have $N_m'(i,j) = M_m'(r_{m,j},\,c_{m,i},\,s_{m,i})$. Moreover, $c_{m,i}<s_{m,i}$, since $N_m'$ is above diagonal. By the assumption,  $\chi_m(\{r_{m,j},c_{m,i},s_{m,i}\})=1$ if $i=j$ and $=0$ if $i\ne j$. We consider the numbers
$$
	T_m = \max\left(\left\{|L|:\ L\subset [m]\;\&\;\bigcap_{l \in L}(c_{m,l},\,s_{m,l}) \ne\emptyset \right\}\right)
$$
(here $(c_{m,l},\,s_{m,l})=\{x\in\mathbb{N}:\ c_{m,l}<x<s_{m,l}\}$). We distinguish the cases of unbounded and bounded sequence $(T_m)_{m\ge1}$, respectively. In the former case we see that
for every $r\in\mathbb{N}$ there is an $m$ such that $K_m$ contains a coloring $K_r'=(3r,\psi_r)$ satisfying for every $i,j\in[r]$ that
$$
    \psi_r(\{j,\,r+i,\,2r+i\})=\left\{
    \begin{array}{lll}
     1    & \dots & i=j\, ,  \\
     0    & \dots & i\ne j\, .
    \end{array}
    \right.
$$
This is an $r$-wealthy coloring of type $W_{3, 1}$. In the latter case $(T_m)_{m \geq 1}$ is bounded. Then for every $r\in\mathbb{N}$ there is an $m$ such that $K_m$ contains the coloring $K_r''=(6r,\theta_r)$ satisfying for every $i,j\in[2r]$ that
\begin{align}\notag
	\theta_r(\{j,\, 2r+2i-1,\, 2r+2i\}) = \left\{ \begin{array}{lll}
    	1 & \dots & i = j\, , \\
    	0 & \dots & i \not= j\, .
   	\end{array} \right.
\end{align}
We partition $[2r+1,6r]$ in intervals $S_1<S_2<\dots<S_r$ of length four each,
$$
    S_i = \{2r+4i-3,\,2r+4i-2,\,2r+4i-1,\,2r+4i\}\, . 
$$
For each $j\in[r]$ we take the three triples 
$$
    U_{j,k}=\{2j-1<2r+4j-3+k<2r+4j-2+k\},\ k=0,1,2\, . 
$$
Their respective $\theta_r$-colors are $1,?,0$. We take the two triples for 
$k=0,1$ or $k=1,2$ with different colors. Thus for every $j\in[r]$ there exist three distinct elements $a_j,b_j,c_j \in S_j$  such that 
$$
    \theta_r(\{2j-1,\,a_j,\,b_j\}) \not= \theta_r(\{2j-1,\,a_j,\,c_j\})\, .
$$
We see that $K_r''$ and $K_m$ contain an $r$-wealthy coloring of type $W_{4,2}$. 

If $N_m' \not= I_m$ is strongly similar to the identity matrix $I_m$, it arises from $I_m$ by exchanging $0$ and $1$ and/or replacing the main diagonal with 
the antidiagonal. The exchange of $0$ and $1$ has no effect on the resulting type $W_{4,2}$ coloring and we assume that $N_m=I_m'$ is the antidiagonal unit matrix.
Suppose that $(T_m)_{m\ge1}$ is unbounded. In the colorings $K_r'$ the order of elements in the interval $[r]$ is reversed but this leads again to
colorings of type $W_{3,1}$. If $(T_m)_{m\ge1}$ is bounded, in the colorings $K_r''$ the order of elements in the interval $[2r]$ is reversed. However, when we replace ``$2j-1$'' with ``$2r-2j+2$'' in the argument of the triples $U_{j,k}$, this leads again to
colorings of type $W_{4,2}$.

Suppose that for every $m$ the matrix $N_m'$ is strongly similar to the upper triangular matrix $U_m$. We first assume that $N_m'=U_m$ for every $m$ and 
modify the previous argument by replacing the phrases ``$i=j$'' and ``$i\ne j$'' with ``$i \leq j$'' and ``$i>j$'', respectively. If the sequence 
$(T_m)_{m \geq 1}$ is unbounded, we obtain by an argument analogous to the previous one colorings of type $W_{3, 2}$. If the sequence $(T_m)_{m \geq 1}$ is bounded, for every $r\in\mathbb{N}$ 
there is an $m$ such that $K_m$ contains the coloring $L_r=(6r,\phi_r)$ with the property that for every $i,j\in[2r]$, $\phi_r(\{j,2r+2i-1,2r+2i\})=1$ if 
$i\le j$ and $=0$ if $i>j$. Using the same argument with the triples $U_{j,k}$ as in the case with $N_m'=I_m$, we get $r$-wealthy colorings of type $W_{4,2}$. 

Suppose that $N_m'\ne U_m$ but $N_m'$ is strongly similar to $U_m$. The effect of exchange of $0$ and $1$ is again clear and therefore
we may assume that for every $m$ we have $N_m'=U_m'$, the matrix with $1$'s on the antidiagonal and above it and $0$s below it.
We modify the argument in the first part by replacing the phrases ``$i=j$'' and ``$i\ne j$'' with ``$i \leq d+1-j$'' and ``$i>d+1-j$'', respectively, 
where $d$ is the dimension of the square matrix in question, $d=m,r$ and $2r$, respectively. If the sequence $(T_m)_{m \geq 1}$ is unbounded, we obtain as before $r$-wealthy 
colorings of type $W_{3, 2}$.
If the sequence $(T_m)_{m \geq 1}$ is bounded, for every $r\in\mathbb{N}$ there is an $m$ such that $K_m$ contains the 
coloring $L_r'=(6r,\phi_r')$ with the property that for every $i,j\in[2r]$, $\phi_r'(\{j,2r+2i-1,2r+2i\})=1$ if $i\le 2r-j+1$ and $=0$ if $i>2r-j+1$. For each $j\in[r]$ the three triples 
$$
    U_{j,k}'=\{2j-1<6r-4j+3+k<6r-4j+4+k\},\ k=0,1,2\, , 
$$
with $\phi_r'$-colors $1,?,0$ show as before that for every $r\in\mathbb{N}$ for some $m$ the colorings
$L_r'$ and $K_m$ contain an $r$-wealthy coloring of type $W_{4,2}$. 

 The case when all matrices $N_m'$ are below-diagonal is handled by an argument almost identical to the previous one, only the phrases ``$c_{m,i}<s_{m,i}$'', 
 ``$(c_{m,l},s_{m,l})$'' and ``$c_{m,l}<x<s_{m,l}$'' have to be replaced with ``$s_{m,i}<c_{m,i}$'', ``$(s_{m,l},c_{m,l})$'' and ``$s_{m,l}<x<c_{m,l}$'', respectively.
\end{proof}

\subsection{Conclusion of the proof of Theorem~\ref{T1FD}}\label{sec_endof proof}
We finish the proof of Theorem~\ref{T1FD}. For $p\in \mathbb{N}$ a set $X$ of colorings is {\it $p$-tame} if every coloring 
$H\in X$ is $p$-tame (see Section~\ref{subsec_ptame} for the definition). If an ideal $X\subset\mathcal{C}_3$ is not $p$-tame for any $p$, then 
for every $p\in\mathbb{N}$ there is a coloring $H\in X$ such that its nuclear decomposition $\gr(H)$ violates one of 
the conditions 1--5 of $p$-tameness in the definition in Section~\ref{subsec_ptame} for this $p$. This clearly implies that there is a $c\in[5]$ such that for every $p\in\mathbb{N}$
there is a coloring $H\in X$ such that $\gr(H)$ violates condition $c$ of $p$-tameness for $p$.
We first look at violation of conditions 1--4 and then at the more complicated situation when condition 5 
is violated. As Propositions~\ref{P1} and \ref{P2} show, violation of one of the conditions 1--5 produces coloring of one of the four types (with subtypes) $W_1$--$W_4$. 
For better readability of the proofs we emphasize each obtained coloring of {\bf type $\mathbf{W_i}$} by the bold type. We also emphasize by the 
bold type the beginning of discussion of each {\bf first} or {\bf second} or {\bf third} $\dots$ case of the argument in either proof of the two propositions. 
For example, the proof of Proposition~\ref{P1} has the following cases.
\begin{enumerate}
    \item[]\hspace{2.05cm}{\bf Condition 1} is violated.
        \item[]\hspace{2.05cm}{\bf Condition 2} or {\bf condition 4}  is violated.
    \item[] 
    $$ 
    \mbox{\bf Condition 3} \left\{ \begin{array}{l}
        \mathbf{|R(\cdot)|} \mbox{\bf\ unbounded} \left\{
        \begin{array}{l}
            \mbox{{\bf conclusion (i)} holds}  \\
            \mbox{{\bf conclusion (ii)} holds} 
        \end{array} \right. \\
        \mathbf{|C(\cdot)|} \mbox{\bf\ unbounded}
    \end{array} \right.
    $$
\end{enumerate}
The proof of Proposition~\ref{P2} has more complicated structure of cases.

It is true that if $M$ is a binary three-dimensional matrix and $M'\preceq M$ then $\al(M')\le\al(M)$. For $*$-binary matrices this inequality in general 
does not hold, but fortunately a modified version which we use does hold: if $M=M_{I,J,K}$ is a crossing matrix of a coloring with the 
base sets $I=J<K$ or $I<J=K$ and $M'\preceq M$, then $\al(M')\le\al(M)+1$. This holds because every line of $M$ contains at most one $*$ or consists only of $*$s.

\begin{proposition}\label{P1}
Let $c\in[4]$ and let $X\subset\mathcal{C}_3$ be an ideal of colorings such that for every $p\in\mathbb{N}$ there is a coloring $H\in X$ whose nuclear decomposition
$\gr(H)$ violates condition $c$ of $p$-tameness for $p$. Then there is an $i\in [4]$ such that for every $r\in\mathbb{N}$ the ideal $X$ contains 
an $r$-wealthy coloring of type $W_i$.
\end{proposition}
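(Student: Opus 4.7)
The plan is to treat each value of $c\in[4]$ separately, extract from an infinite sequence of violating colorings $H_p\in X$ an $r$-wealthy coloring of a single fixed type $W_i$, and then conclude by the ideal property combined with the easy monotonicity ``any $r'$-wealthy coloring of type $W_1$, $W_2$, $W_3$ or $W_{4,1}$ contains an $r$-wealthy one of the same type for $r\le r'$'' (delete trailing vertices and normalise). The main technical obstacle is case $c=3$: one must carefully track non-intertwinedness through submatrices and cross-matrices so that the two-dimensional technology of Section~\ref{subsec_review2dim} applies via Corollary~\ref{cor_prvniholemm} and Lemma~\ref{lema_dvaapul}.

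For $c=1$ the nuclear decomposition of some $H_p\in X$ has length exceeding $p$; the remark at the start of Section~\ref{subsec_ptame} then yields an $r$-wealthy coloring of type $W_{4,1}$ as soon as $p\ge 2r$, so $i=4$. For $c=2$ or $c=4$, some line of a crossing matrix $M_p$ has an unbounded number of $01/10$ transitions. In case $c=4$ the matrix is $*$-binary, but every line either contains at most one $*$-entry or consists entirely of $*$s, so after discarding the trivial case the alternations come from a genuine binary subsequence. Fixing the two coordinates that define the line and extracting an alternating binary subsequence of length at least $r$ produces, after normalisation, a coloring whose triples of the form $\chi(\{1,2,i\})$, $\chi(\{1,i,n\})$, or $\chi(\{i,n-1,n\})$ (depending on whether the line was a shaft, a column, or a row) alternate in colour. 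The first two patterns are directly $W_1'$ and $W_1''$, while the third becomes $W_1'$ after taking reversals. Hence $i=1$.

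For $c=3$, passing to a subsequence assume that $|R(M_p)|$ is unbounded, the case of unbounded $|C(M_p)|$ being handled symmetrically via Lemma~\ref{3DKlazar2}. If $\al(M_p)$ is also unbounded along a further subsequence we are reduced to case $c=2$ and obtain $W_1$. Otherwise Lemma~\ref{3DKlazar} gives two sub-cases. The key point is that $M_p=M_{I_u,I_v,I_w}$ is non-intertwined because its base sets are three disjoint ordered intervals of $[n]$, and this property descends to any submatrix, to any of its layer matrices, and to any of its cross-matrices (which share the same base sets as the submatrix). In sub-case (i) a layer matrix $N_p$ of $M_p$ with unbounded $|R(N_p)|$ and bounded $\al(N_p)$ is supplied; Corollary~\ref{cor_prvniholemm} then yields $r$-wealthy colorings of type $W_2$. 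In sub-case (ii) a submatrix $M_p'$ of $M_p$ admits a $(2,3)$-cross-matrix $N_p$ with unbounded $|R(N_p)|$, and since $\al(M_p')\le\al(M_p)$ is bounded, Lemma~\ref{lema_dvaapul} yields $r$-wealthy colorings of one and the same type from $\{W_2,W_{3,1},W_{3,2}\}$. By pigeonhole one sub-case occurs for infinitely many $p$, committing to a single $i\in\{2,3\}$, which completes the proof.
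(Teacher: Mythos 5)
Your proposal is correct and follows essentially the same route as the paper's proof of Proposition~\ref{P1}: condition 1 yields $W_{4,1}$ via the remark on long nuclear decompositions, conditions 2 and 4 yield $W_1$ from a line with many alternations (handling the at-most-one-$*$ caveat as you do), and condition 3 is processed through Lemma~\ref{3DKlazar} (or \ref{3DKlazar2} for $|C|$), with sub-case (i) fed into Corollary~\ref{cor_prvniholemm} and sub-case (ii) into Lemma~\ref{lema_dvaapul}, the non-intertwinedness of $M_{I_u,I_v,I_w}$ and its derived matrices being the same key observation the paper makes. The only cosmetic differences are that you make the reduction of the $c=3$/unbounded-$\al$ subcase to $c=2$ explicit (the paper phrases this as ``assume condition 2 is not violated''), you explicitly invoke a pigeonhole over sub-cases, and you note downward monotonicity of the wealthy-type families, none of which the paper spells out but all of which are valid.
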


\begin{proof}
We consider three cases.

{\bf Condition 1} is violated. Then for every $r\in\mathbb{N}$ there is a coloring $H\in X$ such that the length of its nuclear decomposition 
is at least $2r$. As we noted in Section~\ref{subsec_ptame}, $H\succeq K$ where $K$ is an $r$-wealthy coloring of type $W_{4,1}$. Therefore for every $r\in\mathbb{N}$ 
the ideal $X$ contains an $r$-wealthy coloring of {\bf type $\mathbf{W_{4,1}}$}.

{\bf Condition 2} is violated or {\bf condition 4} is violated. If condition $2$ is violated then for every $r\in \mathbb{N}$ there is a coloring 
$H\in X$ that has a binary crossing matrix $M=M_{I,J,K}$, where $I<J<K$ are three intervals in $\gr(H)$, with $\al(M)\ge r$. It is easy to see that then $H\succeq K$  
where $K$ is an $(r+2)$-wealthy coloring of type $W_1$. Hence for every $r\in\mathbb{N}$ the ideal $X$ contains an $r$-wealthy coloring of {\bf type $\mathbf{W_1}$}. 
If condition $4$ is violated, the only difference is that the line in the crossing matrix $M$ witnessing $\al(M)\ge r$ may contain one $*$. Again,  
for every $r\in\mathbb{N}$ the ideal $X$ contains an $(r+2)$-wealthy coloring of {\bf type $\mathbf{W_1}$}.

{\bf Condition 3} is violated. We assume that condition $2$ is not violated, which means that there is a constant $p_0$ such that for every coloring $H\in X$ 
and every three intervals $I<J<K$ in $\gr(H)$ we have $\al(M_{I,J,K})\le p_0$. At the same time there exists a sequence 
$(H_m)_{m\ge 1}$ of coloring $H_m = (n_m, \chi_m)$ in $X$ with three intervals $I_m < J_m < K_m$ in $\gr(H_m)$ such that, 
if we denote by $M_m = M_{I_m, J_m, K_m}$ the corresponding crossing matrix, the sequence $(\mathrm{al}(M_m))_{m\geq 1}$ is bounded by $p_0$ but one of 
the sequences $R = (|R(M_m)|)_{m \geq 1}$ and $C = (|C(M_m)|)_{m \geq 1}$ is unbounded. Each matrix $M_m = M_{I_m, J_m, K_m}$ is non-intertwined and so are 
the matrices below derived from $M_m$ and we may apply to them Corollary~\ref{cor_prvniholemm} and Lemma~\ref{lema_dvaapul}.
First we suppose that the {\bf sequence $\mathbf{R}$ is unbounded} and defer 
the case when $C$ is unbounded to the end. Thus the hypothesis of Lemma~\ref{3DKlazar} is satisfied and its conclusion
(i) or (ii) holds. 

If the {\bf conclusion (i) of Lemma~\ref{3DKlazar}} holds then every matrix $M_m$ has a layer matrix $N_m$ such that the sequence $(|R(N_m)|)_{m\geq 1}$ is unbounded. Note that 
the sequence $(\al(N_m))_{m\geq 1}$ is bounded. Therefore by Corollary~\ref{cor_prvniholemm} (with $M_m'=M_m$) for every $r\in\mathbb{N}$ there is an $m$ such that the coloring $H_m$ contains an $r$-wealthy coloring of {\bf type $\mathbf{W_2}$}.

If the {\bf conclusion (ii) of Lemma~\ref{3DKlazar}} holds then every matrix $M_m$ has a submatrix $M_m'$ that has a cross-matrix $N_m$ such that the sequence 
$(|R(N_m)|)_{m\geq 1}$ is unbounded. Also, the sequences $(\al(M_m))_{m\ge1}$ and $(\al(M_m'))_{m\ge1}$ are bounded. We apply Lemma~\ref{lema_dvaapul} to the matrices 
$N_m$ and conclude that for every $r\in\mathbb{N}$ there is an $m$ such that the coloring $H_m$ contains an $r$-wealthy coloring of one and the same type 
{\bf type $\mathbf{W_2}$} or {\bf type $\mathbf{W_{3,1}}$}  or {\bf type $\mathbf{W_{3,2}}$}.

If the {\bf sequence $\mathbf{C}$ is unbounded,} we use Lemma~\ref{3DKlazar2} instead of Lemma~\ref{3DKlazar}. Now the only difference is that in conclusion (i) of Lemma~\ref{3DKlazar2} we have unbounded sequence $(|C(N_m)|)_{m \geq 1}$ instead of $(|R(N_m)|)_{m \geq 1}$. But Corollary~\ref{cor_prvniholemm} applies in this case too. If conclusion (ii) holds then we again use Lemma~\ref{lema_dvaapul}. In total we get again that for every $r\in\mathbb{N}$ there is an $m$ such that the coloring 
$H_m$ contains an $r$-wealthy coloring of one and the same {\bf type $\mathbf{W_2}$} or {\bf type $\mathbf{W_{3,1}}$}  or {\bf type $\mathbf{W_{3,2}}$}.
\end{proof}

The last case is when condition 5 is violated and the previous proposition does not apply. 

\begin{proposition}\label{P2}
Let $X\subset\mathcal{C}_3$ be an ideal of colorings such that for some $p_0\in\mathbb{N}$ for every coloring $H\in X$ it holds that $\gr(H)$ 
satisfies each of the conditions 1--4 of $p$-tameness for $p=p_0$, but for every $p\in\mathbb{N}$ there is a coloring $H\in X$ such that $\gr(H)$ 
violates condition 5  of $p$-tameness for $p$. Then there is an $i\in [4]$ such that for every $r\in \mathbb{N}$ the ideal $X$ contains an $r$-wealthy 
coloring of type $W_i$.
\end{proposition}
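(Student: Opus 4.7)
The plan is to follow the structure of the proof of Proposition~\ref{P1}, but applied to the more delicate setting where the relevant crossing matrices have two equal base sets and therefore carry $*$-entries on a ``diagonal.'' First I would bring the hypothesis into a normal form. Violation of condition~$5$ produces, after passing to a subsequence, a sequence $(M_m)_{m \ge 1}$ of crossing matrices all of a single fixed flavour $M_{I_m, I_m, J_m}$ or $M_{I_m, J_m, J_m}$ (with $I_m < J_m$ intervals of $\gr(H_m)$), and one of $|R(M_m)|$, $|C(M_m)|$ grows unboundedly. The reversal of a coloring sends $M_{I, I, J}$ (with $I < J$) to $M_{J', I', I'}$ (with $J' < I'$), and under this transformation the transition counts $|R|$, $|C|$, $|S|$ of the original become $|C|$, $|S|$, $|R|$ of the reversal, respectively. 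By Lemma~\ref{rema_RCS}, bounded $\al$ together with bounded $|R|$ and $|C|$ forces bounded $|S|$, so conversely unbounded $|S|$ with bounded $\al$ forces unbounded $|R|$ or $|C|$. Thus after reversal I may assume once and for all that
$$
    M_m = M_{X_m, Y_m, Y_m}\ \text{with}\ X_m < Y_m,
$$
and that either $(|R(M_m)|)_m$ or $(|C(M_m)|)_m$ is unbounded. Condition~$4$ and the inequality $\al(M') \le \al(M) + 1$ for submatrices of crossing matrices with two equal base sets (noted just before Proposition~\ref{P1}) then ensure that $\al$ is bounded on $M_m$ and on all submatrices that will appear.

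Next I would apply Lemma~\ref{3DKlazar} (if $|R(M_m)|$ is unbounded) or Lemma~\ref{3DKlazar2} (if $|C(M_m)|$ is unbounded) to pass to a sequence of two-dimensional layer or cross-matrices. In the layer-matrix sub-case, the fixed coordinate lies in $Y_m$, so the $*$-entries occupy at most a single row (for the $2$- and $3$-layer matrices, where one of the two variable coordinates is still $X_m$) or the full main diagonal (for the $1$-layer variant that arises in the $|C|$ case, where both variable coordinates index $Y_m$). I would remove these $*$-entries by splitting the rows above and below the $*$-row in the first case, or by restricting rows and columns to two disjoint halves of $Y_m$ in the diagonal case. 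A pigeonhole argument preserves unboundedness of the relevant transition count, and the resulting layer matrix of a submatrix of $M_m$ is a non-intertwined binary matrix with base sets ordered as $X_m < \{y_{z_m}\} < (\text{half of }Y_m)$ or $X_m < (\text{lower half of }Y_m) < (\text{upper half of }Y_m)$. Corollary~\ref{cor_prvniholemm} then produces $r$-wealthy colorings of type $W_2$ for every $r$.

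In the cross-matrix sub-case I obtain a $(2,3)$-cross-matrix (in the $|R|$ case) or a $(1,3)$-cross-matrix (in the $|C|$ case) of a submatrix $M_m' \preceq M_m$. I would use the fact that each row (or column) selected in the proofs of Lemmas~\ref{3DKlazar} and~\ref{3DKlazar2} carries a genuine $\{0,1\}$ transition and so avoids the all-$*$ rows or columns of $M_m$, forcing its relevant $Y_m$-indices to be distinct. By pigeonhole and a second application of Erd\H{o}s--Szekeres, I may pass to a further subsequence along which those indices obey a fixed pointwise order (say $J < K$ throughout), which translates into the resulting $(2,3)$-d-cross-matrix $N_m$ being above-diagonal in the sense of Lemmas~\ref{ctvrte_lemma} and~\ref{pate_lemma}. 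Splitting on whether $(\al(N_m))_m$ is bounded or unbounded, Lemma~\ref{pate_lemma} then delivers colorings of type $W_{3,1}$, $W_{3,2}$, or $W_{4,2}$, while Lemma~\ref{ctvrte_lemma} delivers colorings of type $W_{3,3}$ or $W_2$. The $(1,3)$-cross-matrix case arising from the $|C|$ route admits a symmetric treatment or an application of Lemma~\ref{lema_dvaapul} after arranging the two $Y_m$-halves used as 2nd- and 3rd-coordinate base sets to be disjoint. Finally, an additional pigeonhole collapses the various outcomes to a single $i \in [4]$ as required.

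The hard part will be the bookkeeping around the $*$-patterns in the derived matrices. The key observation that makes the argument go through is that in each derived layer or cross-matrix the $*$-entries occupy a highly restricted structure (a single row, a single column, or the full main diagonal), and in every sub-case this structure can be excised---by taking disjoint halves so that the resulting binary submatrix has non-intertwined base sets---or aligned with the above- or below-diagonal form required by the auxiliary lemmas, all without sacrificing the unboundedness of the transition counts that fuel the final appeal to Corollary~\ref{cor_prvniholemm} or Lemmas~\ref{ctvrte_lemma}, \ref{pate_lemma}, and~\ref{lema_dvaapul}.
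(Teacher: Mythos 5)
Your overall plan is sensible and it is true that Lemma~\ref{rema_RCS} and the coloring symmetries (reversal together with a cyclic relabelling of the base sets) let you assume all the crossing matrices are of the single flavour $M_{X_m,Y_m,Y_m}$ with $X_m<Y_m$. But this normalization does not make the subsequent case analysis as light as you suggest, and there are two places where the argument, as written, has a real hole.

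First, in the layer-matrix branch you dispose of the $1$-layer case (a symmetric square matrix $P_m$ with $*$s on the main diagonal and unbounded $|R(P_m)|$) by ``restricting rows and columns to two disjoint halves of $Y_m$'' and asserting that a pigeonhole preserves the unboundedness. That assertion is false. Take for $P_m$ the $(2n)\times(2n)$ symmetric matrix with $*$ on the diagonal, $1$ at the positions $(i,i\pm1)$ and $0$ elsewhere; then $|R(P_m)|\approx 2n$, but for \emph{any} choice of disjoint halves $A<B$ the submatrix $P_m[A,B]$ has a single $1$ (at the corner) and $|R(P_m[A,B])|=1$. All the transitions hug the diagonal, and a split into non-intertwined halves destroys them. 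This is exactly the obstruction that the paper's Proposition~\ref{new_variation_onKlazar} was built to overcome: it works directly on the symmetric $*$-diagonal matrix, using the interval decomposition of the associated coloring of pairs to produce either type-$2$ wealthy colorings of pairs (hence $W_{3,3}$) or an $I_r$/$U_r$ above the diagonal (hence $W_2$). Without something of that kind, your proof of the diagonal sub-case does not go through.

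Second, your normalization to $M_{X,Y,Y}$ moves, rather than removes, the hardest $*$-bookkeeping. When the unbounded quantity is $|R(M_m)|$, the $(2,3)$-cross-matrix of a submatrix $M_m'\preceq M_{X,Y,Y}$ has its $*$s confined to entire rows (those with $c_{m,i}=s_{m,i}$), and then your splitting into above- and below-diagonal pieces and the appeals to Lemmas~\ref{ctvrte_lemma} and~\ref{pate_lemma} are exactly the paper's case~(c). But when the unbounded quantity is $|C(M_m)|$ you must use Lemma~\ref{3DKlazar2}, whose conclusion~(ii) yields a $(1,3)$-cross-matrix $N_m$: its columns are columns of $M_m$, and a column of $M_{X,Y,Y}$ carries a \emph{single isolated} $*$ at a position that varies from column to column. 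Asserting that you can ``arrange the two $Y_m$-halves used as 2nd- and 3rd-coordinate base sets to be disjoint'' is precisely the content of the paper's Erd\H{o}s--Szekeres argument with the indices $p_{m,i}$, $q_{m,i}$, $t_{m,i}$ (the bulk of case~(a), transposed to columns), including the two sub-cases on whether $s(m)$ is bounded (which yields $W_2/W_{3,1}/W_{3,2}$) or unbounded (which yields $W_{4,2}$ via Lemma~\ref{treti_lemma}). As written, your proposal replaces this entire analysis with a one-line ``arranging'' and a claimed symmetry, which is not a proof of that step. Until both the diagonal layer-matrix case and the isolated-$*$ cross-matrix case are filled in along the lines of Proposition~\ref{new_variation_onKlazar}, Lemma~\ref{treti_lemma}, and the monotone-indexing argument, the proposal is incomplete.
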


\begin{proof}
Let $X$ be as stated. It follows that there is a sequence of colorings $H_m=(n_m,\chi_m)\in X$, $m\in\mathbb{N}$, and intervals $I_m < J_m$ in $\gr(H_m)$ such 
that one of the cases (a)--(d) holds.

\begin{itemize}
    \item[(a)] $M_m := M_{I_{m}, I_{m}, J_{m}}$, the sequence $(\al(M_m))_{m\ge 1}$ is bounded and the sequence $(|R(M_m)|)_{m\ge 1}$ is unbounded.
    \item[(b)] $M_m := M_{I_{m}, I_{m}, J_{m}}$, the sequence $(\al(M_m))_{m\ge 1}$ is bounded and the sequence $(|C(M_m)|)_{m\ge 1}$ is unbounded.
    \item[(c)] $M_m := M_{I_{m}, J_{m}, J_{m}}$, the sequence $(\al(M_m))_{m\ge 1}$ is bounded and the sequence $(|R(M_m)|)_{m\ge 1}$ is unbounded.
    \item[(d)] $M_m := M_{I_{m}, J_{m}, J_{m}}$, the sequence $(\al(M_m))_{m\ge 1}$ is bounded and the sequence $(|C(M_m)|)_{m\ge 1}$ is unbounded.
\end{itemize}
It is in fact unimportant that $I_m < J_m$ are two intervals in $\gr(H_m)$, it suffices to assume that they are just two subsets in $[n_m]$. We prove then that
in each of the cases (a)--(d) there is an $i\in[4]$ such that for every $r\in\mathbb{N}$ there is an $m$ such that $H_m\succeq K_m$ for an $r$-wealthy 
coloring $K_m$ of type $W_i$. 

{\bf Cases (b) and (d) reduce to case (a)}. Before we begin with case (a) we show that cases (b) and (d) reduce to it. Indeed, the matrices 
$M_m = M_{I_{m}, I_{m}, J_{m}}$ are symmetric in the first two coordinates, which means that for every $m$ every $3$-layer matrix of $M_m$ is a symmetric 
(two-dimensional) matrix and $C(M_m)=R(M_m)$. So cases (a) and (b) are the same. Suppose that the colorings $H_m$ and sets $I_m<J_m$ satisfy conditions of case (d). The reversing map $r_m\colon[n_m]\to[n_m]$ given by 
$r_m(x)=n_m-x+1$ turns each coloring $H_m=(n_m,\chi_m)$ in its reversal
$H_m'=(n_m,\chi_m')$, $\chi'=\chi\circ r_m$, given 
by 
$$
    \chi_m'(\{a,\,b,\,c\})=\chi_m(\{n_m-a+1,\,n_m-b+1,\,n_m-c+1\})\, .
$$ 
Each matrix 
$M_m = M_{I_{m}, J_{m}, J_{m}}$ then turns in the matrix $M_m'=M_{I_m',I_m', J_m'}$ where $I_m'=r_m(J_m)$ and $J_m'=r_m(I_m)$. Then 
$$
    \al(M_m')=\al(M_m)\;\text{ and }\;|C(M_m')|=|R(M_m')|=|C(M_m)|\,.
$$
Thus the colorings $H_m'$ and sets $I_m'<J_m'$ produce matrices $M_m'$ satisfying conditions of case (a). Assuming that we have solved it, we have an $i\in[4]$ such that for every 
$r\in\mathbb{N}$ there is an $m$ such that $H_m'\succeq K_m$ for an $r$-wealthy 
coloring $K_m$ of type $W_i$. Since the family of wealthy colorings of type $W_i$ is closed to reversals and $H_m$ arises from $H_m'$ as its reversal (and $\succeq$ is preserved by reversals), 
$H_m\succeq K_m$ too. We see that an $r$-wealthy coloring of
the same {\bf type $\mathbf{W_i}$} is contained in $H_m$ as well and case (d) is solved.

\medskip{}
{\bf Case (a)}. Recall that $M_m = M_{I_{m}, I_{m}, J_{m}}$ for two subsets $I_m<J_m$ in $[n_m]$, the sequence $(\al(M_m))_{m \geq 1}$ is bounded and the sequence $(|R(M_m)|)_{m \geq 1}$ is unbounded. The hypothesis of Lemma~\ref{3DKlazar} is satisfied and its conclusion (i) or (ii) holds.

Suppose that {\bf conclusion (i) of Lemma~\ref{3DKlazar}} holds. Now, unlike in the previous proposition, we have to treat its two subcases differently because the second base set of $M_m$ coincides with the first one but the third base set lies after the first one. The {\bf first subcase} is that every matrix $M_m$ has a $2$-layer $*$-binary matrix $N_m$, where $N_m$ is a matrix that has one column of $*$s and elsewhere only $0$s and $1$s, such that the sequence $(\al(N_m))_{m \geq 1}$ is bounded but the sequence $(|R(N_m)|)_{m \geq 1}$ is unbounded. The column of $*$s splits $N_m$ in two submatrices 
$N_m'$ and $N_m''$ which are non-intertwined because each lies on only one side of the column of $*$s. For one of them, say $N_m'$, we have that $|R(N_m')|\ge\frac{1}{2}|R(N_m)|$. The other case with $N_m''$ is similar. Thus the sequence $(\al(N_m'))_{m\geq 1}$ is bounded but the sequence $(|R(N_m')|)_{m \geq 1}$ is unbounded. By part 1 of Lemma~\ref{subm_layer_crossm}, each matrix $N_m'$ is also a layer matrix of a submatrix of $M_m$. We apply Corollary~\ref{cor_prvniholemm} to the matrices $N_m'$ and
get that for every $r\in\mathbb{N}$ there is an $m$ such that the coloring $H_m$ contains an $r$-wealthy coloring of {\bf type $\mathbf{W_2}$}.

The {\bf second subcase} in conclusion (i) is that every matrix $M_m$ has a $3$-layer $*$-binary matrix $N_m$ such that the sequence $(\al(N_m))_{m \geq 1}$ is bounded but the sequence 
$$
    (|R(N_m)|)_{m \geq 1}=(|C(N_m)|)_{m \geq 1}
$$ 
is unbounded. Here even $R(N_m)=C(N_m)$ because each $N_m$ is a square 
symmetric matrix with $*$s on the diagonal and $0$s and $1$s elsewhere (the first two base sets of $M_m$ and $N_m$ are the same). We apply Proposition~\ref{new_variation_onKlazar} to the matrices 
$(N_m)_{m \geq 1}$ and get that its case ($\alpha$) or ($\beta$) holds. {\bf Case ($\alpha$)} implies that for every
$r\in\mathbb{N}$ there is an $m$ such that $H_m$ contains an $r$-wealthy coloring of {\bf type $\mathbf{W_{3, 3}}$} (determined by the fixed vertex 
corresponding to the fixed third coordinate and by an $r$-wealthy coloring of pairs of type $2$ that precedes it). In {\bf case ($\beta$)} we can use Lemma~\ref{poz_W34} 
because the submatrices $I_r'$ and $U_r'$ of $N_m$ produced by case ($\beta$) are non-intertwined (they lie above the diagonal of 
$N_m$) and each is also a layer matrix of a submatrix of $M_m$, by part 1 of Lemma~\ref{subm_layer_crossm}. We get that 
for every $r\in\mathbb{N}$ there is an $m$ such that the coloring $H_m$ contains an $r$-wealthy coloring of {\bf type $\mathbf{W_2}$}.

Suppose that {\bf conclusion (ii) of Lemma~\ref{3DKlazar}} holds. Then every matrix $M_m=M_{I_m,I_m,J_m}$ has a submatrix $M_m'$ that has a $*$-binary $R$-full $(2,3)$-cross-matrix 
$N_m$ such that the sequence $(|R(N_m)|)_{m \geq 1}$ is unbounded. For concreteness we suppose that every matrix {\bf $\mathbf{N_m}$ is 
a $\mathbf{(2,3)}$-d-cross-matrix} of $M_m'$, which means that  $N_m(i,j)=M_m'(j,i,i)$, and postpone the other case with $(2,3)$-ad-cross-matrix to the end of case (a). $R$-fullness of the matrices $N_m$ implies that if $N_m$ has $r(m)$ rows 
then there are $r(m)$ distinct column indices $p_{m,i}$ such that for each $i=1,2,\dots,r(m)$ we have 
$$
    \{N_m(i,\,p_{m,i}),\,N_m(i,\, p_{m,i}+1)\}=\{0,1\}\, .
$$
In the sense of Remark~\ref{rem:fullrow}, each row of $N_m$ is a row in $M_m$. Thus each row of $N_m$, as a word over $\{0,1,*\}$, has exactly one $*$. Also, the numbers of 
subwords $01$ and $10$ in the rows of $N_m$ are bounded as a function of $m$. The last fact implies, since the sequence $(|R(N_m)|)_{m \geq 1}$ is unbounded, 
that the sequence $(r(m))_{m \geq 1}$ of numbers of rows in the matrices $N_m$ is unbounded. Let the column index of the $*$ in the row $i$ of $N_m$ be $q_{m,i}$. 
We note that $q_{m,1}<q_{m,2}<\dots<q_{m,r(m)}$. Either $p_{m,i} > q_{m,i}$ or $p_{m,i} < q_{m,i}$ holds for at least half of the indices $i \in [r(m)]$. Note that $p_{m,i}=q_{m,i}$ is impossible because the left side is a column index of a $0$ or a $1$
and the right side is a column index of a $*$. 

We assume that the {\bf former inequality  $\mathbf{p_{m,i}>q_{m,i}}$} holds for at least half of the rows and later indicate how to 
deal with the latter inequality. Using again the Erd\H{o}s--Szekeres lemma we may suppose, by passing to a subsequence of the sequence $m=1,2,\dots$ and to 
submatrices of the corresponding matrices $N_m$, that for every $m \in \mathbb{N}$ the sequence 
$$
    P(m) = \left(p_{m,1},\,p_{m,2},\,\dots,\,p_{m,r(m)}\right)
$$ 
is decreasing, or that for every $m \in \mathbb{N}$ it increases. 

If the {\bf sequences $\mathbf{P(m)}$ decreases},
$p_{m,1}>p_{m,2}>\dots>p_{m,r(m)}$, we consider the submatrix $N_m'$
of $N_m$ consisting of the whole columns with indices $\ge p_{m,r(m)}$. The matrix $N_m'$ is non-intertwined because it lies to the right of all $*$s in $N_m$. 
By part 2 of Lemma~\ref{subm_layer_crossm}, $N_m'$ is also a cross-matrix of a submatrix $M_m''$ of $M_m'$. Clearly, the sequence $(\al(M_m''))_{m\ge1}$
is bounded (because $(\al(M_m))_{m\ge1}$ is bounded) and the sequence $(|R(N_m')|)_{m\geq 1}$ is unbounded. We apply Lemma~\ref{lema_dvaapul} to the
matrices $N_m'$ and conclude that for every $r\in\mathbb{N}$ there is an $m$ such that the coloring $H_m$ contains an $r$-wealthy coloring of one and the
same {\bf type $\mathbf{W_2}$} or {\bf type $\mathbf{W_{3,1}}$} or {\bf type $\mathbf{W_{3,2}}$}.

If the {\bf sequences $\mathbf{P(m)}$ increases}, $p_{m,1}<p_{m,2}<\dots<p_{m,r(m)}$, we define for each $m$ an increasing sequence 
$$
    1=t_{m,1}<t_{m,2}<\dots<t_{m,s(m)}
$$
of row indices as follows. Recall that in the row $i$, $p_{m,i}$ is the column index of the first letter of a subword $01$ or $10$ and $q_{m,i}<p_{m,i}$ and is the column index
of the unique $*$. For $i>1$ we let $t_{m,i}$ be the minimum index $j$ such that $j>t_{m,i-1}\;\&\,q_{m,j}> p_{m, t_{m,i-1}}+1$ if such $j$ exists,
and set $t_{m,i}$ to be the last row index in $N_m$ otherwise; thus $t_{m,s(m)}=r(m)$ is the number of rows in $N_m$. The indices $t_{m,i}$ are illustrated 
by Figure~\ref{fig:rpq}. We consider two cases, of bounded and unbounded sequence $(s(m))_{m\ge1}$ of the numbers of the indices $t_{m,i}$, respectively. 

\begin{figure}[t!]
    \centering
    $
    \begin{matrix}
        t_{m,1} \\
            \\
            \\
        t_{m,2} \\
            \\
          \\
         t_{m,3} \\
    \end{matrix}
        \begin{pmatrix}
            * & \mathbf{0} & 1 & 0 & 0 & 1 & 1 & 1 & 1 & 0 \\
            1 & * & \mathbf{1} & 0 & 0 & 0 & 1 & 0 & 0 & 0 \\
            0 & 0 & * & \mathbf{1} & 0 & 1 & 1 & 1 & 0 & 0 \\
            1 & 0 & 1 & * & 0 & \mathbf{0} & 1 & 1 & 1 & 1 \\
            1 & 0 & 1 & 1 & * & 0 & \mathbf{0} & 1 & 1 & 1 \\
            1 & 1 & 1 & 1 & 1 & 1 & * & \mathbf{1} & 0 & 0 \\
            0 & 0 & 0 & 0 & 1 & 1 & 1 & * & \mathbf{0} & 1 
        \end{pmatrix}
    $
\caption{An example of a matrix $N_m$ and row indices $t_{m,i}$ defined in the proof of Proposition \ref{P2}. The positions $(i,p_{m,i})$ are indicated by
the bold type.}
\label{fig:rpq}
\end{figure}

We assume that  the {\bf sequence $\mathbf{(s(m))_{m\ge1}}$ is bounded}. Then for every $m$ there is an index $l(m)\in[r(m)-1]$ such that the sequence
$$
    (u_m)_{m\ge1}:=\left(t_{m,l(m)+1}-t_{m,l(m)}-2\right)_{m\ge1}
$$
is unbounded, where the numbers $u_m$ counts rows in $N_m$ between the $(t_{m,l(m)}+1)$-th and $(t_{m,l(m)+1})$-th row. 
We denote again by $N_m'$ the  $R$-full submatrix of $N_m$ formed by the intersection of these $u_m$ rows and the columns lying to the right of the $(p_{m,l(m)}+1)$-th column. 
Because of the definition of the indices $t_{m,i}$ and since $P(m)$ increases, we see as for the above matrices $N_m'$ that each
of the matrices $N_m'$ here is non-intertwined. Also, the sequence $(|R(N_m')|)_{m\ge1}$ is unbounded because $|R(N_m')|\ge u_m$. 
We argue as for the above matrices $N_m'$ and conclude by part 2 of Lemma~\ref{subm_layer_crossm} and by Lemma~\ref{lema_dvaapul} that for every $r\in\mathbb{N}$ 
there is an $m$ such that the coloring $H_m$ contains an $r$-wealthy coloring of one and the same {\bf type $\mathbf{W_2}$} or {\bf type $\mathbf{W_{3,1}}$} or {\bf type $\mathbf{W_{3,2}}$}.

If the {\bf sequence $\mathbf{(s(m))_{m\ge1}}$ is unbounded} we consider for each $m$ the submatrix of $N_m$ denoted again by $N_m'$ and formed by the intersection of 
$$
    \text{the rows $t_{m,j}$ and the columns $q_{m,t_{m,j}},\;p_{m,t_{m,j}}$ and $p_{m,t_{m,j}}+1$ for $j=1,\,2,\,\dots,\,s(m)-1$}\, .
$$
Let $r'=r'(m)=s(m)-1$. The $r'\times 3r'$ matrix $N_m'$ has for $i\in[r']$ the entries $N_m'(i, 3i-2) = *$ and $\{N_m'(i, 3i-1),\,N_m'(i, 3i)\}=\{0,1\}$. 
By part 1 of Lemma~\ref{treti_lemma}, for every $r'\in\mathbb{N}$ there is an $m$ such that the coloring $H_m$ contains an $r'$-wealthy coloring of {\bf type $\mathbf{W_{4, 2}}$}.

We consider the {\bf case when $\mathbf{p_{m,i} < q_{m,i}}$} holds for at least half of the row indices $i$ of $N_m$. We again use the Erd\H{o}s--Szekeres lemma, pass to a subsequence of indices $m$ and to submatrices of the corresponding matrices $N_m$ 
and again distinguish two cases depending on whether the sequence 
$$
P(m) = \left(p_{m,1},\,p_{m,2},\,\dots,\,p_{m,r(m)}\right)
$$ 
decreases or increases. In the {\bf case when $\mathbf{P(m)}$ decreases}, when $p_{m,1}>p_{m,2}>\dots>p_{m,r(m)}$, we consider the submatrix $N_m'$
of $N_m$ formed by the columns of $N_m$ with indices $\le p_{m,1}$. The matrix $N_m'$ is non-intertwined because it lies to the left of all $*$s in $N_m$. We argue as before and by means of part 2 of Lemma~\ref{subm_layer_crossm} and Lemma~\ref{lema_dvaapul}  conclude that for every $r\in\mathbb{N}$ there is an $m$ such that the coloring $H_m$ contains an $r$-wealthy coloring of one and the
same {\bf type $\mathbf{W_2}$} or {\bf type $\mathbf{W_{3,1}}$} or {\bf type $\mathbf{W_{3,2}}$}.

In the {\bf case when $\mathbf{P(m)}$ increases}, when $p_{m,1}<p_{m,2}<\dots<p_{m,r(m)}$,
we again define certain row indices of $N_m$ denoted again $t_{m,i}$,
$$
    1=t_{m,1}<t_{m,2}<\dots<t_{m,s(m)}\,,
$$ 
as follows. For $i>1$  we set $t_{m,i}$ to be the minimum $j$ such that $j>t_{m,i-1}$ and $p_{m,j}+1 > q_{m,t_{m,i-1}}$ if such 
$j$ exists, and set $t_{m,i}$ to be the last row index in $N_m$ else. Thus again $t_{m,s(m)}=r(m)$ is the number of rows of $N_m$. As before we consider 
two cases depending on whether the sequence $(s(m))_{m\ge1}$ is bounded. In the {\bf case when $\mathbf{(s(m))_{m\ge1}}$ is bounded}, for some indices 
$l(m)\in[r(m)-1]$ the difference 
$$
    t_{m,l(m)+1}-t_{m,l(m)}-1
$$
is unbounded in $m$ and like before we consider the $R$-full
submatrix $N_m'$ of $N_m$ formed by the intersection of this many rows lying between the $(t_{m,l(m)}-1)$-th and $(t_{m,l(m)+1}-1)$-th row and the columns lying 
to the left of the $q_{m,t_{m,l(m)}}$-th column. Then as before the matrix $N_m'$ is non-intertwined because it lies to the left of all $*$s in $N_m$ and $|R(N_m')|$ is at least the number of rows in $N_m'$ and hence is unbounded in $m$. We again conclude 
by means of part 2 of Lemma~\ref{subm_layer_crossm} and  Lemma~\ref{lema_dvaapul} that for every $r\in\mathbb{N}$ 
there is an $m$ such that the coloring $H_m$ contains an $r$-wealthy coloring of one and the same {\bf type $\mathbf{W_2}$} or {\bf type $\mathbf{W_{3,1}}$} or {\bf type $\mathbf{W_{3,2}}$}. 

If the {\bf sequence $\mathbf{(s(m))_{m\ge1}}$ is unbounded} we again consider the matrix $N_m'$ formed by the intersection of $$\text{the rows $t_{m,j}$ and the columns  $p_{m,t_{m,j}}$, $p_{m,t_{m,j}}+1$ and $q_{m,t_{m,j}}$ for $j=1,\,2,\,\dots,\,s(m)-1$}\,.$$ This is an $r'\times 3r'$ matrix, where $r'=r'(m)=s(m)-1$, and for every
$i\in[r']$ we have that $\{N_m'(i,3i-2),N_m'(i,3i-1)\}=\{0,1\}$ and $N_m'(i,3i)=*$. By part 2 of Lemma~\ref{treti_lemma}, 
for every $r'\in\mathbb{N}$ there is an $m$ such that the coloring 
$H_m$ contains an $r'$-wealthy coloring of {\bf type $\mathbf{W_{4, 2}}$}.

The last case of case (a) to consider is when conclusion (ii) of Lemma~\ref{3DKlazar} holds and each matrix  {\bf $\mathbf{N_m}$ is a 
$\mathbf{(2,3)}$-ad-cross-matrix} of a submatrix $M_m'$ of the matrix $M_m=M_{I_m,I_m,J_m}$ which is a crossing matrix of the coloring $H_m$; 
we know that the sequence $(\al(M_m))_{m\ge 1}$ is bounded and the sequence $(|R(N_m)|)_{m\ge 1}$ is unbounded. We use a finer version of 
the argument by which we above reduced case (d) to case (a). We reduce the case of $(2,3)$-ad-cross-matrix $N_m$ to the just resolved case of $(2,3)$-d-cross-matrix $N_m$.
Instead of global reversal of colorings we will use local reversals. Namely, we replace matrices $N_m$, $M_m'$ and $M_m$ and colorings $H_m$ with the matrices 
$F(N_m)$, $F(M_m')$ and $F(M_m)$ and coloring $H_m'$, respectively,
as follows. $F$ is the mapping transforming a general three-dimensional $*$-binary matrix 
$M\colon[r]\times[s]\times[t]\to\{0,1,*\}$ to the matrix $F(M)\colon[r]\times[s]\times[t]\to\{0,1,*\}$ 
given by
$$
    F(M)(a,\,b,\,c)=M(a,\,b,\,t-c+1)\, .
$$
If $M=M_{I,I,J}$ is a crossing matrix of a coloring $H=(n,\chi)$, where $I,J\subset[n]$ with $I<J$, then $F(M)=M_{I,I,J}$ is the crossing matrix of the
coloring $H'=(n,\chi')$ (for the same sets $I$ and $J$) that arises from $H$ by reversing the order of elements in the subset $J$. 
In more details, if $J=\{y_1<y_2<\dots<y_t\}\subset[n]$ and for an $x\in[n]$ we denote $\overline{x}=x$ if $x\in[n]\setminus J$ and $\overline{x}=\overline{y_j}=y_{t-j+1}$ 
if $x=y_j\in J$ then for every $\{x,y,z\}\in\binom{[n]}{3}$ also $\{\overline{x},\overline{y},\overline{z}\}\in\binom{[n]}{3}$, so no new $*$ is created, and
$$
\chi'(\{x,\,y,\,z\}):=\chi(\{\overline{x},\,\overline{y},\,\overline{z}\})\,.
$$
The mapping $F$ reverses orders of positions in shafts in $M$ but preserves orders of positions in rows and columns and only changes their places. $F$ also
transforms the $(2,3)$-ad-cross-matrix $N$ of a submatrix $M'$ of $M$ to the $(2,3)$-d-cross-matrix $F(N)$ of the submatrix $F(M')$ of $F(M)$ 
(in the definitions of $F(N)$, $F(M')$ and $F(M)$ a position $(a,b,c)$ in $F(M)$ becomes position $(a,b,t-c+1)$ in $M$). 
We have $\al(F(M))=\al(M)$ and even $R(F(N))=R(N)$. We can apply to the matrices $F(N_m)$, $F(M_m')$ and $F(M_m)$ and colorings $H_m'$ the previously resolved case of $(2,3)$-d-cross-matrix. We deduce that there is a symbol $i_0$ which is `$2$' or `$3,1$' or `$3,2$' or `$4,2$' such that for every $r\in\mathbb{N}$ there is an $m$ such that $H_m'\succeq K_m$ where $K_m$ is an $r$-wealthy coloring of type $W_{i_0}$. 

We claim that then also $H_m\succeq K_m$. We prove it by verifying that each time $K_m\preceq H_m'$ in the previous argument, the transformation $F^{-1}=F$ yields $K_m\preceq H_m$. We have $K_m\preceq H_m'$ either by four applications of Lemma~\ref{lema_dvaapul} or by two applications of Lemma~\ref{treti_lemma}. By the statements of Lemmas~\ref{lema_dvaapul} and \ref{treti_lemma}, each base set of $K_m$ is mapped in the containment $K_m\preceq H_m'$ in one of the base sets of $F(M_m)$ which are $I_m,I_m$ and $J_m$. Since $H_m$ is obtained back from $H_m'$ by reversing the order of elements in the interval $J_m$ and each family of type $W_{i_0}$ colorings is closed to reversing  the order of elements in any base set, we finally see that $H_m\succeq K_m$ too and $H_m$ contains an $r$-wealthy coloring of {\bf type $\mathbf{W_{i_0}}$}.

\medskip
{\bf Case (c)}. Recall that $M_m = M_{I_{m}, J_{m}, J_{m}}$ for two subsets $I_m<J_m$ in $[n_m]$, the sequence 
$(\al(M_m))_{m \geq 1}$ is bounded and the sequence $(|R(M_m)|)_{m \geq 1}$ is unbounded. Thus, again, the hypothesis of Lemma~\ref{3DKlazar} is satisfied and its conclusion (i) or (ii) holds.

Suppose that {\bf conclusion (i) of Lemma~\ref{3DKlazar}} holds. Thus every matrix $M_m$ has a $2$-layer or a $3$-layer matrix $N_m$, where $N_m$ is a two-dimensional $*$-binary matrix with one row of $*$s and $0$s and $1$s elsewhere, such that the sequence 
$(\al(N_m))_{m \geq 1}$ is bounded but the sequence $(|R(N_m)|)_{m \geq 1}$ is unbounded. Since the second and third base set of $M_m$ are the same, we may assume that $N_m$ is a $3$-layer matrix of $M_m$. The row of $*$s splits $N_m$ in binary matrices $N_m'$ and $N_m''$ and for one of them, say $N_m'$, we have $|R(N_m')|\ge\frac{1}{2}|R(N_m)|$, thus $(N_m')_{\ge1}$ is unbounded. In general, the matrix $N_m$ may not be non-intertwined. However, the splitting of $N_m$ enforces, as in case (a), that both $N_m'$ and $N_m''$ are non-intertwined. Thus we have a sequence $(N_m')_{\ge1}$ of two-dimensional binary
matrices such that the sequence $(\al(N_m'))_{m\geq 1}$ is bounded but the sequence $(|R(N_m')|)_{m \geq 1}$ is unbounded. We conclude as before by part 1 of Lemma~\ref{subm_layer_crossm} and Corollary~\ref{cor_prvniholemm}
that for every $r\in\mathbb{N}$ there is an $m$ such that the coloring $H_m$ contains an $r$-wealthy coloring of {\bf type $\mathbf{W_2}$}. The argument is the same when $|R(N_m'')|\ge\frac{1}{2}|R(N_m)|$.

Suppose that {\bf conclusion (ii) of Lemma~\ref{3DKlazar}} holds. This means that every matrix $M_m$ has a submatrix $M_m'$ that has an $R$-full $(2, 3)$-cross-matrix $N_m$ 
such that the sequence $(|R(N_m)|)_{m \geq 1}$ is unbounded. As before we suppose for concreteness that every matrix {\bf $\mathbf{N_m}$ is a $\mathbf{(2,3)}$-d-cross-matrix} 
of $M_m'$ and defer $(2,3)$-ad-cross-matrices to the end of case (c). We partition $N_m$ by its rows in two submatrices $N_m'$ and $N_m''$ (see Figure~\ref{fig:cross_c}). 
By Remark~\ref{rem:fullrow} the rows in matrices $N_m$, $M_m'$ and $M_m$ are the same. If 
$$
\{c_{m,\,1}<c_{m,\,2}<\dots<c_{m,\,p(m)}\}\subset J_m\;\text{ and }\;\{s_{m,\,1}<s_{m,\,2}<\dots<s_{m,\,p(m)}\}\subset J_m
$$
is the second and third base set of $M_m'$, respectively, with $p(m)$ being the common second and third dimension of $M_m'$, then the $i$-th row of $N_m$ is
$$
    N_m(i,\,j) = M_m(j,\,c_{m,\,i},\,s_{m,\,i})\;.
$$
We define the submatrix $N_m'$ (resp. $N_m''$) as consisting of the rows $i$ of $N_m$ for 
which $c_{m,i} < s_{m,i}$ (resp. $c_{m,i} > s_{m,i}$). We cannot have $c_{m,i}=s_{m,i}$, then the $i$-th row of $N_m$ 
would contain only $*$s and this contradicts $R$-fullness of $N_m$. In the sense of the definition preceding Lemma~\ref{ctvrte_lemma} the matrix $N_m'$ is above-diagonal and the matrix $N_m''$ below-diagonal. Clearly, $N_m$, $N_m'$ and $N_m''$ are binary matrices but in general are not non-intertwined. However, one of the sequences $(|R(N_m')|)_{m \geq 1}$ and $(|R(N_m'')|)_{m \geq 1}$ is unbounded. 

We first suppose that the {\bf sequence $\mathbf{(|R(N_m')|)_{m \geq 1}}$ is unbounded} and discuss the other case of unbounded sequence $(|R(N_m'')|)_{m \geq 1}$ later. 
We consider two subcases. If the {\bf sequence $\mathbf{(\al(N_m'))_{m \geq 1}}$ is unbounded}, by part 2 of Lemma~\ref{subm_layer_crossm} we can apply to the matrices $N_m'$ Lemma~\ref{ctvrte_lemma} and conclude that for every $r\in\mathbb{N}$ there is an $m$ such that the coloring $H_m$ contains an $r$-wealthy coloring of 
one and the same {\bf type $\mathbf{W_{3,3}}$} or {\bf type $\mathbf{W_2}$}. If the {\bf sequence $\mathbf{(\al(N_m'))_{m \geq 1}}$ is bounded} then by part 2 of Lemma~\ref{subm_layer_crossm} we can apply to the
matrices $N_m'$ Lemma~\ref{pate_lemma} and conclude that for every $r\in\mathbb{N}$ there is an $m$ such that the coloring $H_m$ contains an $r$-wealthy coloring of one 
and the same {\bf type $\mathbf{W_{3,1}}$} or {\bf type $\mathbf{W_{3,2}}$} or {\bf type $\mathbf{W_{4,2}}$}.

\begin{figure}
    \centering
    \includegraphics[width=0.8\textwidth]{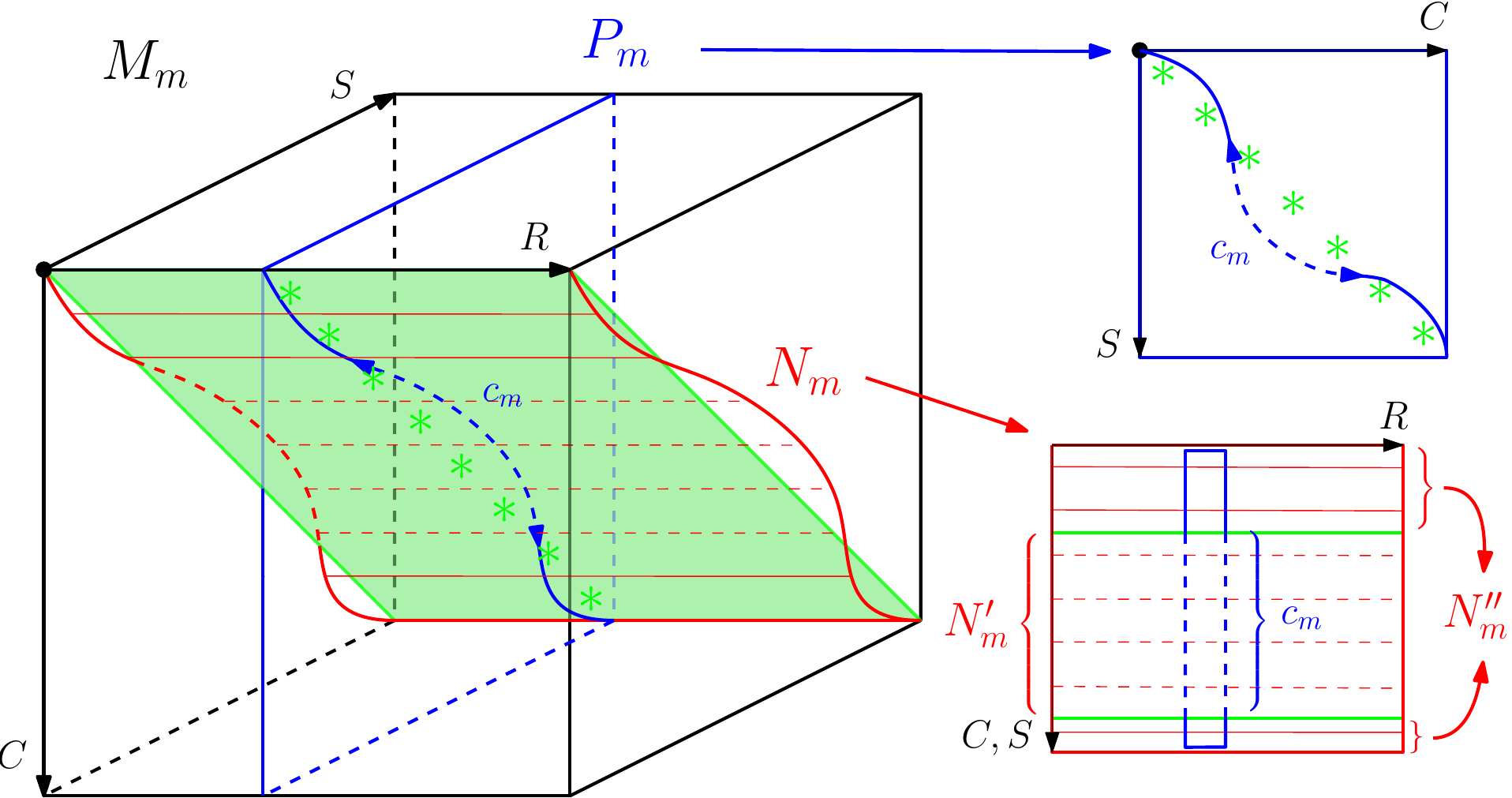}
    \qquad
    \caption{Proof of Proposition~\ref{P2}, case (c). The $(2, 3)$-d-cross-matrix $N_m$ of $M_m'$ is the red wavy surface in the matrix $M_m=M_{I_m,J_m,J_m}$ ($I_m<J_m$) and 
    is disjoint to the green plane that contains only $*$s.}
    \label{fig:cross_c}
\end{figure}

The case when the {\bf sequence $\mathbf{(|R(N_m'')|)_{m \geq 1}}$ is unbounded} is almost identical to the previous case. We can use 
Lemmas~\ref{ctvrte_lemma} and \ref{pate_lemma} as before since they apply to below-diagonal d-cross-matrices $N_m''$ too. We get the same conclusions, for
every $r\in\mathbb{N}$ there is an $m$ such that $H_m$ contains an $r$-wealthy coloring of one and the same {\bf type $\mathbf{W_2}$} or {\bf type $\mathbf{W_{3,1}}$}
or {\bf type $\mathbf{W_{3,2}}$} or {\bf type $\mathbf{W_{3,3}}$} or {\bf type $\mathbf{W_{4,2}}$}. 

The last case of case (c) to consider is when conclusion (ii) of Lemma~\ref{3DKlazar} holds and for every $m$ the matrix {\bf $\mathbf{N_m}$ is a 
$\mathbf{(2, 3)}$-ad-cross-matrix} of the submatrix $M_m'$ of the matrix $M_m=M_{I_m,J_m,J_m}$ which is a crossing matrix of the coloring $H_m$; 
we know that the sequence $(\al(M_m))_{m\ge 1}$ is bounded and the sequence $(|R(N_m)|)_{m\ge 1}$ is unbounded. Now the $i$-th row of $N_m$ is 
$$
N_m(i,\,j)=M_m(j,\,c_{m,\, i},\,s_{m,\, p(m)-i+1})\;.
$$
Like before we partition $N_m$ by its rows in two submatrices and define $N_m'$ (resp. $N_m''$) as consisting of the rows $i$ of $N_m$ such that 
$$
c_{m,\,i}<s_{m,\,p(m)-i+1}\ (\text{resp. }c_{m,\,i}>s_{m,\,p(m)-i+1})\,.
$$
Equality here again does not occur because of $R$-fullness of $N_m$. From these inequalities it follows, since (see above) both the $c_{m,i}$ and the $s_{m,i}$ increase 
in $i$, that for the rows $i$ in $N_m'$ all $c_{m,i}$s precede all $s_{m,p(m)-i+1}$s, and the other way around for $N_m''$. Thus both matrices $N_m'$ and $N_m''$ are non-intertwined. 
Clearly, one of the sequences $(|R(N_m'|)_{m\ge1}$ and $(|R(N_m''|)_{m\ge1}$, say the first one, is unbounded (the other case is identical). By part 2 of Lemma~\ref{subm_layer_crossm} we 
know that each matrix $N_m'$ is also a cross-matrix of a submatrix of the matrix $M_m'$. We apply to the matrices $N_m'$ Lemma~\ref{lema_dvaapul} and conclude
that for every $r\in\mathbb{N}$ there is an $m$ such that the coloring $H_m$ contains an $r$-wealthy coloring of one and the same {\bf type $\mathbf{W_2}$} or 
{\bf type $\mathbf{W_{3,1}}$} or {\bf type $\mathbf{W_{3,2}}$}.
\end{proof}

\begin{proof}(Proof of Theorem \ref{T1FD}.) Let $X\subset\mathcal{C}_3$ be an ideal of colorings. If $X$ is $p$-tame for some $p\in\mathbb{N}$ then 
by Proposition~\ref{M-TAME} we have that 
$$
|X_n|\leq n^{ 10p^6}\; \text{ for every $n\in\mathbb{N}$}\,.
$$
Else by Propositions~\ref{P1} and \ref{P2} there is an $i \in [4]$ such that for every $r\in\mathbb{N}$ the ideal $X$ contains 
an $r$-wealthy coloring of type $W_i$. For $i=1$ Lemma~\ref{LW1} gives $|X_n|\ge 2^{n-2}$ for every $n\in\mathbb{N}$ ($W_1$ colorings). 
For $i=2$ Proposition~\ref{LW2} gives $|X_n|\ge F_n\,(\approx 1.618^n)$ for every $n\in\mathbb{N}$  ($W_2$ colorings). For $i=3$ Proposition~\ref{LW312} gives 
$$
|X_n|>\frac{0.28\cdot1.587^n}{\sqrt{n}}\ge G_n\,(\approx 1.466^n)\;\text{ for every $n\ge23$ ($W_{3,1}$ and $W_{3,2}$ colorings)} 
$$
and Lemma~\ref{LW33} gives $|X_n|\ge F_n$ for every $n\in\mathbb{N}$ ($W_{3,3}$ colorings). For $i=4$ Proposition~\ref{LW41} gives $|X_n|\ge G_n$ for 
every $n\in\mathbb{N}$ ($W_{4,1}$ colorings) and Proposition~\ref{LW42} gives 
$$
|X_n|\ge \binom{\lfloor \frac{2(n-4)}{5} \rfloor }{\lfloor \frac{n-4}{5} \rfloor}^2\,(\approx 1.751^n)\,\ge G_n\;\text{ for every $n \geq 20$ ($W_{4,2}$ 
colorings)}\,.
$$
In the second displayed bound, the range $n\ge23$ applies to the second inequality, the first one holds for every $n\ge 1$. In the third displayed bound, 
the range $n\ge20$ applies again to the second inequality, the first one holds already for $n\ge 9$. So in all cases (see also part 3 of Lemma~\ref{LFibG}) 
we have that $|X_n|\ge G_n$ for every $n \geq 23$.
\end{proof}
\section{Concluding remarks}

Recall that in the following discussion we have $l=2$ colors. If we look at the last proof, we see that the smallest lower bound is for type $W_{4,1}$ colorings, and the next smallest one is for type $W_{3,1}$ and $W_{3,2}$ colorings in Proposition~\ref{LW312}. Therefore we have the following result.

\begin{corollary}
Let $X\subset\mathcal{C}_3$ be an ideal of colorings that for infinitely many $r \in \mathbb{N}$ does not contain an $r$-wealthy coloring of type $W_{4, 1}$. Then 
either $|X_n|$ has at most polynomial growth or $|X_n| > 2^{2(n-2)/3}/\sqrt{2n}$ for any large enough $n \in \mathbb{N}$.
\end{corollary}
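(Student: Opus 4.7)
The plan is to trace through the argument used for Theorem~\ref{T1FD} and observe that subtype $W_{4,1}$ is the unique source of the weakest bound $G_n$; once we excise it, every surviving subtype supplies a lower bound at least as strong as $2^{2(n-2)/3}/\sqrt{2n}$. First I would split on whether $X$ is $p$-tame for some $p\in\mathbb{N}$. If so, Proposition~\ref{M-TAME} gives $|X_n|\le n^{10p^6}$ for every $n\ge 2$, the polynomial alternative. Otherwise, exactly as in the final proof of Theorem~\ref{T1FD}, there is a fixed $c\in[5]$ such that for every $p\in\mathbb{N}$ some $H\in X$ has $\gr(H)$ violating condition $c$ of $p$-tameness for $p$, and applying Proposition~\ref{P1} (for $c\in\{1,2,3,4\}$) or Proposition~\ref{P2} (for $c=5$) produces a specific subtype $W_i$ of which $X$ contains an $r$-wealthy coloring for every $r$.

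Next I would use the hypothesis to rule out subtype $W_{4,1}$. The key observation is that if $(4r,\chi)$ is $r$-wealthy of type $W_{4,1}$, then its restriction to the first $4r'$ vertices is $r'$-wealthy of type $W_{4,1}$ for every $r'\le r$; thus the set of $r$ for which $X$ contains such a coloring is downward closed in $\mathbb{N}$. Consequently, the hypothesis that this set is missed by infinitely many $r$ is equivalent to it being bounded, i.e. $X$ contains $r$-wealthy $W_{4,1}$ colorings for only finitely many $r$. Now I would observe, by inspection of the proofs of Propositions~\ref{P1} and~\ref{P2}, that subtype $W_{4,1}$ is produced only in the case $c=1$ of Proposition~\ref{P1} (where unboundedness of nuclear decomposition lengths forces $r$-wealthy $W_{4,1}$ colorings for every $r$); all remaining cases produce subtypes in $\{W_1, W_2, W_{3,1}, W_{3,2}, W_{3,3}, W_{4,2}\}$. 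Hence under our hypothesis the subtype extracted must lie in this set.

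Finally I would invoke the lower bound attached to each of these subtypes: Lemma~\ref{LW1} yields $|X_n|\ge 2^{n-2}$ for $W_1$; Proposition~\ref{LW2} and Lemma~\ref{LW33} yield $|X_n|\ge F_n$ for $W_2$ and $W_{3,3}$; Proposition~\ref{LW312} yields $|X_n|>2^{2(n-2)/3}/\sqrt{2n}$ for $W_{3,1}$ and $W_{3,2}$; and Proposition~\ref{LW42} yields $|X_n|\ge\binom{\lfloor 2(n-4)/5\rfloor}{\lfloor (n-4)/5\rfloor}^2\approx 1.741^n$ for $W_{4,2}$. Since the base $2^{2/3}\approx 1.587$ is strictly smaller than the bases $2$, $(1+\sqrt{5})/2\approx 1.618$ and $1.741$ appearing in the four other bounds, each of those bounds dominates $2^{2(n-2)/3}/\sqrt{2n}$ for all sufficiently large $n$, yielding the desired inequality. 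The only real obstacle is verifying that Propositions~\ref{P1} and~\ref{P2}, whose statements speak only of $W_i$ with $i\in[4]$, really produce the specific subtype $W_{4,1}$ only from case $c=1$; this requires reading their proofs rather than merely their statements, but involves no new combinatorial argument.
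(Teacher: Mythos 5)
Your proof is correct and follows the same route as the paper's (extremely terse) justification of this corollary, which simply observes that among all the lower bounds produced in the proof of Theorem~\ref{T1FD}, the bound from $W_{4,1}$ (namely $G_n$) is the weakest and the one from $W_{3,1}$/$W_{3,2}$ (namely $2^{2(n-2)/3}/\sqrt{2n}$) is the next weakest. You flesh out the argument by re-running the case analysis: split on $p$-tameness, then inspect the proofs of Propositions~\ref{P1} and~\ref{P2} to see that $W_{4,1}$ appears only from a violation of condition~1 (unbounded nuclear decomposition lengths), and that every other branch produces a subtype whose attached bound ($2^{n-2}$, $F_n$, $2^{2(n-2)/3}/\sqrt{2n}$, or $\approx 1.741^n$) dominates $2^{2(n-2)/3}/\sqrt{2n}$ for large $n$. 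This is exactly what is going on beneath the paper's one-sentence remark.

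One minor comment: your downward-closedness observation (that an $r$-wealthy $W_{4,1}$ coloring restricts to an $r'$-wealthy $W_{4,1}$ coloring for $r'\le r$, so the set of good $r$ is an initial segment) is correct but not needed. The hypothesis ``fails to contain an $r$-wealthy $W_{4,1}$ coloring for infinitely many $r$'' already directly contradicts the conclusion ``contains an $r$-wealthy $W_{4,1}$ coloring for every $r$'' that case $c=1$ of Proposition~\ref{P1} would deliver, so that case is excluded without appealing to downward-closedness.
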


\noindent
In the remark after the proof of Lemma~\ref{LW1} we showed that the lower bound $|X_n|\ge2^{n-2}$ for type $W_1$ colorings is tight. Similarly, we noted 
after the proof of Proposition~\ref{LW41} that its lower bound $|X_n|\ge G_n$ for type $W_{4,1}$ colorings is tight too.
We do not know if the lower bounds for the other colorings of types $W_2$, $W_3$ and $W_{4,2}$ are tight. We suspect that some of them are not but could 
not improve them. If one could improve the lower bound in Proposition~\ref{LW312} for  type $W_{3,1}$ and $W_{3,2}$ colorings to $|X_n| \ge F_n$, the next 
strengthening of the corollary would follow.

\begin{hypothesis}
Let $X\subset\mathcal{C}_3$ be an ideal of colorings that for infinitely many $r\in \mathbb{N}$ does not contain an $r$-wealthy coloring of type $W_{4, 1}$. 
Then either $|X_n|$ grows at most polynomially or $|X_n|\geq F_n$ for every large enough $n\in\mathbb{N}$.
\end{hypothesis}

For any fixed $k\ge1$ we define the sequence $(G_n^{k})_{n \geq 1} = (G_1^{k},G_2^{k},\dots)$ by the recurrence 
$$
    G_1^k = G_2^k = \dots = G_{k-1}^k = 1,\, G_k^k = 2 \; \mbox{ and } \; G_n^k = G_{n-1}^k + G_{n-k}^k\;\text{ for $n > k$}\,.
$$
Thus $F_{n-1} = G_n^2$ and $G_n = G_n^3$. The bounds in Theorems~\ref{T0} and \ref{T1FD} suggest the following conjecture.

\begin{hypothesis}
Let $X \subset \mathcal{C}_k$ be an ideal of colorings. Then there is a constant $c > 0$ such that either $|X_n|\le n^c$ for every 
$n \in\mathbb{N}$ or $|X_n|\geq G_n^k$ for every large enough $n$.
\end{hypothesis}

\noindent
The lower bound $G_n^k$ is tight since, as for $k=3$, the ideal $S(k)\subset\mathcal{C}_k$ of colorings $(n,\chi)$, where $\chi\colon\binom{[n]}{k}\to\{0,1\}$, such that 
for some disjoint $k$-intervals $I_1<I_2<\dots<I_r$ in $[n]$ one has $\chi(I_j)=0$ for every $j$ but $\chi(E)=1$ for all other edges, satisfies $|S(k)_n| = G_n^k$.
We considered the case $k=4$ and found all potential wealthy colorings leading to violation of one of the tameness conditions. Hopefully this approach can be generalized 
for any $k > 2$. However, the analog of type $W_{4, 2}$ colorings produces in the general version sufficiently many colorings only for $k\leq 34$. For larger $k$ one has 
to find a better way to generate sufficiently many different colorings, but this should not be a problem since the bound for $W_{4, 2}$ colorings given in 
Proposition~\ref{LW42} is larger than $G_n$.

Growth functions of ideals of ordered $3$-uniform hypergraphs should be investigated in more detail. We only determined the jump from constant to linear growth and 
the jump from polynomial to exponential growth. Finer polynomial jumps and exponential jumps for ordered graphs are described in \cite{BBM_Nes_sb}. To obtain similar
results for ordered $k$-uniform hypergraph for general $k>2$ may be the goal of a future work.

\section{Acknowledgements}
The first author was supported by the grant SVV–2020–260578.

\bibliographystyle{plain}


\end{document}